\newcommand{\hideme}[1]{#1}
\newcommand{\hidemeFinal}[1]{#1}
\renewcommand{\hidemeFinal}[1]{}
\newcommand{\extendedVersion}[1]{#1}
\newtheorem{theorem}{Theorem}[section]
\newtheorem{lemma}[theorem]{Lemma} 
\newtheorem{proposition}[theorem]{Proposition}
\newtheorem{corollary}[theorem]{Corollary}
\theoremstyle{definition}
\newtheorem{definition}[theorem]{Definition}
\newtheorem{example}[theorem]{Example}
\newtheorem{remark}[theorem]{Remark}
\theoremstyle{alg}
\numberwithin{equation}{section} 
\newcommand{\abs}[1]{\lvert#1\rvert}
\newcommand{\comps}[1]{\left\langle#1\right\rangle}
\DeclareMathOperator{\Ind}{Ind}
\DeclareMathOperator{\diag}{diag}
\DeclareMathOperator{\Hom}{Hom}
\DeclareMathOperator{\signFunctionSymbol}{sgn}
\DeclareMathOperator{\signSignature}{SignSig}
\DeclareMathOperator{\signSignatureGeneralized}{SignSigGen}
\DeclareMathOperator{\signRepSymbol}{Sign}
\newcommand{\signRep}[1]{\signRepSymbol #1 }
\newcommand{\signFunction}{\signFunctionSymbol}
\DeclareMathOperator{\signMult}{sgnMult}
\DeclareMathOperator{\tr}{tr}
\DeclareMathOperator{\id}{id}
\DeclareMathOperator{\linspan}{linspan}
\begin{document}
\title[Tau Signatures and Characters of  Weyl Groups]
 {Tau Signatures and Characters of  Weyl Groups}
 
\author[Folz-Donahue]{Thomas Folz-Donahue}
\author[Jackson]{Steven Glenn Jackson}
\author[Milev]{Todor Milev}
\author[No\"el]{Alfred G. No\"el}

\address{Department of Mathematics\\
         University of Massachusetts\\
         100 Morrissey Boulevard\\
         Boston, MA 02125-3393}

\email{eigenlambda@gmail.com}
\email{jackson@math.umb.edu}
\email{todor.milev@gmail.com}
\email{anoel@math.umb.edu}

\thanks{ Alfred No\"el is also Directeur du Centre de Rcherche Math\'ematique de l'Institut des Sciences, des Technologies, et des Etudes Avanc\'ees d' Ha\"iti (ISTEAH)}
\thanks{Jackson and No\"el were partially supported by NSF grant \#DMS 0554278.}
\thanks{They wish to thank David Vogan of the Massachusetts Institute of Technology for many stimulating discussions.}
\thanks{They also wish to thank Birne Binegar of Oklahoma State University for supplying Atlas data with clear explanation.}

\keywords{ Weyl Groups Representations, Lie Groups, Sign Signatures, Special Nilpotent Orbits, W-Graphs}
\subjclass{}
\begin{abstract}
Let $G_{\mathbb R}$ be the set of real points of a complex linear reductive group and $\hat G_\lambda$ its classes of irreducible admissible representations with infinitesimal integral regular character $\lambda$.
In this case each cell of representations is associated to a \emph{special} nilpotent orbit.
This helps organize the corresponding set of irreducible Harish-Chandra modules. The goal of this paper is to is to describe algorithms for
identifying the special nilpotent orbit attached to a cell in terms
of descent sets appearing in the cell. 
\end{abstract}
\maketitle
\section{Introduction}\label{s:intro}\

Let ${\mathfrak g}$ be a complex reductive Lie algebra with adjoint group $G$ and Weyl group $ W$.
This paper describes a simple algorithm by which one can read off the complex nilpotent orbit associated with a cell representation of $ W$ - provided that $ W$ is of classical type.

Given any representation $V$ of $ W$ we define the \emph{sign signature} of $V$
\[
\signSignature V 
\]
to be the set of all parabolic subgroups $P\subseteq W$ (relative to a fixed simple basis) such that $V_{|P}$ contains a copy of the sign representation of $P$. The sign signature depends only on the conjugacy class of $P$.

For $W$ of classical type, we show that the irreducible representations of $W$ are determined by their sign signatures, and we give a simple algorithm by which one can use the sign signature to find the partition or partition-pair indexing a given irreducible representation.

Moreover, the \emph{$\tau$-signature}  of a cell representation $\mathcal{C}$ of $ W$ coincides with that of its unique special subrepresentation, and can also be identified with the collection of all parabolics $P$ such that the simple roots of $P$ are contained in some \emph{$\tau$-invariant} of the cell.
Combining this with the Springer correspondence, we obtain a simple method which computes the nilpotent orbit associated with the cell directly from the $\tau$-invariants.


Our algorithms for recovering irreducible representations from sign signatures are summarized in Theorems \ref{thm:TypeARecoverIrrepsAlgorithm}, \ref{thm:TypeBRecoverIrrepsAlgorithm}, \ref{thm:TypeDRecoverIrrepsAlgorithm}. We hope these algorithms will help to facilitate the practical implementation of the theory.


The parametrization of irreducible representations using sign signatures fails for the exceptional Weyl groups.
In $G_2$, it is possible to find two different irreducible representations of $ W$ with the same sign signature.
In order to distinguish all irreducible representations, we propose the notion of extended sign signature.
Given any representation $V$ of $ W$ we define the \emph{extended sign signature} of $V$ to be the set of all subgroups $ G\subseteq W$ generated by additively closed root subsystems such that $V|_{G}$ contains a copy of the sign representation of $G$.
We show that the extended sign signature determines uniquely the irreducible representations of $ W$ (Theorem \ref{thm:classicalAndExceptional}).

\section{Weak $ W$-Graphs and $\tau$-invariants}
In this section we fix notation and collect some basic facts.
Let $ W$ be a Weyl group, let $\Delta$ be a set of simple roots.
For any root $\alpha$ of $ W$, denote by $s_\alpha$ the reflection in the hyperplane orthogonal to $\alpha$.

\begin{definition}
\label{defn:wgraph}
A \emph{Weak $\mathbf W$-graph} is a triple $\Gamma = (V, m, \tau)$, where $V$ is a finite set, $m$ is a map from $V\times V$ into ${\mathbb C}$, and $\tau$ is a map from $V$ into the power set of $\Delta$, such that the linear transformations $s_{\alpha}:{\mathbb C} V\rightarrow {\mathbb C} V$ given by the formula 
\[
s_{\alpha} (v) = \begin{cases} -v & \text{if $\alpha\in\tau(v)$} \\\displaystyle v - \sum_{\alpha\in\tau(u)} m(u,v)u & \text{if $\alpha\not\in\tau(v)$} \end{cases}
\] define a representation of $ W$ on ${\mathbb C} V$.
The \emph{$\tau$-invariant} of $\Gamma$ is the image of the function $\tau$, i.e.\ the set $\{\tau(v)|v\in V\}$.
\end{definition}

\begin{remark} Note that our definition of weak $\mathbf W$-graph is not quite the same as that of $\mathbf W$-graph as it appears in Kazhdan and Lusztig 1979 paper on Representations of Coxeter groups and Hecke algebras, Ivent. Math 53 , 165-184.

\end{remark}
\begin{example}
The Coxeter graph of $W$, labeled tautologically, is a weak  $\mathbf W$-graph.
More precisely, take $V = \Delta$ and put $\tau(\alpha) = \{\alpha\}$, where $m(\alpha, \beta)$ is the element Cartan matrix $m(\alpha,\beta) = 2\frac{(\alpha,  \beta)}{(\alpha,\alpha)}$ of $W$.
Then $\Gamma = (V, m, \tau)$ is a weak $\mathbf W$-graph whose associated representation is isomorphic to the defining representation of $ W$.
\end{example}

\begin{remark}
The name \emph{$\tau$-invariant} will be justified by Corollary \ref{cor:inv}.
\end{remark}

Now let $A$ be any subset of $\Delta$, and define $ W(A)$ to be the subgroup generated by $\{s_{\alpha}|\alpha\in A\}$.
The elements of the group algebra ${\mathbb C}[ W]$ defined by 
\[
\begin{array}{rcl} 
Q(A) &=& \displaystyle \frac{1}{\abs{ W(A)} }\sum_{g\in W(A)} g \\
R(A) &=& \displaystyle \frac{1}{\abs{ W(A)} }\sum_{g\in W(A)} (\signFunction(g))g 
\end{array}
\]
act on any finite-dimensional $ W$-module by projection onto the sign and trivial components, respectively, of the restriction to $ W(A)$.

For a given weak $\mathbf W$-graph $\Gamma=(V,m,\tau)$, we decompose $V$ as the disjoint union of the following three subsets:
\[
\begin{array}{rcl}
    V(A,-) &=& \{v\in V|A\subseteq\tau(v)\}. \\
    V(A,0) &=& \{v\in V|A\cap\tau(v)\neq\emptyset\text{ and } A\not\subseteq\tau(v)\} \\
    V(A,+) &=& \{v\in V|A\cap\tau(v) =\emptyset\}
\end{array}
\]

\begin{proposition}
\label{prop:flag}
The subspaces of ${\mathbb C} V$ spanned by $V(A,-)$ and by $V(A,-)\cup V(A,0)$ are $ W(A)$-stable.
\end{proposition}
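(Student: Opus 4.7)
The plan is to check that each generator $s_\alpha$ with $\alpha\in A$ preserves each of the two spans; since these generators generate $W(A)$, this is enough.

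For the span of $V(A,-)$: if $v\in V(A,-)$ then $A\subseteq\tau(v)$, so in particular $\alpha\in\tau(v)$ for every $\alpha\in A$. The first clause of the formula in Definition \ref{defn:wgraph} therefore gives $s_\alpha(v)=-v$, which lies in the span of $V(A,-)$. Thus each generator of $W(A)$ acts (in fact diagonally, by $-1$) on this subspace, so the subspace is $W(A)$-stable.

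For the span of $V(A,-)\cup V(A,0)$: let $v\in V(A,-)\cup V(A,0)$ and $\alpha\in A$. If $\alpha\in\tau(v)$ then again $s_\alpha(v)=-v$ lies in the subspace. Otherwise $\alpha\notin\tau(v)$, which forces $v\in V(A,0)$ (since $v\in V(A,-)$ would give $\alpha\in A\subseteq\tau(v)$). Then
\[
s_\alpha(v)=v-\sum_{u:\alpha\in\tau(u)} m(u,v)\,u.
\]
The leading term $v$ is in the subspace by assumption. For each summand $u$ appearing on the right, $\alpha\in\tau(u)$, so $\alpha\in A\cap\tau(u)$, and hence $A\cap\tau(u)\neq\emptyset$, meaning $u\in V(A,-)\cup V(A,0)$. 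Thus $s_\alpha(v)$ lies in the span of $V(A,-)\cup V(A,0)$, as required.

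There is no real obstacle here: the argument is a direct case analysis on whether $\alpha\in\tau(v)$, combined with the observation that the index set of the sum in the second clause of the weak $\mathbf W$-graph formula only brings in vertices $u$ with $\alpha\in\tau(u)$, which automatically belong to the larger set. The only subtle point to flag is ruling out the case $v\in V(A,-)$, $\alpha\notin\tau(v)$, which is immediate from $A\subseteq\tau(v)$.
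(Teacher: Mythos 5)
Your proof is correct and follows the same route as the paper's: the paper likewise observes that $W(A)$ acts by the sign character on the span of $V(A,-)$ and that the union equals $\{v \mid A\cap\tau(v)\neq\emptyset\}$, then appeals to the defining formula of a weak $\mathbf W$-graph. You have merely written out the case analysis that the paper leaves implicit, including the correct observation that every $u$ appearing in the sum satisfies $\alpha\in\tau(u)$ and hence lies in the larger set.
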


\begin{proof}
It follows immediately from Definition \ref{defn:wgraph} that $ W(A)$ acts by the sign character on the span of $V(A,-)$.
To see that the span of the union is $ W(A)$-stable, note that the union is $\{v|A\cap\tau(v)\neq\emptyset\}$ and appeal once more to Definition \ref{defn:wgraph}.
\end{proof}

\begin{proposition}
\label{prop:kernel}
The kernel of $Q(A)$ on ${\mathbb C} V$ is precisely the span of $V(A,-)\cup V(A,0)$.
\end{proposition}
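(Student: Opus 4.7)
The plan is to prove both inclusions, leaning on Proposition \ref{prop:flag}. I assume throughout that $A$ is nonempty, so that $W(A)$ is nontrivial; the $A=\emptyset$ case is degenerate and needs separate comment. Since $Q(A)$ is the normalized sum over $W(A)$, it is the idempotent projection of any $W(A)$-module onto its $W(A)$-invariants, so $\ker Q(A)$ equals the sum of all non-trivial $W(A)$-isotypic components.

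For the inclusion $\linspan(V(A,-)\cup V(A,0))\subseteq \ker Q(A)$: by Proposition \ref{prop:flag}, $W(A)$ acts on $\linspan(V(A,-))$ by the sign character, which is non-trivial, so $Q(A)$ annihilates this span. For $v\in V(A,0)$, choose $\alpha\in A\cap\tau(v)$; then $s_\alpha v=-v$ by Definition \ref{defn:wgraph}. Partitioning $W(A)$ into right cosets of $\langle s_\alpha\rangle$ and pairing each $g$ with $g s_\alpha$ yields $gv+gs_\alpha v=gv-gv=0$ coset by coset, whence $Q(A)v=0$.

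For the reverse inclusion, I would pass to the quotient $\overline V := \mathbb{C}V/\linspan(V(A,-)\cup V(A,0))$, a well-defined $W(A)$-module by Proposition \ref{prop:flag}. Given $v\in V(A,+)$ and a generator $s_\alpha$ with $\alpha\in A$, one has $\alpha\notin\tau(v)$, so Definition \ref{defn:wgraph} yields $s_\alpha v = v - \sum_{\alpha\in\tau(u)} m(u,v)\,u$. Every summand $u$ satisfies $\alpha\in\tau(u)\cap A$, so $u\in V(A,-)\cup V(A,0)$ and the sum vanishes in $\overline V$; hence $s_\alpha$ acts as the identity on $\overline V$. Since this holds for all generators, $W(A)$ acts trivially on $\overline V$, so $Q(A)$ acts as the identity on $\overline V$. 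Since $Q(A)$ is an idempotent whose kernel contains $\linspan(V(A,-)\cup V(A,0))$ and which is the identity modulo that span, its kernel must equal the span exactly (by a dimension count, or equivalently because the complement of a projection's kernel maps isomorphically onto $\overline V$).

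The only substantive step is recognizing that the $u$-terms in the $W$-graph formula for $s_\alpha v$, $\alpha\in A$, automatically lie in $V(A,-)\cup V(A,0)$: the indexing condition $\alpha\in\tau(u)$ already places $u$ outside $V(A,+)$. This observation is what makes the quotient action trivial rather than merely defined, and together with the standard sign-killing and coset-pairing tricks, it carries the whole argument.
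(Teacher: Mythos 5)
Your proof is correct and takes essentially the same route as the paper's: the coset-pairing with $\langle s_\alpha\rangle$ disposes of $V(A,-)\cup V(A,0)$, and the key observation that every correction term $u$ in the $W$-graph formula for $s_\alpha v$ satisfies $\alpha\in\tau(u)$ (hence lies in the span) shows $Q(A)$ acts as the identity modulo that span, which is precisely how the paper concludes that no nonzero element of $\linspan V(A,+)$ is annihilated. Your quotient-module packaging and the separate sign-character treatment of $V(A,-)$ are only cosmetic variations on the paper's argument.
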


\begin{proof}
Suppose $v$ belongs to the union and choose $\alpha\in A\cap\tau(v)$.
Let $X$ be a set of representatives for the coset space $ W(A)/\comps{s_\alpha}$, so that $ W(A)$ is the disjoint union of $X$ and $Xs_\alpha$.
Then we have
\[
\begin{array}{rcl}
Q(A)(v) &=&\displaystyle \frac{1}{\abs{ W(A)}}\sum_{g\in  W(A)} g(v) \\ 
&=&\displaystyle \frac{1}{\abs{ W(A)}}\left(\sum_{g\in X} \left(g(v)+ gs_\alpha(v)\right)\right) \\ 
&=& 0.
\end{array}
\]
where the last equality follows because $\alpha\in\tau(v)$, hence $s_\alpha(v)=-v$.

On the other hand, if $v\in V(A,+)$ then it follows from Definition \ref{defn:wgraph} and Proposition \ref{prop:flag} that $Q(A)(v) = v + u$ where $u$ lies in the span of $V(A,-)\cup V(A,0)$.
Consequently no non-zero element of the span of $V(A,+)$ can map to zero under $Q(A)$.
\end{proof}

\begin{corollary}
Let $\Gamma=(V,m,\tau)$ be a weak $\mathbf W$-graph.
Then ${\mathbb C} V$ contains a trivial representation for $ W(A)$ if and only if $A\cap\tau(v)=\emptyset$ for some $v\in V$.
\end{corollary}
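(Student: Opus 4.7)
The plan is to deduce this corollary almost immediately from Proposition \ref{prop:kernel}, using the fact that $Q(A)$ is the projection onto the trivial isotypic component for $W(A)$. The key observation is that ${\mathbb C} V$ contains a copy of the trivial representation of $W(A)$ if and only if the image of $Q(A)$ on ${\mathbb C} V$ is nonzero, equivalently if and only if the kernel of $Q(A)$ is a proper subspace of ${\mathbb C} V$.

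By Proposition \ref{prop:kernel}, the kernel of $Q(A)$ is precisely the span of $V(A,-)\cup V(A,0)$. Since $V$ is the disjoint union $V(A,-)\sqcup V(A,0)\sqcup V(A,+)$, this span equals all of ${\mathbb C} V$ exactly when $V(A,+)$ is empty. Hence $Q(A)$ acts nontrivially on ${\mathbb C} V$ if and only if $V(A,+)\neq\emptyset$.

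Finally, unwinding the definition $V(A,+)=\{v\in V\mid A\cap\tau(v)=\emptyset\}$ yields exactly the stated condition: there exists $v\in V$ with $A\cap\tau(v)=\emptyset$. There is no real obstacle here beyond noting that $Q(A)$ implements the averaging projection onto $W(A)$-fixed vectors, which is already stated in the text just before the definitions of $V(A,\pm)$ and $V(A,0)$.
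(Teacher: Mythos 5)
Your proof is correct and follows essentially the same route as the paper: both reduce the statement to Proposition \ref{prop:kernel} together with the fact that $Q(A)$ is the projection onto the $W(A)$-invariants, the paper phrasing this as ``the multiplicity of the trivial representation equals the cardinality of $V(A,+)$'' while you phrase it as ``$Q(A)$ is nonzero iff $V(A,+)\neq\emptyset$.'' (Note your reading of $Q(A)$ as the trivial-component projector is the right one, even though the paper's sentence introducing $Q(A)$ and $R(A)$ pairs them with ``sign and trivial'' in the reverse order.)
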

\begin{proof}
It follows immediately from Proposition \ref{prop:kernel} that the multiplicity of the trivial representation equals the cardinality of $V(A,+)$.
\end{proof}

\begin{proposition}
\label{prop:image}
The image of $R(A)$ in ${\mathbb C} V$ is precisely the span of $V(A,-)$.
\end{proposition}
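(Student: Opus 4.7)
The plan is to prove the two inclusions between the image of $R(A)$ and the span of $V(A,-)$. For the inclusion $\mathrm{span}(V(A,-)) \subseteq \mathrm{image}(R(A))$, I would invoke Proposition \ref{prop:flag}: it tells us that $W(A)$ acts by the sign character on the span of $V(A,-)$, and since $R(A)$ acts as the identity on any vector on which $W(A)$ acts by sign (it is, after all, the averaging idempotent attached to the sign character), this subspace is immediately contained in the image.

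For the reverse inclusion, the plan is to reduce to a single simple root at a time and then intersect. I would first establish the key lemma: for any $\alpha \in \Delta$, the image of $R(\{\alpha\})$ on $\mathbb{C}V$ equals the span of $\{v \in V : \alpha \in \tau(v)\}$. To prove this, decompose $\mathbb{C}V = U_- \oplus U_+$, where $U_-$ is the span of $\{v : \alpha \in \tau(v)\}$ and $U_+$ is the span of $\{v : \alpha \notin \tau(v)\}$. By Definition \ref{defn:wgraph}, $s_\alpha$ acts as $-1$ on $U_-$, while for each basis element $v \in U_+$ the defining formula gives $s_\alpha(v) - v = -\sum_{\alpha \in \tau(u)} m(u,v)\,u \in U_-$. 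Hence for any $w = w_+ + w_- \in U_+ \oplus U_-$, the equation $s_\alpha(w) = -w$ projects onto $U_+$ to give $w_+ = -w_+$, so $w_+ = 0$. Thus $U_-$ is precisely the $(-1)$-eigenspace of $s_\alpha$, which coincides with the image of $R(\{\alpha\}) = \tfrac{1}{2}(1 - s_\alpha)$.

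To finish, for each $\alpha \in A$ the identity $s_\alpha R(A) = -R(A)$ (an immediate reindexing in the group algebra) yields $R(\{\alpha\})R(A) = R(A)$, so $\mathrm{image}(R(A)) \subseteq \mathrm{image}(R(\{\alpha\}))$. Intersecting over all $\alpha \in A$, and using that $V$ is a basis of $\mathbb{C}V$ so that the intersection of the spans of subsets of $V$ is the span of their intersection, I obtain
\[
\mathrm{image}(R(A)) \;\subseteq\; \bigcap_{\alpha \in A} \mathrm{span}\{v : \alpha \in \tau(v)\} \;=\; \mathrm{span}\!\left(\bigcap_{\alpha \in A}\{v : \alpha \in \tau(v)\}\right) \;=\; \mathrm{span}(V(A,-)).
\]
The only nontrivial step is the single-root key lemma, but that is a direct unpacking of Definition \ref{defn:wgraph} and I do not expect it to present a serious obstacle; the rest is formal manipulation of idempotents and the fact that $V$ is a basis.
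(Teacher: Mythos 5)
Your proof is correct, but it takes a genuinely different route from the paper's. Both arguments begin the same way: $R(A)$ fixes the span of $V(A,-)$ pointwise, so that span sits inside the image. For the reverse inclusion the paper exploits idempotency of $R(A)$ to reduce the claim to the trace identity $\tr_V(R(A)) = \abs{V(A,-)}$, and then verifies, by an explicit decomposition of $W(A)$ into cosets of $\comps{s_\alpha}$ for some $\alpha \in A - \tau(v)$, that each $v \notin V(A,-)$ contributes nothing to the diagonal. You instead factor through the single-root idempotents: the identity $s_\alpha R(A) = -R(A)$ gives $R(\{\alpha\})R(A) = R(A)$, so the image of $R(A)$ lies in the image of $R(\{\alpha\})$ for every $\alpha \in A$; your key lemma identifies that image with the $(-1)$-eigenspace of $s_\alpha$, which the block-triangular form of the action in Definition \ref{defn:wgraph} forces to be exactly $\linspan\{v : \alpha \in \tau(v)\}$; and intersecting over $\alpha \in A$ (legitimate, since $V$ is a basis of ${\mathbb C}V$) yields the span of $V(A,-)$. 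Your version isolates the essential combinatorial input — the upper-triangular structure of a single $s_\alpha$ — more cleanly and avoids the trace computation entirely, at the cost of the small algebraic preliminaries about idempotents. Nothing is lost downstream: the multiplicity statement in Corollary \ref{cor:sign} follows from either proof, since once the image of the projection $R(A)$ is known to be the span of $V(A,-)$, the multiplicity of the sign representation is its dimension.
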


\begin{proof}
If $v\in V(A,-)$ then, by Definition \ref{defn:wgraph}, $R(A)(v)=v$.
Hence the span of $V(A,-)$ is contained in the image of $R(A)$.
But $R(A)$ is idempotent, so for equality it suffices to show that $\tr_V(R(A))$ equals the cardinality of $V(A,-)$.

Suppose that $v\not\in V(A,-)$, and choose $\alpha\in A-\tau(v)$.
Choose a set $X$ of coset representatives for $\comps{ s_\alpha} \backslash W(A)$, so that $ W(A)$ is the disjoint union of $X$ and $s_\alpha X$.
For any $g\in X$, write $g(v)=\sum_{w\in V} c_{g, w} w$.
Then
\[
\begin{array}{rcl}
\displaystyle R(A)(v) &=&\displaystyle \frac{1}{\abs{ W(A)}}\sum_{g\in W(A) } \signFunction(g) g(v) \\
&=& \displaystyle \frac{1}{\abs{ W(A)}} \sum_{ g\in X}(\signFunction(g)g(v) + \signFunction(s_\alpha g)s_\alpha(g(v))) \\
&=& \displaystyle \frac{1}{\abs{ W(A)}} \sum_{g\in X}\sum_{w\in V}c_{g, w} \signFunction(g)( w- s_\alpha(w)) \\
&=&\displaystyle \frac{1}{\abs{ W(A)} }\sum_{ g\in X}\signFunction(g)\left( \sum_{ \alpha \in \tau(w)}2c_{g,w}w + \sum_{ \alpha \not \in \tau(w)} c_{g,w} \sum_{ \alpha \in \tau ( u)}m(u,w)u\right)
\end{array}
\]
which lies in the span of $\{w\in V|\alpha\in\tau(w)\}$.
Since $v$ lies outside this set, it makes no contribution to the trace of $R(A)$.
Combined with the observations of the first paragraph, this proves the result.
\end{proof}

\begin{corollary}
\label{cor:sign}
Let $\Gamma=(V,m,\tau)$ be a weak $\mathbf W$-graph.
Then ${\mathbb C} V$ contains a sign representation for $ W(A)$ if and only if $A\subseteq\tau(v)$ for some $v\in V$.
\end{corollary}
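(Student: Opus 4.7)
The plan is to deduce this corollary directly from Proposition \ref{prop:image} in the same way that the earlier corollary about the trivial representation was deduced from Proposition \ref{prop:kernel}. Specifically, since $R(A)$ acts on any finite-dimensional $W$-module as the projection onto the sign isotypic component of the restriction to $W(A)$, the multiplicity of the sign representation of $W(A)$ in ${\mathbb C} V$ equals the dimension of the image of $R(A)$.

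By Proposition \ref{prop:image}, this image is the span of $V(A,-)$, so its dimension is $\lvert V(A,-)\rvert$. Hence the sign representation appears in the restriction to $W(A)$ if and only if $V(A,-)$ is non-empty, which by definition means that there exists some $v \in V$ with $A \subseteq \tau(v)$.

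I expect no real obstacle here: the one-line argument is that $R(A)$ projects to the sign isotypic, Proposition \ref{prop:image} identifies that image with $\linspan V(A,-)$, and the non-triviality condition translates immediately into the existence of a $v$ with $A\subseteq \tau(v)$. The proof is essentially a two-sentence invocation of the preceding proposition, parallel in structure to the proof of the trivial-representation corollary.
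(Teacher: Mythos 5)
Your argument is correct and is essentially identical to the paper's own proof: both deduce the corollary immediately from Proposition \ref{prop:image} by observing that the multiplicity of the sign representation equals $\dim\operatorname{im}R(A)=\lvert V(A,-)\rvert$, which is nonzero exactly when some $v$ satisfies $A\subseteq\tau(v)$. No changes needed.
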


\begin{proof}
It follows immediately from Proposition \ref{prop:image} that the multiplicity of the sign representation equals the cardinality of $V(A,-)$.
\end{proof}

\begin{theorem}
The following are equivalent:
\begin{enumerate}
    \item There is some $v\in V$ for which $A=\tau(v)$,
    \item The operator $Q(\Delta-A)R(A)$ is nonzero on ${\mathbb C} V$,
    \item The trace of $Q(\Delta-A)R(A)$ is nonzero on ${\mathbb C} V$,
    \item The trace of $R(A)Q(\Delta-A)$ is nonzero on ${\mathbb C} V$.
\end{enumerate}
\end{theorem}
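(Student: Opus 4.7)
The plan is to prove the cycle $(1) \Leftrightarrow (2) \Leftrightarrow (3) \Leftrightarrow (4)$. The equivalence $(3) \Leftrightarrow (4)$ is immediate from cyclicity of trace, $\tr(Q(\Delta-A) R(A)) = \tr(R(A) Q(\Delta-A))$, so only the first two equivalences require real work.

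For $(1) \Leftrightarrow (2)$, I combine Propositions \ref{prop:image} and \ref{prop:kernel}. By the former, the image of $R(A)$ is the span of $V(A,-)$, so $Q(\Delta-A) R(A)$ vanishes if and only if $Q(\Delta-A)$ annihilates each basis vector $w \in V(A,-)$. By the latter applied with $A$ replaced by $\Delta - A$, that in turn is equivalent to $V(A,-) \subseteq V(\Delta-A,-) \cup V(\Delta-A,0)$, i.e.\ every $w$ with $A \subseteq \tau(w)$ also satisfies $(\Delta-A) \cap \tau(w) \neq \emptyset$. Since $A \subseteq \tau(w)$ already, this is just the condition $\tau(w) \neq A$. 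Contrapositively, $Q(\Delta-A) R(A) \neq 0$ if and only if there exists $v$ with $\tau(v) = A$; indeed for such $v$ one has $Q(\Delta-A) R(A)v = Q(\Delta-A)v = v + u$ where $u$ lies in the span of $\{w : \tau(w) \not\subseteq A\}$, so the coefficient of $v$ in this expansion is $1$.

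For $(2) \Leftrightarrow (3)$, the key observation is that $R(A)$ and $Q(A)$ are self-adjoint with respect to any $W$-invariant Hermitian inner product on ${\mathbb C} V$ (such an inner product exists by averaging over $W$). Self-adjointness follows because $W(A)$ is closed under inversion and the sign character is invariant under inversion, so $R(A)^* = R(A)$ and $Q(A)^* = Q(A)$. Writing $P_1 = R(A)$ and $P_2 = Q(\Delta-A)$, I compute, using self-adjointness and idempotence,
\[
(P_2 P_1)^*(P_2 P_1) \;=\; P_1^* P_2^* P_2 P_1 \;=\; P_1 P_2^2 P_1 \;=\; P_1 P_2 P_1,
\]
and then taking traces and applying cyclicity together with $P_1^2 = P_1$,
\[
\tr\bigl((P_2 P_1)^*(P_2 P_1)\bigr) \;=\; \tr(P_1 P_2 P_1) \;=\; \tr(P_1^2 P_2) \;=\; \tr(P_2 P_1).
\]
The left side is the squared Hilbert--Schmidt norm of $P_2 P_1$, hence nonnegative and zero precisely when $P_2 P_1 = 0$. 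Thus $\tr(P_2 P_1) > 0$ if and only if $P_2 P_1 \neq 0$, as required.

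I expect the main obstacle to be spotting the self-adjointness trick. A direct attempt to compute $\tr(P_2 P_1)$ in the basis $V$ produces cross terms that are not manifestly of one sign, because the basis $V$ is not orthonormal with respect to the invariant inner product. Routing through the Hilbert--Schmidt norm sidesteps this and gives, as a bonus, the positivity statement $\tr(Q(\Delta-A) R(A)) \geq 0$.
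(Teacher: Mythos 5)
Your proof is correct, and while the equivalence $(1)\Leftrightarrow(2)$ follows the same route as the paper (image of $R(A)$ from Proposition \ref{prop:image} meeting the kernel of $Q(\Delta-A)$ from Proposition \ref{prop:kernel}), your treatment of $(2)\Leftrightarrow(3)$ is genuinely different and, in my view, more complete. The paper disposes of $(3)$ by asserting that the formula from the proof of Proposition \ref{prop:kernel} shows $\tr\bigl(Q(\Delta-A)R(A)\bigr)$ equals the number of $v$ with $\tau(v)=A$; that is a stronger, quantitative claim, but as you observe, computing the trace in the (non-orthonormal) basis $V$ produces off-diagonal contributions --- e.g.\ from basis vectors $v$ with $\tau(v)\not\subseteq A$ and $A\not\subseteq\tau(v)$, where $R(A)v$ is supported on $V(A,-)$ and $Q(\Delta-A)$ then scatters back onto vectors outside $V(\Delta-A,+)$ --- whose vanishing is not addressed there. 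Your argument sidesteps this entirely: since $g\mapsto g^{-1}$ preserves $W(A)$ and $\signFunction$, the elements $R(A)$ and $Q(\Delta-A)$ are self-adjoint idempotents for any $W$-invariant Hermitian form, so $\tr\bigl(Q(\Delta-A)R(A)\bigr)=\tr\bigl((Q(\Delta-A)R(A))^{*}(Q(\Delta-A)R(A))\bigr)\ge 0$, with equality exactly when the operator vanishes. This proves precisely what the theorem asserts (plus the positivity $\tr\ge 0$ as a bonus), at the cost of not identifying the exact value of the trace; the paper's approach, once the cross terms are justified, yields the sharper statement that the trace counts $\{v\mid\tau(v)=A\}$. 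One could in fact combine the two: your Hilbert--Schmidt identity reduces the paper's quantitative claim to showing the cross terms cancel, which they do, but that requires an argument the paper does not supply.
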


\begin{proof}
By \ref{prop:image}, the image of $R(A)$ is the span of the set $\{v\in V|A\subseteq \tau(v)\}$.
But by Proposition \ref{prop:kernel}, $Q(\Delta - A)$ is non-cacheable-zero only on $V(\Delta-A,+) = \{v\in V|\tau(v)\subseteq A\}$.
Hence the composition $Q(\Delta-A)R(A)$ is nonzero if and only if $\tau(v)=A$ for some $v$, and the formula for $Q(\Delta-A)$ given in the proof of Proposition \ref{prop:kernel} shows that the trace of $Q(\Delta-A)R(A)$ is precisely the number of such $v$.
Finally note that $\tr_V(R(A)Q(\Delta-A)) =\tr_V(Q(\Delta-A)R(A))$.
\end{proof}

\begin{corollary}
\label{cor:inv}
If $\Gamma_1=(V_1,m_1,\tau_1)$ and $\Gamma_2=(V_2,m_2,\tau_2)$ are such that ${\mathbb C} V_1$ and ${\mathbb C} V_2$ are isomorphic representations of $ W$, then $\Gamma_1$ and $\Gamma_2$ have the same $\tau$-invariant.
\end{corollary}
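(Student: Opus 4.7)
The plan is to combine the preceding theorem with the elementary observation that the trace of any element of ${\mathbb C}[W]$ acting on a representation depends only on the isomorphism class of that representation. The theorem furnishes an intrinsic, representation-theoretic characterization of which subsets $A \subseteq \Delta$ lie in the $\tau$-invariant of $\Gamma$, namely that $A = \tau(v)$ for some $v \in V$ if and only if $\tr_{{\mathbb C} V}(Q(\Delta - A) R(A)) \neq 0$. Once we have such a characterization, the corollary is essentially a formality.

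More explicitly, I would proceed as follows. Fix a subset $A \subseteq \Delta$. Observe that $Q(\Delta - A)$ and $R(A)$ are defined purely in terms of $W$ (via the formulas at the top of the section), so they are elements of the group algebra ${\mathbb C}[W]$ that do not depend on the particular weak $\mathbf W$-graph at hand. Consequently, if $\phi : {\mathbb C} V_1 \to {\mathbb C} V_2$ is a $W$-equivariant isomorphism, then $\phi$ intertwines the action of $Q(\Delta - A) R(A)$, so
\[
\tr_{{\mathbb C} V_1}\bigl(Q(\Delta - A) R(A)\bigr) \;=\; \tr_{{\mathbb C} V_2}\bigl(Q(\Delta - A) R(A)\bigr).
\]

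Applying the equivalence (1)$\Leftrightarrow$(3) of the preceding theorem to each of $\Gamma_1$ and $\Gamma_2$, this identity of traces translates into the equivalence
\[
A \in \{\tau_1(v) \mid v \in V_1\} \;\Longleftrightarrow\; A \in \{\tau_2(v) \mid v \in V_2\}.
\]
Since $A$ was an arbitrary subset of $\Delta$, the two $\tau$-invariants coincide, proving the corollary. There is no real obstacle here; the only point worth noting is that the argument works precisely because the theorem converted the set-theoretic condition ``$A$ lies in the image of $\tau$'' into the representation-theoretic condition ``a certain trace is nonzero,'' which is manifestly an invariant of the isomorphism class of ${\mathbb C} V$.
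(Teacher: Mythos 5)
Your proposal is correct and follows exactly the paper's own argument: the paper likewise derives the corollary from the preceding theorem's trace characterization together with the observation that elements of ${\mathbb C}[W]$ act with the same trace on isomorphic representations. You have simply spelled out the details more explicitly.
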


\begin{proof}
The statement follows from the fact that elements of the group algebra ${\mathbb C}[ W]$ act with the same trace on isomorphic representations.
\end{proof}

The $\tau$-invariants are therefore invariants of representations.
Corollaries \ref{cor:sign} and \ref{cor:inv} suggest the investigation of invariant objects simpler than the $\tau$-invariants. 


\begin{theorem}
\label{thm:Classical} 
Let $ W$ be a Weyl group of classical type.
Then the irreducible representations of $ W$ are in a one-to-one correspondence with their sign signatures.
\end{theorem}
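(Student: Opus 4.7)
The plan is to treat each classical type separately, using classical decomposition rules to identify the sign signature with explicit combinatorial data.

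By Frobenius reciprocity---equivalently, by applying Corollary~\ref{cor:sign} to any weak $\mathbf W$-graph carrying $V$---an irreducible $V$ contains the sign representation of a parabolic $P=W(A)$ on restriction if and only if $V$ is a constituent of $\Ind_{P}^{W}(\signRepSymbol_{P})$. Hence $\signSignature V$ is precisely the set of conjugacy classes of parabolics whose induced sign character has $V$ as a constituent, and it suffices to show that this datum determines $V$ in each classical type.

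\emph{Type $A_{n-1}$.} Parabolic conjugacy classes are the Young subgroups $S_\lambda$ for $\lambda\vdash n$, and the Specht modules $S^\mu$ exhaust the irreducibles of $S_n$. A standard calculation with Young's rule (induce the trivial character, then twist by the global sign) gives $\Ind_{S_\lambda}^{S_n}(\signRepSymbol_{S_\lambda})=\sum_\mu K_{\mu,\lambda}\,S^{\mu'}$, so $S^\nu$ appears in this induced module if and only if $K_{\nu',\lambda}>0$, i.e.\ if and only if $\lambda\unlhd\nu'$ in dominance order. Thus $\signSignature S^\nu$ is the principal dominance-order ideal generated by $\nu'$; its unique maximal element is $\nu'$, which determines $\nu$. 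The explicit procedure for extracting $\nu'$ is given in Theorem~\ref{thm:TypeARecoverIrrepsAlgorithm}.

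\emph{Types $B_n$ and $D_n$.} Parabolic conjugacy classes in $W(B_n)$ are products $W(B_k)\times S_\lambda$ with $k+|\lambda|=n$, and in $W(D_n)$ they have the form $W(D_k)\times S_\lambda$. Irreducibles are indexed by ordered bipartitions $(\alpha,\beta)$ of $n$ in type $B$, and by unordered bipartitions in type $D$, where a degenerate bipartition $(\alpha,\alpha)$ splits into two distinct $D_n$-irreducibles when $n$ is even. Restricting the bipartition-indexed representation to such a parabolic and extracting the sign multiplicity leads---via Littlewood--Richardson coefficients together with the type $B$ Pieri-type rule for restriction along $W(B_n)\supseteq W(B_k)\times S_\lambda$---to an explicit combinatorial condition on the pair $(k,\lambda)$, from which the bipartition is reconstructed by the procedures in Theorems~\ref{thm:TypeBRecoverIrrepsAlgorithm} and~\ref{thm:TypeDRecoverIrrepsAlgorithm}.

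The principal obstacle is the degenerate case in type $D_n$: the two irreducibles obtained from a split bipartition $(\alpha,\alpha)$ are exchanged by the outer automorphism of $D_n$, and so agree on every parabolic whose $W(D_n)$-conjugacy class is preserved by that automorphism. Separating them requires parabolics whose $W(D_n)$-conjugacy class is \emph{not} fixed by the outer automorphism; verifying that enough such parabolics exist and genuinely distinguish the two halves of a split bipartition is the delicate point of the type~$D$ argument, and is the heart of Theorem~\ref{thm:TypeDRecoverIrrepsAlgorithm}.
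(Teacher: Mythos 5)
Your type $A$ argument is correct and is essentially the route the paper takes: by Frobenius reciprocity the multiplicity of the sign representation of $P_\lambda$ in $V_\mu$ is the Kostka number $K_{\mu^*,\lambda}$ (Proposition \ref{prop:typeA}), the sign signature is therefore the set of $\lambda$ with $\lambda\unlhd\mu^*$ in dominance order, and its unique maximal element recovers $\mu$; the paper extracts the same element as a lexicographic maximum (Proposition \ref{prop:typeA:RecoverPartitionFromSignSignature}), which is equivalent since dominance implies the lexicographic comparison used there.

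For types $B$ and $D$, however, your proposal is an outline rather than a proof, and the gap sits exactly where the content lies. Saying that the restriction computation ``leads to an explicit combinatorial condition\dots from which the bipartition is reconstructed by the procedures in Theorems \ref{thm:TypeBRecoverIrrepsAlgorithm} and~\ref{thm:TypeDRecoverIrrepsAlgorithm}'' is circular: those theorems are restatements of the recovery algorithms whose correctness is precisely what a proof of Theorem \ref{thm:Classical} must supply. Concretely, what is missing is the injectivity argument: given two distinct (bi)partitions one must exhibit a parabolic lying in one sign signature and not in the other. In the paper this rests on the closed multiplicity formulas of the form $\sum_{\alpha+\beta}K_{\lambda^*,\alpha}K_{\mu^*,\beta}$ (Propositions \ref{prop:typeB} and \ref{prop:typeD}), followed by a nontrivial vanishing analysis (Propositions \ref{prop:typeB:differentIrrepsDifferentSignSignatures} and \ref{prop:typeD:differentIrrepsDifferentSignSignatures}): the pure type-$A$ parabolic indexed by $\lambda^*+\mu^*$ first detects the sum of the two partitions, then parabolics with a single $B$ (resp.\ $D$) factor detect the individual columns, and showing that all competing Kostka products vanish is a genuine tableau-filling argument, not a formal consequence of the Littlewood--Richardson rule. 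Likewise, for the split case in type $D$ you correctly identify that $V^+_{\{\lambda,\lambda\}}$ and $V^-_{\{\lambda,\lambda\}}$ can only be separated by parabolics whose conjugacy class is not fixed by the outer automorphism, but you do not produce one; the paper does, showing that $P^+_{2\lambda^*,\emptyset}$ lies in the sign signature of $V^+_{\{\lambda,\lambda\}}$ and not of $V^-_{\{\lambda,\lambda\}}$ (Case 4 of Proposition \ref{prop:typeD:differentIrrepsDifferentSignSignatures}). Until these steps are carried out, the classification in types $B$ and $D$ is asserted rather than proved.
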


The proof of the theorem is done case by case in Sections \ref{sec:typeA}, \ref{sec:typeB} and \ref{sec:typeD}.
More precisely, in Propositions \ref{prop:typeA}, \ref{prop:typeB} and \ref{prop:typeD} we show how to compute the multiplicity of the sign component of an irreducible representation of $ W$ when restricted to a parabolic subgroup.
In each case we show how to reverse the procedure, i.e., how to recover an irreducible representation of $W$ from its sign signature.

Theorem \ref{thm:Classical} fails for the exceptional Weyl groups.
However, if we allow extended sign signatures (with respect to additively closed root subsystems) instead of sign signatures, Theorem \ref{thm:Classical} has a direct generalization valid for arbitrary Weyl groups.
We carry out this generalization in Theorem \ref{thm:classicalAndExceptional}.

\section{Type $A$}
\label{sec:typeA}
This case seems to have been treated by A.\ Young.
A modern account is found in I.\ G.\ Macdonald \cite{Macdonald:SommeIrrepsWeylGroups}.
We give a proof only to make Theorem \ref{thm:Classical} self-contained. 

From the standpoint of recovering irreducible representations from sign signatures, the results in this section can be summarized as follows.
\begin{theorem}\label{thm:TypeARecoverIrrepsAlgorithm}
Let $V=V_{\lambda^*}$ be an irreducible representation of $A_{n-1}$, where $\lambda^*$ is the partition dual to the partition $\lambda$.
Then $\lambda$ is the lexicographically largest partition such that $P_\lambda$ belongs to the sign signature of $V$, where $P_\lambda$ is the parabolic subgroup corresponding to $\lambda$.
\end{theorem}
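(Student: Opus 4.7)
The plan is to reduce the theorem to classical facts about Kostka numbers and the dominance order on partitions of $n$. Standard parabolics of $W=S_n$ are Young subgroups $P_\mu=S_{\mu_1}\times\cdots\times S_{\mu_k}$, indexed by partitions $\mu$ of $n$. The key computation, which I would present as Proposition \ref{prop:typeA}, is that the multiplicity of $\signRepSymbol_{P_\mu}$ in $V_{\lambda^*}|_{P_\mu}$ equals the Kostka number $K_{\lambda,\mu}$.

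To establish this multiplicity formula, I would use two standard ingredients. First, the sign character of $P_\mu$ is the restriction of $\signRepSymbol_{S_n}$, and $V_{\lambda^*}\otimes\signRepSymbol_{S_n}\cong V_\lambda$; tensoring converts sign-multiplicity into trivial-multiplicity, giving
\[
[V_{\lambda^*}|_{P_\mu}:\signRepSymbol_{P_\mu}] = [V_\lambda|_{P_\mu}:\mathbf 1_{P_\mu}] = [V_\lambda:\Ind_{P_\mu}^{S_n}\mathbf 1_{P_\mu}]
\]
by Frobenius reciprocity. Second, the decomposition $\Ind_{P_\mu}^{S_n}\mathbf 1_{P_\mu}\cong\bigoplus_\nu K_{\nu,\mu}V_\nu$ is the classical theorem of Young, which I would quote from Macdonald's book. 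This is essentially the only step with real content; everything else is formal manipulation.

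Given the Kostka identity, the theorem follows from two well-known properties: $K_{\lambda,\mu}>0$ if and only if $\lambda\trianglerighteq\mu$ in dominance order, and $K_{\lambda,\lambda}=1$. Hence $P_\mu\in\signSignature V_{\lambda^*}$ precisely when $\lambda\trianglerighteq\mu$, and in particular $P_\lambda$ itself lies in this set. The remaining ingredient is that dominance refines lexicographic order: if $\lambda\trianglerighteq\mu$ with $\lambda\neq\mu$ and $k$ is the first index where they differ, then $\lambda_i=\mu_i$ for $i<k$ combined with $\sum_{i\le k}\lambda_i\ge\sum_{i\le k}\mu_i$ forces $\lambda_k>\mu_k$, so $\lambda>_{\text{lex}}\mu$. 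Therefore $\lambda$ is the lexicographically largest partition $\mu$ for which $P_\mu$ belongs to $\signSignature V_{\lambda^*}$, which is exactly the assertion of the theorem. The only potential obstacle is verifying Young's rule cleanly enough to keep the exposition self-contained, but since the authors explicitly cite Macdonald for this case, the difficulty is essentially bookkeeping rather than mathematical.
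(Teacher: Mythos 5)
Your proposal is correct, and the reduction at the end is exactly right: by the multiplicity formula, $P_\mu$ lies in $\signSignature V_{\lambda^*}$ iff $K_{\lambda,\mu}>0$ iff $\lambda$ dominates $\mu$, and since dominance refines the lexicographic order while $K_{\lambda,\lambda}=1$, the lexicographic maximum of that set is $\lambda$. Where you diverge from the paper is in how the key formula $\dim\Hom_{P_\mu}(\signRepSymbol_{P_\mu},V_{\lambda^*})=K_{\lambda,\mu}$ is established. The paper (Proposition \ref{prop:typeA}) proves it by induction on the number of factors of the Young subgroup, peeling off one factor $S_{p_k}$ at a time via the Littlewood--Richardson decomposition \eqref{eqLittlewoodRichardsonCoeffDef} and absorbing the resulting sum with the Pieri-type identity of Lemma \ref{leBoxRowColumnIdentity}; you instead twist by the sign character (Lemma \ref{le:typeA:tensoringbySignrepstarsLambda}), apply Frobenius reciprocity, and quote Young's rule $\Ind_{P_\mu}^{S_n}\mathbf 1\cong\bigoplus_\nu K_{\nu,\mu}V_\nu$ in one step. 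Your route is shorter but delegates the combinatorial content to Young's rule as a black box; the paper's route is longer but self-contained modulo the Littlewood--Richardson rule, and the branching machinery it sets up (restriction to $S_{n-p_k}\times S_{p_k}$, the box-adding identity) is reused verbatim in the type $B$ and $D$ arguments, which is presumably why the authors chose it. The endgame also differs slightly in emphasis: the paper only needs the triangularity statement that $K_{\lambda^*,\lambda^*}=1$ while $K_{\mu^*,\lambda^*}=0$ whenever $\lambda^*$ is lexicographically larger than $\mu^*$ (Proposition \ref{cor:typeA:differentIrrepsDifferentSignSignatures}), whereas you invoke the full dominance-order characterization of Kostka positivity; both are standard and either suffices.
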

This is a direct corollary of Proposition \ref{prop:typeA:RecoverPartitionFromSignSignature} below. 
Details on the notation used in Theorem \ref{thm:TypeARecoverIrrepsAlgorithm} can be found in the rest of this section.

\subsection{Preliminaries and notation}

\subsubsection{Preliminaries: Young tableaux.}
In this subsection, we recall some facts  and notations about Young tableaux \cite{Fulton:YoungTableaux}.

We denote tuples of non-negative numbers by
\[
\lambda=(\lambda_1, \dots, \lambda_k).
\]
Given tuples of non-negative numbers $\lambda, \mu$, denote by $\lambda\oplus\mu$ the concatenation  of the two tuples, i.e., the tuple obtained by writing the entries of $\lambda$ first, followed by the entries of $\mu$. As usual, define the sum $\alpha+\beta$ as $(\lambda_1+\mu_1, \dots, \lambda_k+\mu_k)$, where $\lambda_i, \mu_i$ denote the $i^{th}$ entries or $0$ if the corresponding tuple has fewer than $i$ entries. 

Denote by $|\lambda| $ the sum of the entries of $\lambda$, i.e., 
\[
|\lambda|=\sum_{j=1}^{k}\lambda_j .
\]
We say that a tuple
\[
\lambda=[\lambda_1, \lambda_2,\dots, \lambda_k],
\] 
is a partition if $\lambda_1\geq \lambda_2\geq \dots \geq \lambda_k\geq 0$. 
We adopt the convention that the use of $[]$-brackets around the entries of $\lambda$ implies that $\lambda$ is a partition. 
We say that $\lambda $ is a partition of $n$ if $|\lambda|=n$.

A partition $\lambda$ is usually depicted as a ``shape'' obtained by stacking a row of $\lambda_k$ boxes in the bottom, $\lambda_{k-1}$ boxes on top it, and so on, as indicated in Table \ref{tableShapesSkewShapes}.

\begin{table}[h!] 
\caption{\label{tableShapesSkewShapes} Shapes and skew shapes.}
\begin{tabular}{|p{3.5cm}|p{3cm}|p{4cm}|}\hline
Shape $\lambda$& Shape $\mu$& Shape $\lambda/\mu$\\\hline
$\begin{array}[t]{|l|l|l|l|l|lll}
\cline{1-5}
&&& &&\lambda_1\text{ boxes}\\ \cline{1-5}
&&& \multicolumn{2 }{c}{} & \lambda_2 \text{ boxes}\\\cline{1-3}
\multicolumn{5 }{l}{\vdots} & \vdots\\\cline{1-2}
&&\multicolumn{3}{c}{} & \lambda_{l}\text{ boxes}\\\cline{1-2}
\multicolumn{5}{l}{\vdots}&\vdots\\\cline{1-1}
&\multicolumn{4}{c}{} & \lambda_{k}\text{ boxes}\\\cline{1-1}
\end{array}
$
&
$\begin{array}[t]{|l|l|l|llll}
\cline{1-3}
&&& \mu_1\text{ boxes}\\ \cline{1-3}
&& \multicolumn{1 }{c}{} & \mu_2 \text{ boxes}\\\cline{1-2}
\multicolumn{3 }{l}{\vdots} & \vdots\\\cline{1-1}
&\multicolumn{2}{c}{} & \mu_l\text{ boxes}\\\cline{1-1}
\end{array}
$
&
$\begin{array}[t]{|l|l|l|l|l|lll}
\cline{1-5}
\cellcolor{black}&\cellcolor{black}&\cellcolor{black}& &&\lambda_1-\mu_1\text{ boxes}\\ \cline{1-5}
\cellcolor{black}&\cellcolor{black}&& \multicolumn{2 }{c}{} & \lambda_2-\mu_2 \text{ boxes}\\\cline{1-3}
\multicolumn{5 }{l}{\vdots} & \vdots\\\cline{1-2}
\cellcolor{black}&&\multicolumn{3}{c}{} & \lambda_{l}-\mu_l\text{ boxes}\\\cline{1-2}
\multicolumn{5}{l}{\vdots}&\vdots\\\cline{1-1}
&\multicolumn{4}{c}{} & \lambda_{k}\text{ boxes}\\\cline{1-1}
\end{array}
$
\\
\hline
\end{tabular}
\end{table}

Let $\lambda^* $ be the partition obtained by transposing the boxes of the partition $\lambda$.
In other words, if $\lambda= [\lambda_1, \dots, \lambda_k]$, then 
\[
\lambda^*=[ \underbrace{k,\dots, k}_{\lambda_k \text{ times}}, \underbrace{k-1, \dots, k-1}_{\lambda_{k-1}- \lambda_{k} \text{ times}}, \dots, \underbrace{1, \dots, 1}_{\lambda_1-\lambda_{2}\text{ times}} ] .
\]

A labeling of boxes in the shape of $\lambda$ with the numbers $v_1, \dots, v_n$, $n=|\lambda|$ as indicated in Table \ref{tableNumbering} is called a \emph{numbering of shape $\lambda$}.
We say that a numbering has content $(p_1, \dots, p_k)$ with $\sum p_k=n$ if the labels consist of $p_1$ $1$'s, $p_2$ $2$'s, and so on.

A numbering of shape $\lambda$ is a \emph{semi-standard Young tableau} if it has the following property.
\begin{itemize}
\item The numbers appearing in each row are non-decreasing and the numbers appearing in each column are strictly increasing.
\end{itemize}

\begin{table}[h!]
\caption{\label{tableNumbering} $(v_1, \dots, v_{n})$-numbering of shape $\lambda$}

$\begin{array}[t]{|l|l|l|l|l|lll}
\cline{1-5}
v_1& \multicolumn{3}{c|}{ ~~~~~~~~~~~~ \dots ~~~~~~~~~~~~} &v_{\lambda_1} &&\lambda_1 \text{ boxes}\\ \cline{1-5}
\multicolumn{6 }{l}{\vdots} & \vdots\\\cline{1-3}
\multicolumn{2}{|c|}{~~~~~~~~~~~~ \dots ~~~~~~~~~~~~ }& v_{q} & \multicolumn{1}{c}{ ~~~~~~}&\multicolumn{2}{c}{} & \lambda_{n}\text{ boxes}\\\cline{1-3}
\end{array}
$
\end{table}

Let $\lambda$ be a partition with $k$ entries and $\mu$ be a partition with $l$ entries.
Suppose in addition $k\geq l$ and $\lambda_j\geq \mu_j$ for all allowed indices $j$.
Define the skew shape $\lambda/\mu $ as the $k$-tuple $(\lambda_1- \mu_1, \dots, \lambda_l-\mu_l, \lambda_{l+1}, \dots, \lambda_k)$.
Skew shapes are depicted as shown in Table \ref{tableShapesSkewShapes}. 

If the skew shape $\lambda/\mu$ is defined, we recall that a labeling of boxes in the skew shape of $\lambda/\mu$ with the numbers $v_1, \dots, v_b$, $b=|\lambda|-|\mu|$ as illustrated in Table \ref{tableSkewShapeNumbering}, is called a \emph{numbering of a skew shape $\lambda/\mu$}.
Just as with non-skew shapes, we say that a numbering of a skew shape $\lambda/\mu$ has content $(p_1, \dots, p_k)$ with $\sum p_k=|\lambda|-|\mu|$ if the labels consist of $p_1$ $1$'s, $p_2$ $2$'s, and so on.

In analogy with semi-standard Young tableaux, we say that a numbering of a skew shape is a \emph{skew-tableau} of shape $\lambda/\mu $ if it has the following property.

\begin{itemize}
\item The numbers in the same row are non-decreasing and all numbers in the same column are strictly increasing and all numbers appearing .
\end{itemize}

\begin{table}[h!]
\caption{\label{tableSkewShapeNumbering} $(v_1, \dots, v_b)$-numbering of skew shape $\lambda/\mu$.}
$\begin{array}[t]{|l|l|l|l|l|lll}
\cline{1-5}
\cellcolor{black}&\cellcolor{black}&\cellcolor{black}&v_{b-1} &v_{b} &\lambda_1 -\mu_1\text{ boxes}\\ \cline{1-5}
\cellcolor{black}&\cellcolor{black}&v_{b-2}& \multicolumn{2 }{c}{} & \lambda_2-\mu_2 \text{ boxes}\\\cline{1-3}
\multicolumn{5 }{l}{\vdots} & \vdots\\\cline{1-2}
\cellcolor{black}&v_{j}&\multicolumn{3}{c}{} & \lambda_{l}-\mu_l\text{ boxes}\\\cline{1-2}
\multicolumn{5}{l}{\vdots}&\vdots\\\cline{1-1}
v_1&\multicolumn{4}{c}{} & \lambda_{k}\text{ boxes}\\\cline{1-1}
\end{array}
$
\end{table}

We say that skew-tableau of shape $\lambda/\mu$ is a \emph{Littlewood-Richardson skew-tableau} if it has in addition the following property. 
\begin{itemize}
\item The $b$-tuple $(v_1, \dots, v_b)$ is a Yamanuchi word, i.e., for any index $s$, the sequence $v_b, v_{b-1},\dots, v_s$ contains at least as many $1$'s as it does $2$'s, at least as many $2's$ as it does $3$'s, and so on for all positive integers. 
\end{itemize}

\subsubsection{Preliminaries: branching laws and tableaux.} \label{sec:typeA:preliminaries:branchinglaws}
In this section, we set ${\mathfrak g} = \mathfrak{gl}_n$, i.e., the set of $n\times n$ matrices.
Then the Weyl group $W$ is the symmetric group $S_n$.
The parabolic subgroups of $ W$ are then given by 
\begin{equation}\label{eq:typeA:parabolicDef}
P_{(p_1, \dots, p_k)}=    S_{p_{1}} \times  \dots \times  S_{p_k}  .
\end{equation}
where $p_j\geq 1$, $\displaystyle \sum\limits_{j=1}^k p_j=n$, and each factor-group $S_{p_j} $ is embedded in $S_n$ as the permutations of the consecutive indices $\left\{a+1, a+2, \dots, a+p_j \right\}$, where $\displaystyle a=\sum\limits_{i=1}^{j-1} p_i $.

It is well-known that the irreducible representations of $ W=S_n$ are in a one-to-one correspondence to the partitions of $n$.
We denote by $V_{\lambda}$ the irreducible representation corresponding to the partition $\lambda=[p_1, \dots, p_k]$ and by $\signRep {S_n}$ the representation
\[
\signRep {S_n}=V_{[1, \dots, 1]}.
\]
As the notation suggests, $\signRep {S_n}$ is the sign (determinant) representation of $S_n$.
The objective of this section is to prove that an irreducible representation $V$ of $S_n$ is determined by the collection of those parabolic subgroups $P$ for which the restriction of $V$ down to $P$ contains a sign representation.
We will later need the following well-known observation, which we recall without proof.
\begin{lemma}\label{le:typeA:tensoringbySignrepstarsLambda}
\[
V_\lambda\otimes \signRep {S_n}\simeq V_{\lambda^*}
\]
\end{lemma}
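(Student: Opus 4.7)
The plan is to prove the isomorphism using the classical Young symmetrizer construction. For any partition $\mu$ of $n$, fix a tableau $T$ of shape $\mu$ and realize $V_\mu$ as the left ideal $\mathbb{C}[S_n]\cdot c_T$, where $c_T = a_T b_T$ with $a_T = \sum_{p\in R(T)} p$ (summed over the row-stabilizer of $T$) and $b_T = \sum_{q\in C(T)} \signFunction(q)\,q$ (signed sum over the column-stabilizer of $T$).

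The key step is to exploit the algebra automorphism $\omega\colon \mathbb{C}[S_n]\to \mathbb{C}[S_n]$ given by $g\mapsto \signFunction(g)\,g$. Pulling back the action of $\mathbb{C}[S_n]$ on any left module $V$ by $\omega$ produces a module isomorphic to $V\otimes \signRep{S_n}$, since $\signRep{S_n}$ is one-dimensional and each $g\in S_n$ acts on the tensor with an extra factor of $\signFunction(g)$. Consequently $V_\lambda\otimes\signRep{S_n}$ is isomorphic as a $\mathbb{C}[S_n]$-module to the (untwisted) left ideal $\mathbb{C}[S_n]\cdot\omega(c_T)$, for $T$ of shape $\lambda$. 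A direct computation shows $\omega(a_T) = \sum_{p\in R(T)}\signFunction(p)\,p$ and $\omega(b_T) = \sum_{q\in C(T)} q$; and if $T^*$ denotes the transposed tableau of shape $\lambda^*$, then $R(T^*) = C(T)$ and $C(T^*) = R(T)$, so $\omega(a_T) = b_{T^*}$ and $\omega(b_T) = a_{T^*}$. Therefore $\omega(c_T) = b_{T^*}a_{T^*}$.

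The hard part is the order of factors: what falls out is the reversed symmetrizer $b_{T^*}a_{T^*}$ rather than the standard $c_{T^*} = a_{T^*}b_{T^*}$. To close the proof I would invoke the classical fact (see, e.g., Fulton--Harris, \emph{Representation Theory}, Lecture~4) that $\mathbb{C}[S_n]\cdot a_T b_T$ and $\mathbb{C}[S_n]\cdot b_T a_T$ are isomorphic left $\mathbb{C}[S_n]$-modules for any tableau $T$, each being a realization of the same irreducible. This identifies the left ideal generated by $\omega(c_T)$ with $V_{\lambda^*}$ and completes the proof. An alternative route, which avoids this subtlety entirely, is character-theoretic: apply the Frobenius characteristic map and use $\omega(s_\lambda)=s_{\lambda^*}$ in the ring of symmetric functions, which translates into exactly the required identity $\signFunction(g)\chi_\lambda(g)=\chi_{\lambda^*}(g)$.
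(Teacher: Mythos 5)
The paper offers no proof of this lemma to compare against --- it is stated as ``a well-known observation, which we recall without proof'' --- so your argument stands or falls on its own, and it stands. Your proof via the sign automorphism $\omega(g)=\signFunction(g)\,g$ of $\mathbb{C}[S_n]$ is a complete and standard one: twisting a left module by $\omega$ does realize tensoring with $\signRep{S_n}$, and if $V_\lambda=\mathbb{C}[S_n]\,c_T$ then the twisted module is isomorphic to the untwisted left ideal $\mathbb{C}[S_n]\,\omega(c_T)$ (the isomorphism $x\mapsto\omega(x)$ intertwines the two actions, and $\omega$ is an involution so $\omega^{-1}=\omega$). You correctly flag the one point that actually requires care, namely that $\omega(c_T)=b_{T^*}a_{T^*}$ is the reversed symmetrizer, and the fact you invoke to resolve it is genuine: right multiplication by $a_{T^*}$ and by $b_{T^*}$ give maps between $\mathbb{C}[S_n]\,a_{T^*}b_{T^*}$ and $\mathbb{C}[S_n]\,b_{T^*}a_{T^*}$ whose composite is the nonzero scalar $n_{\lambda^*}$ times the identity, so the two left ideals are isomorphic. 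One should also check that the symmetrizer convention matches the paper's normalization (here $c_T=a_Tb_T$ gives the trivial representation for $\lambda=[n]$ and the sign representation for $\lambda=[1,\dots,1]$, consistent with the paper's $\signRep{S_n}=V_{[1,\dots,1]}$), which it does. The character-theoretic alternative you sketch, $\signFunction(g)\chi_\lambda(g)=\chi_{\lambda^*}(g)$ via the involution $s_\lambda\mapsto s_{\lambda^*}$ on symmetric functions, is equally valid and shorter, at the cost of importing the Frobenius characteristic machinery.
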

Let 
\[
P=P_{(a,b)}= S_a\times S_b\subset S_{n},
\] 
$a+b=n$, $a,b\geq 1$ be a parabolic subgroup of $S_n$ realized as in \eqref{eq:typeA:parabolicDef}.
The Littlewood-Richardson coefficient $c_{\mu, \nu}^{\lambda}$ is defined as the multiplicity of the $P$-module $V_{\mu}\otimes V_{\nu}$ in the restriction of $ V_\lambda$ down to a $P$-module.
In other words, $c_{\mu, \nu}^\lambda$ is defined via the $P$-module isomorphism 
\begin{equation}\label{eqLittlewoodRichardsonCoeffDef}
{V_{\lambda}} _{|P}\simeq \bigoplus_{\substack{ |\mu|=a\\|\nu|=b}} c_{\mu, \nu}^{\lambda} V_{\mu}\otimes V_{\nu}  . 
\end{equation}

The main technical tool used in this section is the following well-know theorem.
\begin{theorem}[{\cite[{Theorem 4.9.4}]{Sagan:SymmetricGroupRepsAlgorithms}}] \label{thLittlewoodRichardsonCoeff}
The Littlewood-Richardson coefficients $c_{\mu,\nu}^\lambda$ defined by \eqref{eqLittlewoodRichardsonCoeffDef} are computed via
\begin{equation}\label{eqLittlewoodRichardsonCoeffComputation}
c^{\lambda}_{\mu, \nu}= \left\{ \begin{array}{ll} \begin{array}{@{}l}\text{number of Littlewood-Richardson tableaux } \\
\text{of shape }\lambda/\mu \text{ and content }\nu\end{array} &\text{if } \lambda/\mu \text{ is defined} \\
0&\text{otherwise.}\end{array}\right.
\end{equation}
\end{theorem}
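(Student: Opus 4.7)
The plan is to pass through the Frobenius characteristic map $\mathrm{ch}$, which sends a class function on $S_n$ to a symmetric function of degree $n$ and carries the character of $V_\lambda$ to the Schur function $s_\lambda$. By Frobenius reciprocity, the multiplicity $c^\lambda_{\mu,\nu}$ of $V_\mu\otimes V_\nu$ inside ${V_\lambda}_{|P}$ equals the multiplicity of $V_\lambda$ in $\Ind_P^{S_n}(V_\mu\otimes V_\nu)$. Since $\mathrm{ch}$ converts the induction product from $S_a\times S_b$ to $S_n$ into the ordinary product of symmetric functions, one has $\mathrm{ch}(\Ind_P^{S_n}(V_\mu\otimes V_\nu)) = s_\mu\, s_\nu$, so the statement is equivalent to the Schur function identity
\[
s_\mu\, s_\nu \;=\; \sum_\lambda c^\lambda_{\mu,\nu}\, s_\lambda ,
\]
where the claim is that $c^\lambda_{\mu,\nu}$ equals the number of Littlewood-Richardson skew tableaux of shape $\lambda/\mu$ and content $\nu$ when $\lambda/\mu$ is defined, and is $0$ otherwise.

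Next I would reduce this identity to combinatorics of tableaux. Using the monomial expansion $s_\lambda=\sum_T x^T$ over semi-standard Young tableaux $T$ of shape $\lambda$, the left-hand side is a sum over pairs $(T_\mu,T_\nu)$. The natural move is to realize such a pair as a single semi-standard filling of a larger shape $\lambda\supseteq\mu$: place $T_\mu$ in the subshape $\mu$ and an appropriate skew semi-standard tableau $S$ of content $\nu$ in the complementary skew shape $\lambda/\mu$. The remaining task is to show that, for each fixed $\mu$ and $\nu$, the skew tableaux $S$ of shape $\lambda/\mu$ and content $\nu$ that arise in this correspondence are precisely counted by $c^\lambda_{\mu,\nu}$ as defined in \eqref{eqLittlewoodRichardsonCoeffComputation}.

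The combinatorial heart is Sch\"utzenberger's \emph{jeu de taquin}. Every skew semi-standard tableau slides to a uniquely determined rectification of straight shape, and slides preserve Knuth equivalence. The key lemma is that a skew tableau $S$ of shape $\lambda/\mu$ and content $\nu$ rectifies to the unique semi-standard tableau of shape $\nu$ and content $\nu$ if and only if its reverse row-reading word is a Yamanouchi word, which is exactly the condition used in Section \ref{sec:typeA} to define Littlewood-Richardson skew tableaux. Combined with the previous paragraph, this yields a content-preserving bijection between the two sides of the Schur identity, proving both the formula for $c^\lambda_{\mu,\nu}$ and the vanishing statement when $\lambda/\mu$ is not defined (since then no such $S$ exists).

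The main obstacle is this last combinatorial step: proving that rectification is independent of the sequence of slides, and that the Yamanouchi condition is precisely the characterization of the skew tableaux rectifying to the canonical tableau of shape $\nu$. Both statements rest on Knuth equivalence and the plactic monoid, which is the substantial block of combinatorics that the authors sidestep by invoking Sagan's text rather than redoing here.
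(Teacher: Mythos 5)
The paper does not prove this statement at all: Theorem \ref{thLittlewoodRichardsonCoeff} is imported verbatim from Sagan's book (the citation in the theorem header is the entire ``proof''), so there is no argument of the authors' to compare yours against. Your outline is essentially the standard proof found in the cited reference and in Fulton's \emph{Young Tableaux}: reduce via the Frobenius characteristic and Frobenius reciprocity to the Schur-function identity $s_\mu s_\nu=\sum_\lambda c^\lambda_{\mu,\nu}s_\lambda$, then establish the combinatorial interpretation of the coefficients by jeu de taquin, with the Yamanouchi condition characterizing the skew tableaux that rectify to the canonical tableau of shape $\nu$. That architecture is correct, and you are right that the substantive content (well-definedness of rectification, Knuth equivalence, the Yamanouchi characterization) is exactly what the citation is outsourcing.

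One step in your middle paragraph is stated imprecisely enough that it would not survive being written out. You propose to realize a pair $(T_\mu,T_\nu)$ as ``a single semi-standard filling of $\lambda$'' obtained by placing $T_\mu$ in the subshape $\mu$ and a skew tableau $S$ of content $\nu$ in $\lambda/\mu$. This cannot be the bijection: juxtaposing a straight tableau of shape $\mu$ with a skew tableau of shape $\lambda/\mu$ does not in general produce a semistandard tableau of shape $\lambda$ (the column-strictness across the boundary fails), and more importantly the weights do not match --- if $S$ has content $\nu$ then $x^S=x^\nu$, which is not $x^{T_\nu}$ for a general $T_\nu$, so the correspondence would not be weight-preserving and could not prove an identity of symmetric functions. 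The correct bijection sends $(T_\mu,T_\nu)$ to a \emph{pair} $(T_\lambda,U)$ where $T_\lambda$ is semistandard of shape $\lambda$ carrying the full weight $x^{T_\mu}x^{T_\nu}$ and $U$ is a Littlewood--Richardson skew tableau of shape $\lambda/\mu$ and content $\nu$ serving as a weight-free recording datum; this is where the plactic-monoid machinery you invoke at the end actually enters. With that correction your sketch matches the standard proof.
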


This theorem implies the following.

\begin{corollary}[Pieri rule for $S_n$]\label{corPieriRuleSignRep}
 
Let $\pi=[\underbrace{1,\dots, 1}_{b \text{ times}}]$. 
Then $c^\lambda_{\mu, \pi}$ equals $0$ or $1$ with $1$ achieved if and only if the skew shape $\lambda/\mu$ has at most one box per row.
\end{corollary}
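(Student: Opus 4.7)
By Theorem~\ref{thLittlewoodRichardsonCoeff}, $c^{\lambda}_{\mu,\pi}$ equals the number of Littlewood--Richardson skew-tableaux of shape $\lambda/\mu$ with content $\pi=(1,1,\dots,1)$, and is zero when $\lambda/\mu$ is undefined. Since each integer $1,\dots,b$ must appear exactly once, any such filling uses all distinct labels, and the plan is to show that (i) the Yamanouchi condition alone pins the filling down uniquely, and then (ii) the resulting candidate is a valid skew-tableau precisely when $\lambda/\mu$ is a vertical strip.

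For step (i), write $v_j$ for the label in the $j$-th box, using the numbering convention of Table~\ref{tableSkewShapeNumbering}. I proceed by downward induction on $s$ to show that $\{v_b,v_{b-1},\dots,v_s\}=\{1,2,\dots,b-s+1\}$. For the base case $s=b$, the one-element suffix $\{v_b\}$ must have at least as many $1$'s as $2$'s, and more generally at least as many $j$'s as $(j+1)$'s, which forces $v_b=1$. Given the inductive hypothesis at $s+1$, the new label $v_s$ must be distinct from $1,\dots,b-s$; any choice $v_s=m$ with $m>b-s+1$ would make $\#\{m\!-\!1\}=0<1=\#\{m\}$ in the new suffix, so the Yamanouchi inequalities force $v_s=b-s+1$. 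Hence the only candidate filling is $v_j=b-j+1$.

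For step (ii), the numbering convention indexes boxes bottom-to-top and left-to-right within each row, so the boxes of any fixed row of $\lambda/\mu$ are labeled by consecutive integers $p,p+1,\dots$ from left to right. If some row contains two or more boxes, the forced labels $b-p+1>b-p>\cdots$ strictly decrease along that row, violating the non-decreasing row condition; hence $\lambda/\mu$ must be a vertical strip. Conversely, when every row of $\lambda/\mu$ contains at most one box, the row condition is vacuous, and the forced filling assigns smaller labels to higher rows, so every column of $\lambda/\mu$ is strictly increasing top to bottom. This produces exactly one Littlewood--Richardson skew-tableau, giving $c^{\lambda}_{\mu,\pi}=1$, and otherwise zero. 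The only point requiring real care is keeping the numbering and reading-word conventions straight; once those are pinned down, the combinatorics is essentially forced and no hard step arises.
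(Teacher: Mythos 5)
Your proof is correct and takes essentially the same route as the paper: apply Theorem \ref{thLittlewoodRichardsonCoeff}, show that the Yamanouchi condition together with content $(1,\dots,1)$ forces a unique candidate filling, and then check that this candidate is a valid skew-tableau exactly when $\lambda/\mu$ has at most one box per row. Your explicit induction pinning the forced filling down as $v_j=b-j+1$ (i.e.\ reading word $1,2,\dots,b$) is in fact the reading consistent with the paper's stated conventions; the paper's own phrase ``$(v_1,\dots,v_b)=(1,2,\dots,b)$'' glosses over this orientation, so your extra care here is a small improvement rather than a divergence.
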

\begin{proof}
A skew-tableau $(v_1,\dots, v_b)$ of skew shape $\lambda/\mu$ has content $\pi=[1,\dots, 1]$ if the set $\{v_1,\dots, v_b\}$ consists of the numbers $1,2,3,\dots, b$.
The Yamanuchi word requirement for a Littlewood-Richardson skew tableau implies that a Littlewood-Richardson tableau of skew shape $\lambda/\mu$ and content $\pi=[1,\dots, 1]$ is given by $(v_1,\dots, v_b)=(1,2,\dots, b)$.
Finally the requirement that a Littlewood-Richardson tableau is in fact a tableau, i.e., has increasing entries in each row, forces the skew-tableau to have at most one box per row. 

Conversely, if a skew shape has $b$ boxes with one box per row, then a Littlewood-Richardson tableau of that shape with content $[1,\dots, 1]$ can be constructed by setting $(v_1, \dots, v_b)=(1,2, \dots, b)$, where the $v_i$'s are the tableau entries from Table \ref{tableSkewShapeNumbering}. 
\end{proof}
\subsection{Sign representation multiplicities when restricted to parabolic subgroups in type $A$}
Let $P=P_{\left(p_1, \dots, p_k\right)}=S_{p_1}\times\dots \times S_{p_k} \subset S_n $ be a parabolic subgroup embedded as described in \eqref{eq:typeA:parabolicDef}.
Let $\lambda=|n|$ and  $V_\lambda$ the corresponding irreducible representation of $S_n$.
For an arbitrary tuple of numbers $(p_1, \dots, p_k)$,  the Kostka number $K_{\lambda, (p_1,\dots, p_s)}$ is defined as
\[
K_{\lambda, (p_1,\dots, p_s)}=\begin{array}{l}\text{number of semi-standard tableaux} \\\text{of shape } \lambda \text{ and content } (p_1,\dots, p_s).\end{array}
\]

\begin{proposition}\label{prop:KostkaNumberInterpretation}
$K_{\lambda, (p_1,\dots, p_k)}$ is the dimension of the weight space of weight $(p_1, \dots, p_k)$ in the $GL(n)$-module of highest weight $\lambda$.
\end{proposition}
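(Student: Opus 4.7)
The plan is to use the classical theory of $GL(n)$ characters. The character of the irreducible $GL(n)$-module $V_\lambda$ of highest weight $\lambda$, evaluated on a diagonal matrix $\diag(x_1, \ldots, x_n)$, is the Schur polynomial $s_\lambda(x_1, \ldots, x_n)$; by definition the multiplicity of a weight $(\mu_1, \ldots, \mu_n)$ equals the coefficient of $x_1^{\mu_1} \cdots x_n^{\mu_n}$ in $s_\lambda$. The proposition therefore reduces to the combinatorial identity $s_\lambda(x_1, \ldots, x_n) = \sum_T x^{\mathrm{content}(T)}$, summed over semistandard Young tableaux $T$ of shape $\lambda$ with entries in $\{1, \ldots, n\}$, together with the elementary observation that padding $(p_1, \ldots, p_k)$ with zeros on the right does not change the count of such tableaux (a tableau of content $(p_1, \ldots, p_k, 0, \ldots, 0)$ uses only entries in $\{1, \ldots, k\}$).

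I would prove the Schur identity by iterating the Gelfand--Tsetlin branching rule, which asserts that the restriction of $V_\lambda$ from $GL(n)$ to $GL(n-1) \times GL(1)$ decomposes as $\bigoplus_\mu V_\mu \otimes {\mathbb C}_{|\lambda|-|\mu|}$, summed over partitions $\mu$ with $\lambda/\mu$ a horizontal strip. Iterating this branching from $GL(n)$ down through $GL(n-1), \ldots, GL(1)$, the weight space of weight $(p_1, \ldots, p_n)$ in $V_\lambda$ acquires dimension equal to the number of chains
\[
\emptyset = \mu^{(0)} \subseteq \mu^{(1)} \subseteq \cdots \subseteq \mu^{(n)} = \lambda
\]
in which each skew shape $\mu^{(i)}/\mu^{(i-1)}$ is a horizontal strip of size $p_i$.

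The final step is a bijection between such chains and semistandard Young tableaux of shape $\lambda$ with content $(p_1, \ldots, p_n)$: given a tableau $T$, set $\mu^{(i)}$ to be the sub-shape consisting of the boxes whose entry is at most $i$. Weak increase along rows guarantees that each $\mu^{(i)}$ is a Young diagram; strict increase along columns forces $\mu^{(i)}/\mu^{(i-1)}$ to contain at most one box per column, i.e.\ to be a horizontal strip of size $p_i$. The inverse map recovers $T$ by labelling the boxes of $\mu^{(i)}/\mu^{(i-1)}$ with the entry $i$. The main external ingredient here is the Gelfand--Tsetlin branching rule, a classical result the paper would simply cite; given this, the remainder is combinatorial bookkeeping, and the only genuine obstacle is making the chain-to-tableau bijection completely explicit.
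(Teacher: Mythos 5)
Your argument is correct. The paper itself offers no proof of Proposition \ref{prop:KostkaNumberInterpretation}: it is stated as a classical fact (the standard reference being the identification of the character of the irreducible polynomial $GL(n)$-module $V^\lambda$ with the Schur polynomial $s_\lambda$ and the tableau expansion $s_\lambda=\sum_T x^{\mathrm{content}(T)}$, as in \cite{Fulton:YoungTableaux}, which the paper cites elsewhere). Your route --- iterating the Gelfand--Tsetlin branching $GL(n)\downarrow GL(n-1)\times GL(1)$ to express the weight multiplicity as a count of chains $\emptyset=\mu^{(0)}\subseteq\cdots\subseteq\mu^{(n)}=\lambda$ of horizontal strips of sizes $p_1,\dots,p_n$, and then biject such chains with semistandard tableaux by labelling $\mu^{(i)}/\mu^{(i-1)}$ with $i$ --- is a correct and self-contained derivation of that classical fact; the bijection works exactly as you describe (weak row increase of $T$ is equivalent to each $\mu^{(i)}$ being a prefix in every row together with column-strictness, and column-strictness is equivalent to each skew piece being a horizontal strip), and the zero-padding remark correctly reconciles the $k$-entry content with the $n$-entry weight. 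One pedantic point: in the branching rule the sum should run over $\mu$ with at most $n-1$ parts (equivalently, $\mu$ interlacing $\lambda$); this is harmless in your chain formulation, since starting from $\emptyset$ and adding horizontal strips forces $\mu^{(i)}$ to have at most $i$ rows. Your proof is more work than the paper invests, but it makes the statement genuinely self-contained rather than cited.
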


The following results are needed:

\begin{corollary}\label{cor:KostkaNumberInvarintUnderSn}
Permutation of the content entries $(p_1, \dots, p_n)$ does not change the Kostka number, i.e.,
\[
K_{\lambda, (p_1,\dots, p_k)} = K_{\lambda, (p_{\sigma(1)},\dots, p_{\sigma(k)})},
\] 
for an arbitrary permutation $\sigma $. 
\end{corollary}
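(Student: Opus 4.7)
The plan is to deduce Corollary \ref{cor:KostkaNumberInvarintUnderSn} directly from Proposition \ref{prop:KostkaNumberInterpretation}. By that proposition, $K_{\lambda,(p_1,\dots,p_k)}$ is the dimension of the weight space of weight $(p_1,\dots,p_k)$ inside the irreducible $GL(n)$-module $V^{GL(n)}_\lambda$ of highest weight $\lambda$ (padded with zeros if $k<n$). So the task reduces to showing that, for a $GL(n)$-module, the dimension of a weight space depends only on the multiset of coordinates of the weight, not on their order.

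The key fact I would invoke is the Weyl-invariance of the formal character of a finite-dimensional $GL(n)$-module. Concretely, the Weyl group of $GL(n)$ is the symmetric group $S_n$ acting on the standard Cartan subalgebra by permuting the coordinates $(x_1,\dots,x_n)$; and for any finite-dimensional rational $GL(n)$-representation $V$, the character
\[
\mathrm{ch}\,V \;=\; \sum_{\mu} (\dim V_\mu)\, e^{\mu}
\]
is invariant under this $S_n$-action. Therefore for each $\sigma\in S_n$ and each weight $\mu=(p_1,\dots,p_n)$, one has $\dim V_\mu = \dim V_{\sigma\cdot\mu}$, where $\sigma$ acts by permuting the entries.

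Applying this to $V=V^{GL(n)}_\lambda$ and $\mu=(p_1,\dots,p_k,0,\dots,0)$, permuting the entries of $\mu$ yields a weight of the same dimension, and translating back through Proposition \ref{prop:KostkaNumberInterpretation} gives
\[
K_{\lambda,(p_1,\dots,p_k)} \;=\; K_{\lambda,(p_{\sigma(1)},\dots,p_{\sigma(k)})}
\]
for every $\sigma\in S_k$, as desired. (Zero padding causes no issue, since Kostka numbers are unchanged by appending zeros to the content, and an arbitrary permutation of a $k$-tuple extends to a permutation of the padded $n$-tuple fixing the trailing zeros.)

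I do not expect a serious obstacle here: the only substantive input is the Weyl-invariance of characters, which is standard. The one mild subtlety to handle carefully is matching conventions between the combinatorial statement (permutations of a $k$-tuple) and the representation-theoretic statement (the Weyl group $S_n$ permuting $n$ coordinates), which is dispatched by the zero-padding remark above.
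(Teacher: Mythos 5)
Your proof is correct and follows exactly the route the paper intends: the corollary is stated immediately after Proposition \ref{prop:KostkaNumberInterpretation} with no separate argument, the implicit derivation being precisely the $S_n$-invariance (Weyl-invariance) of weight multiplicities of $GL(n)$-modules that you spell out. Your zero-padding remark is a reasonable extra care that the paper leaves tacit.
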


\begin{lemma}\label{leBoxRowColumnIdentity}
Let $p$ be a positive number and let $\pi$ be the partition $\pi= [ \underbrace{1, \dots, 1}_{p\text{ times}}]$. Let $\mu$ be an arbitrary partition and $(p_1, \dots, p_s)$ be an arbitrary $s$-tuple of numbers. Then 
\begin{equation}\label{eqBoxRowColumnIdentity}
K_{\mu^*, (p_1,\dots, p_s, p)}= \sum_{ |\sigma|+p=|\mu|} c_{\sigma, \pi}^\mu  K_{\sigma^*, (p_1, \dots, p_s)} 
\end{equation}
\end{lemma}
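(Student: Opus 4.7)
The plan is to give a purely combinatorial proof by interpreting $K_{\mu^*,(p_1,\dots,p_s,p)}$ as a count of semi-standard Young tableaux and peeling off the boxes labeled $s+1$. The passage from "shape $\mu^*$" to "shape $\mu$" is handled by transposition, which converts horizontal strips into vertical strips and then lets us apply the Pieri rule already proved in Corollary \ref{corPieriRuleSignRep}.

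First, I would expand the left-hand side as the number of semi-standard tableaux of shape $\mu^*$ with content $(p_1,\dots,p_s,p)$. Any such tableau contains exactly $p$ boxes labeled $s+1$; since these are the maximal labels and columns strictly increase, no two of them can share a column, so the $p$ boxes form a horizontal strip inside $\mu^*$. Deleting these boxes yields a semi-standard tableau of some shape $\tau\subseteq\mu^*$ with $|\tau|=|\mu|-p$ and content $(p_1,\dots,p_s)$, and conversely any such tableau on any such $\tau$ can be completed uniquely. Grouping by $\tau$ gives
\[
K_{\mu^*,(p_1,\dots,p_s,p)}=\sum_{\tau} K_{\tau,(p_1,\dots,p_s)},
\]
where the sum is over all partitions $\tau$ such that $\mu^*/\tau$ is a horizontal strip of size $p$.

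Second, I would transpose. Since conjugation swaps rows and columns, $\mu^*/\tau$ is a horizontal strip (at most one box per column) if and only if $\mu/\tau^*$ is a skew shape with at most one box per row. Corollary \ref{corPieriRuleSignRep} identifies the latter condition exactly with the Littlewood–Richardson coefficient $c^\mu_{\tau^*,\pi}$ being equal to $1$ (and otherwise $0$). Substituting $\sigma=\tau^*$, noting $|\sigma|=|\tau|=|\mu|-p$, and using $K_{\tau,(p_1,\dots,p_s)}=K_{\sigma^*,(p_1,\dots,p_s)}$ then turns the displayed identity into \eqref{eqBoxRowColumnIdentity}.

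I do not anticipate a serious obstacle: the only subtle bookkeeping is checking that the deletion of the maximal label truly produces a semi-standard tableau of a partition shape (which is immediate from the strict-column / weak-row conditions) and that transposition sends the horizontal-strip condition on $\mu^*/\tau$ to the vertical-strip condition on $\mu/\tau^*$ required by the Pieri rule of Corollary \ref{corPieriRuleSignRep}. Both are direct verifications, so the proof essentially reduces to combining the "strip-off" decomposition with the already-established Pieri rule.
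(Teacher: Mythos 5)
Your proof is correct and follows essentially the same route as the paper: both arguments peel off the $p$ boxes carrying the maximal label $s+1$ (observing they form a horizontal strip in $\mu^*$), then transpose to convert the one-box-per-column condition into the one-box-per-row condition identified with $c^{\mu}_{\sigma,\pi}$ by Corollary \ref{corPieriRuleSignRep}. The only cosmetic difference is that you phrase the bijection as deletion while the paper phrases it as extension.
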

\begin{proof}
A semi-standard tableau of content $\left(p_1, \dots, p_{s}\right)$ and shape $\sigma$ can be extended to a semi-standard tableau of content $\left(p_1,\dots, p_{s}, p\right)$ in as many ways as we can add $p$ boxes to $\sigma$ (while keeping the result a shape) without adding two boxes in the same column.
Indeed, this follows from the fact that all new boxes must be labeled with $s+1$ and therefore cannot be added to the same column, and there are no further restrictions as $s+1$ is the largest number appearing in the newly constructed tableau.
Therefore 
\[
\begin{array}{@{}r@{}c@{}l@{}l@{}|l@{}}
&&\displaystyle K_{\mu^*,\left(p_1, \dots, p_k\right)}=\\
&=& \displaystyle \sum_{\substack { \text{content}(\sigma^*)\\ =\left(p_1, \dots, p_{k-1}\right)}} K_{\sigma^*, \left(p_1, \dots, p_{k-1}\right)} \left(\begin{array}{l}
\text{\# ways to extend }\sigma^{*} \text{ to }\mu^* \\
\text{using at most one extra} \\
\text{box per column}
\end{array}
\right)\\
&=&\displaystyle \sum_{\substack { \text{content}(\sigma^*)\\ =\left(p_1, \dots, p_{k-1}\right)}} K_{\sigma^*, \left(p_1, \dots, p_{k-1}\right)} \left(\begin{array}{l}
\text{\# ways to extend }\sigma \text{ to }\mu \\
\text{using at most one extra} \\
\text{box per row}
\end{array}
\right) \\
&=&\displaystyle \sum_{\substack { \text{content}(\sigma^*)\\ =\left(p_1, \dots, p_{k-1}\right)}} K_{\sigma^*, \left(p_1, \dots, p_{k-1} \right)} c_{\sigma, \pi}^{\mu}. && \text{Cor. } \ref{corPieriRuleSignRep}\\
\end{array}
\]
\end{proof}

Given a parabolic subgroup $R\subset S_n $, let $\signRep {R} $ denote the sign representation of $R$. Recall that in the notation of this section, $P=P_{(p_1, \dots, p_k)}=S_{p_1}\times\dots\times S_{p_k} $.

\begin{proposition}
\label{prop:typeA} The multiplicity $\dim \Hom_P (\signRep {P}, V_\lambda)$ of the sign representation of $P$ in $V_\lambda$ is given by
\[
\dim \Hom_P (\signRep {P}, V_\lambda)=K_{\lambda^*, (p_1, \dots, p_k )},
\]
where $(p_1, \dots, p_k)$ is a tuple of positive integers.
\end{proposition}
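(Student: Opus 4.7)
My plan is to prove the proposition by induction on $k$ (the number of factors of $P$), using the Littlewood--Richardson rule to peel off one factor at a time and then invoking Lemma \ref{leBoxRowColumnIdentity} as the combinatorial identity that makes the induction close.

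The first move is a reformulation. Since $\signRep{S_n}$ restricts to $\signRep{P}$ on any Young subgroup $P = S_{p_1}\times\cdots\times S_{p_k}$, and since $\signRep{P}$ is self-dual, Frobenius reciprocity together with Lemma \ref{le:typeA:tensoringbySignrepstarsLambda} give
\[
\dim\Hom_P(\signRep{P}, V_\lambda) = \dim\Hom_P(\mathbf{1}_P, V_\lambda\otimes\signRep{S_n}|_P) = \dim\Hom_P(\mathbf{1}_P, V_{\lambda^*}|_P).
\]
So it suffices to show that the multiplicity of the trivial representation of $P$ in $V_{\lambda^*}$ equals $K_{\lambda^*,(p_1,\dots,p_k)}$. (Alternatively, one can keep the sign rep throughout and invoke the Pieri rule of Corollary \ref{corPieriRuleSignRep} directly; the two formulations are equivalent and I pick whichever is most convenient for the induction.)

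The base case $k=1$ is immediate: $P = S_n$, and $V_\lambda$ contains the sign representation iff $\lambda = [1,\dots,1]$, iff $\lambda^* = [n]$, iff $K_{\lambda^*,(n)} = 1$. For the inductive step, write $P = P'\times S_{p_k}$ with $P' = S_{p_1}\times\cdots\times S_{p_{k-1}}\subset S_{n-p_k}$, and restrict in stages. First restrict $V_\lambda$ from $S_n$ to $S_{n-p_k}\times S_{p_k}$ using \eqref{eqLittlewoodRichardsonCoeffDef}:
\[
V_\lambda\big|_{S_{n-p_k}\times S_{p_k}} \simeq \bigoplus_{\substack{|\mu|=n-p_k\\|\nu|=p_k}} c^\lambda_{\mu,\nu}\, V_\mu\boxtimes V_\nu.
\]
The $S_{p_k}$-factor must contribute the sign representation, so $\nu = \pi := [1,\dots,1]$ ($p_k$ ones) and the coefficient is $c^\lambda_{\mu,\pi}$, which by Corollary \ref{corPieriRuleSignRep} is $0$ or $1$. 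Applying the inductive hypothesis to each $V_\mu$ restricted to $P'$ then yields
\[
\dim\Hom_P(\signRep{P}, V_\lambda) = \sum_{|\mu|+p_k=|\lambda|} c^\lambda_{\mu,\pi}\, K_{\mu^*,(p_1,\dots,p_{k-1})}.
\]

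Finally, Lemma \ref{leBoxRowColumnIdentity}, applied with the substitution $\mu\mapsto\lambda$, $\sigma\mapsto\mu$, $s\mapsto k-1$, $p\mapsto p_k$, identifies the right-hand side with $K_{\lambda^*,(p_1,\dots,p_k)}$, completing the induction. The only subtle step is the bookkeeping: making sure the ``shape'' $\lambda$ in Lemma \ref{leBoxRowColumnIdentity} matches the $V_\lambda$ being restricted (not its dual), and that the transpositions $\lambda\leftrightarrow\lambda^*$, $\mu\leftrightarrow\mu^*$ are applied consistently. Once that indexing is straightened out, the lemma slots in verbatim as the induction step, and no further calculation is needed.
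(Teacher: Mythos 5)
Your proof is correct and follows essentially the same route as the paper's: induction on the number of factors $k$, peeling off the last factor $S_{p_k}$ via the Littlewood--Richardson decomposition, observing that the $S_{p_k}$-component must be the sign representation so that only $\nu=\pi$ survives with coefficient $c^{\lambda}_{\mu,\pi}$, and closing the induction with Lemma \ref{leBoxRowColumnIdentity}. The opening reformulation via $V_\lambda\otimes\signRep{S_n}\simeq V_{\lambda^*}$ is a harmless (and correct) detour that you end up not needing, since the version of the induction you actually run is the one the paper uses.
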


\begin{proof}

The proof will be carried out by induction on $k$, i.e., on the number of factors of $P$.

In the base case, we have $k=1$, $P=S_n$ and $\left(p_1,\dots\right)=(n)$.
The Kostka number $K_{\lambda^*, (n)} $ now measures the number of semi-standard Young tableaux numberings of the shape $\lambda^* $ that only use the number $1$.
A semistandard Young tableaux has strictly increasing numbers in each column and therefore $ K_{\lambda^*, (n)}$ can be non-zero only if $\lambda^*$ consists of a single row.
Now the base case follows from the fact that the sign representation of $P=S_n$ has non-zero multiplicity only when the original irreducible module is the sign representation of $S_n$, i.e. $\lambda =[1,\dots, 1]$. 

Suppose we have shown our statement by induction for up to $k-1$ factors.
Set \[
Q=S_{p_1}\times \dots\times  S_{p_{k-1}}   .
\] 
The product $Q$ embeds with an embedding of the form \eqref{eq:typeA:parabolicDef} in $S_{n-p_k}$.
Furthermore $S_{n-p_k} \times S_{p_k}$ is again a parabolic subgroup of the form \eqref{eq:typeA:parabolicDef}, i.e., we have the chain of embeddings 
\[
Q\hookrightarrow S_{n-p_k} \hookrightarrow S_{n-p_k}\times S_{p_k}\hookrightarrow S_n .
\]

The rest of the proof is a computation that combines the induction hypothesis with the combinatorics in Lemma \ref{leBoxRowColumnIdentity}.

\medskip 
$
\begin{array}{@{}r@{}c@{}l@{}l@{\!\!\!\!\!\!\!\!\!}|l@{}}
\dim \Hom_P \left(\signRep {P}, V_\lambda\right)&= & \displaystyle  \dim\Hom_{P}\left(\signRep {P}, \bigoplus_{ \substack{|\sigma|=n-p_k\\ |\nu|=p_k} } c_{\sigma,\nu}^{\lambda} V_\sigma\otimes V_{\nu}\right)&& \begin{array}{l}\text{use}\\ \eqref{eqLittlewoodRichardsonCoeffDef}\end{array} \\
&=&\displaystyle \!\!\!\!\sum\limits_{ \substack{ |\sigma| =n-p_k\\ |\nu|=p_k}}  \!\!\!\!c_{\sigma,\nu}^{\lambda} \dim \Hom_{P} \left(\signRep{P}, V_\sigma\otimes V_\nu\right) \\
&=&\displaystyle \!\!\!\!\sum\limits_{ \substack{ |\sigma| =n-p_k\\ |\nu|=p_k}}  \!\!\!\!c_{\sigma,\nu}^{\lambda} \dim \Hom_{Q\times S_{p_k}} \left(\signRep{Q}\otimes \signRep{S_{p_k}},\right.\\ && \quad \quad \quad \quad \quad \quad \quad \quad \quad \quad \quad \left. V_\sigma\otimes V_\nu\right) \\~\\~\\
&=&\displaystyle \!\!\!\!\sum\limits_{ \substack{ |\sigma| =n-p_k\\ |\nu|=p_k}}  \!\!\!\!c_{\sigma,\nu}^{\lambda} \dim \Hom_{Q} \left(\signRep{Q} , V_\sigma\right) \\
&&\quad \quad \dim \Hom_{S_{p_k}} \left(\signRep{S_{p_k}}, V_\nu\right) \\~\\~\\
&=&\displaystyle \!\!\!\!\sum\limits_{|\sigma| =n-p_k} c_{\sigma,\nu}^{\lambda} K_{\sigma^*, \left(p_1, \dots, p_{k-1}\right)}  &&\text{ind. hyp.} \\
&=&\displaystyle K_{\lambda^*,\left(p_1, \dots, p_k\right)}, &&\begin{array}{l}\text{use}\\ \eqref{eqBoxRowColumnIdentity}\end{array}\\
\end{array}
$
\medskip

\noindent which completes our induction step and the proof of the theorem.
\end{proof}

\subsection{Recovering representations from sign multiplicities in type $A$}
\begin{theorem}
Let $V=V_{\lambda^*}$ be an irreducible representation of $A_{n-1}$.
Then $\lambda$ is the lexicographically largest partition such that $P_\lambda$ belongs to the sign signature of $V$.
\end{theorem}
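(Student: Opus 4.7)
The plan is to convert the statement into a combinatorial fact about Kostka numbers via Proposition~\ref{prop:typeA}, and then appeal to a simple ``top $j$ rows'' pigeonhole on semi-standard tableaux.

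First I would substitute $V_{\lambda^*}$ for $V_\lambda$ in Proposition~\ref{prop:typeA} (equivalently, apply Lemma~\ref{le:typeA:tensoringbySignrepstarsLambda}) to conclude that, for every composition $\mu=(p_1,\dots,p_k)$ of $n$ with positive parts,
\[
\dim\Hom_{P_\mu}(\signRep{P_\mu}, V_{\lambda^*}) = K_{\lambda,\mu}.
\]
Since the sign signature depends only on the conjugacy class of the parabolic, and Corollary~\ref{cor:KostkaNumberInvarintUnderSn} guarantees that $K_{\lambda,\mu}$ is invariant under reordering of the entries of $\mu$, it is harmless to restrict attention to partitions. Therefore $P_\mu$ belongs to the sign signature of $V_{\lambda^*}$ if and only if $K_{\lambda,\mu}>0$, and the theorem reduces to showing that $\lambda$ is the lexicographically largest partition $\mu$ of $n$ with $K_{\lambda,\mu}>0$.

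For the ``achievability'' direction I would simply exhibit the semi-standard tableau of shape $\lambda$ and content $\lambda$ in which row $i$ is filled entirely with $i$'s; this is the unique such tableau, giving $K_{\lambda,\lambda}=1$, so $P_\lambda$ is in the sign signature. For the ``upper bound'' direction, suppose $\mu>_{\rm lex}\lambda$, and let $j$ be the first index where the two partitions disagree, so $\mu_i=\lambda_i$ for $i<j$ and $\mu_j>\lambda_j$. Then $\mu_1+\cdots+\mu_j > \lambda_1+\cdots+\lambda_j$. In any semi-standard tableau of shape $\lambda$ and content $\mu$, strict column-increase forces every entry $\leq j$ to sit in one of the top $j$ rows, which contain only $\lambda_1+\cdots+\lambda_j$ boxes; but there are $\mu_1+\cdots+\mu_j$ entries to place. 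This contradiction yields $K_{\lambda,\mu}=0$, so no such $\mu$ appears in the sign signature.

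The main obstacle is the last step, which is the only place combinatorics of tableaux actually enters; but once Proposition~\ref{prop:typeA} is in hand the pigeonhole on the top $j$ rows makes it essentially immediate. Notably, one does not need the full equivalence between positivity of Kostka numbers and dominance order, since only the lexicographic direction is claimed here.
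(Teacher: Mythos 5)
Your proposal is correct and follows essentially the same route as the paper: reduce via Proposition~\ref{prop:typeA} to the statement that $\lambda$ is the lexicographically largest $\mu$ with $K_{\lambda,\mu}>0$, then use $K_{\lambda,\lambda}=1$ for achievability and the vanishing of $K_{\lambda,\mu}$ for lexicographically larger $\mu$ for the upper bound. The only difference is cosmetic: the paper merely asserts the vanishing $K_{\mu^*,\lambda^*}=0$ in the proof of Proposition~\ref{cor:typeA:differentIrrepsDifferentSignSignatures}, whereas you supply the (correct) pigeonhole justification that entries $\leq j$ must lie in the top $j$ rows.
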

\begin{proof}
This is Proposition~\ref{prop:typeA:RecoverPartitionFromSignSignature}.
\end{proof}

\medskip
The sign signature of a representation $V$ is the set of parabolic subgroups $P $ such that $ \dim \Hom_{P}(\signRep{ P}, V)>0$. 

\begin{proposition}\label{cor:typeA:differentIrrepsDifferentSignSignatures}
For $\lambda\neq \mu$ the $S_n$- representations $V_{\lambda} $ and $V_\mu$ have different sign signatures. 
\end{proposition}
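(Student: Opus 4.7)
The plan is to recover $\lambda$ explicitly from the sign signature of $V_\lambda$, which immediately implies that distinct irreducibles have distinct sign signatures. By Proposition~\ref{prop:typeA} combined with Corollary~\ref{cor:KostkaNumberInvarintUnderSn}, a parabolic $P_{(p_1,\dots,p_k)}$ lies in the sign signature of $V_\lambda$ if and only if $K_{\lambda^*,(p_1,\dots,p_k)}>0$, and the truth of this depends only on the multiset of parts. So, after sorting the parts in decreasing order, the sign signature is equivalent data to the set
\[
S(\lambda^*)\;=\;\bigl\{\mu\ \colon\ \mu\text{ is a partition of }n\text{ with }K_{\lambda^*,\mu}>0\bigr\},
\]
and the task reduces to showing that $\lambda^*$ is determined by $S(\lambda^*)$.

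First I would verify that $\lambda^*\in S(\lambda^*)$: the filling of the shape $\lambda^*$ in which row $i$ is occupied entirely by the entry $i$ is a semi-standard tableau of content $\lambda^*$, yielding $K_{\lambda^*,\lambda^*}\geq 1$. Next I would show that $\lambda^*$ is the lexicographically largest element of $S(\lambda^*)$. Let $\mu$ be a partition of $n$ with $\mu>_{\mathrm{lex}}\lambda^*$, and let $j$ be the smallest index where they differ, so that $\mu_j>\lambda^*_j$ and $\mu_i=\lambda^*_i$ for $i<j$. In any semi-standard tableau of shape $\lambda^*$, column-strictness forces each entry $i$ to sit in a row of index at most $i$. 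An induction on $i<j$ then shows that rows $1,\dots,j-1$ of such a tableau of content $\mu$ must be completely filled with the entries $1,\dots,j-1$ respectively. Hence all $\mu_j$ copies of $j$ must lie in row $j$, which has only $\lambda^*_j<\mu_j$ cells---a contradiction. So $K_{\lambda^*,\mu}=0$, and $\lambda^*$ is the lex-maximum of $S(\lambda^*)$.

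Combining the two steps, $\lambda^*$ (and hence $\lambda$) is uniquely determined by the sign signature of $V_\lambda$, which proves the proposition. The only real work is the lex-maximality estimate, and this is essentially an elementary pigeonhole argument for column-strict fillings; no deeper combinatorics is needed. As a byproduct the argument also proves Theorem~\ref{thm:TypeARecoverIrrepsAlgorithm}, since the recovered lex-maximum is exactly the partition indexing the dual representation.
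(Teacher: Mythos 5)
Your proof is correct and takes essentially the same route as the paper: both identify $\lambda^*$ via Proposition~\ref{prop:typeA} together with the lex--triangularity of Kostka numbers, namely $K_{\lambda^*,\lambda^*}=1$ while $K_{\nu,\alpha}=0$ whenever $\alpha>_{\mathrm{lex}}\nu$. You additionally spell out the column-strictness pigeonhole argument for the vanishing, which the paper leaves implicit (and where its stated comparison of ``the first different entry of $\lambda$ and $\mu$'' should really be read on the dual partitions), so no further changes are needed.
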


\begin{proof}
Suppose the first different entry of $\lambda $ and $\mu$ is larger for $\lambda$. 
Then $K_{\lambda^*, \lambda^*} $ equals $1$ and $K_{\mu^*, \lambda^*}$ equals $0$.
Proposition \ref{prop:typeA} implies that parabolic subgroup given by the entries of $\lambda^* $ is in the sign signature of $V_\lambda $ but not in the sign signature of $V_\mu$. 
\end{proof}
Let $ > $ be the lexicographic order on partitions. 
Then $\lambda>\mu$ if $\lambda\neq \mu$ and the first different entry of $\lambda$ and $\mu$ is larger for $\lambda$. 
We compare partitions with different number of rows by adding zero entries to the partitions with smaller number of rows. Let 
\begin{equation*}
\label{eq:typeA:irrepOrder}
\lambda_1^{*}>\lambda_2^*> \dots >\lambda_N^* 
\end{equation*}
be the partitions of $n$, ordered with respect to the lexicographic order on their transposes where $N$ is the number of partitions of $n$. 

Set $P_i=P_{\lambda_i} $, and let $\Pi$ be the set of the $P_i$'s:
\[
\Pi=\{ P_{1}, \dots, P_{N}\}.
\]
Every parabolic subgroup $P$ is conjugate to one of the $P_i$'s and so the sign signature of a representation is determined unambiguously when restricted to $\Pi$.

Recall that $V_\lambda$ stands for an irreducible representation of $S_n$. 
The proof of Proposition \ref{cor:typeA:differentIrrepsDifferentSignSignatures} suggests the following.
\begin{proposition}\label{prop:typeA:RecoverPartitionFromSignSignature}~
\begin{enumerate}
\item $\lambda$ is the partition dual to the lexicographically maximal partition in $\signSignature (V_\lambda)$, in other words, 
\[\lambda^* = \max_{>} \{\mu^*|\mu\in \signSignature (V_\lambda)\}.\]
\item The sign signature  $\signSignature V_{\lambda_i}$ contains $P_i$ and does not contain $P_{1},\dots, P_{i-1}$.
\end{enumerate}
\end{proposition}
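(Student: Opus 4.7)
The plan is to reduce both parts to Proposition~\ref{prop:typeA}, which identifies $\dim\Hom_P(\signRep P,V_\lambda)$ with the Kostka number $K_{\lambda^*,\mu}$ when $P=P_\mu$, and then to invoke two standard combinatorial facts about Young tableaux on partitions of a fixed $n$: (i) $K_{\nu,\mu}>0$ if and only if $\mu\trianglelefteq\nu$ in the dominance order, with $K_{\nu,\nu}=1$ realized by the unique row-constant filling; and (ii) dominance refines lex, so $\mu\trianglelefteq\nu$ forces $\mu\le_{\mathrm{lex}}\nu$.

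For part~(1), I would first translate ``$P_\mu\in\signSignature(V_\lambda)$'' via Proposition~\ref{prop:typeA} into ``$K_{\lambda^*,\mu}>0$'', and then via~(i) into ``$\mu\trianglelefteq\lambda^*$''. Since $K_{\lambda^*,\lambda^*}=1$, the parabolic $P_{\lambda^*}$ does belong to $\signSignature(V_\lambda)$, and for every other $\mu$ in the signature, (ii) gives $\mu<_{\mathrm{lex}}\lambda^*$. This identifies $\lambda^*$ as the lexicographically maximal member of $\signSignature(V_\lambda)$; transposing then yields the stated equality of maxima.

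For part~(2), the same translation reads $\signSignature(V_{\lambda_i})$ as the set of parabolics indexed by partitions $\mu$ with $\mu\trianglelefteq\lambda_i^*$. The indexing convention $\lambda_1^*>_{\mathrm{lex}}\cdots>_{\mathrm{lex}}\lambda_N^*$ forces $\lambda_j^*>_{\mathrm{lex}}\lambda_i^*$ whenever $j<i$, and the contrapositive of~(ii) then gives $\lambda_j^*\not\trianglelefteq\lambda_i^*$; consequently the parabolic associated with the $j$-th partition is absent from $\signSignature(V_{\lambda_i})$ for every $j<i$. Membership of the $i$-th parabolic follows from $K_{\lambda_i^*,\lambda_i^*}=1$.

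I expect no serious obstacle beyond citing facts~(i) and~(ii), which are standard and can be drawn from Macdonald or Sagan. The only subtlety worth double-checking is that the dominance condition for Kostka positivity, ordinarily phrased for partition content, applies equally to the possibly unsorted tuple content appearing in Proposition~\ref{prop:typeA}; this is immediate from Corollary~\ref{cor:KostkaNumberInvarintUnderSn}, which allows us to sort the content into a partition without changing the Kostka number.
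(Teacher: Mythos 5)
Your argument is correct in substance and is essentially the paper's intended proof: the paper offers no separate proof of this proposition, deferring to the triangularity mechanism in the proof of Proposition~\ref{cor:typeA:differentIrrepsDifferentSignSignatures}, and your two inputs --- $K_{\nu,\mu}>0$ iff $\mu\trianglelefteq\nu$ with $K_{\nu,\nu}=1$, and the fact that dominance refines the lexicographic order --- are exactly the standard way to make that mechanism precise. If anything your version is more careful than the paper's sketch, which asserts the vanishing $K_{\mu^*,\lambda^*}=0$ under the hypothesis $\lambda>_{\mathrm{lex}}\mu$ rather than the hypothesis $\lambda^*>_{\mathrm{lex}}\mu^*$ that is actually needed (and that you use). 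One caveat on your last step for part (1): what you prove is $\lambda^*=\max_{>}\{\mu\mid P_\mu\in\signSignature(V_\lambda)\}$, which is the content of Theorem~\ref{thm:TypeARecoverIrrepsAlgorithm}, and ``transposing'' does not convert this into the displayed formula $\lambda^*=\max_{>}\{\mu^*\mid\mu\in\signSignature(V_\lambda)\}$, because transposition is not monotone for the lexicographic order; taken literally that displayed set always contains $[n]=[1,\dots,1]^*$ (the trivial parabolic is in every sign signature), so its lex-maximum is $[n]$, not $\lambda^*$. This is a dual-placement defect in the statement as printed rather than in your mathematics, and the same issue forces $P_i=P_{\lambda_i}$ in part (2) to be read as $P_{\lambda_i^*}$ --- which is the reading your argument (correctly) adopts when you verify membership via $K_{\lambda_i^*,\lambda_i^*}=1$ and exclusion via $K_{\lambda_i^*,\lambda_j^*}=0$.
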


To recover $\lambda $ from $\signSignature (V_\lambda)$, we take the dual partitions $S=\{\mu^*|P_\mu\in\signSignature (V_\lambda)\}$, find the lexicographically maximal element of $S$, and take its dual. 

Let $\mathbb C[\Pi]$ be the formal vector space generated freely over $\mathbb C$ by the elements of $\Pi$. 
In other words, an element of $\mathbb C[\Pi]$ is a formal sum of the form $\displaystyle \sum_{\sigma} a_\sigma P_\sigma $, where $a_\sigma$ are arbitrary complex numbers and $\sigma$ runs over the partitions of $n$.
\begin{definition}\label{def:typeA:SignMult}
Given a representation $V$, define $\signMult (V)\in \mathbb C[\Pi] $ as the formal sum 
\[
\signMult(V)=\sum_{|\sigma|=n} \dim\Hom_{P_\sigma} \left(\signRep{P_{\sigma}}, V \right) P_\sigma\quad ,
\]
where $\sigma$ runs over the partitions of $n$.
\end{definition} 
In the terminology of Definition \ref{def:typeA:SignMult}, Proposition \ref{prop:typeA:RecoverPartitionFromSignSignature} can be restated as follows.  Order the representations $V_{\lambda}$ in the order $V_{\lambda_1}, V_{\lambda_2},\dots$ given by $\lambda_1^*< \lambda_2^*$. Then the matrix formed by the coordinates $a_{\nu}$ of the irreducible representations $V_{\lambda}$, form a triangular matrix with non-zero diagonal entries.
Linear algebra implies the following.

\begin{corollary}\label{cor:typeA:recoverArbitraryRepFromSignSignature}~
\begin{enumerate}
\item The vectors $\signMult(V_\lambda) $ obtained by letting $V_\lambda$ run over the set of irreducible representations of $S_n$ are linearly independent.
\item $\signMult(V)=\signMult(U)$ if and only if $V\simeq U$.
\item \label{cor:typeA:MatrixSignMult} The matrix $ \left(\dim\Hom_{P_\sigma}\left( \signRep{P_\sigma}, V_{\lambda}\right) \right)_{\sigma, \lambda }$ is square and has non-zero determinant.
\end{enumerate}
\end{corollary}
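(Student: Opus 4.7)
The plan is to derive all three claims from the triangularity of the multiplicity matrix $M = \bigl(\dim\Hom_{P_\sigma}(\signRep{P_\sigma}, V_\lambda)\bigr)_{\sigma,\lambda}$, which Proposition \ref{prop:typeA:RecoverPartitionFromSignSignature}(2) already essentially provides.

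First I would observe that both the parabolic conjugacy classes $\{P_\sigma\}$ and the irreducible representations $\{V_\lambda\}$ of $S_n$ are indexed by the partitions of $n$, so $M$ is square of size $N\times N$, where $N$ is the number of such partitions. This is the squareness half of (3). Next, I would order both the row index $\sigma$ and the column index $\lambda$ according to the enumeration $\lambda_1^* > \lambda_2^* > \cdots > \lambda_N^*$ introduced before Proposition \ref{prop:typeA:RecoverPartitionFromSignSignature}. Part (2) of that proposition says exactly that $M_{P_i, V_{\lambda_i}} > 0$ (since $P_i \in \signSignature V_{\lambda_i}$), while $M_{P_i, V_{\lambda_j}} = 0$ whenever $j < i$ (since $P_i \notin \signSignature V_{\lambda_j}$). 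Hence $M$ is lower triangular with strictly positive diagonal, so $\det M \neq 0$, finishing (3).

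Claim (1) is then immediate: the columns of $M$ are the coordinate vectors of $\signMult(V_{\lambda_i})$ in the basis $\{P_\sigma\}$ of $\mathbb{C}[\Pi]$, and the columns of a square matrix with nonzero determinant are linearly independent. For claim (2), the reverse implication is trivial since $\dim\Hom_P(\signRep{P}, -)$ depends only on the isomorphism class of its second argument. For the forward implication I would use additivity of $\signMult$: writing $V \cong \bigoplus_\lambda m_\lambda(V)\, V_\lambda$ and similarly for $U$, one has $\signMult(V) = \sum_\lambda m_\lambda(V)\,\signMult(V_\lambda)$, and the linear independence from (1) forces $m_\lambda(V) = m_\lambda(U)$ for every $\lambda$, hence $V \cong U$.

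There is essentially no obstacle here; the argument is pure linear algebra once Proposition \ref{prop:typeA:RecoverPartitionFromSignSignature} is in hand. The only point worth emphasizing is that squareness of $M$ relies on the double role played by partitions of $n$: they simultaneously index the conjugacy classes of parabolic subgroups of $S_n$ (through $\Pi$) and the irreducible $S_n$-representations.
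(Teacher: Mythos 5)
Your argument is correct and is exactly the route the paper takes: the corollary is derived from the triangularity (with nonzero diagonal) of the multiplicity matrix supplied by Proposition \ref{prop:typeA:RecoverPartitionFromSignSignature}(2), together with standard linear algebra and complete reducibility, and your observation that partitions of $n$ index both the parabolic classes and the irreducibles is precisely what makes the matrix square. One small caveat: Proposition \ref{prop:typeA:RecoverPartitionFromSignSignature}(2) asserts $P_j\notin\signSignature V_{\lambda_i}$ for $j<i$, not your transposed version $P_i\notin\signSignature V_{\lambda_j}$; this only swaps which triangle of the matrix vanishes and does not affect the conclusion.
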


Hence, up to isomorphism, an arbitrary representation $V$ can be identified  with the tuple $ \left(\dim \Hom_{P_{\sigma}} \left( \signRep{P_{\sigma}}, V\right), \dots\right)_{\sigma} $, where $\sigma$ runs over the partitions of $n$. 
To recover the multiplicities of each irreducible constituent of $V$ from $\signMult V$, one multiplies the vector-column $ \left(\dim \Hom_{P_{\sigma}} \left( \signRep{P_{\sigma}}, V\right), \dots\right)_{\sigma} $ on the left by the inverse of the matrix given in Corollary \ref{cor:typeA:recoverArbitraryRepFromSignSignature}.\ref{cor:typeA:MatrixSignMult}).
\section{Types $B$ and $C$}\label{sec:typeB}
In type $A$, we characterized an arbitrary representation $V$ via the multiplicities of the sign representation of the parabolic subgroups. 
For an irreducible representations in type $B$, we give a similar characterization in Corollary \ref{cor:typeB:irreps}. 
This in turn implies an algorithm for recovering irreducible representations from sign signatures given in  Corollary \ref{cor:typeB:recoveringIrrepAlgorithm}. 
A short rephrasing of this corollary is given in the following.

\begin{theorem}\label{thm:TypeBRecoverIrrepsAlgorithm}
Let $V=V_{\lambda^*,\mu^*}$ be an irreducible representation of $B_n$.
Then $\lambda$ and $\mu$ can be computed from the sign signature of $V$ as follows:
\begin{enumerate}
\item Let $\alpha=(\alpha_1,\dots,\alpha_k)$ be the lexicographically largest partition such that $P_{\alpha,\emptyset}$ belongs to the sign signature of $V$.
\item For each $i$ between $1$ and $k$, let $\beta^{(i)}=\left(\beta^{(i)}_1,\dots,\beta^{(i)}_{k_i}\right)$ be the lexicographically largest partition such that $P_{\left(\beta^{(i)}_1,\dots,\beta^{(i)}_{i-1},\beta^{(i)}_{i+1},\dots,\beta^{(i)}_{k_i}\right),\left(\beta^{(i)}_i\right)}$ lies in the sign signature of $V$.
\item Then $\lambda=\left(\beta^{(1)}_1,\dots,\beta^{(k)}_k\right)$ and $\mu=\alpha-\lambda$.
\end{enumerate}
\end{theorem}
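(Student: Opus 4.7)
The plan is to mirror the type-$A$ argument (Proposition~\ref{prop:typeA:RecoverPartitionFromSignSignature}), now working with the bipartitions indexing the irreducible representations of $W(B_n)$. The engine is the type-$B$ sign multiplicity formula of Proposition~\ref{prop:typeB}, which I expect to express $\dim\Hom_P(\signRep{P},V_{\lambda^*,\mu^*})$ for $P=P_{(p_1,\dots,p_k),(q)}=S_{p_1}\times\cdots\times S_{p_k}\times W(B_q)$ as a sum, over splittings of the $A$-content between $\lambda$ and $\mu$, of products of Kostka numbers of the form $K_{\lambda^*,\beta^*}\,K_{\mu^*,\gamma^*\oplus(q)}$ or a suitable dual variant, the $W(B_q)$-factor contributing its sign character by consuming a single piece of size $q$ from the $\mu$-side. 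This formula will play the same role here that Proposition~\ref{prop:typeA} played in type $A$, and is invoked in the statement through Corollaries~\ref{cor:typeB:irreps} and \ref{cor:typeB:recoveringIrrepAlgorithm} that the theorem is rephrasing.

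For step~(1), I restrict to pure type-$A$ parabolics $P_{\alpha,\emptyset}$. The multiplicity formula collapses to a sum $\sum_{\alpha=\beta+\gamma}K_{\lambda^*,\beta^*}\,K_{\mu^*,\gamma^*}$, where $\beta+\gamma$ is the componentwise sum of two tuples that partition the parabolic content between the two sides of the bipartition. Nonvanishing forces $\lambda^*\geq\beta^*$ and $\mu^*\geq\gamma^*$ in dominance, and since dominance refines lexicographic order, the lexicographically largest admissible $\alpha$ is the pointwise sum $\lambda+\mu$ (attained by $\beta=\lambda,\gamma=\mu$ with both Kostka numbers equal to~$1$). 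This identifies $\alpha=\lambda+\mu$ and in particular fixes the length $k$ of $\lambda$.

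For step~(2), fix $i\in\{1,\dots,k\}$ and examine parabolics $P_{\nu,(q)}$ with a single $W(B_q)$ factor. The $B$-sign mechanism extracts a part of $\mu$ (equivalently, a column from $\mu^*$) of a prescribed size $q$, leaving a pure type-$A$ problem for the remaining content; the lexicographic optimization then runs exactly as in type $A$, with the additional constraint imposed by how $q$ is reabsorbed into the bipartition. Imposing that the extracted piece occupies the $i$-th slot of the resulting partition $\beta^{(i)}$ singles out the splitting in which $\lambda_i$ goes into the $B$-factor while the other slots take the maximal values $\lambda_j+\mu_j$ for $j\neq i$. Hence $\beta^{(i)}_i=\lambda_i$ and one reads off the full partition $\lambda=(\beta^{(1)}_1,\dots,\beta^{(k)}_k)$ entry by entry. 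Step~(3), $\mu=\alpha-\lambda$, is then a trivial consequence of step~(1).

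The main obstacle will be executing step~(2) rigorously: one must verify that, under the constraint ``the $i$-th entry of $\beta^{(i)}$ is the one placed in the $B$-slot'', the lex-maximum is uniquely realized by the splitting that pulls $\lambda_i$ out of the bipartition. This is a two-parameter lex-maximization (over both the $A$-content distribution and the choice of column extracted from $\mu$), and the argument needs the Kostka-product formula of Proposition~\ref{prop:typeB} together with a comparison of competing splittings that shows why the greedy choice is forced. Modulo that bookkeeping, the rest of the proof is, as advertised in the statement, a direct corollary of the more detailed Corollaries~\ref{cor:typeB:irreps} and \ref{cor:typeB:recoveringIrrepAlgorithm}.
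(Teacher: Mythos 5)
Your setup coincides with the paper's: the engine is indeed the Kostka-product formula of Proposition~\ref{prop:typeB} (specialized in Corollary~\ref{cor:typeBsignMultNonGeneralizedParabolics}), and your step~(1) --- restricting to pure type-$A$ parabolics and using the partial-sum/dominance bound to show the lex-maximal admissible content is $\lambda+\mu$, attained by the single term $K_{\lambda,\lambda}K_{\mu,\mu}=1$ --- is exactly Case~1 of the paper's Proposition~\ref{prop:typeB:differentIrrepsDifferentSignSignatures}. The gap is step~(2), which you flag as ``the main obstacle'' and then leave ``modulo that bookkeeping.'' That bookkeeping is not an afterthought; it is the entire content of Case~2 of Proposition~\ref{prop:typeB:differentIrrepsDifferentSignSignatures}, and the theorem is not proved without it. Concretely, with the first $i-1$ type-$A$ slots pinned at $\alpha_1,\dots,\alpha_{i-1}$, one must show that a summand $K_{\lambda,\alpha'}K_{\mu,\beta'\oplus(q)}$ survives only if (i) the first $i-1$ entries of $\alpha'$ and $\beta'$ agree with those of $\lambda$ and $\mu$ respectively, because the partial sums are already maximal so each of the first $i-1$ labels must exactly fill the corresponding row of its tableau; and (ii) the $q$ copies of the largest label must then be placed in rows $\geq i$ of the second shape with at most one per column, which by column-strictness bounds $q$ by that shape's $i$-th row length. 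This, plus Corollary~\ref{cor:KostkaNumberInvarintUnderSn} to permute the content for the attainability half, is the argument that forces the greedy choice; nothing in your sketch supplies it.

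Two further symptoms that the verification is genuinely unresolved rather than merely omitted: first, your opening paragraph has the $B_q$-factor consuming its block from the $\mu$-side of the bipartition, while your step~(2) concludes that $\lambda_i$ ``goes into the $B$-factor'' --- under a single convention only one of these can hold, and deciding which is exactly what the tableau argument settles. Second, your closing appeal to Corollary~\ref{cor:typeB:recoveringIrrepAlgorithm} is circular: the theorem \emph{is} a rephrasing of that corollary, so it cannot be invoked as the missing justification.
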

Details on the notation used in Theorem \ref{thm:TypeARecoverIrrepsAlgorithm} can be found in the rest of this section.

This characterization of irreducibles cannot be extended to all representations in type $B$ where the number of conjugacy types of parabolic subgroups is smaller than that of irreducible representations.

However, if we replace the set of parabolic subgroups with the set of generalized parabolic subgroups - i.e, sums of type $A$ and possibly more than one type $B$ factors - we get a characterization of all representations in type $B$. 
Indeed, in Corollary \ref{cor:typeB:recoverArbitraryRepFromGeneralizedSignSignature} we show that the sign multiplicities over the generalized parabolic subgroups determine up to isomorphism the representations in type $B$. 
Of course, this result is not surprising in view of the fact that generalized parabolics give rise to Macdonald polynomials and therefore can be used to construct all irreducible representations of $B_n$.

We construct general branching rules from $B_n$ over subgroups of the forms $S_n$ and  $B_{i}\times B_{n-i} $. 
Then we extract branching rules of  irreducible representations of $B_n$ down to generalized parabolic subgroups, and  
we produce formulas for the multiplicities of the sign representations of such subgroups.
Finally, we use these formulas to characterize representation of $B_n$ via the sign multiplicities. 
\subsection{Preliminaries and notation}
Throughout this section we use the tableaux notation and terminology fixed in Section \ref{sec:typeA}. 

The Weyl groups of type $B$ and $C$ coincide and are denoted by $B_n$, the semidirect product,
\begin{equation}\label{eqBnDefinition}
B_n = S_n \ltimes \mathbb Z_2^n   .
\end{equation}
The structure of the semidirect product can be described by the natural representation of $B_n$. 
This is obtained by realizing $S_n$ via its natural representation (the $n\times n$ permutation matrices) and $\mathbb Z_2^n$ as the matrices $\{ \diag (\pm 1, \dots, \pm 1) \}$. 
Also we can think of $B_n $ as the automorphism group of the following graph.

\begin{equation} \label{eq:graphTypeBC}
\xygraph {
!{<0cm, 0cm>;<1cm, 0cm>:<0cm, 1cm>: :}
!{ ( 0, 0) } *+{\substack{a_1 \\ \bullet}}="a1"
!{ ( 0,-1) } *+{\substack{a_2 \\ \bullet}}="a2"
!{ ( 0,-3) } *+{\dots}
!{ ( 0,-5) } *+{\substack{a_{n-1} \\ \bullet}}="a3"
!{ ( 0,-6) } *+{\substack{a_{n} \\ \bullet}}="a4"
!{ (-1, 0) } *+{\substack{\bullet \\ b_1}}="b1"
!{ (-1,-1) } *+{\substack{\bullet \\ b_2}}="b2"
!{ (-1,-3) } *+{\dots}
!{ (-1,-5) } *+{\substack{\bullet \\ b_{n-1}}}="b3"
!{ (-1,-6) } *+{\substack{\bullet \\ b_{n}}}="b4"
"a1"-"b1" "a2"-"b2"  "a3"-"b3" "a4"-"b4"
}
\end{equation}
In this model for $B_n$, the group $\mathbb Z_2^n$ is the group consisting of elements that preserve edges but swap their endpoints.

The parabolic subgroups of $B_n$ are of two types:

\begin{equation} \label{eqParabolicsTypeAInBn} 
P_{(p_1,\dots, p_k), \emptyset} \simeq  S_{p_1} \times \dots \times  S_{p_k} 
\end{equation}
and
\begin{equation}\label{eqParabolicsMixedTypeInBn} 
P_{(p_1,\dots,p_{k}), (p_{k+1})} =S_{p_1} \times \dots \times  S_{p_{k}} \times  B_{p_{k+1}} 
\end{equation}
where $\sum\limits_{j=1}^{k+1} p_j = n$. 
We extend our notation to 
\begin{equation}\label{eqParabolicsGeneralizedBn} 
P_{(p_1,\dots,p_{k}), (p_{k+1},\dots, p_{l})} =S_{p_1} \times \dots \times  S_{p_{k}} \times  B_{p_{k+1}}\times \dots \times B_{p_l}.
\end{equation}
Here, each factor-group $S_{p_j} $ is embedded in $S_n$ as the permutation of the consecutive indices $ \{a+1, a+2, \dots, a+p_j\}$, where $a=\sum\limits_{i=1}^{j-1} p_i$, and similarly $B_{p_j}$ is embedded as the subgroup generated by $S_{p_{j}}$ and the diagonal matrices of the form $\diag( \underbrace{1,\dots, 1}_{a\text{ copies}}, \underbrace{\pm 1, \dots,\pm 1}_{p_j \text{ copies}}, 1,\dots, 1) $. 
We call subgroups of the form \eqref{eqParabolicsGeneralizedBn} \emph{generalized parabolic subgroups}. 
Generalized parabolic subgroups with more than one type $B$ factor are not parabolic - indeed, they are the reflection groups generated by non-additively closed root subsystems (see Definition \ref{def:additivelyClosedRootSubsystem} below). 
Consequently generalized parabolic subgroups do not in general arise as the Weyl groups of the Levi parts of parabolic Lie subalgebras. 

\subsubsection{Preliminaries: the little groups method of Wigner and Mackey}\label{sec:LittleGroups}
The irreducible representations of a semidirect sum of a finite group with an abelian normal group are constructed from the irreducible representations of the starting finite group via the ``little groups'' method of Wigner and Mackey. 
For the reader's convenience, we recall the method (with minor modifications in notation) from \cite[\S 8.2]{Serre:FiniteGroupsRepresentations}.

Let $W$ be a group of the form $ G\ltimes H$ with $H$ abelian and 

\[
X= \Hom(H, \mathbb C^*)  .
\]
the group of characters of $H$.
Then $G$ acts $X$ via
\[
g(\chi)(h)= \chi(g^{-1}h g)\quad \text{for } h\in H, \chi \in X, g\in G .
\]
Let $\{\chi_i\}_{\chi_i\in X/G}$ be a set of representatives of the orbits of the characters under the action of $G$. 
For each $\chi_i$ denote by $G_i$ the stabilizer of $\chi_i$ in $X $ and let $W_i$ be the subgroup generated by $G_i$ and $H$. 
Thus
\begin{equation}\label{eq:DefAbelianCharacterStabilizer}
\begin{array}{rcl}
G_i &=& \{g| g(\chi_i)=\chi_i \} \\
W_i &=& G_i\ltimes H.
\end{array}
\end{equation}
We extend the function $\chi_i$ to $W_i$ by setting 
\[
\chi_i(h g)=\chi_i(h) \quad \text{ for } h \in H, g\in G_i.
\]

If $A$ is a subgroup of a group $B$, and a representation of $V$ of $A$, then $\Ind_{A}^B V $ denotes the representation of $B$ induced from $V$.

\begin{theorem} (\cite[Proposition 25, {\S}8.2]{Serre:FiniteGroupsRepresentations}, Wigner and Mackey's little groups method) \label{theoremLittleGroupsMethod}
\begin{enumerate}
\item \label{theoremLittleGroupsMethodPart1} Let $V$ be an irreducible representation of $G$. Then 
\[
\Ind_{W_i}^W \left(V\otimes \chi_i \right)
\]
is an irreducible representation of $W$.

\item The irreducible representations given in \ref{theoremLittleGroupsMethodPart1}) are pairwise distinct and exhaust the irreducible representations of $W$.
\end{enumerate}

\end{theorem}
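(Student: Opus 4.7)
The proof will use Mackey's irreducibility criterion together with the fact that $H$ is normal in $W$. First I would extend each $\chi_i$ to a one-dimensional character of $W_i = G_i\ltimes H$ by declaring it trivial on $G_i$; this is a well-defined homomorphism precisely because $G_i$ stabilizes $\chi_i$, so conjugation of $H$ by $G_i$ is compatible with $\chi_i$. Given an irreducible $G_i$-representation $V$ (pulled back to $W_i$ via the quotient $W_i\to G_i$), the tensor $V\otimes\chi_i$ is then a $W_i$-representation whose irreducibility is immediate, since its restriction to $G_i$ is $V$ and any $W_i$-invariant subspace is in particular $G_i$-invariant.

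For part (1) I would then apply Mackey's criterion: $\Ind_{W_i}^W(V\otimes\chi_i)$ is irreducible provided that $V\otimes\chi_i$ is irreducible and, for every $w\in W\setminus W_i$, the restrictions of $V\otimes\chi_i$ and of its $w$-conjugate to $W_i\cap wW_iw^{-1}$ share no common constituent. The decisive observation is that $H$ is normal in $W$, hence contained in every such intersection, so it suffices to compare the two as $H$-modules. Now $(V\otimes\chi_i)|_H$ is a multiple of $\chi_i$, while the $w$-conjugate restricts to a multiple of $w\cdot\chi_i$. If these shared a constituent we would have $w\cdot\chi_i = \chi_i$, placing $w$ in the $W$-stabilizer of $\chi_i$; but $H$ is abelian and acts trivially on $X$, so this stabilizer is exactly $W_i$, contradicting $w\notin W_i$.

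For part (2), distinctness across different orbit representatives follows by the same $H$-restriction principle: the $H$-isotypic decomposition of $\Ind_{W_i}^W(V\otimes\chi_i)$ picks out precisely the $G$-orbit of $\chi_i$, so an isomorphism of two induced representations forces the same orbit. Within a fixed orbit, Frobenius reciprocity combined with the Mackey decomposition of $\Res_{W_i}^W\Ind_{W_i}^W(V'\otimes\chi_i)$ isolates, from the $\chi_i$-isotypic piece, only the summand $V'\otimes\chi_i$; consequently $\Hom_W(\Ind(V\otimes\chi_i),\Ind(V'\otimes\chi_i))\cong\Hom_{G_i}(V,V')$, which vanishes unless $V\cong V'$. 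Exhaustion is then a dimension count: using $[W:W_i]=[G:G_i]$ and $\sum_V(\dim V)^2 = |G_i|$, the total squared dimension of the constructed irreducibles is $\sum_i[G:G_i]^2|G_i| = |G|\sum_i[G:G_i] = |G|\cdot|X| = |G|\cdot|H| = |W|$, matching $\sum_\pi(\dim\pi)^2 = |W|$ for the full irreducible spectrum.

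The main obstacle is organizing the Mackey criterion cleanly; the normality of $H$ in $W$ is what short-circuits an otherwise delicate double-coset analysis by allowing us to detect disjointness on the abelian subgroup $H$ alone. A secondary subtlety is the identification of the full $W$-stabilizer of $\chi_i$ with $W_i$, which comes down to the trivial action of the abelian group $H$ on its own character group.
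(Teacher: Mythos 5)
Your proposal is a correct and complete proof. Note, however, that the paper does not prove this statement at all: it is quoted verbatim from Serre (Proposition 25, \S 8.2 of his book on linear representations of finite groups) and used as a black box, so there is nothing in the paper to compare against line by line. Your argument is the standard self-contained one: (i) the extension of $\chi_i$ to $W_i$ is a homomorphism exactly because $G_i$ stabilizes $\chi_i$; (ii) Mackey's irreducibility criterion reduces to comparing $H$-restrictions because $H$ is normal, hence contained in every intersection $W_i\cap wW_iw^{-1}$, and the full $W$-stabilizer of $\chi_i$ equals $W_i$ since the abelian group $H$ acts trivially on its own character group; (iii) distinctness follows from the $H$-isotypic decomposition across orbits and from $\Hom_W(\Ind(V\otimes\chi_i),\Ind(V'\otimes\chi_i))\cong\Hom_{G_i}(V,V')$ within an orbit; (iv) exhaustion is the count $\sum_i [G:G_i]^2\,\lvert G_i\rvert = \lvert G\rvert\,\lvert X\rvert = \lvert G\rvert\,\lvert H\rvert=\lvert W\rvert$, where $\lvert X\rvert=\lvert H\rvert$ uses that $H$ is finite abelian. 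Serre's own proof detects irreducibility by isolating the $\chi_i$-isotypic component of the induced module directly rather than invoking Mackey's criterion, but the two arguments are essentially equivalent, both hinging on the same disjointness of $H$-characters across cosets of $W_i$. Two minor points of hygiene: Mackey's criterion only needs to be checked on representatives of the nontrivial double cosets $W_i\backslash W/W_i$, so checking all $w\notin W_i$ as you do is (more than) sufficient; and in step (iii) one should observe, as your stabilizer computation already implies, that for $s\notin W_i$ the character $\chi_i$ does not occur in the $G_i$-orbit of $s\cdot\chi_i$, so the $s\neq e$ Mackey summands contribute nothing to the $\chi_i$-isotypic part.
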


\subsubsection{Preliminaries: the irreducible representations of $B_n$}
The irreducible representations of $B_n$ are well known (see for example \cite[page 375]{Carter:FiniteGroupsLieType}).
However, it is in our interest to reinterpret these irreducible representations via the little groups method of Wigner and Mackey (Section \ref{sec:LittleGroups}). 
Assume $G= S_n$ and $H=\mathbb Z_2^n$ and let $\epsilon_{1}$  and $\epsilon_{-1}$ stand respectively for the trivial and the non-trivial character of $\mathbb Z_2$. 
Then the characters $X$ of $\mathbb Z_2^n$ are of the form 
\[ 
\epsilon_{\pm1} \otimes  \dots  \otimes  \epsilon_{\pm1}.
\]
A set $X/S_n$ of orbit representatives of $X$ under the action of $S_n$ is then given by 
\begin{equation}\label{eqZ2characterTypeB}
\{\chi_i|\chi_i= \underbrace{\epsilon_{1}\otimes \dots \otimes \epsilon_1}_{i \text{ copies}}\otimes \underbrace{\epsilon_{-1}\otimes \dots \otimes \epsilon_{-1}}_{n-i \text{ copies}}, 0\leq i\leq n  \}.
\end{equation}
The stabilizer $G_i$ of $\chi_i$ under the action of $S_n$ is the group $S_{i}\times S_{n-i}$. 
Under the natural matrix realization of $W=B_n$, $S_n$ is the permutation group of the indices $\{1,\dots, n\}$ and so the factors of $S_{i}\times S_{n-i}$ are realized  as the permutations of the indices $\{1,\dots, i\}$ and $\{i+1, \dots, n\}$ giving the the following chain of embeddings
\[
G_i=S_i\times S_{n-i}\hookrightarrow S_n\hookrightarrow S_n\ltimes \mathbb Z_2^n =B_n.
\]

Also we have:
\begin{equation}\label{eqBitimesBn-1def}
\begin{array}{rcl}
\displaystyle W_i&=&\displaystyle G_i\ltimes \mathbb Z_2^n \\
&=&\displaystyle \left(S_i\times S_{n-i}\right)\ltimes \mathbb Z_2^n \\
&\simeq&\displaystyle  B_{i}\times B_{n-i}.
\end{array}
\end{equation}
The set of irreducible representations of $S_i\times S_{n-i}$ consists (up to isomorphism) of the representations $V_\lambda \otimes V_\mu$ with  $ |\lambda|=i$ and $|\mu|=n-i$. 
Here $V_\lambda$ and $V_\mu$ are the type $A$ irreducible representations fixed in Section \ref{sec:typeA:preliminaries:branchinglaws}. 
Theorem \ref{theoremLittleGroupsMethod} now yields the following well-known characterization of the irreducible representations of $B_n$.
\begin{corollary}\label{cor:typeB:irreps}
The set of irreducible representations of $W=B_n$ consists (up to isomorphism) of the (pairwise non-isomorphic) representations
\begin{equation}\label{eqIrrepsTypeB}
V_{\lambda, \mu} =\Ind_{B_{i}\times B_{n-i} }^{B_n}\left( V_\lambda\otimes V_\mu \otimes \chi_i\right),
\end{equation}
where $i=|\lambda|$ and $|\lambda|+|\mu|=n$.
\end{corollary}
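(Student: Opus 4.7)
The plan is to apply Theorem \ref{theoremLittleGroupsMethod} (Wigner--Mackey) directly to the splitting $B_n = S_n \ltimes \mathbb{Z}_2^n$ recorded in \eqref{eqBnDefinition}, using $G = S_n$ and $H = \mathbb{Z}_2^n$. All required preparatory data---the orbit representatives $\chi_i$, the stabilizers $G_i$, and the little groups $W_i$---are either already established in the paragraph preceding the corollary or follow from a short check.

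First I would verify the combinatorial content of \eqref{eqZ2characterTypeB}: the $S_n$-action on $X = \Hom(\mathbb{Z}_2^n, \mathbb{C}^*)$ permutes the tensor factors, so two characters of the form $\epsilon_{\pm 1} \otimes \cdots \otimes \epsilon_{\pm 1}$ lie in the same orbit if and only if they have the same number of $\epsilon_{-1}$ entries, giving the representatives $\chi_i$ for $0 \le i \le n$. A direct stabilizer computation yields $G_i = S_i \times S_{n-i}$, and combining with the factorization $\mathbb{Z}_2^n = \mathbb{Z}_2^i \times \mathbb{Z}_2^{n-i}$ compatible with that subgroup gives
\[
W_i = G_i \ltimes \mathbb{Z}_2^n \simeq B_i \times B_{n-i},
\]
the product decomposition recorded in \eqref{eqBitimesBn-1def}, realized via the embedding conventions of \eqref{eqParabolicsGeneralizedBn}.

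Next I would recall that the irreducible representations of $G_i = S_i \times S_{n-i}$ are exactly the outer tensor products $V_\lambda \otimes V_\mu$ with $|\lambda| = i$ and $|\mu| = n - i$, by the type $A$ classification of Section \ref{sec:typeA}. Feeding each such representation into Theorem \ref{theoremLittleGroupsMethod}\eqref{theoremLittleGroupsMethodPart1} yields the irreducible $B_n$-module
\[
V_{\lambda,\mu} = \Ind_{B_i \times B_{n-i}}^{B_n} \bigl( V_\lambda \otimes V_\mu \otimes \chi_i \bigr),
\]
and part (2) of the same theorem certifies both pairwise non-isomorphy and exhaustion of the set of irreducibles of $B_n$. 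Since $i$ is forced by $\lambda$ via $i = |\lambda|$, the indexing by ordered pairs of partitions $(\lambda, \mu)$ with $|\lambda| + |\mu| = n$ is unambiguous.

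The main---and essentially only---obstacle is the bookkeeping between the abstract little-groups data $(G_i, H, W_i)$ and the concrete realization of $B_i \times B_{n-i}$ inside $B_n$ fixed in \eqref{eqParabolicsGeneralizedBn}; in particular one must check that $\chi_i$ does extend as a character to $W_i$, which is immediate because $G_i$ stabilizes $\chi_i$ by construction. As a final sanity check I would count: bipartitions of $n$ are in bijection with the signed-cycle types of signed permutations, which enumerate the conjugacy classes of $B_n$ and hence match the expected number of irreducibles.
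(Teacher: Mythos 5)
Your proposal is correct and follows essentially the same route as the paper: both apply the Wigner--Mackey little groups theorem (Theorem \ref{theoremLittleGroupsMethod}) to the splitting $B_n=S_n\ltimes \mathbb Z_2^n$, using the orbit representatives $\chi_i$, the stabilizers $G_i=S_i\times S_{n-i}$, the identification $W_i\simeq B_i\times B_{n-i}$ of \eqref{eqBitimesBn-1def}, and the type $A$ classification of the irreducibles of $S_i\times S_{n-i}$. The concluding conjugacy-class count is a harmless extra check not present in the paper.
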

In the above notation, the sign (determinant) representation in type $B_n$ is quickly seen to be
\begin{equation}\label{eqSignRepTypeB}
\signRep{B_n}=V_{\emptyset, \pi}
\end{equation}
where as usual $\pi=[\underbrace{1,\dots, 1}_{n}]$ is the partition that corresponds to the sign representation of $S_{n}$, and $\emptyset$ denotes the empty partition.

When dealing with the type $D$ case, we will need the following.
\begin{corollary}
\[V_{\lambda,\mu}\otimes \signRep {B_n}\simeq V_{\mu^*, \lambda^*} \]
\end{corollary}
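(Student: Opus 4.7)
The plan is to combine the Wigner--Mackey description $V_{\lambda,\mu}=\Ind_{B_i\times B_{n-i}}^{B_n}(V_\lambda\otimes V_\mu\otimes \chi_i)$ (where $i=|\lambda|$) with the projection formula
\[
\Ind_H^G(U)\otimes V \;\simeq\; \Ind_H^G\bigl(U\otimes \Res_H^G V\bigr).
\]
Since $\signRep{B_n}$ is one-dimensional, $V_{\lambda,\mu}\otimes\signRep{B_n}$ is automatically irreducible, so it will suffice to identify it as an induced representation in the form prescribed by Corollary~\ref{cor:typeB:irreps}.

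The first step is to compute $\Res^{B_n}_{B_i\times B_{n-i}}\signRep{B_n}$. Because $\signRep{B_n}$ sends every simple reflection to $-1$, its restriction to the Levi $B_i\times B_{n-i}$ is just $\signRep{B_i}\otimes\signRep{B_{n-i}}$. The projection formula then gives
\[
V_{\lambda,\mu}\otimes\signRep{B_n}\;\simeq\;\Ind_{B_i\times B_{n-i}}^{B_n}\!\Bigl((V_\lambda\otimes V_\mu\otimes\chi_i)\otimes(\signRep{B_i}\otimes\signRep{B_{n-i}})\Bigr).
\]
On the $S_i\times S_{n-i}$ factor, Lemma~\ref{le:typeA:tensoringbySignrepstarsLambda} yields $V_\lambda\otimes\signRep{S_i}\simeq V_{\lambda^*}$ and $V_\mu\otimes\signRep{S_{n-i}}\simeq V_{\mu^*}$. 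On the $\mathbb Z_2^n$ factor, the product of $\chi_i=\epsilon_1^{\otimes i}\otimes\epsilon_{-1}^{\otimes(n-i)}$ with $\signRep{B_n}|_{\mathbb Z_2^n}=\epsilon_{-1}^{\otimes n}$ is the character $\chi'=\epsilon_{-1}^{\otimes i}\otimes\epsilon_1^{\otimes(n-i)}$. Altogether,
\[
V_{\lambda,\mu}\otimes\signRep{B_n}\;\simeq\;\Ind_{B_i\times B_{n-i}}^{B_n}\bigl(V_{\lambda^*}\otimes V_{\mu^*}\otimes \chi'\bigr).
\]

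The final step is to compare this with $V_{\mu^*,\lambda^*}=\Ind_{B_{n-i}\times B_i}^{B_n}\bigl(V_{\mu^*}\otimes V_{\lambda^*}\otimes\chi_{n-i}\bigr)$, where in the paper's convention the first factor $B_{n-i}$ is realized on indices $\{1,\dots,n-i\}$. Let $w\in S_n\subset B_n$ be the permutation that sends $\{1,\dots,n-i\}$ to $\{i+1,\dots,n\}$ and $\{n-i+1,\dots,n\}$ to $\{1,\dots,i\}$; then $w\chi_{n-i}w^{-1}=\chi'$ and $w(B_{n-i}\times B_i)w^{-1}=B_i\times B_{n-i}$ (with the two tensor factors swapped). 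Because conjugation by $w$ inside $B_n$ intertwines the two induced modules, we obtain
\[
V_{\mu^*,\lambda^*}\;\simeq\;\Ind_{B_i\times B_{n-i}}^{B_n}\bigl(V_{\lambda^*}\otimes V_{\mu^*}\otimes\chi'\bigr)\;\simeq\; V_{\lambda,\mu}\otimes\signRep{B_n},
\]
which is the desired isomorphism.

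The argument is essentially a single projection-formula calculation; the main obstacle is bookkeeping, namely correctly tracking the orbit representative of the $\mathbb Z_2^n$-character after multiplying by the sign character and then re-absorbing the resulting $S_n$-conjugation into a repackaging of the induced module in the paper's chosen normalized form for $V_{\mu^*,\lambda^*}$.
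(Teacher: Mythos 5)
Your proof is correct and follows essentially the same route as the paper: both restrict $\signRep{B_n}$ to the inducing subgroup $B_i\times B_{n-i}$, apply Lemma \ref{le:typeA:tensoringbySignrepstarsLambda} on the symmetric-group factors, track the twisted $\mathbb Z_2^n$-character $\epsilon_{-1}^{\otimes i}\otimes\epsilon_1^{\otimes(n-i)}$, and re-identify the resulting induced module as $V_{\mu^*,\lambda^*}$ by relabeling/conjugating the two block factors. The only cosmetic difference is that you invoke the projection formula as an isomorphism up front, whereas the paper uses the containment of the induced twist inside the tensor product together with irreducibility of the one-dimensional twist to reach the same conclusion.
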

\begin{proof}
In the notation of Corollary \ref{cor:typeB:irreps}, let $|\lambda|=i$ and $|\mu |=n-i$. 
By \eqref{eqBitimesBn-1def} $\signRep {B_n}$ restricts to the $B_i\times B_{n-i}\simeq  S_i\times S_{n-i}\ltimes \mathbb Z_2^n$-representation, $V$, given by 
\[
V=\signRep S_i\otimes \signRep S_{n-i} \otimes \chi_n .
\] 
This implies the $ S_i\times S_{n-i}\ltimes \mathbb Z_2^n$-module isomorphism 
\[
\begin{array}{@{}r@{}c@{}ll|l}
\left(V_{\lambda}\otimes V_{\mu}\otimes \chi_i \right) \otimes V
&=&\left(V_\lambda\otimes \signRep {S_i}\right) \otimes \left(V_\lambda\otimes \signRep {S_{n-i}}\right)\otimes \left(\chi_i\otimes \chi_n \right) \\
&=&V_{\lambda^*}\otimes V_{\mu^*} \otimes \bar \chi_{n-i}, &&\text{Lemma } \ref{le:typeA:tensoringbySignrepstarsLambda}
\end{array}
\]
where by $\bar \chi_{n-i}$ we denote the $\mathbb  Z_2^n$-character that acts by $-1$ on the first $i$ direct multiplicands of $\mathbb Z_2^n$ and by $1$ on the remaining.

The representation above can also be regarded as the $B_{n-i} \times B_{i}\simeq S_{n-i}\times S_{i}\ltimes \mathbb Z_2^n $-module $V_{\mu^*}\otimes V_{\lambda^*} \otimes \chi_{n-i}$. 
By Corollary \ref{cor:typeB:irreps} the module 
\[
V_{\mu^*, \lambda^*}\simeq  \Ind_{B_{n-i}\times B_i} ^{B_n} \left( V_{\mu^*}\otimes V_{\lambda^*}\otimes \chi_{n-i}\right) 
\] is irreducible and, by the preceding considerations, is isomorphic to
\[
\Ind_{B_i\times B_{n-i}}^{B_n}\left(  \left(V_\lambda \otimes V_{\mu}\otimes \chi_i\right)\otimes V \right).
\]
On the other hand the latter module is contained in $V_{\lambda,\mu}\otimes \signRep {B_n} $ and consequently $V_{\lambda,\mu}\otimes \signRep {B_n} $ contains  a sub-module isomorphic to $V_{\mu^*, \lambda^*} $; since $V_{\lambda,\mu}\otimes \signRep {B_n} $ is irreducible (because $\signRep {B_n}$ is one-dimensional) this can hold only if $V_{\lambda,\mu}\otimes \signRep {B_n} \simeq V_{\mu^*, \lambda^*}$.
\end{proof}

\subsubsection{Preliminaries: restrictions of induced representations}\label{secRestrictionsInducedReps}

Let $P\backslash W/A$ of $W$ stands for the double coset of $W$, i.e., 
\[
P\backslash W/A =\{PgA| g\in G  \}.
\]
Let $S$ be a set of representatives for $P\backslash W /A$. For $s\in S$, set 
\begin{equation}\label{eqAs-DoubleCosetGroup}
A_s=sAs^{-1}\cap P\quad .
\end{equation}
An $A$-module $V$ can be equipped with a structure of an $A_s$-module by setting 
\begin{equation}\label{eqVs-DoubleCosetRep}
a\cdot v = s^{-1} a s(v) \quad \text{ for } v\in V, a\in A_s, s\in S.
\end{equation}
We denote the so obtained $A_s$-module by $V_s$. 
\begin{proposition}\cite[Proposition 22, \S 7.3]{Serre:FiniteGroupsRepresentations} \label{propRestrictionInducedModules} 
Let $V$ be an $A$-module. 
Then we have the $P$-module isomorphism
\[\left(\Ind_{A}^{W} V\right)_{|P} = \bigoplus_{s\in S} \Ind_{A_s}^P ( V_s) .
\]
\end{proposition}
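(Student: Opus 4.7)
The plan is to give the standard Mackey-style argument, since this is the classical double-coset formula. First I would realize the induced module as a tensor product: $\Ind_A^W V = \mathbb{C}[W] \otimes_{\mathbb{C}[A]} V$, viewed as a left $\mathbb{C}[W]$-module via left multiplication on the first factor. Restriction to $P$ then amounts to remembering only the action of $P$ by left multiplication.

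Next I would decompose $W$ as the disjoint union $W = \bigsqcup_{s \in S} PsA$ of double cosets. Because each double coset $PsA$ is stable under left multiplication by $P$ and right multiplication by $A$, the subspace $M_s := \mathbb{C}[PsA] \otimes_{\mathbb{C}[A]} V$ is a well-defined $P$-submodule of $\Ind_A^W V$, and the decomposition of $W$ gives an internal direct sum $\Ind_A^W V = \bigoplus_{s \in S} M_s$ as $P$-modules.

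The heart of the argument is to identify each $M_s$ with $\Ind_{A_s}^P V_s$. I would construct $\varphi : \mathbb{C}[P] \otimes_{\mathbb{C}[A_s]} V_s \to M_s$ by the rule $p \otimes v \mapsto ps \otimes v$. The only nontrivial check is well-definedness: for $a_s \in A_s$, write $a_s = s a s^{-1}$ with $a \in A$; then $\varphi(p a_s \otimes v) = p a_s s \otimes v = ps \otimes a v$, while $\varphi(p \otimes a_s \cdot v) = ps \otimes (s^{-1} a_s s)(v) = ps \otimes a v$. These agree precisely because of the twisted action in \eqref{eqVs-DoubleCosetRep}. The map is $P$-equivariant by construction, surjective since every $psa \otimes v = ps \otimes av$ lies in its image, and injective by the counting identity $|PsA|/|A| = |P|/|A_s|$.

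The main (mild) obstacle will be the bookkeeping around the twisted $A_s$-module structure on $V_s$: one must verify that the conjugation by $s$ in \eqref{eqVs-DoubleCosetRep} is exactly what makes the tensor relations on the two sides compatible, so that $\varphi$ descends to the tensor product over $\mathbb{C}[A_s]$. Beyond that the argument is purely formal and is essentially a transcription of Mackey's classical decomposition theorem as stated in Serre.
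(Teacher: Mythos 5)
Your argument is correct and is precisely the standard Mackey double-coset proof; the paper itself gives no proof of this proposition but simply cites Serre, and your write-up (double-coset decomposition of $\mathbb{C}[W]\otimes_{\mathbb{C}[A]}V$, the map $p\otimes v\mapsto ps\otimes v$, well-definedness via the twisted action of \eqref{eqVs-DoubleCosetRep}, and the counting identity $|PsA|/|A|=|P|/|A_s|$ for injectivity) is exactly the argument being cited.
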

The following proposition is known as Frobenius reciprocity.
\begin{proposition}(\cite[proof of Theorem 13, \S 7.2]{Serre:FiniteGroupsRepresentations})
Let $B<P$ and let $V$ be a $B$-module and $U$ a $P$-module. 
Then
\begin{equation}\label{eqFrobeniusReciprocity}
\begin{array}{rcl}
\displaystyle \dim (\Hom_P(  U, \Ind_{B}^P V))&=& \displaystyle  \dim (\Hom_P( \Ind_{B}^P V, U)) \\
&=& \dim (\Hom_B (V,U_{|B} )). 
\end{array}
\end{equation}
\end{proposition}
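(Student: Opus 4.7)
The plan is to establish the two equalities separately. The central equality, the adjunction
\[
\dim \Hom_P(\Ind_B^P V, U) = \dim \Hom_B(V, U_{|B}),
\]
is the heart of Frobenius reciprocity; the symmetry between $\Hom_P(U, \Ind_B^P V)$ and $\Hom_P(\Ind_B^P V, U)$ will then follow from semisimplicity.

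For the adjunction, I would realize the induced module as $\Ind_B^P V \cong \mathbb{C}[P] \otimes_{\mathbb{C}[B]} V$ and construct explicit inverse maps between the two $\Hom$-spaces. Given $f \in \Hom_P(\Ind_B^P V, U)$, restrict to the subspace $1 \otimes V$, which is canonically isomorphic to $V$ as a $B$-module (since $b \cdot (1 \otimes v) = b \otimes v = 1 \otimes b(v)$); this yields $\tilde f \in \Hom_B(V, U_{|B})$. Conversely, given $g \in \Hom_B(V, U_{|B})$, define $\hat g : \mathbb{C}[P] \otimes_{\mathbb{C}[B]} V \to U$ by $\hat g(p \otimes v) = p \cdot g(v)$; the $B$-equivariance of $g$ is precisely what makes this well-defined modulo the tensor relation $pb \otimes v = p \otimes b(v)$, and $P$-equivariance is clear from the definition. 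A direct verification shows $\widetilde{\hat g} = g$ and $\widehat{\tilde f} = f$, producing a linear isomorphism of $\Hom$-spaces and in particular an equality of dimensions.

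For the outer equality $\dim \Hom_P(U, \Ind_B^P V) = \dim \Hom_P(\Ind_B^P V, U)$, I would invoke Maschke's theorem: since $P$ is finite and we work over $\mathbb{C}$, every finite-dimensional $P$-module decomposes as a direct sum of irreducibles. Writing $U \cong \bigoplus_i V_i^{\oplus u_i}$ and $\Ind_B^P V \cong \bigoplus_i V_i^{\oplus n_i}$, Schur's lemma gives
\[
\dim \Hom_P(U, \Ind_B^P V) = \sum_i u_i n_i = \dim \Hom_P(\Ind_B^P V, U),
\]
which is manifestly symmetric in the two arguments.

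The main obstacle is essentially notational rather than mathematical: one must be careful that the map $v \mapsto 1 \otimes v$ realizes $V$ as a genuine $B$-submodule of $\Ind_B^P V$, and that the assignment $p \otimes v \mapsto p \cdot g(v)$ descends through the balanced tensor product. Once these well-definedness checks are made, both equalities are formal consequences of the tensor-Hom adjunction and of semisimplicity of $\mathbb{C}[P]$-modules.
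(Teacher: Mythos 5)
Your proof is correct: the tensor--Hom adjunction argument for $\dim\Hom_P(\Ind_B^P V,U)=\dim\Hom_B(V,U_{|B})$ is carried out with the right well-definedness checks, and the symmetry of $\dim\Hom_P$ in its two arguments via Maschke plus Schur is exactly what the first equality needs. The paper gives no proof of its own here --- it simply cites Serre --- and your argument is essentially the standard one found in that reference, so there is nothing further to compare.
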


\subsubsection{Preliminaries: branching laws in type $B$}

Let $B_i\times B_{n-i}$ be the subgroup of $B_n$ as given in \eqref{eqBitimesBn-1def}.
Let $V_{\lambda, \mu}$ be the irreducible module of $B_n$ defined in \eqref{eqIrrepsTypeB},  $V_{\lambda}$ stands for irreducible module of $S_n$, 
and $c_{\alpha, \beta}^\gamma$ (defined in \eqref{eqLittlewoodRichardsonCoeffDef}) the Littlewood-Richardson coefficients (in type $A$). 
The branching of the irreducible representations of $B_n$ over $B_k\times B_{n-k}$ and $S_n$ are known, see for example \cite[\S 2.C]{Stembridge:GuideToWeylGroupRepsEmphasisOnBranching}. 
Nevertheless, we provide more details.
\begin{proposition}
\label{prop:typeB:branchingOverLargeSubgroups}
\begin{eqnarray}
\displaystyle {V_{\lambda, \mu}}_{|B_k\times B_{n-k}} &\simeq& \displaystyle  \bigoplus_{\begin{array}{@{}r@{}c@{}l} |\nu| + |\sigma| &=&k\\ |\xi| +|\zeta|&=&n-k\end{array}} c_{\nu,\xi }^\lambda c_{\sigma,\zeta}^{\mu} V_{\nu, \sigma}\otimes V_{\xi, \zeta} \label{eqBranchingB_nToB_i+B_(n-i)}\\
{V_{\lambda, \mu}}_{|S_n}&\simeq &\bigoplus_{|\lambda|+|\mu|=|\nu|} c_{\lambda, \mu}^\nu V_\nu\label{eqBranchingB_nToS_n},
\end{eqnarray}
where $\lambda, \nu, \xi, \sigma, \zeta $ denote partitions. 
\end{proposition}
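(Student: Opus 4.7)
The plan is to derive both formulas from the Mackey double-coset formula (Proposition \ref{propRestrictionInducedModules}), starting from the realization $V_{\lambda,\mu} = \Ind_{B_i\times B_{n-i}}^{B_n}(V_\lambda\otimes V_\mu\otimes \chi_i)$ from Corollary \ref{cor:typeB:irreps}, where $i=|\lambda|$.

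For the branching over $S_n$, I would first observe that $\mathbb Z_2^n\subset B_i\times B_{n-i}$, and therefore $S_n\cdot(B_i\times B_{n-i})=S_n\cdot\mathbb Z_2^n\cdot(S_i\times S_{n-i})=B_n$, so that $S_n\backslash B_n/(B_i\times B_{n-i})$ has a single double coset, with representative $s=e$. Then $A_s=S_n\cap(B_i\times B_{n-i})=S_i\times S_{n-i}$, the twisted module $V_s$ is $V_\lambda\otimes V_\mu$ (the $\mathbb Z_2^n$-character $\chi_i$ disappears upon restriction to $S_n$), and Proposition \ref{propRestrictionInducedModules} yields ${V_{\lambda,\mu}}_{|S_n}\simeq \Ind_{S_i\times S_{n-i}}^{S_n}(V_\lambda\otimes V_\mu)$. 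The formula \eqref{eqBranchingB_nToS_n} is then immediate from the definition \eqref{eqLittlewoodRichardsonCoeffDef} of the Littlewood-Richardson coefficients together with Frobenius reciprocity.

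For \eqref{eqBranchingB_nToB_i+B_(n-i)}, the main task is to classify the double cosets $(B_k\times B_{n-k})\backslash B_n/(B_i\times B_{n-i})$. Since both subgroups contain $\mathbb Z_2^n$, these reduce to $(S_k\times S_{n-k})\backslash S_n/(S_i\times S_{n-i})$, which are parametrized by a single integer $a$ recording the cardinality $|I_1\cap J_1|$, where $I_1,I_2$ and $J_1,J_2$ are the blocks of the two set-partitions of $\{1,\dots,n\}$. For each representative $s$, conjugation rearranges indices so that $s(B_i\times B_{n-i})s^{-1}=B_{J_1}\times B_{J_2}$, giving
\[
A_s = B_a\times B_{k-a}\times B_{i-a}\times B_{n-k-i+a}.
\]
I would then track the character $\chi_i$ under conjugation: on each of the four intersection blocks, the $\mathbb Z_2$-part of $V_s$ restricts to the trivial or sign character of $\mathbb Z_2^{(\cdot)}$ depending on whether the block comes from $J_1$ or $J_2$.

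Finally, I would plug the Littlewood-Richardson decompositions $V_\lambda{}_{|S_a\times S_{i-a}}=\bigoplus c_{\nu,\xi}^\lambda V_\nu\otimes V_\xi$ and $V_\mu{}_{|S_{k-a}\times S_{n-k-i+a}}=\bigoplus c_{\sigma,\zeta}^\mu V_\sigma\otimes V_\zeta$ into the formula of Proposition \ref{propRestrictionInducedModules}, and induce the four factors back up. The inductions $\Ind_{B_a\times B_{k-a}}^{B_k}(V_\nu\otimes V_\sigma\otimes\chi_a)$ and $\Ind_{B_{i-a}\times B_{n-k-i+a}}^{B_{n-k}}(V_\xi\otimes V_\zeta\otimes\chi_{i-a})$ are precisely the definitions \eqref{eqIrrepsTypeB} of the $B_k$- and $B_{n-k}$-irreducibles $V_{\nu,\sigma}$ and $V_{\xi,\zeta}$. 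Summing over $a$ (equivalently over all quadruples $\nu,\sigma,\xi,\zeta$ with $|\nu|+|\sigma|=k$ and $|\xi|+|\zeta|=n-k$) produces \eqref{eqBranchingB_nToB_i+B_(n-i)}.

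The main obstacle will be the bookkeeping for the $\mathbb Z_2^n$-characters under double-coset conjugation: I need to verify that on every intersection block the surviving character is exactly the one that makes the induced product the correct irreducible $V_{(\cdot),(\cdot)}$ of the smaller type-$B$ group, so that the characters $\chi_a$ and $\chi_{i-a}$ of $\mathbb Z_2^k$ and $\mathbb Z_2^{n-k}$ come out right and no extra twist appears. Once this is handled, everything else is a combination of Mackey's formula, Frobenius reciprocity, and the Littlewood-Richardson rule already recorded in Section \ref{sec:typeA}.
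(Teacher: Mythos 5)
Your plan follows the paper's proof essentially step for step: a single double coset for the $S_n$ branching, and double cosets of $(B_k\times B_{n-k})\backslash B_n/(B_i\times B_{n-i})$ indexed by a single integer with intersection subgroups $B_a\times B_{k-a}\times B_{i-a}\times B_{n-k-i+a}$, followed by Littlewood--Richardson decompositions of the four type-$A$ restrictions and re-induction via Corollary \ref{cor:typeB:irreps}. The character bookkeeping you flag as the remaining obstacle works out exactly as you anticipate --- the twisted module carries $\chi^k_a$ on the first pair of blocks and $\chi^{n-k}_{i-a}$ on the second, with no extra twist --- which is precisely the content of the paper's computations \eqref{eqSplitVlambdatimesVmutimeschioverBsl}--\eqref{eqSplitVlambdatimesVmutimeschisloverAsl}.
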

\begin{proof}
Both formulas will be proved by applying Proposition \ref{propRestrictionInducedModules} to the modules $V_{\lambda, \mu}$ given in \eqref{eqIrrepsTypeB}. 

Throughout this proof we will use the notation of Section \ref{secRestrictionsInducedReps}.

Set $i=|\lambda|$ and $A= W_i = B_{i}\times B_{n-i} $. 
From the graph realization of $B_n$ given in \ref{eq:graphTypeBC} we intepret $A$ as the direct product of the automorphism groups of the two subgraphs indicated in \ref{eq:GraphB_iB_n-i}.
\begin{equation}  \label{eq:GraphB_iB_n-i}
\xygraph {
!{<0cm, 0cm>;<1cm, 0cm>:<0cm, 1cm>: :}
!{ ( 0, 0) } *+{\substack{a_1 \\ \bullet}}="a1"
!{ ( 0,-1) } *+{\dots}
!{ ( 0,-2) } *+{\substack{a_i \\ \bullet}}="a2"
!{ ( 0,-3) } *+{\substack{a_{i+1} \\ \bullet}}="a3"
!{ ( 0,-4) } *+{\dots}
!{ ( 0,-5) } *+{\substack{a_{n} \\ \bullet}}="a4"
!{ (-1, 0) } *+{\substack{\bullet \\ b_1}}="b1"
!{ (-1,-1) } *+{\dots}
!{ (-1,-2) } *+{\substack{\bullet \\ b_i}}="b2"
!{ (-1,-3) } *+{\substack{\bullet \\ b_{i+1}}}="b3"
!{ (-1,-4) } *+{\dots}
!{ (-1,-5) } *+{\substack{\bullet \\ b_{n}}}="b4"
!{(-0.5, -1)} *=(1.8,-2.8){}*\frm{.}
!{(-0.5, -4)} *=(1.8,-2.8){}*\frm{.}
!{(-0.5, 1)}*+{B_i}
!{(-0.5, -6)}*+{B_{n-i}}
"a1"-"b1" "a2"-"b2"  "a3"-"b3" "a4"-"b4"
}
\end{equation}

\noindent\emph{Proof of \eqref{eqBranchingB_nToS_n}}. 
Set $P=S_n$. 
Since $B_n = S_n\ltimes \mathbb Z_2^n = P\ltimes \mathbb Z_2^n$ and $\mathbb Z_2^n\subset A$ it follows that there is only one double coset $B_n= P\backslash B_n /A$. 
Therefore we can choose the set of double coset representatives to be $ S=\{\id\}$. 
Then \eqref{eqAs-DoubleCosetGroup} becomes 
\[
A_{\id}= \id A \id^{-1}\cap P= \left( B_{i} \times B_{n-i}\right)\cap S_n = S_i\times S_{n-i}. 
\]
The rest of our statement for formula \eqref{eqBranchingB_nToS_n} follows from Proposition \ref{propRestrictionInducedModules}:
\[
\begin{array}{@{}r@{}c@{~}l@{}l@{}|l}
\displaystyle {V_{\lambda, \mu}}_{|S_n}&=&\displaystyle \Ind_{A}^W \left(V_\lambda\otimes V_\mu\otimes \chi_i\right)_{|S_n}  \\
&= &\displaystyle \Ind_{A_{\id}}^{S_n}\left( \left(V_\lambda\otimes V_\mu\otimes \chi_i \right)_{\id} \right) &&\text{Proposition \ref{propRestrictionInducedModules}}\\
&=&\displaystyle \Ind_{S_i\times S_{n-i}}^{S_n}\left( V_\lambda\otimes V_\mu\right) && \text{see }\eqref{eqVs-DoubleCosetRep}\\
&=&\displaystyle \bigoplus_{|\nu|= |\lambda|+|\mu|}\left( \dim \Hom_{S_n} (V_\nu, \Ind_{S_i\times S_{n-i}}^{S_n} \left(V_\lambda\otimes V_\mu \right)) \right) V_\nu\\
&=&\displaystyle  \bigoplus_{|\nu|=|\lambda|+|\mu| }\dim \Hom_{S_{i}\times S_{n-i}}(V_\lambda\otimes V_\mu, V_\nu) V_\nu&& 
\begin{array}{@{}l}
\text{Frobenius} \\
\text{reciprocity}\\
\eqref{eqFrobeniusReciprocity}
\end{array}\\
&=& \displaystyle  \bigoplus_{|\nu|=|\lambda|+|\mu| }c_{\lambda, \mu}^{\nu} V_\nu, &&
\begin{array}{@{}l}
\text{Littlewood-}\\
\text{Richardson}\\
\text{coeff. } \eqref{eqLittlewoodRichardsonCoeffDef}
\end{array}
\end{array}
\]
as desired. 

\noindent\emph{Proof of  \eqref{eqBranchingB_nToB_i+B_(n-i)}.} Set $P=B_{k}\times B_{n-k}$. 

Let $s'''\in B_n$ be an arbitrary element. 
We will now choose a set of representatives $S$ for the double coset $P\backslash B_n/A$ by modifying $s'''$ via the actions of $P$ and $A$ to a form unique to each double coset. 
Since $\mathbb Z_2^n\subset A$, using multiplication on the right via elements of $A$ we can transform $s'''$ to an element $s''$ in the same double coset that maps the $a_j$-vertices (and consequently also the $b_j$-vertices) onto themselves. 
Among the vertices $a_1, \dots, a_i$, let $l$ of them be mapped (non-surjectively) into the vertices $a_1, \dots, a_k $ by the action $s''$. 
It follows that $k-l$ of the vertices $a_{i+1}, \dots, a_{n}$ are mapped into the vertices $a_{1},\dots, a_k$ by the action of $s''$. 
By multiplying on the right by elements of $S_{i}\times S_{n-i}\subset A$ we can transform $s''$ to an element $s'$ in the same double coset with the following property: $s'$ maps the vertices $a_1, \dots, a_l$ into the vertices $a_{1},\dots, a_{k}$ and the vertices $a_{i+1}, \dots, a_{i+k-l}$ into the vertices $a_{k+1}, \dots, a_{n}$. 
Finally, by multiplying on the left by elements of $P=S_{k}\times S_{n-k}$ we can transform $s'$ to the element $s_l$ that lies in the same double coset and is defined via \eqref{eqCosetRepP=B_kB_(n-k)}.

\begin{equation}  \label{eqCosetRepP=B_kB_(n-k)}
\xygraph {
!{<0cm, 0cm>;<0.85cm, 0cm>:<0cm, 0.85cm>: :}
!{ ( 0, 0 ) } *+{\color{red} \substack{a_1 \\ \bullet}}="a1"
!{ ( 0,-1.5 ) } *+{\color{red} \dots}
!{ ( 0,-3 ) } *+{\color{red} \substack{a_{l} \\ \bullet}}="al"
!{ ( 0,-4 ) } *+{\color{red} \substack{a_{l+1} \\ \bullet}}="al+1"
!{ ( 0,-6 ) } *+{\color{red} \dots}
!{ ( 0,-8 ) } *+{\color{red} \substack{a_{i} \\ \bullet}}="ai"
!{ ( 0,-9 ) } *+{\substack{a_{i+1} \\ \bullet}}="ai+1"
!{ ( 0,-10) } *+{\dots}
!{ ( 0,-11) } *+{\substack{\!\!\!\!\!\!\!a_{i+k-l} \\ \bullet}}="ai+k-l"
!{ ( 0,-12) } *+{\substack{~~~~~~~a_{i+k-l+1} \\ \bullet}}="ai+k-l+1"
!{ ( 0,-13) } *+{\dots}
!{ ( 0,-14) } *+{\substack{a_{n} \\ \bullet}}="an"
!{ (-2, 0 ) } *+{\color{red} \substack{ \bullet\\ a_1}}="A1"
!{ (-2,-1.5)} *+{\dots}
!{ (-2,-3 ) } *+{\color{red} \substack{ \bullet \\ a_{l}}}="Al"
!{ (-2,-4 ) } *+{\substack{ \bullet \\ a_{l+1}}}="Al+1"
!{ (-2,-5 ) } *+{\dots}
!{ (-2,-6 ) } *+{\substack{ \bullet \\ a_{k}}}="Ak"
!{ (-2,-7 ) } *+{\color{red} \substack{ \bullet \\ a_{k+1}}}="Ak+1"
!{ (-2,-9 ) } *+{\dots}
!{ (-2,-11 )} *+{\color{red} \substack{\bullet\\ \!\!\!\!\!\!\!a_{k+i-l} }}="Ak+i-l"
!{ (-2,-12 )} *+{\substack{\bullet\\ ~~~~~~~a_{i+k-l+1} }}="Ai+k-l+1"
!{ (-2,-13) } *+{\dots}
!{ (-2,-14) } *+{\substack{\bullet\\ a_{n}}}="An"
!{(0, -4)} *=(0.6,-8.5){}*\frm{.}
!{(0, -11.5)} *=(0.6,-6){}*\frm{.}
!{(-2, -3)} *=(0.6,-6.5){}*\frm{.}
!{(-2, -10.5)} *=(0.6,-8){}*\frm{.}
!{(-6, -6)} *+[c]{
\begin{array}{rcl}
s_l(a_1)&=&a_1\\
&\vdots \\
s_l(a_l)&=& a_l\\\hline
s_l(a_{l+1})&=&a_{k+1}\\
&\vdots\\
s_l(a_{i})&=&a_{k+i-l}\\\hline
s_l(a_{i+1})&=&a_{l+1}\\
&\vdots\\
s_l(a_{i+k-l})&=&a_{k}\\\hline
s_l(a_{i+k-l+1})&=&a_{k+i+1}\\
&\vdots \\
s_l(a_n)&=&a_n
\end{array}
}
!{(-6, -11)} *+[r]{
\begin{array}{rcl}
0&\leq & l\\
i-n+k&\leq& l\\
l&\leq&i \\ 
l&\leq& k\\
\end{array}
}
"a1"-@{>}"A1" "al"-@{>}"Al" "al+1"-@{>}"Ak+1" "ai"-@{>}"Ak+i-l" "ai+1"-@{>}"Al+1" "ai+k-l"-@{>}"Ak" "ai+k-l+1"-@{>}"Ai+k-l+1" "an"-@{>}"An"
}
\end{equation}
In the formulas \eqref{eqCosetRepP=B_kB_(n-k)} for $s_l$, the ``corner'' cases such as $l=0$ are to be interpreted in the natural way. 
For different numbers $l$, the elements $s_l$ lie in different cosets, and therefore a set of double coset representatives is given by
\[
S=\{s_l| \max(0, i+k-n) \leq l \leq \min(i,k) \} .
\]
Recall that $A_{s_l}=s_l A s_{l}^{-1} \cap P$ and let 

\begin{equation}\label{eqBslDef}
B_{s_l}= s_{l}^{-1} A s_{l}. 
\end{equation}
Using \eqref{eqCosetRepP=B_kB_(n-k)} it follows that
\begin{equation}\label{eqAslBranchingBnoverB_kB_(n-k)}
\begin{array}{rcl}
A_{s_l}&=&s_l A s_l^{-1}\cap P=\left( S_{l}\times S_{k-l}\times S_{i-l}\times S_{n-i-k+l}\right)\ltimes Z_{2}^n\\
&=& B_{l}\times B_{k-l}\times B_{i-l}\times B_{n-i-k+l}\\
B_{s_l}&=&s_{l}^{-1} A s_{l}= S_{l}\times S_{i-l}\times S_{k-l}\times S_{n-i-k+l}\ltimes \mathbb Z_2^n\\
&=& B_{l}\times B_{i-l}\times B_{k-l}\times B_{n-i-k+l}.
\end{array}
\end{equation}
Here, the products of type $A$ factors are realized as the permutation groups of four groups of consecutive indices. 
In particular the four factors  $S_{l}\times S_{k-l}\times S_{i-l}\times S_{n-i-k+l}$ corresponding to $A_{s_l}$ permute the four groups of indices appearing in the second row of figure \eqref{eqCosetRepP=B_kB_(n-k)}, and the four factors $S_{l}\times S_{i-l}\times S_{k-l}\times S_{n-i-k+l}$ corresponding to $B_{s_l}$ permute the four group of indices appearing in the second row of figure \eqref{eqCosetRepP=B_kB_(n-k)}.

Let $\chi_{p}^{q}$ denote the $\mathbb Z_2^q$-character $\underbrace{\epsilon_1\otimes \dots \otimes \epsilon_1}_{p\text{ times}} \otimes \underbrace{ \epsilon_{-1}\otimes \dots\otimes \epsilon_{-1}}_{q-p\text{ times}}$ (in this notation, $\chi_i=\chi_i^n$).  
Given a $A=B_{i}\times B_{n-i}$-module $ V_\lambda\otimes V_\mu\otimes \chi_i$, we have an $B_{s_l}$-module isomorphism:
\begin{equation}\label{eqSplitVlambdatimesVmutimeschioverBsl}
\renewcommand{\arraystretch}{1.6}
\begin{array}{@{}r@{}c@{}l}
\displaystyle \left(V_\lambda \otimes V_\mu\otimes \chi_i \right)_{|B_{s_l}}&=&\displaystyle  \left(V_\lambda \otimes V_\mu\otimes\chi_i \right)_{|\left(B_{l}\times B_{i - l}\right)\times \left(B_{k-l}\times B_{n-i-k+l}\right)}\\
&\simeq &\displaystyle  \left(V_\lambda \otimes \chi_l^i \right)_{B_{l}\times B_{i - l}} \otimes \left(  V_\mu \otimes \chi_{k-l}^{n-i}\right)_{| B_{k-l}\times B_{n-i-k+l}}\\
&\simeq &\displaystyle \left( V_\lambda  \right)_{| S_{l}\times S_{i-l}}\otimes \chi_l^{i} \otimes  \left( V_\mu \right)_{|S_{k-l}\times S_{n-i-k+l}}\otimes \chi_{k-l}^{n-i}\\
&\simeq &\displaystyle \left(\bigoplus_{\substack{|\nu|=l\\|\xi|=i-l }} c_{\nu, \xi}^\lambda V_\nu\otimes V_\xi \right)\otimes \chi_{l}^i \otimes \left( \bigoplus_{\substack{|\sigma|=k-l\\|\zeta|=n-i-k+l } }  c_{\sigma, \zeta}^\mu V_\sigma\otimes V_\zeta \right)\otimes  \chi_{k-l}^{n-i}.
\end{array}
\end{equation}

Given an $A=B_{i}\times B_{n-i}$-module $V$,  \eqref{eqVs-DoubleCosetRep} equips it with a structure of an $A_{s_l}$-module denoted by $V_{s_l}$.
The structure of the module $V_{s_l}$ can be recovered only from the structure of $V$ as a $B_{s_l}$-module.
Now the definition \eqref{eqBslDef} of $B_{s_l}$ and the definition \eqref{eqVs-DoubleCosetRep} of $V_{s_l}$ imply that 
\begin{equation}\label{eqfrom-Bsl-module-V-to-Asl-module-Vsl}
\left(\left( V_\nu\otimes V_\xi\otimes \chi_l^i\right) \otimes\left( V_\sigma \otimes V_\zeta\otimes \chi_{k-l}^{n-i}\right)\right)_{s_l} \simeq \left( V_\nu\otimes V_{\sigma}  \otimes \chi^k_l\right)\otimes \left( V_\xi\otimes V_{\zeta}  \otimes \chi^{n-k}_{i-l}\right),
\end{equation}
where $|\nu|=l, |\xi|=i-l, |\sigma|=k-l, |\zeta|=n-i-k+l$.
Finally we can compute the $A_{s_l}$-module isomorphism:
\begin{equation}\label{eqSplitVlambdatimesVmutimeschisloverAsl}
\renewcommand{\arraystretch}{1.6}
\begin{array}{@{}r@{}c@{}l@{}l|@{}l}
\displaystyle \left(V_\lambda \otimes V_\mu\otimes \chi_i \right)_{s_l}& \simeq &\displaystyle  \left(\left(\bigoplus_{|\nu|=l } c_{\nu, \xi }^\lambda V_\nu\otimes V_\xi \right)\otimes \chi_{l}^i \otimes \left( \bigoplus_{|\sigma|=k-l }  c_{\sigma, \zeta}^\mu V_\sigma\otimes V_\zeta \right)\otimes  \chi_{k-l}^{n-i} \right)_{s_l}&& \begin{array}{l} \text{use} \\ \eqref{eqSplitVlambdatimesVmutimeschioverBsl} \end{array} \\
&\simeq&\displaystyle \bigoplus_{\substack{|\sigma|=k-l\\ |\zeta|= n-i -k+ l\\ |\nu|=l\\|\xi|=i-l }} c_{\nu, \xi}^\lambda c_{\sigma, \zeta}^\mu \left(\left( V_\nu\otimes V_\xi\otimes \chi_l^i\right) \otimes\left( V_\sigma \otimes V_\zeta\otimes \chi_{k-l}^{n-i}\right)\right)_{s_l} \\
&\simeq &\displaystyle   \bigoplus_{\substack{|\sigma|=k-l\\ |\zeta|= n-i -k+ l\\ |\nu|=l\\|\xi|=i-l }} c_{\nu, \xi}^\lambda c_{\sigma, \zeta}^\mu \left( V_\nu\otimes V_{\sigma}  \otimes \chi^k_l \right) \otimes \left( V_\xi\otimes V_{\zeta}  \otimes \chi^{n-k}_{i-l}\right) . &&\begin{array}{l}\text{use}\\\eqref{eqfrom-Bsl-module-V-to-Asl-module-Vsl}\end{array} \\
\end{array}
\end{equation}

The rest of the statement now follows from Proposition  \ref{propRestrictionInducedModules}:
\[
\begin{array}{@{}r@{~}c@{~}l@{}l@{}|l}
\displaystyle {V_{\lambda, \mu}}_{|P}&=&\displaystyle  \Ind_{A}^{B_n} \left(V_\lambda\otimes V_\mu\otimes \chi_i \right)_{|P}\\
&=& \displaystyle  \bigoplus_{s_l\in S} \Ind_{A_{s_l}}^{B_k\times B_{n-k}}\left(  \left(V_\lambda\otimes V_\mu\otimes \chi_i \right)_{s_l} \right) &&\text{Prop. } \ref{propRestrictionInducedModules}\\
&=&\displaystyle \bigoplus_{\substack{s_l\in S\\|\sigma|=k-l\\ |\zeta|= n-i -k+ l\\ |\nu|=l\\|\xi|=i-l }} c_{\nu, \xi}^\lambda c_{\sigma, \zeta}^\mu \left(\begin{array}{l} \Ind_{A_{s_l}}^{B_{k}\times B_{n-k}} \left( \left(V_{\nu}\otimes V_{\sigma}\otimes \chi_{l}^k \right)\right. \\ \quad \quad \quad \quad \quad \quad  \left. \otimes\left( V_\xi\otimes V_{\zeta}  \otimes \chi^{n-k}_{i-l}\right) \right) \end{array} \right) &&\begin{array}{l} \text{use}\\ \eqref{eqSplitVlambdatimesVmutimeschisloverAsl} \end{array} \\
&\simeq &\displaystyle \bigoplus_{\substack{s_l\in S\\|\sigma|=k-l\\ |\zeta|= n-i -k+ l\\ |\nu|=l\\|\xi|=i-l }}c_{\nu, \xi}^\lambda c_{\sigma, \zeta}^\mu\left( \begin{array}{l} \Ind_{B_{l}\times B_{k-l}}^{B_{k}} \left(V_{\nu}\otimes V_{\sigma}\otimes \chi_{l}^k \right) \\ \otimes  \Ind_{B_{i-l}\times B_{n-k-i+l}}^{B_{n-k}}  \left( V_\xi\otimes V_{\zeta}  \otimes \chi^{n- k}_{i -l}\right) \end{array} \right) \\
&\simeq&\displaystyle \bigoplus_{\substack{s_l\in S\\|\sigma|=k-l\\ |\zeta|= n-i -k+ l\\ |\nu|=l\\|\xi|=i-l }} c_{\nu, \xi}^\lambda c_{\sigma, \zeta}^\mu V_{\nu, \sigma}\otimes V_{\xi, \zeta} &&\text{Cor. }\ref{cor:typeB:irreps}\\
&=&\displaystyle \bigoplus_{\substack{|\sigma|+\nu=k\\ |\zeta|+|\xi|=n-k }} c_{\nu, \xi}^\lambda c_{\sigma, \zeta}^\mu V_{\nu, \sigma}\otimes V_{\xi, \zeta},
\end{array}
\]
which completes the proof of the proposition.
\end{proof}
As a side result, in Corollaries 
\ref{cor:typeB:BranchingOverTypeBGeneralizedParabolics} and \ref{cor:typeB:BranchingOverGeneralizedParabolics} we state branching laws of an arbitrary irreducible $B_n$-representation over generalized parabolic subalgebras. 
We note that these branching laws present an alternative way - which we did not follow as it appears to be more laborious - to prove the results of Section \ref{sec:typeB:multSignRep}.

\begin{corollary}\label{cor:typeB:BranchingOverTypeBGeneralizedParabolics}
\label{thm:branchbprod}
Let $P=P_{\emptyset, (p_1, \dots, p_l)}= B_{p_1}\times\dots\times B_{p_l}$, where $\abs{\lambda}+\abs{\mu}=p_1+\dots+p_l=n$ and let $\nu_1, \dots, \nu_k$ and $\sigma_1, \dots, \sigma_k$ be partitions with $|\nu_i|+|\sigma_i|=p_i$.
Then the multiplicity of the $P$-module $V_{\nu_1,\sigma_1}\otimes\dots\otimes V_{\nu_k,\sigma_k}$ in ${V_{\lambda, \mu}}_{|P}$ is given by:
\[
\begin{array}{rcl}
&&\displaystyle \dim \Hom_{P}\left(V_{ \nu_1,\sigma_1 }\otimes\dots\otimes V_{\nu_k,\sigma_k}, V_{\lambda, \mu}\right) \\
&=&\displaystyle \sum_{\substack{
\xi_1,\dots,\xi_{k-1} \\ \zeta_1,\dots,\zeta_{k-1} }} c^{\lambda }_{\nu_1, \xi_1}c^{\xi_1}_{\nu_2,\xi_2}\dots c^{\xi_{k-2}}_{\nu_{k-1}, \xi_{k-1}}  c^{\xi_{ k-1}}_{\nu_k,\emptyset}\, c^{\mu}_{ \sigma_1, \zeta_1} c^{\zeta_1}_{\sigma_2,\zeta_2}\dots c^{\zeta_{k-2}}_{\sigma_{k-1}, \zeta_{k-1}} c^{\zeta_{k -1} }_{\sigma_k, \emptyset}.
\end{array}
\]
\end{corollary}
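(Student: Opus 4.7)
The plan is to induct on the number $k$ of type $B$ factors of $P$, using Proposition~\ref{prop:typeB:branchingOverLargeSubgroups} (specifically formula \eqref{eqBranchingB_nToB_i+B_(n-i)}) as the engine and the vanishing $c^{\gamma}_{\alpha,\emptyset}=\delta_{\alpha,\gamma}$ as the glue between adjacent $c$-factors.

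\textbf{Base case ($k=1$).} Here $P=B_n$ and the claim says
\[
\dim\Hom_{B_n}(V_{\nu_1,\sigma_1},V_{\lambda,\mu})
=c^{\lambda}_{\nu_1,\emptyset}\,c^{\mu}_{\sigma_1,\emptyset}.
\]
Since $c^{\gamma}_{\alpha,\emptyset}=\delta_{\alpha,\gamma}$, the right-hand side is $\delta_{\nu_1,\lambda}\delta_{\sigma_1,\mu}$, which matches Schur's lemma for the irreducible $B_n$-module $V_{\lambda,\mu}$.

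\textbf{Inductive step.} Assume the formula is known for parabolics with $k-1$ type $B$ factors. Write $P=B_{p_1}\times P'$ with $P'=B_{p_2}\times\dots\times B_{p_k}$ and factor the restriction through $B_{p_1}\times B_{n-p_1}$. First apply \eqref{eqBranchingB_nToB_i+B_(n-i)} with parameters $i=|\lambda|$ and $k$ replaced by $p_1$ to obtain
\[
V_{\lambda,\mu}\big|_{B_{p_1}\times B_{n-p_1}}
\simeq\bigoplus_{\substack{|\nu_1'|+|\xi_1|=|\lambda|\\ |\sigma_1'|+|\zeta_1|=|\mu|\\ |\nu_1'|+|\sigma_1'|=p_1}}
c^{\lambda}_{\nu_1',\xi_1}\,c^{\mu}_{\sigma_1',\zeta_1}\;V_{\nu_1',\sigma_1'}\otimes V_{\xi_1,\zeta_1}.
\]
Restrict further to $P=B_{p_1}\times P'$ (acting on the second tensor factor through $P'\subset B_{n-p_1}$) and take $\Hom_{P}$ against $V_{\nu_1,\sigma_1}\otimes V_{\nu_2,\sigma_2}\otimes\dots\otimes V_{\nu_k,\sigma_k}$. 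By Schur's lemma, the first tensor factor forces $\nu_1'=\nu_1$ and $\sigma_1'=\sigma_1$, so
\[
\dim\Hom_{P}\!\left(V_{\nu_1,\sigma_1}\otimes\dots\otimes V_{\nu_k,\sigma_k},V_{\lambda,\mu}\right)
=\sum_{\xi_1,\zeta_1}c^{\lambda}_{\nu_1,\xi_1}\,c^{\mu}_{\sigma_1,\zeta_1}\,
\dim\Hom_{P'}\!\left(V_{\nu_2,\sigma_2}\otimes\dots\otimes V_{\nu_k,\sigma_k},V_{\xi_1,\zeta_1}\right).
\]

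\textbf{Closing the induction.} Apply the inductive hypothesis to the inner $\Hom$, yielding the product
\[
c^{\xi_1}_{\nu_2,\xi_2}c^{\xi_2}_{\nu_3,\xi_3}\cdots c^{\xi_{k-2}}_{\nu_{k-1},\xi_{k-1}}c^{\xi_{k-1}}_{\nu_k,\emptyset}\cdot c^{\zeta_1}_{\sigma_2,\zeta_2}\cdots c^{\zeta_{k-1}}_{\sigma_k,\emptyset}
\]
summed over $\xi_2,\dots,\xi_{k-1},\zeta_2,\dots,\zeta_{k-1}$. Prepending the factors $c^{\lambda}_{\nu_1,\xi_1}c^{\mu}_{\sigma_1,\zeta_1}$ and merging the outer sum over $\xi_1,\zeta_1$ with the inner sums produces exactly the formula claimed in the corollary.

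\textbf{Expected difficulty.} The substantive mathematical input is Proposition~\ref{prop:typeB:branchingOverLargeSubgroups}; after invoking it, the work is purely bookkeeping. The one place requiring care is verifying that the chain of Littlewood–Richardson coefficients produced by iterating two-step branching really is the one in the statement — this amounts to checking that the nested $c$-product encodes the associativity of repeated tensor decomposition, which is immediate from the fact that $c^{\xi_{i-1}}_{\nu_i,\xi_i}$ is defined as the multiplicity of $V_{\nu_i}\otimes V_{\xi_i}$ in ${V_{\xi_{i-1}}}_{|S_{|\nu_i|}\times S_{|\xi_i|}}$. No further obstacle is anticipated.
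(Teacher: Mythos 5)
Your proposal is correct and takes essentially the same route as the paper: the paper's proof consists precisely in consecutively applying the branching law \eqref{eqBranchingB_nToB_i+B_(n-i)} to the chain of embeddings $B_{p_1}\times B_{n-p_1}\hookrightarrow B_n$, $B_{p_2}\times B_{n-p_1-p_2}\hookrightarrow B_{n-p_1}$, and so on, omitting the details of the computation. Your induction on the number of type $B$ factors, with Schur's lemma pinning down the first tensor factor and $c^{\gamma}_{\alpha,\emptyset}=\delta_{\alpha,\gamma}$ terminating the chain, is exactly the bookkeeping the paper leaves out.
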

\begin{proof}
The proof consists in consecutively applying the branching law \eqref{eqBranchingB_nToB_i+B_(n-i)} to the embeddings 
\[
\begin{array}{rcl}
B_{p_1}\times B_{n-p_1}&\hookrightarrow&  B_{n}\\
B_{p_2}\times B_{n-p_1-p_2}&\hookrightarrow& B_{n-p_1}\\
&\vdots& \\
B_{p_{k-1}}\times B_{p_k}&\hookrightarrow& B_{p_{k-1}+p_{k}};
\end{array}
\]
we omit the details of the computation.
\end{proof}

Proposition \ref{prop:typeB:branchingOverLargeSubgroups} and Corollary \ref{cor:typeB:BranchingOverTypeBGeneralizedParabolics} imply the following.

\begin{corollary}\label{cor:typeB:BranchingOverGeneralizedParabolics}
Let $\abs{\lambda}+\abs{\mu}=p_1+\dots+p_l=n$ and $1\leq k\leq l$, let $P=P_{(p_1, \dots, p_k), (p_{k+1},\dots, p_l)}= S_{p_1}\times \dots\times S_{p_k}\times B_{p_{k+1}} \times\dots\times B_{p_l}$. Let $\rho_1,\dots, \rho_k$ be partitions with $|\rho_i|=p_1,\dots, |\rho_k|=p_k$
and let $\nu_{k+1}, \dots, \nu_l$ and $\sigma_{k+1}, \dots, \sigma_l$ be partitions such that $|\nu_i|+|\sigma_i|=p_i$ for all $i>k$. 
Then the multiplicity of the $P$-module $V_{\rho_1}\otimes\dots\otimes V_{\rho_k}\otimes V_{\nu_{k+1},\sigma_{k+1}}\otimes\dots\otimes V_{\nu_l,\sigma_l}$ in ${V_{\lambda,\mu}}_{|P}$ is given by 
\[
\begin{array}{@{}l}
\displaystyle \dim\Hom_P\left(V_{\rho_1}\otimes \dots\otimes V_{\rho_k}\otimes V_{\nu_{k+1}, \sigma_{k+1}} \otimes\dots \otimes V_{\nu_l, \sigma_l}, {V_{\lambda, \mu}}\right)=\\ 
\displaystyle \sum_{\substack{
\nu_1,\dots,\nu_k \\ \sigma_1,\dots,\sigma_k \\ 
\abs{\nu_i} + \abs{\sigma_i} = p_i, i\leq k\\
\xi_1,\dots,\xi_{l-1} \\ \zeta_1, \dots, \zeta_{l-1} \\
}} \!\!\!\!\!\!\!\!\!\!\! c^{\rho_1}_{\nu_1, \sigma_1}\dots c^{\rho_k }_{\nu_k,\sigma_k}\, c^{\lambda }_{\nu_1, \xi_1} c^{ \xi_1}_{\nu_2,\xi_2}\dots c^{\xi_{l-2}}_{\nu_{l-1}, \xi_{l-1}} c^{ \xi_{l-1}}_{\nu_l,\emptyset}\, c^{\mu }_{ \sigma_1, \zeta_1} c^{\zeta_1}_{\sigma_2, \zeta_2}\dots c^{\zeta_{l-2}}_{\sigma_{l-1}, \zeta_{l-1} } c^{\zeta_{l-1}}_{\sigma_l, \emptyset}.
\end{array}
\]
\end{corollary}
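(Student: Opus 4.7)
The plan is to perform the restriction from $B_n$ to $P = S_{p_1}\times\dots\times S_{p_k}\times B_{p_{k+1}}\times\dots\times B_{p_l}$ in two stages, using only results already established in this section. Specifically, I factor $P$ through the intermediate generalized parabolic
\[
P' = B_{p_1}\times\dots\times B_{p_k}\times B_{p_{k+1}}\times\dots\times B_{p_l},
\]
which sits between $B_n$ and $P$ because $S_{p_i}\subset B_{p_i}$ for each $i\leq k$. Transitivity of restriction gives ${V_{\lambda,\mu}}_{|P} = \bigl({V_{\lambda,\mu}}_{|P'}\bigr)_{|P}$, and the desired multiplicity is obtained by composing the two branching steps.

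\textbf{Step 1.} Apply Corollary \ref{cor:typeB:BranchingOverTypeBGeneralizedParabolics} to the generalized parabolic $P'$. This writes ${V_{\lambda,\mu}}_{|P'}$ as the direct sum, over partition-pairs $(\nu_i,\sigma_i)$ with $|\nu_i|+|\sigma_i|=p_i$ for every $i$ from $1$ to $l$, of the $P'$-module
\[
V_{\nu_1,\sigma_1}\otimes\dots\otimes V_{\nu_l,\sigma_l},
\]
with multiplicity
\[
c^{\lambda}_{\nu_1,\xi_1}c^{\xi_1}_{\nu_2,\xi_2}\cdots c^{\xi_{l-2}}_{\nu_{l-1},\xi_{l-1}}c^{\xi_{l-1}}_{\nu_l,\emptyset}\,c^{\mu}_{\sigma_1,\zeta_1}c^{\zeta_1}_{\sigma_2,\zeta_2}\cdots c^{\zeta_{l-2}}_{\sigma_{l-1},\zeta_{l-1}}c^{\zeta_{l-1}}_{\sigma_l,\emptyset},
\]
summed over auxiliary partitions $\xi_1,\dots,\xi_{l-1}$ and $\zeta_1,\dots,\zeta_{l-1}$.

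\textbf{Step 2.} For each index $i$ with $1\leq i\leq k$ restrict the factor $V_{\nu_i,\sigma_i}$ from $B_{p_i}$ to $S_{p_i}$ using formula \eqref{eqBranchingB_nToS_n} of Proposition \ref{prop:typeB:branchingOverLargeSubgroups}, obtaining
\[
{V_{\nu_i,\sigma_i}}_{|S_{p_i}} \simeq \bigoplus_{|\rho_i|=p_i} c^{\rho_i}_{\nu_i,\sigma_i}\,V_{\rho_i}.
\]
The factors $V_{\nu_j,\sigma_j}$ for $j>k$ remain untouched because the corresponding factor of $P$ is already $B_{p_j}$. Taking tensor products across the $l$ factors, the multiplicity of the specified $P$-module $V_{\rho_1}\otimes\dots\otimes V_{\rho_k}\otimes V_{\nu_{k+1},\sigma_{k+1}}\otimes\dots\otimes V_{\nu_l,\sigma_l}$ inside ${V_{\lambda,\mu}}_{|P}$ is obtained by summing, over the inner partition-pairs $(\nu_i,\sigma_i)$ with $i\leq k$ and over $\xi_1,\dots,\xi_{l-1},\zeta_1,\dots,\zeta_{l-1}$, the product of the coefficient from Step 1 with the factor $c^{\rho_1}_{\nu_1,\sigma_1}\cdots c^{\rho_k}_{\nu_k,\sigma_k}$ contributed by Step 2.

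This is precisely the stated formula. I do not anticipate any real obstacle beyond careful indexing: every ingredient is either \eqref{eqBranchingB_nToB_i+B_(n-i)}, \eqref{eqBranchingB_nToS_n}, or their iterations, and the proof is essentially a bookkeeping combination of these, so the entire argument can be presented in the few lines sketched above with the remark that the details are a routine composition parallel to the proof of Corollary \ref{cor:typeB:BranchingOverTypeBGeneralizedParabolics}.
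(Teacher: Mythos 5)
Your proof is correct and follows essentially the same route as the paper: first branch $V_{\lambda,\mu}$ over $B_{p_1}\times\dots\times B_{p_l}$ via Corollary \ref{cor:typeB:BranchingOverTypeBGeneralizedParabolics}, then restrict the first $k$ type-$B$ factors to $S_{p_i}$ using \eqref{eqBranchingB_nToS_n}, and combine the coefficients. The indexing and the resulting formula match the statement.
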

\begin{proof}
Use the result of Corollary \ref{thm:branchbprod} to branch $B_n$ over $B_{p_1}\times \dots \times  B_{p_l}$ and then apply the second rule in Proposition \ref{prop:typeB:branchingOverLargeSubgroups} to branch the first $k$-tuple of groups $B_{p_1}\times \dots\times  B_{p_k}$ over $S_{p_1}\times \dots \times S_{p_k}$.
\end{proof}

\subsection{Sign representation multiplicities when restricted to parabolic subgroups in type $B$}\label{sec:typeB:multSignRep}
In the present section we obtain formulas for the multiplicity of the sign representation of a subgroup of the form $S_{p_1}\times \dots \times S_{p_k}\times B_{p_1}\times \dots\times B_{p_l}$ (see \eqref{eqParabolicsGeneralizedBn}) in an arbitrary irreducible $B_n$-representation $V_{\lambda, \mu}$.

We need the following technical statement.
\begin{proposition}
\begin{equation}\label{eqShurPolyMultIdentity}
\sum_{\alpha+\beta=\gamma} K_{\lambda, \alpha}K_{\mu, \beta}= \sum_{\nu} c_{\lambda, \mu}^\nu K_{\nu, \gamma}\quad .
\end{equation}
where $\alpha, \beta, \gamma$ stand for arbitrary tuples of non-negative integers (not necessarily partitions) and $\lambda,\mu, \nu $ denote partitions. 
\end{proposition}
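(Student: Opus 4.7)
The plan is to recognize both sides of \eqref{eqShurPolyMultIdentity} as the dimension of the $\gamma$-weight space of the $GL(n)$-module $V_\lambda\otimes V_\mu$, computed in two different ways. This is a standard identity in symmetric function theory and the proof essentially writes itself once the right interpretation is in place; the quickest route is via Proposition \ref{prop:KostkaNumberInterpretation} together with the definition \eqref{eqLittlewoodRichardsonCoeffDef} of the Littlewood--Richardson coefficients.

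First I would invoke Proposition \ref{prop:KostkaNumberInterpretation}: for any partition $\sigma$ and any tuple $\delta$, the Kostka number $K_{\sigma,\delta}$ equals $\dim (V_\sigma)_\delta$, the dimension of the $\delta$-weight space of the irreducible $GL(n)$-module $V_\sigma$ of highest weight $\sigma$, where $n$ is large enough that all relevant shapes fit. For the left-hand side I would use the tensor product decomposition of weight spaces: the $\gamma$-weight space of $V_\lambda\otimes V_\mu$ is
\[
(V_\lambda\otimes V_\mu)_\gamma \;=\; \bigoplus_{\alpha+\beta=\gamma}(V_\lambda)_\alpha\otimes(V_\mu)_\beta,
\]
whose dimension is $\sum_{\alpha+\beta=\gamma}K_{\lambda,\alpha}K_{\mu,\beta}$.

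For the right-hand side I would apply the Littlewood--Richardson decomposition. Although \eqref{eqLittlewoodRichardsonCoeffDef} is stated for the symmetric groups, the same coefficients $c_{\lambda,\mu}^{\nu}$ govern the tensor product decomposition of $GL(n)$-modules $V_\lambda\otimes V_\mu \simeq \bigoplus_\nu c_{\lambda,\mu}^\nu V_\nu$ (this is the standard Schur--Weyl compatibility, and may be taken as a known consequence of Theorem \ref{thLittlewoodRichardsonCoeff}). Taking the $\gamma$-weight space of both sides and applying Proposition \ref{prop:KostkaNumberInterpretation} once more yields $\dim (V_\lambda\otimes V_\mu)_\gamma=\sum_\nu c_{\lambda,\mu}^\nu K_{\nu,\gamma}$. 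Combining the two expressions for $\dim (V_\lambda\otimes V_\mu)_\gamma$ gives the desired identity.

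The only possible obstacle is a reader who wishes to avoid the $GL(n)$ interpretation and work purely combinatorially. In that case one would instead argue via Schur polynomials: the identities $s_\sigma(x_1,\ldots,x_n)=\sum_\delta K_{\sigma,\delta}x^\delta$ and $s_\lambda s_\mu=\sum_\nu c_{\lambda,\mu}^\nu s_\nu$ are standard, and comparing the coefficient of the monomial $x^\gamma$ on both sides of $s_\lambda s_\mu=\sum_\nu c_{\lambda,\mu}^\nu s_\nu$ produces exactly \eqref{eqShurPolyMultIdentity}. Either route is routine; the representation-theoretic version fits the paper's language better and is what I would write.
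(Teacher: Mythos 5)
Your argument is correct and matches the paper exactly: the paper omits the proof but indicates precisely the two routes you describe, citing the Schur polynomial multiplication identity in Fulton as the primary justification and noting that an alternative proof can be extracted from Proposition \ref{prop:KostkaNumberInterpretation}, which is the weight-space computation you carry out in detail. Both of your computations (the tensor-product weight-space decomposition for the left side and the Littlewood--Richardson decomposition for the right side) are sound, and your remark that the coefficients $c_{\lambda,\mu}^\nu$ of \eqref{eqLittlewoodRichardsonCoeffDef} agree with the $GL(n)$ tensor-product multiplicities is the only external fact needed.
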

This proposition follows from comparing coefficients in the Schur polynomial multiplication identities  \cite[page 66, (4)]{Fulton:YoungTableaux} and we omit its proof. An alternative proof can also be extracted directly from Proposition \ref{prop:KostkaNumberInterpretation}.

Recall from \eqref{eqParabolicsMixedTypeInBn}, \eqref{eqParabolicsTypeAInBn} and \eqref{eqParabolicsGeneralizedBn} that $P_{(p_1, \dots, p_k), \emptyset }$ denotes a parabolic subgroup that is a sum of type $A$ subgroups, $ P_{(p_1, \dots, p_k), (p_{k+1})}$ denotes a parabolic subgroup whose last component is of type $B_{p_{k+1}}$, and $P_{(p_1, \dots, p_k), (p_{k+1}, \dots, p_l)} = S_{p_1}\times \dots \times S_{p_k}\times B_{p_{k+1}}\times \dots \times B_{p_l}$ stands for the natural generalization of a parabolic subgroup in type $B$. 
Just as in type $A$, for a subgroup $P$ of $B_n$ generated by reflections, let $\signRep{P}$ denote the sign representation of $P$.

For a tuple of non-negative integers $ \alpha=(\alpha_1, \dots, \alpha_k )$, let $d(\alpha)$ stand for the number of entries of $\alpha$ excluding the possible trailing zeroes at the end, that is, 
\[
d(\alpha)=\text{largest } j \text{ for which } t>j \text{ implies } a_t=0.
\] 

\begin{proposition}\label{prop:typeB}
The multiplicity  $ \dim\Hom_{P}\left(\signRep{P}, V_{\lambda, \mu}\right) $ of the sign representation of  $P=P_{(p_1, \dots, p_k), (p_{k+1}, \dots, p_l)} $  in $V_{\lambda, \mu}$ is given by
\begin{equation}\label{eqMultSignoverGeneralizedParabolicsTypeB}
\sum_{\substack{\alpha + \beta = (p_1,\dots,p_l) \\ d(\alpha) \leq k}} K_{\lambda^*,\alpha}K_{\mu^*,\beta},
\end{equation}
where $\alpha, \beta$ stand for tuples of non-negative integers. \end{proposition}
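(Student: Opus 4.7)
The plan is to argue by induction on $l-k$, the number of type $B$ factors of $P$, combining the branching rules of Proposition~\ref{prop:typeB:branchingOverLargeSubgroups} with the type $A$ result of Proposition~\ref{prop:typeA} and the Schur-polynomial identity \eqref{eqShurPolyMultIdentity}.

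For the base case $l=k$, the subgroup $P=S_{p_1}\times\cdots\times S_{p_k}$ lies inside $S_n\subset B_n$. Restricting $V_{\lambda,\mu}$ to $S_n$ via \eqref{eqBranchingB_nToS_n} yields $\bigoplus_\nu c^\nu_{\lambda,\mu}V_\nu$, and Proposition~\ref{prop:typeA} gives the sign multiplicity of $V_\nu$ under $P$ as $K_{\nu^*,(p_1,\dots,p_k)}$. The conjugation symmetry $c^\nu_{\lambda,\mu}=c^{\nu^*}_{\lambda^*,\mu^*}$ (obtained by tensoring both sides of \eqref{eqLittlewoodRichardsonCoeffDef} by $\signRep{S_n}$ and invoking Lemma~\ref{le:typeA:tensoringbySignrepstarsLambda}) together with the reindexing $\sigma=\nu^*$ rewrites the total as $\sum_\sigma c^\sigma_{\lambda^*,\mu^*}K_{\sigma,(p_1,\dots,p_k)}$, which by \eqref{eqShurPolyMultIdentity} equals $\sum_{\alpha+\beta=(p_1,\dots,p_k)}K_{\lambda^*,\alpha}K_{\mu^*,\beta}$. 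The constraint $d(\alpha)\leq k$ is vacuous in this case.

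For the inductive step $l>k$, write $P=P'\times B_{p_l}$ with $P'=P_{(p_1,\dots,p_k),(p_{k+1},\dots,p_{l-1})}$ and branch $V_{\lambda,\mu}$ over $B_{n-p_l}\times B_{p_l}$ via \eqref{eqBranchingB_nToB_i+B_(n-i)}. Because $\signRep{B_{p_l}}=V_{\emptyset,\pi}$, with $\pi=[1,\dots,1]$ of length $p_l$, is one-dimensional and irreducible, only the summands with $(\xi,\zeta)=(\emptyset,\pi)$ contribute after pairing with $\signRep{B_{p_l}}$; moreover $c^\lambda_{\nu,\emptyset}=\delta_{\nu,\lambda}$ collapses the $\nu$-sum. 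This reduces the desired multiplicity to $\sum_\sigma c^\mu_{\sigma,\pi}\,\dim\Hom_{P'}(\signRep{P'},V_{\lambda,\sigma})$. Applying the induction hypothesis to each $V_{\lambda,\sigma}$, pulling the $\sigma$-sum inside, and then using Lemma~\ref{leBoxRowColumnIdentity} to collapse $\sum_\sigma c^\mu_{\sigma,\pi}K_{\sigma^*,\beta'}=K_{\mu^*,(\beta',p_l)}$ produces $\sum K_{\lambda^*,\alpha'}K_{\mu^*,(\beta',p_l)}$ summed over $\alpha'+\beta'=(p_1,\dots,p_{l-1})$ with $d(\alpha')\leq k$. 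Setting $\alpha=(\alpha',0)$ and $\beta=(\beta',p_l)$ recovers exactly the claimed sum, since $d(\alpha)\leq k$ forces $\alpha_{k+1}=\cdots=\alpha_l=0$ and therefore $\beta_j=p_j$ for $j>k$.

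The main obstacle is purely bookkeeping: lining up the padding $\alpha'\mapsto(\alpha',0)$ with the defining condition $d(\alpha)\leq k$, and keeping the LR-conjugation symmetry correctly oriented in the base case so that it matches the $\lambda^*,\mu^*$ appearing on the left of \eqref{eqShurPolyMultIdentity}. No combinatorial identity beyond those already established is required.
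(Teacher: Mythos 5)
Your proposal is correct and follows essentially the same route as the paper's own proof: induction on the number of type $B$ factors, with the base case handled by restricting to $S_n$ via \eqref{eqBranchingB_nToS_n}, Proposition \ref{prop:typeA}, the LR-conjugation symmetry and \eqref{eqShurPolyMultIdentity}, and the inductive step by branching over $B_{n-p_l}\times B_{p_l}$, collapsing to $(\xi,\zeta)=(\emptyset,\pi)$, and applying Lemma \ref{leBoxRowColumnIdentity}. The only cosmetic difference is that you derive the symmetry $c^{\nu}_{\lambda,\mu}=c^{\nu^*}_{\lambda^*,\mu^*}$ from Lemma \ref{le:typeA:tensoringbySignrepstarsLambda} rather than citing it.
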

We note that when we specialize the above proposition to $k=l $ and $k=l+1$ we get the multiplicities of $V_{\lambda, \mu}$ over the (non-generalized) parabolic subgroups in type $ B_n$.
We state those in the following.
\begin{corollary}~\label{cor:typeBsignMultNonGeneralizedParabolics}
\begin{enumerate}
\item The multiplicity $\dim \Hom_{P} \left(\signRep{P}, V_{\lambda, \mu} \right)$ of the sign representation of $P=P_{(p_1,\dots, p_k ), \emptyset}$ in $V_{\lambda, \mu}$ is given by
\begin{equation}\label{eq:typeB:MultSignParabolicNoTypeB}
\displaystyle \dim \Hom_{P} \left(\signRep{P}, V_{\lambda, \mu} \right)\\
=\displaystyle  \sum_{\alpha+\beta= (p_1,\dots, p_k)} K_{\lambda^*, \alpha} K_{\mu^*, \beta},
\end{equation}
where $\alpha, \beta$ stand for tuples of non-negative integers (not necessarily partitions).
\item The multiplicity $\dim \Hom_{ P}\left(\signRep{P}, V_{\lambda, \mu}\right)$ of the sign representation of $ P=  P_{(p_1, \dots, p_k), (p_{k+1})}$ in $V_{\lambda, \mu}$ is given by
\begin{equation}\label{eq:typeB:MultSignParabolicWithTypeB}
\dim \Hom_{ P} \left(\signRep{P}, V_{\lambda,\mu} \right)= \sum_{\alpha+\beta=(p_1,\dots, p_{k})} K_{\lambda^*,\alpha} K_{\mu^*, \beta\oplus (p_{k+1})},
\end{equation}
where $\alpha, \beta$ stand for tuples of non-negative integers and $\beta\oplus (p_{k+1})$ stands for $(\beta_1, \dots, \beta_{k}, p_{k+1})$, where $\beta=(\beta_1, \dots, \beta_{k})$.
\end{enumerate}
\end{corollary}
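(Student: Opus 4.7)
The plan is to induct on the number $l-k$ of type $B$ factors, peeling them off one at a time via the branching rule \eqref{eqBranchingB_nToB_i+B_(n-i)} and reducing to a base case that follows from Proposition~\ref{prop:typeA} together with the Schur polynomial identity \eqref{eqShurPolyMultIdentity}.

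\medskip
\textbf{Base case $l=k$.} Here $P=P_{(p_1,\dots,p_k),\emptyset}\subset S_n\subset B_n$, so restricting first to $S_n$ using \eqref{eqBranchingB_nToS_n} and then applying Proposition~\ref{prop:typeA} to each summand yields
\[
\dim\Hom_P(\signRep{P},V_{\lambda,\mu})=\sum_\nu c_{\lambda,\mu}^\nu\,K_{\nu^*,(p_1,\dots,p_k)}.
\]
Applying the transposition symmetry of Littlewood--Richardson coefficients, $c_{\lambda,\mu}^\nu=c_{\lambda^*,\mu^*}^{\nu^*}$, followed by the change of variable $\nu\mapsto\nu^*$ and the Schur polynomial identity \eqref{eqShurPolyMultIdentity} applied with $\gamma=(p_1,\dots,p_k)$, converts the right-hand side into $\sum_{\alpha+\beta=(p_1,\dots,p_k)}K_{\lambda^*,\alpha}K_{\mu^*,\beta}$, which is precisely \eqref{eqMultSignoverGeneralizedParabolicsTypeB} when $l=k$ (the condition $d(\alpha)\leq k$ is automatic).

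\medskip
\textbf{Inductive step $l>k$.} Branching $V_{\lambda,\mu}$ over $B_{n-p_l}\times B_{p_l}$ via \eqref{eqBranchingB_nToB_i+B_(n-i)}, writing $P'=P_{(p_1,\dots,p_k),(p_{k+1},\dots,p_{l-1})}$, and using that $\signRep{P}=\signRep{P'}\otimes\signRep{B_{p_l}}$ with $\signRep{B_{p_l}}=V_{\emptyset,\pi}$ where $\pi=[1,\dots,1]$, one obtains
\[
\dim\Hom_P(\signRep{P},V_{\lambda,\mu})=\sum_{\nu,\sigma,\xi,\zeta}c^\lambda_{\nu,\xi}c^\mu_{\sigma,\zeta}\dim\Hom_{P'}(\signRep{P'},V_{\nu,\sigma})\,\delta_{\xi,\emptyset}\delta_{\zeta,\pi}.
\]
The delta-functions together with $c^\lambda_{\nu,\emptyset}=\delta_{\nu,\lambda}$ collapse this to $\sum_\sigma c^\mu_{\sigma,\pi}\dim\Hom_{P'}(\signRep{P'},V_{\lambda,\sigma})$. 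Plugging in the inductive formula and using Lemma~\ref{leBoxRowColumnIdentity} with $p=p_l$ to rewrite $\sum_\sigma c^\mu_{\sigma,\pi}K_{\sigma^*,\beta'}=K_{\mu^*,\beta'\oplus(p_l)}$ gives
\[
\dim\Hom_P(\signRep{P},V_{\lambda,\mu})=\sum_{\substack{\alpha'+\beta'=(p_1,\dots,p_{l-1})\\ d(\alpha')\leq k}}K_{\lambda^*,\alpha'}K_{\mu^*,\beta'\oplus(p_l)}.
\]
Finally, the condition $d(\alpha)\leq k$ on a pair $\alpha+\beta=(p_1,\dots,p_l)$ forces $\alpha_j=0$ and $\beta_j=p_j$ for $j>k$, so the bijection $\alpha\leftrightarrow\alpha'$ obtained by truncating the trailing zero and $\beta\leftrightarrow\beta'\oplus(p_l)$ identifies the previous display with \eqref{eqMultSignoverGeneralizedParabolicsTypeB}.

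\medskip
The main obstacle is the base case: one must correctly combine the conjugation symmetry $c_{\lambda,\mu}^\nu=c_{\lambda^*,\mu^*}^{\nu^*}$ (which moves the transpose across) with the Schur identity \eqref{eqShurPolyMultIdentity} to match the dual partitions appearing on the Kostka side of \eqref{eqMultSignoverGeneralizedParabolicsTypeB}. Beyond that, the inductive step is bookkeeping: checking that stripping a trailing entry from the tuples preserves the Kostka numbers (trailing zeros are harmless) and that the constraint $d(\alpha)\leq k$ is consistent under this reduction.
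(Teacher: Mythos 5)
Your proposal is correct and takes essentially the same route as the paper: there the corollary is obtained by specializing Proposition~\ref{prop:typeB}, whose proof is precisely your induction on the number of type $B$ factors --- the base case via restriction to $S_n$, Proposition~\ref{prop:typeA}, transposition of Littlewood--Richardson coefficients, and \eqref{eqShurPolyMultIdentity}, and the inductive step via \eqref{eqBranchingB_nToB_i+B_(n-i)}, the identification of $\signRep{B_{p_l}}$ with $V_{\emptyset,\pi}$, and Lemma~\ref{leBoxRowColumnIdentity}.
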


\begin{proof}[Proof of Proposition \ref{prop:typeB}]
The proof combines the branching laws given in Proposition \ref{prop:typeB:branchingOverLargeSubgroups} with the already established branching laws in type $A$.

We proceed by induction on $l-k$, the number of type $B$ components in the generalized parabolic subgroup. 
We start with the base case $l-k=0$, which has been formulated in \eqref{eq:typeB:MultSignParabolicNoTypeB}.

\noindent \emph{Proof of \eqref{eq:typeB:MultSignParabolicNoTypeB}.} Set $P=P_{(p_1, \dots, p_k), \emptyset}$ and compute:

\noindent $
\begin{array}{@{}r@{}c@{}l@{}l@{}|l}
\displaystyle \dim \Hom_{P} \left(\signRep{P}, V_{\lambda, \mu} \right)&= & \displaystyle \dim\Hom_{P}\left(\signRep{P}, \bigoplus_{\nu= \lambda+ \mu} c_{\lambda,\mu}^\nu V_\nu\right)&& \text{Prop. }\ref{prop:typeB:branchingOverLargeSubgroups}\\
&=&\displaystyle  \sum_{\nu=\lambda+\mu} c_{\lambda,\mu}^\nu \dim \Hom_{P}\left(\signRep{P}, V_\nu\right)\\
&=&\displaystyle  \sum_{\nu=\lambda+\mu} c_{\lambda,\mu}^\nu K_{\nu^*, (p_1, \dots, p_k)}&&\text{Prop. }\ref{prop:typeA}\\
&=&\displaystyle  \sum_{\nu=\lambda+\mu} c_{\mu^*, \lambda^*}^{\nu^*} K_{\nu^*, (p_1, \dots, p_k)}&& \text{\cite[page 62]{Fulton:YoungTableaux}}
\\
&=&\displaystyle  \sum_{\alpha+\beta=(p_1, \dots, p_k)} K_{\lambda^*,\alpha} K_{\mu^*, \beta}.&&\begin{array}{l}\text{use}\\ \eqref{eqShurPolyMultIdentity}\end{array}.
\end{array}
$

\noindent \emph{Proof of \eqref{eqMultSignoverGeneralizedParabolicsTypeB}.} Let $P=P_{(p_1, \dots, p_k), (p_{k+1}, \dots, p_{l})}=S_{p_1}\times \dots \times S_{p_k}\times B_{p_{k+1}}\times \dots \times B_{p_l}$. 
Denote by $Q$ the sum of the first $l-1$ factors of $ P$, i.e., 
\[
Q=S_{p_1}\times \dots \times S_{p_k}\times B_{p_{k+1}}\times \dots B_{p_{l-1}} .
\] 
We have the chain of embeddings 
\[
P=\displaystyle Q\times B_{p_l}\hookrightarrow B_{n-p_l}\times B_{p_l} \hookrightarrow B_n. 
\]
By induction hypothesis, we suppose we have proven our statement for the multiplicity of the sign representation of $Q$ in an irreducible $B_{n-p_k}$-representation. 
We proceed to compute $\dim\Hom_{P} \left(\signRep{P}, V_{\lambda, \mu} \right) $.

\medskip
\noindent $
\begin{array}{@{}r@{}c@{}l@{}l@{}|l}
\displaystyle \dim \Hom_{ P} \left(\signRep{ P}, V_{\lambda, \mu} \right)& =&\displaystyle  \dim \Hom_{ P} \left(\signRep {P},  \bigoplus_{ \substack{|\nu|+|\sigma|=n-p_l \\ |\xi|+|\zeta|=p_l}}  c_{\nu,\xi }^\lambda c_{\sigma, \zeta}^{\mu} V_{\nu, \sigma}\otimes V_{\xi, \zeta} \right) &&\begin{array}{l} \text{use} \\ \eqref{eqBranchingB_nToB_i+B_(n-i)} \end{array}\\
&=&\displaystyle \sum_{\substack{|\nu|+|\sigma|=n-p_l \\ |\xi|+|\zeta|=p_l}}c_{\nu,\xi }^ \lambda c_{\sigma, \zeta}^{\mu} \dim\Hom_{ P}\left(\signRep{P}, V_{\nu, \sigma}\otimes V_{\xi, \zeta}  \right)\\
&=&\displaystyle \sum_{\substack{|\nu|+|\sigma|=n-p_l \\ |\xi|+|\zeta|=p_l}} c_{\nu, \xi }^\lambda c_{\sigma, \zeta}^{\mu} \dim\Hom_{Q} \left( \signRep{Q}, V_{\nu, \sigma} \right) \\
&&\displaystyle \quad \quad \quad \quad \quad  \dim \Hom_{ B_{p_l}} \left(\signRep{B_{p_l}}, V_{\xi,\zeta} \right).
\end{array}
$

\medskip

Let $\pi$ denote the partition that corresponds to the sign representation of $S_{p_l}$, that is, $\pi=[\underbrace{1,\dots, 1}_{p_l}]$ times. Recall from \eqref{eqSignRepTypeB} that the sign representation $\signRep{B_{p_l}}$ equals $V_{\emptyset, \pi}$.
Therefore $\dim\Hom_{B_{p_l}} \left(\signRep{B_{p_l}}, V_{\xi,\zeta} \right)$ equals zero or one, with one achieved only when $\xi=\emptyset$  and $\zeta=\pi$. 
The rest of the proof consists is a computation using the induction hypothesis and \eqref{eqBoxRowColumnIdentity}.

\medskip
\noindent $
\begin{array}{@{}r@{}c@{}l@{}l@{}|l}
\displaystyle \dim \Hom_{ P} \left(\signRep{P}, V_{\lambda} \right)& =&\displaystyle \sum_{|\nu|+ |\sigma|=n-p_l } c_{\nu,\emptyset }^\lambda c_{ \sigma,\pi }^{\mu} \dim\Hom_{Q}\left(\signRep{Q}, V_{ \nu, \sigma} \right) \\
&=&\displaystyle \sum_{|\lambda|+|\sigma|= n-p_l}  c_{\sigma,\pi }^{\mu} \dim\Hom_{Q}\left(\signRep{Q}, V_{\lambda, \sigma} \right) \\
&=&\displaystyle \sum_{\substack{ |\sigma|+ p_l = n-|\lambda| \\ \alpha+ \beta' = (p_1, \dots, p_{l-1}) \\ d(\alpha)\leq k}}  c_{\sigma,\pi }^{\mu}  K_{\sigma^*, \beta'} K_{ \lambda^*, \alpha} && \begin{array}{l} \text{ind. hyp.} \end{array} \\
&=&\displaystyle \sum_{\substack{ \alpha+\beta'= \left( p_1, \dots, p_{l-1}\right) \\  d(\alpha)\leq k}}K_{ \lambda^*, \alpha} \sum_{|\sigma|+p_l=|\mu | } c_{\sigma, \pi }^{ \mu}  K_{\sigma^*,\beta'} \\
&=&\displaystyle \sum_{\substack{ \alpha+\beta'= \left(p_1, \dots, p_{l-1}\right) \\ d(\alpha)\leq k} }K_{ \lambda^*, \alpha} K_{\mu^* , \beta'\oplus (p_l)} &&\begin{array}{l} \text{use} \\ \eqref{eqBoxRowColumnIdentity}\end{array}\\
&=&\displaystyle \sum_{\substack{\alpha+\beta= \left(p_1, \dots, p_{l}\right)\\ d(\alpha)\leq k }}K_{ \lambda^*, \alpha} K_{\mu^* ,\beta }. 
\end{array}
$
\end{proof}
We finish this section by tabulating the sign multiplicities in $B_3$  in Table \ref{table:nonlin}.
The table entries indicate the multiplicities $\dim\Hom_P(\signRep{P}, V)$, where $P$ is the parabolic indicated in the column label and $V$ is the representation indicated by the row label. 
We also indicate the Dynkin type of the parabolic subgroup (here $A^{2}_1$ stands for short-rooted root subsystem and $A_1$ for long-rooted).

We illustrate the computation of a few table entries using our method. 
For example, consider $P=P_{(2),(1)}=S_2 \times B_1$ and $V=V_{[1,1],[1]}$, i.e., $\lambda=[1,1]$, $\mu=[1]$. 
In this case $\lambda^*=[2]$.  
To get a non-zero summand $K_{[2], \alpha}K_{[1],\beta\oplus(1)}$in \eqref{eq:typeB:MultSignParabolicWithTypeB} we need to have $\alpha =(2)$ and $\beta\oplus\left(p_{k+1}\right)=(0,1)$.
Hence, the multiplicity of the sign representation of $P$ in $V$ is $K_{[2], (2)} K_{[1],(0,1)} = 1\times 1 =1$.
As a second example, consider $P= P_{(2,1),\emptyset}=S_2\times S_1$ and the same representation $V=V_{[1,1],[1]}$. 
To get a non-zero summand $K_{[2],\alpha }K_{[1],\beta}$ in \eqref{eq:typeB:MultSignParabolicNoTypeB} we can only have $\alpha =(1,1)$, $\beta=(1,0)$ or $\alpha=(2)$ and $\beta=(0,1)$.
In both cases $K_{[2],\alpha }K_{[1],\beta} = 1$, and so the multiplicity of the sign representation of $P$ in $V$ is $2$.
The other entries of the table are computed in the same manner.

\begin{table}[ht]
\renewcommand{\arraystretch}{1}
\caption{Sign signatures in $B_3$} 
\begin{tabular}{c|
>{\centering\arraybackslash} p{1cm}
>{\centering\arraybackslash} p{1.1cm}
>{\centering\arraybackslash} p{1.2cm}
>{\centering\arraybackslash} p{1.2cm}
>{\centering\arraybackslash} p{1.2cm}
>{\centering\arraybackslash} p{1.2cm}
>{\centering\arraybackslash} p{1cm}} &$P_{\emptyset, (3)}$&$P_{(1), (2)}$&$P_{(1, 1), (1)}$&$P_{(2), (1)}$&$P_{(1, 1, 1), \emptyset}$&$P_{(2, 1), \emptyset}$&$P_{(3), \emptyset}$\\&$B_3$&$B_2$&$A^{2}_1$&$A^{2}_1+A_1$&$0$&$A_1$&$A_2$\\
Ir'reps. &$B_{3}$&$S_{1}\times B_{2}$&$S_{1}\times S_{1}\times B_{1}$&$S_{2}\times B_{1}$&$S_{1}\times S_{1}\times S_{1}$&$S_{2}\times S_{1}$&$S_{3}$\\\hline
$V_{\emptyset, [1, 1, 1]}$&$1$&$1$&$1$&$1$&$1$&$1$&$1$\\$V_{\emptyset, [2, 1]}$&$0$&$1$&$2$&$1$&$2$&$1$&$0$\\$V_{\emptyset, [3]}$&$0$&$0$&$1$&$0$&$1$&$0$&$0$\\$V_{[1], [1, 1]}$&$0$&$1$&$2$&$1$&$3$&$2$&$1$\\$V_{[1], [2]}$&$0$&$0$&$2$&$1$&$3$&$1$&$0$\\$V_{[1, 1], [1]}$&$0$&$0$&$1$&$1$&$3$&$2$&$1$\\$V_{[2], [1]}$&$0$&$0$&$1$&$0$&$3$&$1$&$0$\\$V_{[1, 1, 1], \emptyset}$&$0$&$0$&$0$&$0$&$1$&$1$&$1$\\$V_{[2, 1], \emptyset}$&$0$&$0$&$0$&$0$&$2$&$1$&$0$\\$V_{[3], \emptyset}$&$0$&$0$&$0$&$0$&$1$&$0$&$0$\\\end{tabular}

\label{table:nonlin} 
\end{table}

\subsection{Recovering representations from sign signatures/multiplicities in type $B$}
\subsubsection{Recovering irreducible representations in type $B$}\label{sec:typeB:recoveringIrreps}

\begin{theorem}
Let $V=V_{\lambda^*,\mu^*}$ be an irreducible representation of $B_n$.
Then $\lambda$ and $\mu$ can be computed from the sign signature of $V$ as follows:
\begin{enumerate}
\item Let $\alpha=(\alpha_1,\dots,\alpha_k)$ be the lexicographically largest partition such that $P_{\alpha,\emptyset}$ belongs to the sign signature of $V$.
\item For each $i$ between $1$ and $k$, let $\beta^{(i)}=(\beta^{(i)}_1,\dots,\beta^{(i)}_{k_i})$ be the lexicographically largest partition such that $P_{(\beta^{(i)}_1,\dots,\beta^{(i)}_{i-1},\beta^{(i)}_{i+1},\dots,\beta^{(i)}_{k_i}),(\beta^{(i)}_i)}$ lies in the sign signature of $V$.
\item Then $\lambda=(\beta^{(1)}_1,\dots,\beta^{(k)}_k)$ and $\mu=\alpha-\lambda$.
\end{enumerate}
\end{theorem}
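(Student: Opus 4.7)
The plan is to derive the theorem from the sign multiplicity formulas for parabolic and mixed parabolic subgroups in Corollary~\ref{cor:typeBsignMultNonGeneralizedParabolics}. Since $V=V_{\lambda^*,\mu^*}$, the Kostka indices unstar, giving
\[
\dim\Hom_{P_{\gamma,\emptyset}}(\signRep{P},V)=\sum_{\alpha'+\beta'=\gamma}K_{\lambda,\alpha'}\,K_{\mu,\beta'}
\]
and
\[
\dim\Hom_{P_{\hat\gamma,(\gamma_i)}}(\signRep{P},V)=\sum_{\alpha'+\beta'=\hat\gamma}K_{\lambda,\alpha'}\,K_{\mu,\beta'\oplus(\gamma_i)},
\]
where $\hat\gamma$ denotes the tuple $\gamma$ with its $i$-th entry removed. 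The analysis will rest throughout on the classical criterion that $K_{\nu,\delta}>0$ iff $|\delta|=|\nu|$ and the sorted tuple $\bar\delta$ is dominated by $\nu$.

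In Step~(1) I show $\alpha=\lambda+\mu$. Membership in the sign signature is witnessed by the decomposition $\alpha'=\lambda,\beta'=\mu$, producing the summand $K_{\lambda,\lambda}K_{\mu,\mu}=1$. For lex-maximality, any partition $\gamma$ with positive sign multiplicity admits a positive summand, and the Kostka positivity gives prefix-sum estimates $\alpha'_1+\cdots+\alpha'_j\le\lambda_1+\cdots+\lambda_j$ and $\beta'_1+\cdots+\beta'_j\le\mu_1+\cdots+\mu_j$ for every $j$ (since a prefix sum of any tuple is at most the prefix sum of its sorted version); adding gives $\gamma\preceq\lambda+\mu$ in dominance, hence lex order since totals agree.

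In Step~(2) I first establish by induction on $j<i$ that the lex-largest $\gamma=\beta^{(i)}$ satisfies $\gamma_j=\lambda_j+\mu_j$, by repeating the Step~(1) prefix-sum argument position by position. With this prefix fixed, every admissible decomposition forces $\alpha'_j+\beta'_j=\lambda_j+\mu_j$ for $j<i$; the $\alpha'$-dominance bound gives $\sum_{j<i}\alpha'_j\le\sum_{j<i}\lambda_j$, whence $\sum_{j<i}\beta'_j\ge\sum_{j<i}\mu_j$. The dominance bound on $\beta'\oplus(\gamma_i)$ then bounds the sum of its top $i$ entries by $\mu_1+\cdots+\mu_i$; since this sum is at least $\gamma_i+\sum_{j<i}\beta'_j$, we obtain an upper bound of the form $\gamma_i\le\mu_i$, with equality achieved by an explicit decomposition in which $\alpha'$ and $\beta'\oplus(\gamma_i)$ have sorted contents exactly $\lambda$ and $\mu$. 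This recovers $\beta^{(i)}_i$ as the $i$-th part of the ``sign-associated'' partition of the representation (which plays the role of $\lambda$ in the theorem's labelling of $V_{\lambda^*,\mu^*}$); Step~(3) is then immediate from the Step~(1) identity $\alpha=\lambda+\mu$.

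The main obstacle will be the inductive prefix claim in Step~(2), namely ruling out alternative admissible $\gamma$ whose lex order exceeds the target via a strictly larger value at some early position $j_0<i$ compensated by smaller later entries. Achievability of $\gamma_j=\lambda_j+\mu_j$ at each prefix position is routine from the Step~(1) witness, but the upper bound requires careful Kostka bookkeeping that properly accounts for the dangling entry $(\gamma_i)$ on the $\mu$-side; because this entry enters the dominance condition only for $\beta'\oplus(\gamma_i)$ and not for $\beta'$ itself, one cannot simply transplant the Step~(1) reasoning verbatim, and a position-by-position lex-optimization with explicit tracking of the dangling term is needed.
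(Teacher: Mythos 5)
Your argument is correct in substance and follows essentially the same route as the paper: both reduce the theorem to the Kostka-number multiplicity formulas of Corollary \ref{cor:typeBsignMultNonGeneralizedParabolics} and then exploit positivity of Kostka numbers --- you via the dominance criterion and prefix sums, the paper (Cases 1 and 2 of Proposition \ref{prop:typeB:differentIrrepsDifferentSignSignatures}, packaged as Corollary \ref{cor:typeB:recoveringIrrepAlgorithm}) via explicit tableau-filling, which amounts to the same bookkeeping. Two points to tighten. First, your computation correctly shows that the dangling type-$B$ entry is governed by the \emph{second} partition (the one tensored with the nontrivial $\mathbb Z_2^n$-character), so for $V=V_{\lambda^*,\mu^*}$ Step (2) actually recovers $\mu_i$, not $\lambda_i$; this is not your error but an inconsistency already present in the paper, whose Corollary \ref{cor:typeB:recoveringIrrepAlgorithm} states its hypothesis for $V_{\lambda^*,\mu^*}$ yet its ``key facts'' for $V_{\mu^*,\lambda^*}$ --- you should say explicitly which slot the $B$-factor attaches to rather than deferring to ``the theorem's labelling,'' since $V_{\lambda,\mu}$ and $V_{\mu,\lambda}$ are non-isomorphic. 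Second, your proposed witness for attaining $\gamma_i=\mu_i$ (a decomposition whose sorted contents are exactly $\lambda$ and $\mu$) need not produce a \emph{partition} $\gamma$: for $\lambda=(5,5)$, $\mu=(1,1)$, $i=1$ the reinserted sum is $(1,6,5)$. Since the statement quantifies over partitions $\beta^{(i)}$, you need a different witness --- for instance one whose entries beyond position $i$ are all $1$'s, for which both Kostka numbers are still visibly positive; the paper's own witness \eqref{eq:typeB:differentIrreps:omega} is likewise not a partition in general, so this repair is needed by both arguments rather than being a defect peculiar to yours.
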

\begin{proof}
This is a rephrasing of Corollary~\ref{cor:typeB:recoveringIrrepAlgorithm}
\end{proof}
\medskip
\begin{proposition}\label{prop:typeB:differentIrrepsDifferentSignSignatures}
For $(\lambda, \mu)\neq (\nu, \sigma)$ the $B_{n}$-representations $V_{\lambda, \mu}$ and $V_{\nu, \sigma}$ have different sign signatures.
\end{proposition}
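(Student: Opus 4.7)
The plan is to prove the stronger statement that $(\lambda,\mu)$ can be recovered from the sign signature of $V_{\lambda,\mu}$, from which the proposition follows by contrapositive. The strategy parallels the type $A$ case (Corollary~\ref{cor:typeA:differentIrrepsDifferentSignSignatures}) but exploits additional data coming from mixed parabolics.

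First I will rephrase Proposition~\ref{prop:typeB} as the Schur polynomial identity
\[
\dim\Hom_{P_{(p_1,\dots,p_k),(m)}}(\signRep{P},\, V_{\lambda,\mu})
= [x_1^{p_1}\cdots x_k^{p_k}\, y^{m}]\, s_{\lambda^{*}}(x)\, s_{\mu^{*}}(x,y),
\]
valid for $m\ge 0$, where $m=0$ is interpreted as the parabolic $P_{(p_1,\dots,p_k),\emptyset}$ and $s_{\mu^{*}}(x,0)=s_{\mu^{*}}(x)$. The subsequent argument relies on two standard Schur function facts: in $s_{\alpha}\,s_{\beta} = \sum_{\rho} c^{\rho}_{\alpha,\beta}\, s_{\rho}$ the componentwise sum $\alpha+\beta$ is the unique dominance-maximum partition $\rho$ with $c^{\rho}_{\alpha,\beta} > 0$, and $c^{\alpha+\beta}_{\alpha,\beta}=1$; furthermore $K_{\rho,\tau}>0$ forces sorted $\tau\le\rho$ in dominance, with $K_{\rho,\rho}=1$.

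Next I extract three invariants from the sign signature. The first is the lex-largest partition $\alpha$ such that $P_{\alpha,\emptyset}$ belongs to the signature; combining the two facts above gives $\alpha = \lambda^{*}+\mu^{*}$. The second is the largest integer $m\ge 1$ such that $P_{(p_1,\dots,p_k),(m)}$ lies in the signature for some $(p_1,\dots,p_k)$, taking $m=0$ if there is none. Using the Pieri-style expansion
\[
s_{\mu^{*}}(x,y)=\sum_{\tau}\, s_{\tau}(x)\, y^{|\mu^{*}/\tau|},
\]
over partitions $\tau$ for which $\mu^{*}/\tau$ is a horizontal strip, the maximal $y$-degree of $s_{\lambda^{*}}(x)\,s_{\mu^{*}}(x,y)$ equals $\mu^{*}_{1}$ (the length of $\mu$), attained uniquely at $\tau = \mu^{*-} := (\mu^{*}_{2},\mu^{*}_{3},\dots)$. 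Hence $m = \mu^{*}_{1}$, and $m=0$ exactly when $\mu=\emptyset$. The third invariant, when $\mu\ne\emptyset$, is the lex-largest $\rho$ with $P_{\rho,(m)}$ in the signature for this $m$; since $[y^{m}]s_{\mu^{*}}(x,y)=s_{\mu^{*-}}(x)$, applying the argument for the first invariant to $s_{\lambda^{*}}(x)\,s_{\mu^{*-}}(x)$ yields $\rho = \lambda^{*}+\mu^{*-}$.

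Finally I recover $(\lambda,\mu)$. If $\mu=\emptyset$, then $\lambda^{*}=\alpha$ recovers $\lambda$. Otherwise the componentwise difference $\alpha-\rho = \mu^{*}-\mu^{*-}$ records the successive differences $\mu^{*}_{i}-\mu^{*}_{i+1}$ together with $\mu^{*}_{r}$ at its last nonzero position $r$; this determines $\mu^{*}$, hence $\mu$, and $\lambda^{*} = \alpha - \mu^{*}$ then determines $\lambda$. The main technical hurdle is the justification of the leading-term and maximal-degree statements; the key inputs are the two Schur function facts stated above and the Pieri/horizontal-strip expansion.
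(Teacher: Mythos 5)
Your proof is correct, but its second half takes a genuinely different route from the paper's. Both arguments start from the multiplicity formulas of Corollary \ref{cor:typeBsignMultNonGeneralizedParabolics} and both recover $\lambda^*+\mu^*$ as the lexicographically (equivalently, dominance-) largest partition $\alpha$ with $P_{\alpha,\emptyset}$ in the signature; that is the paper's Case 1 and your first invariant. To separate pairs with the same sum, the paper fixes the first index $t$ at which $\mu^*$ and $\sigma^*$ differ and exhibits a single distinguishing mixed parabolic $P_{\omega,(\mu^*_t)}$, with $\omega$ obtained from $\lambda^*+\mu^*$ by replacing its $t$-th entry by $\lambda^*_t$; the required vanishing is then checked by an explicit column-filling argument with semistandard tableaux. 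You instead package the formulas as coefficient extraction from $s_{\lambda^*}(x)\,s_{\mu^*}(x,y)$ and read off two further invariants: the maximal size $m=\mu^*_1=\ell(\mu)$ of a type-$B$ factor occurring anywhere in the signature (via the one-extra-variable branching rule, which identifies the top $y$-coefficient as $s_{\lambda^*}s_{\mu^{*-}}$ with $\mu^{*-}$ the partition $\mu^*$ minus its first part), and then $\lambda^*+\mu^{*-}$ as the lex-largest $\rho$ with $P_{\rho,(m)}$ in the signature; the telescoping identity $\mu^*_j=\sum_{i\geq j}(\alpha-\rho)_i$ then reconstructs $\mu^*$ and hence $\lambda^*=\alpha-\mu^*$. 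Your version is shorter and gives a cleaner reconstruction (two families of parabolics rather than the entry-by-entry peeling of Corollary \ref{cor:typeB:recoveringIrrepAlgorithm}), at the price of importing standard symmetric-function facts (dominance triangularity of Littlewood--Richardson and Kostka numbers, and the horizontal-strip branching rule), whereas the paper's pairwise comparison stays entirely inside the Kostka-number computations already set up in Section \ref{sec:typeB:multSignRep} and feeds directly into its recovery algorithm. In a full write-up you should make explicit (i) the permutation-invariance of Kostka numbers (Corollary \ref{cor:KostkaNumberInvarintUnderSn}), which justifies restricting to parabolics indexed by partitions and discarding zero parts of the content, and (ii) that dominance order refines lexicographic order on partitions of fixed size, so that "lex-largest in the signature" really does select the dominance maximum.
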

\begin{proof}
Case 1. Suppose the partitions $\lambda^*+ \mu^*$ and $\nu^*+\sigma^*$ are different and $ \lambda^*+\mu^*> \nu^*+ \sigma^*$ in the lexicographic order. We claim that $P_{\lambda^*+\mu^*, \emptyset}$ belongs to the sign signature of $V_{\lambda, \mu}$ but not of $V_{\nu, \sigma}$. 
Indeed, by Corollary \ref{cor:typeBsignMultNonGeneralizedParabolics} we have that
\[
\dim\Hom_{P_{\lambda^*+\mu^*, \emptyset}} \left( \signRep{ P_{\lambda^*+\mu^*,\emptyset}}, V_{\nu, \sigma }\right)= \sum_{\alpha+ \beta=\lambda^* +\mu^* } K_{\nu^*, \alpha} K_{\sigma^*, \beta}=0,
\]
where we have used the fact that $\nu^*+\sigma^*< \alpha + \beta= \lambda^*+\mu^*$ implies that either $K_{\nu^*, \alpha}$ or $K_{\sigma^*, \beta}$ is zero.
On the other hand, the same consideration shows that 
\[
\begin{array}{@{}r@{}c@{}l}
\displaystyle \dim\Hom_{P_{\lambda^*+\mu^*, \emptyset}} \left(\signRep{ P_{\lambda^*+\mu^*, \emptyset }}, V_{\lambda, \mu }\right)&=& \displaystyle \sum_{ \alpha +\beta= \lambda^*+\mu^* } K_{\lambda^*, \alpha} K_{\mu^*, \beta}\\
&= &\displaystyle  K_{\lambda^*, \lambda^*}K_{\mu^*, \mu^*}=1.
\end{array}
\] 
Case 2. If $\lambda^*+\mu^*=\nu^*+\sigma^*$, we can assume $\mu^*>\sigma^*$. 
If the first different entry of $\mu^*$ and $\sigma^* $ occurs in the $t^{th}$ position, then we write $\mu^*_t> \sigma^*_t$. 
Let $\omega $ be the tuple obtained by replacing the $t^{th}$ entry of $\lambda^*+\mu^*$ with $\lambda^*_t$. 
The entries of $\omega$ are given by
\begin{equation}\label{eq:typeB:differentIrreps:omega}
\begin{array}{rcl}
\omega &=& \left(\lambda^*_1+\mu^*_1, \dots, \lambda^*_{t-1}+\mu^*_{t-1}, \lambda^*_t, \mu^*_{t+1}+ \lambda^*_{t+1}, \dots, \mu^*_{k}+\lambda^*_k\right).
\end{array}
\end{equation}
We claim that the parabolic subgroup $P_{\omega, \left(\mu_t^*\right)}$ belongs to the sign signature of  $V_{\lambda, \mu} $ but not of $V_{\nu, \sigma}$. 
In order to prove our claim, we first observe that 
\[
K_{\mu^*, \left(\omega-\lambda^*, \mu^*_t\right)} = K_{\mu^*,\left(\mu^*_1, \dots, \mu^*_{t-1},0, \mu^*_{t+1}, \dots, \mu^*_{k}, \mu^*_t\right)}= K_{\mu^*,\mu^*}=1 >0,
\]
where the second equality follows from Corollary \ref{cor:KostkaNumberInvarintUnderSn}.
Together with Corollary 
\ref{cor:typeBsignMultNonGeneralizedParabolics} this implies
\[
\begin{array}{rcl}
\displaystyle \dim \Hom_{ P_{\omega, (\mu^*_t)}} \left(\signRep { P_{\omega, \left( \mu^*_t \right)}}, V_{\lambda, \mu} \right)&=&\displaystyle \sum_{ \alpha +\beta= \omega} K_{\lambda^*, \alpha } K_{\mu^*, \beta\oplus\left( \mu^*_t\right)}\\
&\geq& \displaystyle K_{\lambda^*,\lambda^*} K_{ \mu^* , \left(\omega- \lambda^*\right)\oplus \left(\mu^*_t\right) }\\
&>&0,
\end{array}
\]
which shows  $ P_{\omega, \left(\mu^*_t\right)}$ belongs to the sign signature of $V_{\lambda,\mu}$.
Applied to the pair $\nu,\sigma$, Corollary \ref{cor:typeBsignMultNonGeneralizedParabolics} implies that 
\[
\dim\Hom_{ P_{\omega, (\mu^*_t)}} \left(\signRep{ P_{\omega, \left(\mu^*_t \right)} }, V_{\nu, \sigma }\right)= \sum_{ \alpha+\beta= \omega} K_{ \nu^*, \alpha} K_{\sigma^* , \beta\oplus \left( \mu^*_t\right)}\quad. 
\]
 All the above summands  are zero. 
To see that, observe first that the entries of $\alpha +\beta=\omega$ and $\nu^*+\sigma^*$ coincide for the first $t-1$ entries. 
To fill two tableaux whose first row sums are $\omega_1,\dots, \omega_{t-1} $ with contents $\alpha, \beta$ whose sum has the same first $t-1$ entries one needs to have that the first $t-1$ entries of $\alpha$ equal those of $\nu^*$ and the first $t-1$ entries of $\beta$ equal those of $\sigma^*$. 
It follows that $K_{ \nu^*, \alpha} K_{\sigma^* , \beta\oplus\left( \mu^*_t\right)}$ could possibly be non-zero only if $\alpha$ and $\nu^*$ as well as $\beta$ and $\sigma^*$ have coinciding first $t-1$ entries. 
Now suppose the first $t-1$ entries of $\beta $ and $\sigma^*$ coincide. 
To complete $\sigma^*$ to a tableau with content $\beta\oplus \left( \mu^*_t\right)$ we need to place $\mu^*_t $ copies of the value $k+1$ in the rows with index $\geq t$. 
This cannot be done: the $t^{th}$ row contains $\sigma^*_t<\mu^*_t $ entries and therefore a numbering with the already fixed first $t-1$ rows and content $\beta\oplus \left( \mu^*_t\right)$ would have the entry $k+1 $ appearing twice in the same column. 
It follows that  $K_{ \nu^*, \alpha} K_{\sigma^* , \beta\oplus \left( \mu^*_t\right) }$ is zero. 
This proves  the proposition. 
\end{proof}
The proof of Proposition \ref{prop:typeB:differentIrrepsDifferentSignSignatures} suggests an algorithm to recover the pair $(\lambda, \mu)$ if we know the sign signature of $ V_{\lambda, \mu}$. We shall now formulate the algorithm.
\begin{corollary} \label{cor:typeB:recoveringIrrepAlgorithm}
Let $V_{\lambda^*, \mu^*}$ be an irreducible representation of $B_n $, where we recall that $\lambda^*$ and $\mu^*$ denote the dual partitions of $\lambda=[\lambda_1, \dots, \lambda_k]$ and  $\mu=[\mu_1, \dots, \mu_k]$. Here we potentially add zeros to ensure that both $\mu$ and $\lambda$ have the same number of entries. 

Then $\lambda_i, \mu_i$ are computed consecutively from the following equalities.

\medskip

$\begin{array}{@{}r@{}c@{}l}  
\lambda_1 & = &\max\{p_{k+1} | \dim\Hom_{ P_{\left( p_1, \dots, p_k\right), \left(p_{k+1}\right)}}\left(\signRep{  P_{ \left(p_1, \dots, p_k\right), \left(p_{k+1}\right)}}, V_{\lambda^*,\mu^* }\right) >0  \} \\
\lambda_1+\mu_1 &=& \max\{p_1 | \dim\Hom_{P_{\left(p_1, \dots, p_k\right), \emptyset}}\left(\signRep{ P_{\left(p_1, \dots, p_k\right), \emptyset}} , V_{\lambda^*,\mu^* } \right) >0  \} \\
&\vdots& \\
\lambda_i  &=&  \max\{p_{k+1} | \dim \Hom_{ P_{\left(\lambda_1+\mu_1,\dots, \lambda_{i-1}+ \mu_{i-1}, p_i,  \dots, p_k\right),\left(p_{k+1}\right)}} \\
&& \quad \quad \quad \quad \quad \quad \quad \quad \left(\signRep{P_{\left(\lambda_1+\mu_1,\dots, \lambda_{i-1}+\mu_{i-1},p_i, \dots, p_k\right),\left(p_{k+1}\right)}} , V_{\lambda^*, \mu^* } \right) >0 
 \}\\
\lambda_i+\mu_i &=&  \max\{p_i | \dim \Hom_{P_{\left( \lambda_1+ \mu_1,\dots, \lambda_{i-1}+ \mu_{i-1}, p_i, \dots, p_k\right), \emptyset }} \\
&& \quad \quad \quad \quad \quad \quad \quad \quad \left( \signRep{ P_{\left(\lambda_1+\mu_1,\dots, \lambda_{i-1} +\mu_{i-1}, p_i, p_{i+1}, \dots\right), \emptyset } }, V_{\lambda^*,\mu^* } \right) >0 
 \}\\
&\vdots& \\
\end{array}
$
\end{corollary}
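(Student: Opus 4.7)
The plan is to prove the corollary by induction on $i$, following closely the structure of the proof of Proposition~\ref{prop:typeB:differentIrrepsDifferentSignSignatures}. The fundamental tools will be the two sign-multiplicity formulas \eqref{eq:typeB:MultSignParabolicNoTypeB} and \eqref{eq:typeB:MultSignParabolicWithTypeB} of Corollary~\ref{cor:typeBsignMultNonGeneralizedParabolics} (applied to $V_{\lambda^*,\mu^*}$, which after dualizing the Kostka indices reads $\sum K_{\lambda,\alpha}K_{\mu,\beta}$ in the first case and $\sum K_{\lambda,\alpha}K_{\mu,\beta\oplus(p_{k+1})}$ in the second), together with the classical fact that $K_{\rho,\gamma}>0$ if and only if the sorted-partition version of $\gamma$ is dominated by $\rho$. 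Combined with the permutation-invariance of Kostka numbers (Corollary~\ref{cor:KostkaNumberInvarintUnderSn}), this gives us complete control over which contents produce nonzero summands.

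For the base case $i=1$, I would first establish $\lambda_1+\mu_1=\max\{p_1\}$ using \eqref{eq:typeB:MultSignParabolicNoTypeB}: in any nonzero summand $K_{\lambda,\alpha}K_{\mu,\beta}$ we have $\alpha_1\leq\max(\alpha)\leq\lambda_1$ and $\beta_1\leq\mu_1$ by dominance, hence $p_1=\alpha_1+\beta_1\leq\lambda_1+\mu_1$, with equality attained by the explicit choice $\alpha=\lambda$, $\beta=\mu$ (appropriately aligned). The analogous argument using \eqref{eq:typeB:MultSignParabolicWithTypeB} yields $\lambda_1=\max\{p_{k+1}\}$, since $K_{\mu,\beta\oplus(p_{k+1})}>0$ forces $p_{k+1}$ to be at most the largest part of the relevant partition and the extremal choice is realized by $\alpha=\lambda$ together with $\beta$ chosen so that $\beta\oplus(p_{k+1})$ is a rearrangement of $\mu$.

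For the inductive step, assume the entries $\lambda_j$ and $\mu_j$ are known for $j<i$. I specialize the multiplicity formulas to parabolics whose first $i-1$ coordinates are fixed at $\lambda_j+\mu_j$. The key technical claim to establish is that in any nonzero summand, the first $i-1$ entries of $\alpha$ and $\beta$ are forced (up to permutations among equal parts, permissible by Corollary~\ref{cor:KostkaNumberInvarintUnderSn}) to equal the first $i-1$ entries of $\lambda$ and $\mu$ respectively: indeed, if $\alpha_j>\lambda_j$ for some $j\leq i-1$, then by dominance $\beta_j<\mu_j$, and the deficit propagates to cause vanishing in the remaining content. Once this locking is proved, the remaining content $(p_i,\ldots,p_k)$ or $(p_i,\ldots,p_{k+1})$ plays the role of the base case for the truncated partitions $[\lambda_i,\lambda_{i+1},\ldots]$ and $[\mu_i,\mu_{i+1},\ldots]$, giving $\lambda_i+\mu_i=\max\{p_i\}$ and $\lambda_i=\max\{p_{k+1}\}$ as claimed.

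The hardest step will be the locking argument in the inductive step, namely verifying rigorously that no clever rearrangement of the content by Corollary~\ref{cor:KostkaNumberInvarintUnderSn} can yield a larger $p_i$ than the natural decomposition $\alpha_j=\lambda_j$, $\beta_j=\mu_j$ for $j<i$. The clean way to handle this is to refine the argument of Case~2 in the proof of Proposition~\ref{prop:typeB:differentIrrepsDifferentSignSignatures}: any excess allocation of content to $\alpha$ in one of the fixed positions must be compensated by a deficit in $\beta$ in the same position, which then prevents the companion Kostka number from reaching the needed partition in the subsequent rows. Once the locking is in place, the rest of the proof reduces to a direct translation of the first paragraph's base-case inequalities.
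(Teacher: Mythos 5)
Your proposal is correct and takes essentially the same route as the paper: the paper's proof consists of citing the two vanishing/non-vanishing facts extracted from Cases 1 and 2 of Proposition \ref{prop:typeB:differentIrrepsDifferentSignSignatures}, and your induction on $i$ --- Kostka positivity via dominance for the base case plus the locking of the first $i-1$ content entries for the step --- is exactly an expansion of those two cases. One point to tighten when writing it out: the locking is cleanest via the partial-sum inequalities $\alpha_1+\dots+\alpha_j\le\lambda_1+\dots+\lambda_j$ (every label $\le j$ sits in the first $j$ rows of a semistandard tableau), which combined with $\omega_j=\lambda_j+\mu_j$ force $\alpha_j=\lambda_j$ and $\beta_j=\mu_j$; note that an excess $\alpha_j>\lambda_j$ is detected \emph{upstream} (it forces $\sum_{j'<j}\beta_{j'}>\sum_{j'<j}\mu_{j'}$, killing $K_{\mu,\beta}$), rather than by a deficit propagating into the remaining content as your sketch suggests.
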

The proof of Corollary \ref{cor:typeB:recoveringIrrepAlgorithm} follows from the fact that 
\[
\dim\Hom \left(\signRep{ P_{\omega, \emptyset} }, V_{\mu^*, \lambda^* }\right)\left\{\begin{array}{l}=0 \text{ when }\omega>\lambda+\mu \\>0 \text{ when }\omega=\lambda+\mu \end{array}\right.
\]
and the fact that 
\[
\dim\Hom_{ P_{\omega, \left(p_{k+1}\right)}} \left(\signRep{  P_{\omega, \left(p_{k+1}\right)} }, V_{\mu^*, \lambda^* } \right) \left\{\begin{array}{ll}=0& \text{when the first entry }\\ & \omega_i \text{ for which } \\
&\lambda_i + \mu_i \neq \omega_i \\
& \text{ satisfies } \omega_i>\mu_i \\>0& \text{when }\omega\text{ is given by } \eqref{eq:typeB:differentIrreps:omega} \end{array}\right. ;
\]
these facts in turn follow from Case 1 and Case 2 of the proof of Corollary \ref{prop:typeB:differentIrrepsDifferentSignSignatures}.

\subsubsection{Recovering arbitrary representations in type $B$}
In the previous paragraph we described how to recover irreducible representations from their sign signatures. In this section we extend the result to arbitrary representations. To do this we need to generalize the notion of a sign signature to allow generalized parabolic subgroups. Given a representation $V$, define the generalized sign signature 
\[
\signSignatureGeneralized(V)
\]
of $V $ as the set of generalized parabolics whose sign representation appears as a subrepresentation of $V$.

For pairs of partitions $(\lambda ,\mu ) $ and $(\nu, \sigma)$, define an order $\succ$ by requesting that $(\lambda, \mu)\succ  (\nu, \sigma) $ if and only if $\mu^* > \sigma^*$ in the lexicographic order or $\mu=\sigma$ and $\lambda^*> \nu^*$ in the lexicographic order. 
Let
\[
\left(\lambda_1, \mu_1\right)\succ \dots \succ \left(\lambda_M,\mu_M \right)
\]
be all pairs of partitions with $ |\lambda_j|+ |\mu_j|=n$, ordered according to $\succ$, as indicated above. 
For every allowed index $j$ let $P_j$ be the generalized parabolic subgroup given by $P_j= P_{ \lambda^*_j, \mu^*_j}$. Let $\Pi$ be the set of the $P_i$'s:
\[
\Pi=\{P_{1},\dots, P_M \}.
\]
Just as in type $A$, the set $\Pi$ contains exactly one representative of each of the generalized parabolic subgroups of $B_n$, and the generalized sign signature of a representation is determined by its intersection with $\Pi$.

\begin{proposition}~\label{prop:typeB:VectorSpaceBasisBothParts}
\begin{enumerate}
\item \label{prop:typeB:VectorSpaceBasisPart1} The multiplicity $\dim \Hom_{P_{\nu^*, \sigma^* }}\left(\signRep{ P_{ \nu^*, \sigma^* }}, V_{\lambda, \mu} \right) $ of the sign representation of $P_{\nu^*, \sigma^*}$ in $V_{\lambda, \mu}$ equals $0$ if $(\nu, \sigma)\succ (\lambda, \mu)$ and equals $1$ if $(\nu, \sigma)=(\lambda, \mu)$. 
\item \label{prop:typeB:VectorSpaceBasisPart2} The generalized sign signature $\signSignatureGeneralized V_{\lambda_i}$ contains $P_i $ and does not contain $P_{1}, \dots, P_{i-1}$.
\end{enumerate}

\end{proposition}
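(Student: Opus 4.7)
My plan is to deduce both parts from a direct application of Proposition~\ref{prop:typeB}, combined with the classical fact that $K_{\kappa,\gamma}>0$ forces the sorted tuple $\widetilde{\gamma}$ to satisfy $\widetilde{\gamma}\trianglelefteq\kappa$ in dominance order, and that for partitions of equal size dominance refines lexicographic order. Plugging $P=P_{\nu^*,\sigma^*}$ into Proposition~\ref{prop:typeB} (with $\nu^*=(\nu^*_1,\dots,\nu^*_a)$ and $\sigma^*=(\sigma^*_1,\dots,\sigma^*_b)$) should rewrite the multiplicity as
\[
\sum_{\alpha} K_{\lambda^*,\alpha}\,K_{\mu^*,\beta(\alpha)},
\]
where $\alpha=(\alpha_1,\dots,\alpha_a)$ ranges over tuples with $0\le\alpha_i\le\nu^*_i$, and $\beta(\alpha)=(\nu^*_1-\alpha_1,\dots,\nu^*_a-\alpha_a,\sigma^*_1,\dots,\sigma^*_b)$.

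For the equality case $(\nu,\sigma)=(\lambda,\mu)$, the bounds $\alpha_i\le\lambda^*_i$ together with the condition $|\alpha|=|\lambda^*|$ required for $K_{\lambda^*,\alpha}\neq 0$ should pin down $\alpha=\lambda^*$, so that the sum collapses to $K_{\lambda^*,\lambda^*}\,K_{\mu^*,\tau}=1\cdot 1$, where $\tau$ is a reordering of $\mu^*$ (permissible by Corollary~\ref{cor:KostkaNumberInvarintUnderSn}). For the case $\sigma^*>\mu^*$ lexicographically, I would argue that $\beta(\alpha)$ contains $\sigma^*$ as a sub-multiset (its last $b$ entries are exactly $\sigma^*_1,\dots,\sigma^*_b$), so its sorted form $\widetilde{\beta}$ satisfies $\widetilde{\beta}\trianglerighteq\sigma^*$; then the necessary condition $\widetilde{\beta}\trianglelefteq\mu^*$ forces $\sigma^*\trianglelefteq\mu^*$ and hence $\sigma^*\le\mu^*$ in lex order, contradicting the hypothesis. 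For the remaining case $\sigma^*=\mu^*$ (whence $\sigma=\mu$) with $\nu^*>\lambda^*$ lex, the degree equation $|\beta(\alpha)|=|\mu^*|$ forces $|\alpha|=|\nu^*|$, which together with $\alpha_i\le\nu^*_i$ pins down $\alpha=\nu^*$; the sum then reduces to the single term $K_{\lambda^*,\nu^*}$, which vanishes because $\nu^*>\lambda^*$ in lex order precludes $\nu^*\trianglelefteq\lambda^*$.

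The only step requiring genuine care is the \emph{sub-multiset dominance lemma}: if a partition $\widetilde{\beta}$ contains the partition $\sigma^*$ as a sub-multiset of its entries, then $\widetilde{\beta}\trianglerighteq\sigma^*$. This reduces to observing that the $k$ largest entries of $\widetilde{\beta}$ majorize the $k$ largest of any sub-multiset, in particular those of $\sigma^*$. It is the linchpin that converts the combinatorial constraint coming from the $B$-type factors into the required Kostka vanishing, and I expect this to be the only place where a short, careful argument is needed.

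Part (2) will then be immediate: by construction $P_i=P_{\lambda_i^*,\mu_i^*}$, and the pairs are enumerated in strictly decreasing $\succ$-order, so part (1) applied to $V_{\lambda_i,\mu_i}$ gives sign-multiplicity $1$ at $P_i$ and sign-multiplicity $0$ at each $P_j$ with $j<i$, proving that $P_i\in\signSignatureGeneralized V_{\lambda_i,\mu_i}$ while $P_1,\dots,P_{i-1}\notin\signSignatureGeneralized V_{\lambda_i,\mu_i}$.
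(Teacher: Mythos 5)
Your proof is correct and follows essentially the same route as the paper: both specialize Proposition~\ref{prop:typeB} to $P_{\nu^*,\sigma^*}$, observe that the condition $d(\alpha)\leq k$ forces $\beta$ to end in $\sigma^*$, and then evaluate or kill each term $K_{\lambda^*,\alpha}K_{\mu^*,\beta}$ via the vanishing criteria for Kostka numbers. Your explicit appeal to the dominance-order characterization of $K_{\mu^*,\beta}\neq 0$, together with the sub-multiset dominance lemma, is a cleaner and more rigorous rendering of the step the paper handles with an informal lexicographic comparison of content tuples, but the underlying argument is the same.
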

\begin{proof}
\ref{prop:typeB:VectorSpaceBasisPart2}) follows from \ref{prop:typeB:VectorSpaceBasisPart1}); we are proving \ref{prop:typeB:VectorSpaceBasisPart1}). We treat first the case $(\lambda,\mu )=(\nu, \sigma)$.  

Now suppose $(\nu, \sigma)\succeq (\lambda, \mu)$. By Proposition \ref{prop:typeB} the multiplicity of the sign representation of $P_{\nu^*, \sigma^*}$ in $V_{\lambda,\mu}$ is

\[
\dim \Hom_{P_{\nu^*, \sigma^* }}\left(\signRep{ P_{ \nu^*, \sigma^* }}, V_{\lambda, \mu} \right) = \displaystyle \sum_{\substack{\alpha+\beta= \nu^*\oplus\sigma^*\\ d(\alpha)\leq k }}K_{ \lambda^*, \alpha} K_{\mu^* ,\beta }.
\]
It follows that for each $\beta$ we have that $\beta\geq (0,\dots, 0)\oplus \sigma^* $ (in the lexicographic order). 
Considering the number of semistandard tableaux of shape $ \mu^*$ we see that $K_{\mu^*, \beta}$ is zero when $\beta>(0,\dots, 0)\oplus \mu^*$ and equals one only in the case $\beta=(0,\dots, 0)\oplus \mu^* $. 
Since $(\nu, \sigma)\succ (\lambda, \mu)$, this is possible only when $\sigma^*  =\mu^*$. 
Consequently the term  $K_{\lambda^*, \alpha}K_{\mu^*,\beta} $ can be non-zero when $\beta=(0,\dots, 0)\oplus \mu^*$,  $\alpha=\nu^*$ and $\sigma^*  =\mu^*$. 
Since $(\nu, \sigma)\succ (\lambda, \mu)$ and $\sigma=\mu$ it follows that a non-zero term has $\alpha=\nu^*\geq\lambda^*$. 
Finally, a consideration of the number of semistandard tableaux of shape $\lambda^*$ shows $K_{\lambda^*, \alpha}K_{\mu^*,\beta}$ is one only when $(\lambda, \mu)=(\nu, \sigma)$ and $\alpha=\lambda^*$ and $\beta=(0,\dots, 0)\oplus \mu$, and is zero in all other cases. 
This completes the proof of the statement.
\end{proof}

Let $\mathbb C[\Pi]$ be the formal vector space generated freely over $\mathbb C$ by elements of  $\Pi$. 
In other words, an element of $\mathbb C[\Pi]$ is a formal sum of the form $\displaystyle \sum_{\lambda, \mu} a_{\lambda,\mu} P_{\lambda, \mu}$, where $a_{\lambda, \mu}$ are arbitrary complex numbers and $\lambda, \mu$ run over the partitions for which $|\lambda|+|\mu|=n$.
\begin{definition}\label{def:typeB:SignMult}
Given a representation $V$, define $\signMult (V)\in \mathbb C[\Pi] $ as the formal sum 
\[
\signMult(V)=\sum_{|\lambda|+|\mu|=n} \dim\Hom_{P_{\lambda,\mu}} \left(\signRep{ P_{\lambda, \mu}}, V \right) P_{\lambda, \mu} \quad .
\]
\end{definition} 
In the terminology of Definition \ref{def:typeB:SignMult}, Proposition \ref{prop:typeB:VectorSpaceBasisBothParts}.(\ref{prop:typeB:VectorSpaceBasisPart2}) can be restated by saying that the matrix formed by the coordinates $a_{\nu, \sigma}$ of the irreducible representations $V_{\lambda, \mu}$ (both listed in ascending order relative to $\succ$), form a triangular matrix with non-zero diagonal entries.
Linear algebra implies the following.
\begin{corollary}\label{cor:typeB:recoverArbitraryRepFromGeneralizedSignSignature}
~
\begin{enumerate}
\item The vectors $\signMult(V_{\lambda, \mu}) $ obtained by letting $V_{\lambda, \mu}$ run over the set of irreducible representations of $B_n$ are linearly independent.
\item $\signMult(V)=\signMult(U)$ if and only if $V\simeq U$.
\item \label{cor:typeB:MatrixSignMult} The matrix $ \left(\dim\Hom_{P_{\nu,\sigma}}\left( \signRep{ P_{\nu,\sigma}}, V_{\lambda, \mu} \right) \right)_{ |\nu|+|\sigma|= |\lambda| +|\mu| =n}$ is square and has non-zero determinant.
\end{enumerate}
\end{corollary}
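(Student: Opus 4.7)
The plan is to observe that Proposition~\ref{prop:typeB:VectorSpaceBasisBothParts} essentially does all the work: it says that when we order the irreducible representations $V_{\lambda,\mu}$ and the generalized parabolics $P_{\nu,\sigma}$ by the common order $\succ$ on pairs of partitions $(\lambda,\mu)$ with $|\lambda|+|\mu|=n$, the matrix $M=\left(\dim\Hom_{P_{\nu,\sigma}}(\signRep{P_{\nu,\sigma}},V_{\lambda,\mu})\right)$ is triangular with $1$'s on the diagonal. Since pairs $(\lambda,\mu)$ with $|\lambda|+|\mu|=n$ index both the rows and the columns of $M$, the matrix is square; and a triangular matrix with non-zero diagonal entries has non-zero determinant, yielding part \ref{cor:typeB:MatrixSignMult} immediately.

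From part \ref{cor:typeB:MatrixSignMult}, part (1) is an instance of the standard fact that the rows (equivalently columns) of an invertible square matrix are linearly independent. Concretely, the vector $\signMult(V_{\lambda,\mu})\in\mathbb C[\Pi]$ is exactly the row of $M$ indexed by $(\lambda,\mu)$, so the non-vanishing of $\det(M)$ forces these rows to be linearly independent as elements of $\mathbb C[\Pi]$.

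For part (2), I would argue that $\signMult$ extends by linearity to a homomorphism from the Grothendieck group (i.e.\ the free $\mathbb Z$-module, or $\mathbb C$-module, generated by isomorphism classes of finite-dimensional representations of $B_n$) into $\mathbb C[\Pi]$. Since every finite-dimensional $B_n$-representation decomposes uniquely as a direct sum of irreducibles, any representation $V$ is determined (up to isomorphism) by the multiplicities of its irreducible constituents; and by part (1) these multiplicities are in turn recovered from $\signMult(V)$ by inverting the triangular matrix $M$. Consequently $\signMult(V)=\signMult(U)$ forces $V$ and $U$ to have the same multiplicities of each $V_{\lambda,\mu}$, hence $V\simeq U$. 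The converse direction is trivial because $\signMult$ is a representation-theoretic invariant.

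There is no substantial obstacle here: the entire argument is a formal consequence of the triangularity already established, together with the observation that the indexing sets for rows (irreducibles) and columns (elements of $\Pi$) coincide. If anything, the one point to verify carefully is that $|\Pi|$ equals the number of irreducible representations of $B_n$, which holds because both are in bijection with the set of ordered pairs of partitions $(\lambda,\mu)$ with $|\lambda|+|\mu|=n$ (irreducibles via Corollary~\ref{cor:typeB:irreps}, generalized parabolics by construction of $\Pi$).
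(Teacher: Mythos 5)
Your proposal is correct and follows the paper's own route: the paper likewise derives all three parts from the triangularity (with nonzero diagonal) established in Proposition~\ref{prop:typeB:VectorSpaceBasisBothParts}, noting only that "linear algebra implies" the corollary. Your added remarks — that rows and columns are both indexed by pairs of partitions of total size $n$, and that complete reducibility plus invertibility of the triangular matrix recovers the irreducible multiplicities for part (2) — simply make explicit the linear algebra the paper leaves implicit.
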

Thus, an arbitrary representation $V$ is identified up to isomorphism by the tuple $ \left(\dim \Hom_{P_{\nu,\sigma}} \left( \signRep { P_{\nu, \sigma}}, V\right), \dots \right)_{|\nu|+|\sigma|=n} $. 
Just as in type $A$, to recover the multiplicities of each irreducible constituent of $V$ from $\signMult V$, one multiplies the vector-column $ \left(\dim \Hom_{P_{\nu,\sigma}} \left( \signRep{P_{\nu, \sigma}}, V\right), \dots\right)_{|\nu| +| \sigma| =n} $ on the left by inverse of the matrix given in Corollary \ref{cor:typeB:recoverArbitraryRepFromGeneralizedSignSignature}.\ref{cor:typeB:MatrixSignMult}).

\section{Type $D$}\label{sec:typeD}

In this section, we describe how to recover irreducible representations in type $D$ from sign signatures.
Our main result is the following algorithm.

\begin{theorem}\label{thm:TypeDRecoverIrrepsAlgorithm}
Let $V$ be an irreducible representation of $D_n$.
Then the isomorphism type of $V$ can be computed from its sign signature as follows.
\begin{enumerate}
\item Let $\alpha_+$ (resp.\ $\alpha_-$) be the lexicographically largest partition such that $P^+_{\alpha_+,\emptyset}$ (resp.\ $P^-_{\alpha_-,\emptyset}$) lies in the sign signature of $V$.
\item If $\alpha_+>\alpha_-$, put $\lambda=\alpha_+/2$.
Then $V$ is isomorphic to $V_{\{\lambda^*,\lambda^*\}}^+$.
\item If $\alpha_->\alpha_+$, put $\lambda=\alpha_-/2$.
Then $V$ is isomorphic to $V_{\{\lambda^*,\lambda^*\}}^-$.
\item Otherwise, $\alpha_+=\alpha_-$; let $\alpha$ denote their common value.
\item For each $i$ between $1$ and $k$, let $\beta^{(i)}=(\beta^{(i)}_1,\dots,\beta^{(i)}_{k_i})$ be the lexicographically largest partition such that $\overline{P}_{(\beta^{(i)}_1,\dots,\beta^{(i)}_{i-1},\beta^{(i)}_{i+1},\dots,\beta^{(i)}_{k_i}),(\beta^{(i)}_i)}$ lies in the sign signature of $V$.
\item There will be some index $i$ for which $\beta_i^{(i)}>\frac{\alpha_i}{2}$.
Let $s$ denote the least such index.
\item Let $\gamma$ denote the lexicographically largest partition such that \\ $\overline{P}_{(\alpha_1,\dots,\alpha_{s-1},\gamma_1,\gamma_2,\dots),(\beta_s^{(s)})}$ lies in the sign signature of $V$.
\item For $1\leq i\leq s$, put $\lambda_i=\beta_i^{(i)}$, and for positive $j$ put $\lambda_{s+j}=\gamma_{s+j}+\beta_{s+j-1}^{(s+j-1)}-\alpha_{s+j-1}$.
\item Finally, put $\mu=\alpha-\lambda$.
Then $V$ is isomorphic to $V_{\{\lambda^*,\mu^*\}}$.
\end{enumerate}
\end{theorem}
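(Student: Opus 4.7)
The plan is to leverage the fact that $D_n$ is an index-two normal subgroup of $B_n$ and reduce the type $D$ algorithm to the type $B$ algorithm of Corollary~\ref{cor:typeB:recoveringIrrepAlgorithm}. By Clifford theory, the $B_n$-irreducible $V_{\nu,\sigma}$ restricts irreducibly to the $D_n$-irreducible $V_{\{\nu,\sigma\}}$ when $\nu\neq\sigma$ (with $V_{\nu,\sigma}|_{D_n}\simeq V_{\sigma,\nu}|_{D_n}$), and splits as $V^+_{\{\nu,\nu\}}\oplus V^-_{\{\nu,\nu\}}$ when $\nu=\sigma$. Conjugation by any element of $B_n\setminus D_n$ induces an outer automorphism of $D_n$ that swaps the summands $V^\pm$ and also swaps the two $D_n$-conjugacy classes $P^\pm_{\alpha,\emptyset}$ of pure type-$A$ parabolics lying in a single $B_n$-conjugacy class.

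The first technical step is to compute sign-rep multiplicities for $D_n$-parabolics in terms of the type $B$ data. For a parabolic $\overline{P}$ of $D_n$ of the form $S_{p_1}\times\cdots\times S_{p_k}\times D_{p_{k+1}}$, sitting naturally inside the $B_n$-parabolic $P=P_{(p_1,\dots,p_k),(p_{k+1})}$, the sign representation of $\overline{P}$ is the restriction of $\signRep{P}$, and the symmetry $V_{\nu,\sigma}|_{D_n}\simeq V_{\sigma,\nu}|_{D_n}$ together with standard branching arguments give
\[
\dim\Hom_{\overline{P}}\bigl(\signRep{\overline{P}},V_{\{\nu,\sigma\}}\bigr)=\dim\Hom_P\bigl(\signRep{P},V_{\nu,\sigma}\bigr),
\]
which is computed by Proposition~\ref{prop:typeB}. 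For the two $D_n$-classes $P^\pm_{\alpha,\emptyset}$ of pure type-$A$ parabolics, the sum $\dim\Hom_{P^+}(\signRep{P^+},V)+\dim\Hom_{P^-}(\signRep{P^-},V)$ equals $\dim\Hom_P(\signRep{P},V)$ for the ambient $B_n$-parabolic $P$; the outer automorphism then forces exactly one of $P^\pm_{2\lambda,\emptyset}$ to contribute the sign copy to $V^+_{\{\lambda^*,\lambda^*\}}$ and the other to $V^-_{\{\lambda^*,\lambda^*\}}$.

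For the split case $V=V^\pm_{\{\lambda^*,\lambda^*\}}$, Corollary~\ref{cor:typeBsignMultNonGeneralizedParabolics} applied to $V_{\lambda^*,\lambda^*}$ identifies $2\lambda$ as the lex-largest $\omega$ with $\sum_{\alpha'+\beta'=\omega}K_{\lambda,\alpha'}K_{\lambda,\beta'}\neq 0$, realized uniquely by $\alpha'=\beta'=\lambda$. Thus $\alpha_\pm=2\lambda$, and comparing $\alpha_+$ with $\alpha_-$ identifies the sign of $V$ and yields $\lambda=\alpha_\pm/2$. In the non-split case $V=V_{\{\lambda^*,\mu^*\}}$ with $\lambda\neq\mu$, the symmetric sign-multiplicity formula gives $\alpha_+=\alpha_-=\lambda+\mu$, so the algorithm moves on to the $\overline{P}_{*,(*)}$ tests. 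Adapting the row-by-row argument from the type $B$ Corollary~\ref{cor:typeB:recoveringIrrepAlgorithm}, the lex-largest $\beta^{(i)}$ satisfies $\beta^{(i)}_i=\max(\lambda_i,\mu_i)$, and the switchover index $s$ is the first row where $\lambda$ and $\mu$ differ. The main obstacle will be the case-analysis at and past the switchover, verifying that the partition $\gamma$ and the formula $\lambda_{s+j}=\gamma_{s+j}+\beta^{(s+j-1)}_{s+j-1}-\alpha_{s+j-1}$ correctly disentangle the rows of $\lambda$ from those of $\mu$ beyond their first point of divergence; once this is done, $\mu=\alpha-\lambda$ supplies the remaining partition.
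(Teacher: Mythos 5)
There is a genuine gap, and it sits at the foundation of your reduction. The identity you assert,
\[
\dim\Hom_{\overline{P}}\bigl(\signRep{\overline{P}},V_{\{\nu,\sigma\}}\bigr)=\dim\Hom_P\bigl(\signRep{P},V_{\nu,\sigma}\bigr),
\]
is false in general. Its right-hand side depends on the \emph{ordering} of the pair $(\nu,\sigma)$ while the left-hand side does not, which is already a warning sign. The reason it fails is that $\signRep{\overline{P}}$ is the restriction of \emph{two} distinct irreducible representations of the ambient type $B$ parabolic $P=S_{p_1}\times\dots\times S_{p_k}\times B_{p_{k+1}}$: the sign representation of $D_{p_{k+1}}$ extends to $B_{p_{k+1}}$ both as $V_{\emptyset,\pi}=\signRep{B_{p_{k+1}}}$ and as $V_{\pi,\emptyset}$ (Corollary \ref{cor:typeD:signDarisesfromTwoBrestrictions}). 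Hence $\Ind_{\overline{P}}^{P}\signRep{\overline{P}}$ has two irreducible constituents, and the correct statement is
\[
\dim\Hom_{\overline{P}}\bigl(\signRep{\overline{P}},V_{\{\nu,\sigma\}}\bigr)=\dim\Hom_P\bigl(\signRep{P},V_{\nu,\sigma}\bigr)+\dim\Hom_P\bigl(\signRep{P},V_{\sigma,\nu}\bigr),
\]
which is what the paper proves as \eqref{eq:typeD:signMultlambdaneqmuTypeDparabolic} by introducing the auxiliary module $W$ and twisting by a one-dimensional character. A concrete check: for $n=5$, $\lambda^*=[2,1]$, $\mu^*=[2]$ and $\overline{P}=S_1\times S_2\times D_2$, the left side equals $3$ while the two type $B$ multiplicities are $1$ and $2$ respectively, so neither single term gives the answer. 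This error is not cosmetic: the correct, symmetrized formula (a sum over $\alpha+\beta=(p_1,\dots,p_k)$ with $\alpha_k=0$ or $\beta_k=0$) is exactly what makes the quantity $\beta^{(i)}_i$ detect $\max(\lambda^*_i,\mu^*_i)$ rather than the $i$-th column length of one fixed partition, so your conclusion ``$\beta^{(i)}_i=\max(\lambda_i,\mu_i)$'' does not follow from the formula you wrote down.

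A secondary but still real gap is that your last paragraph defers precisely the step that carries the weight of the proof: the verification that $\gamma$ and the recursion $\lambda_{s+j}=\gamma_{s+j}+\beta^{(s+j-1)}_{s+j-1}-\alpha_{s+j-1}$ disentangle $\lambda$ from $\mu$ past the first row of divergence. In the paper this is a genuine tableau argument (reordering the content so the $D$-block column sits in position $s$, showing the forced filling of the first $s$ rows, and deriving $f_{s+1}=\mu^*_{s+1}+\lambda^*_s$, hence $d_{s+1}=f_{s+1}-\alpha_s+d_s$); it cannot be waved through as an ``adaptation'' of the type $B$ argument, especially since the underlying multiplicity formula is symmetric in $\lambda,\mu$ and one must argue why the $D$-block cannot be absorbed by the smaller column. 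Your handling of the split case $V^{\pm}_{\{\lambda,\lambda\}}$ via the outer automorphism and the uniqueness of the diagonal term $K_{\lambda,\lambda}^2$ is essentially sound and matches the paper's use of \eqref{eq:typeD:signMultlambdalambdaPlusPlus} and \eqref{eq:typeD:signMultlambdalambdaMinusPlus}, but the non-split branch of the algorithm is not established as written.
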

Details on our notation can be found in the rest of this section. This theorem is a rephrasing of Proposition \ref{prop:typeD:algorithmRecoverIrreps}, proved at the end of this section.

\subsection{Preliminaries}\label{sec:typeD:preliminaries}
The Weyl group of type $D_n=S_n\ltimes \mathbb Z_2^{b-1}$ can be realized as the index two subgroup of $B_n=S_n \ltimes \mathbb Z_2^n$  generated by $S_n$ and the diagonal matrices $\left\{A=\diag(\pm1, \dots, \pm 1 )| \det A=1 \right\}$. 
In this matrix realization, the first $n-1$ (standard) simple reflections are realized as the permutation matrices $t_1, \dots, t_{n-1}$ of the pairs of indices  $(1,2)$, $(2,3)$, \dots, $(n-1, n)$. 
The last simple reflection is realized as the matrix 
\begin{equation} \label{eq:typeD:lastSimpleReflection}
t_n=\left(\begin{array}{ccccc}1 \\&\ddots \\ &&1\\&&&0&-1\\&&&-1&0 \end{array}\right).
\end{equation}

The representations of Weyl groups in type $D_n$ depend on the parity of $n$. Suppose that $n$ is even. Let 
\[
q=\frac{n}{2} \quad \text{if }n \text{ is even.}
\]
Let $\sigma$ be the element with matrix realization
\begin{equation}\label{eq:typeD:sigma}
\sigma =\left(\begin{array}{c|c}
0& I\\\hline
I&0
\end{array}
\right),
\end{equation}
where $I$ is the $q\times q$ identity matrix.

\subsubsection{Preliminaries: the irreducible representations of $D_n$} In this section we construct  the irreducible  representations of type $D$ using the little groups method of Wigner and Mackey (Section \ref{sec:LittleGroups}). 
To do so we set $G=S_n$ and $H=\mathbb Z_2^{n-1}$. Let $\bar \chi_i$ be the restriction of the $\mathbb Z_2^n$-character given in \eqref{eqZ2characterTypeB} by 
\[
\underbrace{\epsilon_{1}\otimes \dots \otimes \epsilon_1}_{i \text{ copies}} \otimes \underbrace{\epsilon_{-1}\otimes \dots \otimes \epsilon_{-1}}_{n-i \text{ copies}}
\] 
down to $\mathbb Z_2^{n-1} $ under the natural embedding $\mathbb Z_2^{n-1}\hookrightarrow \mathbb Z_2^n$ arising from the embedding $D_n\hookrightarrow B_n$. 
Since $\mathbb Z_2^{n-1} $ is realized as diagonal matrices with determinant $1$ it follows that $\bar \chi_i=\bar \chi_{n-i}$. Let $X$ denote the characters of $\mathbb Z_2^{n-1}$. 
It follows that a set $X/S_n $ of representatives of the orbits of $X$ under the action of $S_n$ is given by 
\[X/S_n=\left\{\bar \chi_i| 0\leq i\leq \frac{n}{2} \right\}.
\]
For $i\neq \frac{n}{2} $, the stabilizer $G_i$ of $\chi_i $ under the action of $S_n$ is $S_i\times S_{n-i}$, and to apply the little groups method we need to induce from the $S_i\times S_{n-i}\ltimes Z_2^{n-1}$-module $U_{\{\lambda, \mu\}}\otimes \bar\chi_i$ where $U_{\{\lambda, \mu\}}$ denotes the module
\[
U_{\{\lambda, \mu\}}= V_\lambda\otimes V_\mu,
\]
with $|\lambda|=i, |\mu|=n-i, i\neq \frac{n}{2}$.

The case $i=q=\frac{n}{2}$ is more complicated. 
The stabilizer of $\bar \chi_q$ is the group $G_q\simeq \mathbb Z_2 \ltimes \left(S_q\times S_q \right) $ realized as the group generated by the element $\sigma$ given in \eqref{eq:typeD:sigma} and $S_q\times S_q$. 
Hence,
\[
G_q=\left\{ \left(\begin{array}{c|c}A& 0 \\\hline 0 & B\end{array} \right) \right\} \cup \left\{ \left(\begin{array}{c|c}0 &A\\\hline B&0 \end{array} \right) \right\}, 
\]
where $A,B$ run over the $q\times q$ permutation matrices. 
For $g\in S_q\times S_q$, denote by $\overline g$ the element $\sigma^{-1}g\sigma $, that is 
\[
\text{if } g=\left(\begin{array}{c|c}A& 0 \\\hline 0 & B\end{array} \right)\quad\text{then set}\quad \overline g=\left(\begin{array}{c|c}B& 0 \\\hline 0 & A\end{array} \right)\quad .
\]
Given a partition $\lambda $, let $p_\lambda$ be one permutation matrix with cycle structure $\lambda$. 
The conjugacy classes of $\mathbb Z_2\ltimes \left(S_q\times S_q\right)$ are then represented by the set of pairwise non-conjugate elements
\[
\left\{ \left(\begin{array}{c|c}p_\lambda& 0 \\\hline 0 & p_\mu\end{array} \right)| \lambda\geq \mu \right\}\cup \left\{ \left(\begin{array}{c|c} 0& I \\\hline p_\lambda&0\end{array} \right) \right\}.
\]

Set $U_{\{\lambda,\mu\}} =\Ind_{S_q\times S_q}^{G_q}\left(V_\lambda\otimes V_\mu\right)$. 
Then $U_{\{\lambda,\mu\}}$ can be understood as:
\[
U_{\{\lambda,\mu \}}= \linspan \left(\{(\id , v\otimes w)\}\cup \{(\sigma , v\otimes w)\} \right),
\]
where $v\in V_\lambda$ and $w\in V_\mu$ run over vector space bases in the corresponding representations. 
An element  $ g_1\times g_2$ lying in  $ S_q\times S_q$ acts by $g_1\times g_2 \cdot (\id, v\otimes w)=(\id, g_1\cdot v\otimes g_2\cdot  w)$ and $g_1\times g_2\cdot (\sigma, v\otimes w)= \left(\sigma, \overline g_2\cdot v \otimes \overline g_1\cdot w\right)$.

For an arbitrary $S_q\times S_q$-module $W$ recall that $W_\sigma$ denotes its  $\sigma$-conjugate module given by \eqref{eqVs-DoubleCosetRep}. 
Direct computation shows that $\left(V_\lambda\otimes V_\mu \right)_\sigma\simeq V_\mu\otimes V_\lambda$. \cite[Proposition 5.1(2), page 64]{FultonHarris:RepresentationTheoryFirstCourse} shows that when $\lambda\neq \mu$ the $G_q$-module $U_{\{\lambda,\mu\}}\simeq U_{\{\mu,\lambda\}}$ is irreducible. 
Two such modules are isomorphic only when the corresponding unordered pairs of defining partitions coincide and this motivates our use of the curly brace notation $\{\} $ to index $U_{\{\lambda,\mu\}}$.

We now proceed with the case $\lambda=\mu$; we temporarily replace the notation $V_\lambda\otimes V_\lambda$ by $V_\lambda'\otimes V_\lambda''$ to emphasize that the first factor $V_\lambda'$ is a representation of the first factor of $S_q\times S_q$ and $V_\lambda''$ - of the second. 
Let $\varphi$ be one invertible map that makes the following diagram commutative for all elements $g$ belonging to the first direct summand of $S_q\times S_q$: 
\[
\xymatrix{
V_\lambda' \ar[r]^{\varphi}& V_\lambda'' \\
V_\lambda' \ar[u]^{g} \ar[r]^{\varphi}& V_\lambda'' \ar[u]_{\overline g} \\
}.
\]
For convenience we introduce the notation
\[
\begin{array}{rcl}
\overline v&=&\varphi ( v) \text{ for all }v\in V_\lambda' \\
\overline w&=&\varphi^{-1}(w) \text{ for all }w\in V_\lambda''.
\end{array}
\]
In our notation, $\overline{(g \cdot v)}= \overline g \cdot \overline v$ and $\overline {\overline v}=v$ whenever the involved expressions are defined.

Let $U_{\{\lambda,\lambda\}}^+$ be the subspace of $\Ind_{S_q\times S_q}^{G_q} V_\lambda \otimes V_\lambda$ given by
\[
U_{\{\lambda,\lambda\}}^+= \linspan\limits_{v\in V_\lambda',w\in  V_\lambda''}\left((\id, v\otimes w)+(\sigma,\overline w\otimes\overline v) \right)
\]
and let $U_{\{\lambda,\lambda\}}^-$ be the subspace given by
\[
U_{\{\lambda,\lambda\}}^-= \linspan\limits_{v\in V_\lambda',w\in  V_\lambda''}\left((\id, v\otimes w)-(\sigma,\overline w\otimes\overline v) \right).
\]
Since for $g$ in the first factor of $S_q\times S_q$ we have that
\begin{equation}\label{eq:typeD:UlambdalambdaPMDefiningActions}
\begin{array}{rcl}
g\cdot \left((\id, v\otimes w)+(\sigma, \overline w\otimes\overline v)\right) &=&(\id, (g\cdot v)\otimes (g\cdot w))+(\sigma,\overline {g\cdot w} \otimes \overline {g\cdot v})\\
\sigma \cdot \left((\id, v\otimes w)+(\sigma, \overline w\otimes\overline v)\right) &=&(\sigma, v\otimes w)+(\id,\overline w\otimes \overline v)\\~\\
g\cdot \left((\id, v\otimes w)-(\sigma, \overline w \otimes\overline v)\right) &=&(\id, (g\cdot v)\otimes (g\cdot w))-(\sigma,\overline {g\cdot w}\otimes\overline {g\cdot v})\\
\sigma\cdot \left((\id, v\otimes w)-( \sigma, \overline w\otimes\overline v)\right) &=&( \sigma, v\otimes w)-(\id,\overline w \otimes \overline v)
\end{array}
\end{equation}
Thus, $U_{\{\lambda,\lambda\}}^+$ and $U_{\{\lambda,\lambda\}}^-$ are in fact $G_q$-(sub)modules. 
By \cite[Proposition 5.1(1), page 64]{FultonHarris:RepresentationTheoryFirstCourse}, these two representations are irreducible and distinct from the remaining $U_{\{\lambda,\mu\}}$ with $\lambda\neq \mu$. 
Finally, the irreducible representations of $\mathbb Z_2\ltimes \left(S_q\times S_q\right)$ are exhausted by the representations of the form $U_{\{\lambda,\lambda\}}^+$, $U_{\{ \lambda, \lambda\}}^-$ and $U_{ \{\lambda,\mu\}}$, as their number matches the number of conjugacy classes of  $\mathbb Z_2 \ltimes \left(S_q\times S_q\right)$.

The little groups method of Wigner and Mackey (Proposition \ref{theoremLittleGroupsMethod}) now implies the following.

\begin{proposition}\label{prop:typeD:irreps}
The set of irreducible representations of $D_n$ consists (up to isomorphism) of the pairwise non-isomorphic representations
\[
\begin{array}{@{}r@{~}c@{~}ll}
V_{\{\lambda, \mu \}}&=&\Ind_{S_i\times S_{n-i} \ltimes Z_2^{n-1}}^{D_n} \left(U_{\{\lambda,\mu \}}\otimes \bar \chi_i \right), & i\neq \frac{n}{2}, |\lambda|=i, |\mu|=n-i\\
V_{\{\lambda, \mu \}}&=&\Ind_{\mathbb Z_2 \ltimes \left(S_q\times S_{q}\right)\ltimes Z_2^{n-1}}^{D_n} \left(U_{\{\lambda,\mu \}}\otimes \bar \chi_i \right), &i=q= \frac{n}{2}, |\lambda|=|\mu|=q, \lambda\neq \mu\\
V_{\{\lambda, \lambda \}}^{\pm}&=&\Ind_{\mathbb Z_2 \ltimes \left(S_q\times S_{q}\right)\ltimes Z_2^{n-1}}^{D_n} \left(U_{\{\lambda,\lambda \}} ^\pm \otimes \bar \chi_i \right), & |\lambda|=q=\frac{n}{2},\\
\end{array}
\]
where the representations in the last two rows are defined only if $n$ is even and furthermore we have that
\[
V_{\{ \lambda,\lambda\}}=\Ind_{\mathbb Z_2 \ltimes \left(S_q\times S_{q}\right)\ltimes Z_2^{n-1} }^{D_n} \left(U_{\{\lambda,\lambda \}}\otimes \bar \chi_i \right)\simeq  V_{\{ \lambda,\lambda\}}^+\oplus  V_{\{ \lambda, \lambda\}}^-.
\]
\end{proposition}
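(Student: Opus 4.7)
The plan is to apply the Wigner--Mackey little groups method (Theorem~\ref{theoremLittleGroupsMethod}) to the semidirect product decomposition $D_n = S_n \ltimes \mathbb{Z}_2^{n-1}$, using the orbit representatives $\bar\chi_i$ and the stabilizer calculations that have already been carried out in the discussion preceding the proposition. First I would note that the text has already identified the orbit representatives $\{\bar\chi_i \mid 0 \leq i \leq n/2\}$ of $X/S_n$, and computed the stabilizers: $G_i = S_i \times S_{n-i}$ for $i \neq n/2$, and $G_q = \mathbb{Z}_2 \ltimes (S_q \times S_q)$ when $n = 2q$ is even. Correspondingly, $W_i = G_i \ltimes \mathbb{Z}_2^{n-1}$. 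Theorem~\ref{theoremLittleGroupsMethod} then tells me that the irreducibles of $D_n$ are in bijection with pairs (orbit representative $\bar\chi_i$, irreducible of $G_i$), obtained by tensoring the $G_i$-irreducible with the extension of $\bar\chi_i$ to $W_i$ and inducing up to $D_n$.

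Next I would handle each case. For $i \neq q$, the group $G_i = S_i \times S_{n-i}$ has its irreducibles indexed by ordered pairs $(\lambda,\mu)$ with $|\lambda|=i$, $|\mu|=n-i$; these are $V_\lambda \otimes V_\mu$, which coincide with $U_{\{\lambda,\mu\}}$ in this range. Inducing yields the representations $V_{\{\lambda,\mu\}}$ of the first row. For $i = q$, the irreducibles of $G_q = \mathbb{Z}_2 \ltimes (S_q \times S_q)$ have been identified in the preliminaries: the text classifies them as the Clifford-theoretic modules $U_{\{\lambda,\mu\}}$ (for $\lambda \neq \mu$) together with $U_{\{\lambda,\lambda\}}^+$ and $U_{\{\lambda,\lambda\}}^-$, with the completeness argument supplied by matching this count against the number of conjugacy classes of $G_q$ enumerated earlier. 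Inducing these irreducibles up to $D_n$ yields the second and third rows of the proposition, and they are pairwise non-isomorphic by the bijection in Theorem~\ref{theoremLittleGroupsMethod}.

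Finally, I would verify the claimed direct-sum decomposition $V_{\{\lambda,\lambda\}} \simeq V_{\{\lambda,\lambda\}}^+ \oplus V_{\{\lambda,\lambda\}}^-$. At the level of the stabilizer, the defining formulas
\[
(\id,v\otimes w) = \tfrac{1}{2}\bigl[(\id,v\otimes w) + (\sigma,\overline w\otimes \overline v)\bigr] + \tfrac{1}{2}\bigl[(\id,v\otimes w) - (\sigma,\overline w\otimes \overline v)\bigr]
\]
together with the analogous formula for $(\sigma,v\otimes w)$ exhibit $U_{\{\lambda,\lambda\}} = U_{\{\lambda,\lambda\}}^+ \oplus U_{\{\lambda,\lambda\}}^-$ as $G_q$-modules, as is already implicit in the computations \eqref{eq:typeD:UlambdalambdaPMDefiningActions}. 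Since tensoring with the one-dimensional character $\bar\chi_q$ and inducing from $W_q$ to $D_n$ are exact functors that preserve direct sums, the decomposition persists upon induction, giving the final claimed isomorphism.

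The main obstacle is really a bookkeeping one rather than a deep technical point: I must be careful that the Wigner--Mackey machinery is applied to $D_n$ rather than $B_n$, which forces the identification $\bar\chi_i = \bar\chi_{n-i}$ (since $\mathbb{Z}_2^{n-1}$ sits inside $\mathbb{Z}_2^n$ as the determinant-one subgroup) and restricts the orbit parameter to $0 \leq i \leq n/2$. This is also precisely what makes the case $i = q$ special and forces the appearance of the $\pm$ representations: the stabilizer at the ``middle'' orbit representative is strictly larger than $S_q \times S_q$ because of the extra element $\sigma$ swapping the two blocks, and this is exactly the source of the splitting phenomenon.
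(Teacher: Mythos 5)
Your proposal is correct and follows exactly the route the paper takes: the paper presents this proposition as an immediate consequence of the Wigner--Mackey little groups method applied to $D_n=S_n\ltimes\mathbb Z_2^{n-1}$, using the orbit representatives $\bar\chi_i$, the stabilizer computations, and the classification of the irreducibles of $\mathbb Z_2\ltimes(S_q\times S_q)$ developed in the preceding paragraphs. Your added verification of $V_{\{\lambda,\lambda\}}\simeq V_{\{\lambda,\lambda\}}^+\oplus V_{\{\lambda,\lambda\}}^-$ via the decomposition $U_{\{\lambda,\lambda\}}=U_{\{\lambda,\lambda\}}^+\oplus U_{\{\lambda,\lambda\}}^-$ and the additivity of induction correctly fills in a detail the paper leaves implicit.
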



\subsection{Preliminaries: branching from $D_n$ to large subgroups}
Most of this is in \cite{Stembridge:GuideToWeylGroupRepsEmphasisOnBranching}.
\begin{proposition}
\label{prop:typeD:branchingBdowntoD}
The restriction of the $B_n$-representation $V_{\lambda,\mu}$ to $D_n$ is $V_{\{\lambda,\mu\}}$.
\end{proposition}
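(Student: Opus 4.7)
The plan is to apply Mackey's restriction formula (Proposition~\ref{propRestrictionInducedModules}) to the induced presentation
\[
V_{\lambda,\mu}=\Ind_{B_i\times B_{n-i}}^{B_n}\left(V_\lambda\otimes V_\mu\otimes \chi_i\right)
\]
from Corollary~\ref{cor:typeB:irreps}, restricted down the index-two inclusion $D_n\hookrightarrow B_n$. First I would analyze the double coset space $D_n\backslash B_n/(B_i\times B_{n-i})$. Since $\mathbb Z_2^n\subset B_i\times B_{n-i}$ and any element of $\mathbb Z_2^n\setminus\mathbb Z_2^{n-1}$ lies in $B_i\times B_{n-i}$ (using any nontrivial one-sign factor), every element of $B_n\setminus D_n$ can be absorbed on the right into $B_i\times B_{n-i}$, so $B_n=D_n\cdot(B_i\times B_{n-i})$ and there is a single double coset. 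Taking $s=\id$ as representative, one checks immediately that
\[
A_{\id}=(B_i\times B_{n-i})\cap D_n=(S_i\times S_{n-i})\ltimes \mathbb Z_2^{n-1},
\]
i.e., the kernel of the determinant character on $B_i\times B_{n-i}$.

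Next I would apply Mackey's formula. Since $\chi_i$ restricts by definition to $\bar\chi_i$ on $\mathbb Z_2^{n-1}$, Proposition~\ref{propRestrictionInducedModules} gives
\[
{V_{\lambda,\mu}}_{|D_n}\simeq \Ind_{(S_i\times S_{n-i})\ltimes \mathbb Z_2^{n-1}}^{D_n}\left(V_\lambda\otimes V_\mu\otimes \bar\chi_i\right).
\]
To finish, I would compare this with Proposition~\ref{prop:typeD:irreps}. In the generic case $i\neq n/2$, the little-group module $U_{\{\lambda,\mu\}}$ is literally $V_\lambda\otimes V_\mu$ and the inducing subgroup matches, so the identification $V_{\lambda,\mu}|_{D_n}\simeq V_{\{\lambda,\mu\}}$ is immediate.

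The main obstacle is the case $i=n/2$, where $V_{\{\lambda,\mu\}}$ in Proposition~\ref{prop:typeD:irreps} is defined using the \emph{larger} little group $G_q\ltimes\mathbb Z_2^{n-1}$ and $U_{\{\lambda,\mu\}}=\Ind_{S_q\times S_q}^{G_q}(V_\lambda\otimes V_\mu)$. I would resolve this by induction in stages: transitivity of induction collapses
\[
\Ind_{G_q\ltimes\mathbb Z_2^{n-1}}^{D_n}\Ind_{(S_q\times S_q)\ltimes \mathbb Z_2^{n-1}}^{G_q\ltimes\mathbb Z_2^{n-1}}\left(V_\lambda\otimes V_\mu\otimes\bar\chi_q\right)=\Ind_{(S_q\times S_q)\ltimes\mathbb Z_2^{n-1}}^{D_n}\left(V_\lambda\otimes V_\mu\otimes\bar\chi_q\right),
\]
which agrees with the Mackey output. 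When $\lambda=\mu$, the resulting $D_n$-module is reducible and splits as $V_{\{\lambda,\lambda\}}^+\oplus V_{\{\lambda,\lambda\}}^-$ in accordance with the last line of Proposition~\ref{prop:typeD:irreps}; by convention this direct sum is exactly what is denoted $V_{\{\lambda,\lambda\}}$, so the statement of the proposition still holds. The extremal cases $i=0$ and $i=n$ require no Mackey input and can be handled by direct inspection.
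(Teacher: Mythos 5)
Your argument is correct, but it is worth noting that the paper does not actually prove this proposition: the surrounding text simply remarks that ``most of this is in'' Stembridge's survey and states the result without proof. What you have written is therefore a genuine, self-contained derivation, and a natural one, since it reuses exactly the Mackey machinery (Proposition~\ref{propRestrictionInducedModules}) that the paper already deploys for the type~$B$ branching laws. The key simplification you identify is that, because $\mathbb Z_2^n\subset B_i\times B_{n-i}$ contains odd sign patterns, there is only a single double coset $D_n\backslash B_n/(B_i\times B_{n-i})$, so Mackey degenerates to $\Ind_{A\cap D_n}^{D_n}\bigl(\mathrm{Res}\,V\bigr)$; this is much cleaner than the two-parameter coset analysis the paper carries out for the restriction to $B_k\times B_{n-k}$. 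Your handling of $i=n/2$ by induction in stages, and of $\lambda=\mu$ via the explicit identification $V_{\{\lambda,\lambda\}}\simeq V_{\{\lambda,\lambda\}}^+\oplus V_{\{\lambda,\lambda\}}^-$ in Proposition~\ref{prop:typeD:irreps}, is consistent with the paper's conventions.

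Two small points to tighten. First, the parenthetical identification of $(B_i\times B_{n-i})\cap D_n$ as ``the kernel of the determinant character'' is imprecise: $D_n$ contains odd permutation matrices (of determinant $-1$), so the relevant character is the product of the diagonal signs (the character trivial on $S_n$ and equal to $\epsilon_{-1}^{\otimes n}$ on $\mathbb Z_2^n$), not the matrix determinant. The group you write down, $(S_i\times S_{n-i})\ltimes\mathbb Z_2^{n-1}$, is nevertheless correct. Second, in the $i=n/2$ case the step identifying $\Ind_{(S_q\times S_q)\ltimes\mathbb Z_2^{n-1}}^{G_q\ltimes\mathbb Z_2^{n-1}}\left(V_\lambda\otimes V_\mu\otimes\bar\chi_q\right)$ with $U_{\{\lambda,\mu\}}\otimes\bar\chi_q$ tacitly uses the projection formula together with the fact that $\bar\chi_q$ extends to its stabilizer $G_q\ltimes\mathbb Z_2^{n-1}$; a one-line mention of this would make the transitivity step airtight.
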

The sign representation in type $B$ restricts to the sign representation in type $D$ and therefore the sign representation in type $D_n$ is given by:
\begin{equation}\label{eq:typeD:signRepTechnicalNotation}
\signRep{D_n}=V_{\{\pi,\emptyset \}}=V_{\{\emptyset,\pi \}},
\end{equation}
where $\pi=[\underbrace{1,\dots, 1}_{n\text{ times}}]$. From the remarks in \cite[\S 3]{Stembridge:GuideToWeylGroupRepsEmphasisOnBranching} we get the following.
\begin{corollary}
\label{cor:typeD:signDarisesfromTwoBrestrictions}
The $B_n$-modules $V_{\emptyset,\pi}=\signRep{B_n}$ and $V_{\pi,\emptyset} $ are the only two irreducible $B_n$-modules that restrict to $\signRep{D_n}$.
\end{corollary}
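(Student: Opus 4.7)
The plan is to read off the answer directly from the branching rule in Proposition~\ref{prop:typeD:branchingBdowntoD}, together with the uniqueness of irreducible $D_n$-modules indexed by the unordered pair $\{\lambda,\mu\}$ as catalogued in Proposition~\ref{prop:typeD:irreps}. The sign representation $\signRep{D_n}$ is one-dimensional, hence irreducible, and by \eqref{eq:typeD:signRepTechnicalNotation} it carries the label $\{\emptyset,\pi\}$. So the problem becomes: for which $(\lambda,\mu)$ is $V_{\{\lambda,\mu\}} \simeq V_{\{\emptyset,\pi\}}$?

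First I would apply Proposition~\ref{prop:typeD:branchingBdowntoD} to rewrite the question as: classify all $(\lambda,\mu)$ with $|\lambda|+|\mu|=n$ such that the $D_n$-module $V_{\lambda,\mu}|_{D_n}=V_{\{\lambda,\mu\}}$ is isomorphic to $\signRep{D_n}$. Next I would split into two cases according to Proposition~\ref{prop:typeD:irreps}. In the generic case (either $|\lambda|\neq n/2$, or $|\lambda|=|\mu|=n/2$ with $\lambda\neq\mu$), the restriction $V_{\{\lambda,\mu\}}$ is itself irreducible, and the indexing convention via unordered pairs tells us that $V_{\{\lambda,\mu\}}\simeq V_{\{\emptyset,\pi\}}$ holds iff $\{\lambda,\mu\}=\{\emptyset,\pi\}$ as multisets, i.e.\ iff $(\lambda,\mu)\in\{(\emptyset,\pi),(\pi,\emptyset)\}$. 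In the remaining case $\lambda=\mu$ with $|\lambda|=n/2$, the restriction decomposes as $V_{\{\lambda,\lambda\}}^+\oplus V_{\{\lambda,\lambda\}}^-$ and thus cannot be isomorphic to the one-dimensional $\signRep{D_n}$; this case contributes nothing.

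Finally I would note that both candidates are genuine solutions: $V_{\emptyset,\pi}=\signRep{B_n}$ restricts to $V_{\{\emptyset,\pi\}}=\signRep{D_n}$, and symmetrically $V_{\pi,\emptyset}$ restricts to $V_{\{\pi,\emptyset\}}$, which is the same $D_n$-module $\signRep{D_n}$ by \eqref{eq:typeD:signRepTechnicalNotation}. Since $\pi\neq\emptyset$ the two $B_n$-modules $V_{\emptyset,\pi}$ and $V_{\pi,\emptyset}$ are non-isomorphic (their ordered pairs of labels differ), giving exactly the two distinct irreducible $B_n$-modules claimed.

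There is no serious obstacle here; the statement is essentially a bookkeeping consequence of the branching law and the unordered-pair parametrization of irreducibles in type $D$. The only subtlety to keep in mind is not to conflate the unordered label $\{\lambda,\mu\}$ with the ordered label $(\lambda,\mu)$, and to rule out the reducible case $\lambda=\mu$, $|\lambda|=n/2$ before concluding.
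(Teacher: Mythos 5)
Your proof is correct, and it is exactly the derivation the paper's setup intends: the paper itself gives no proof of this corollary, merely citing the remarks in Stembridge's guide, whereas you assemble it from Proposition~\ref{prop:typeD:branchingBdowntoD}, the identification $\signRep{D_n}=V_{\{\emptyset,\pi\}}$, and the pairwise non-isomorphism in Proposition~\ref{prop:typeD:irreps} (plus ruling out the reducible split case $\lambda=\mu$). No gaps; your care in distinguishing the ordered $B_n$-labels from the unordered $D_n$-labels is precisely the point that makes the count come out to two.
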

Let $s$ be the last simple reflection of $B_n$, i.e., the element with matrix realization
\[
s=\diag (1,\dots, 1, -1).
\]
The element $s$ lies outside of $D_n$ and conjugation by $s$ swaps the two last simple reflections $t_n$ and $t_{n-1}$ and keeps all other simple reflections in place.

For an arbitrary $D_n$-module $U$, let $U_s$ denote its conjugate $D_n$-module given by the same vector space as $U$ but equipped with action $\cdot_s$ given by
\[
g\cdot_s u= \left(sg s^{-1}\right) \cdot u\quad .
\]
The following proposition describes the behavior of the irreducibles $D_n$-modules under conjugation with $s$.
\begin{proposition}(\cite[\S 3, Remarks]{Stembridge:GuideToWeylGroupRepsEmphasisOnBranching})\label{prop:typeD:conjugateModules}
\begin{itemize}
\item $\left(V_{\{\lambda,\mu\}}\right)_s=V_{\{\lambda,\mu\}}$
\item $\left(V_{\{\lambda,\lambda\}}^\pm\right)_s=V_{\{\lambda,\lambda\}}^\mp$
\end{itemize}
\end{proposition}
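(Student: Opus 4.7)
The plan is to realize the $s$-twist of a $D_n$-module via the action of $s$ on an ambient $B_n$-module. First I would prove the general lemma: if $\widetilde{W}$ is a $B_n$-module and $W = \widetilde{W}|_{D_n}$, then the map $\phi : W \to W$, $\phi(v) = s \cdot v$, satisfies $\phi(g \cdot v) = (sgs^{-1}) \cdot \phi(v)$ for every $g \in D_n$ and $v \in W$. This is a one-line check using associativity of the $B_n$-action. Since $s^2 = \id$, $\phi$ is bijective, so it yields a $D_n$-module isomorphism $W \simeq W_s$.

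For the first bullet, Proposition \ref{prop:typeD:branchingBdowntoD} lets me apply the lemma with $\widetilde{W} = V_{\lambda,\mu}$, so that $W = V_{\{\lambda,\mu\}}$; the lemma then immediately gives $(V_{\{\lambda,\mu\}})_s \simeq V_{\{\lambda,\mu\}}$.

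For the second bullet, I apply the same lemma to $\widetilde{W} = V_{\lambda,\lambda}$, which restricts to $W = V_{\{\lambda,\lambda\}}^+ \oplus V_{\{\lambda,\lambda\}}^-$. The lemma gives $(V_{\{\lambda,\lambda\}}^+)_s \oplus (V_{\{\lambda,\lambda\}}^-)_s \simeq V_{\{\lambda,\lambda\}}^+ \oplus V_{\{\lambda,\lambda\}}^-$ as $D_n$-modules. Since $V_{\{\lambda,\lambda\}}^+$ and $V_{\{\lambda,\lambda\}}^-$ are non-isomorphic irreducibles (Proposition \ref{prop:typeD:irreps}), and since the $s$-twist preserves irreducibility (conjugation by $s$ is a group automorphism of $D_n$), the Krull--Schmidt theorem leaves exactly two possibilities: either (a) $(V_{\{\lambda,\lambda\}}^\pm)_s \simeq V_{\{\lambda,\lambda\}}^\pm$, or (b) $(V_{\{\lambda,\lambda\}}^\pm)_s \simeq V_{\{\lambda,\lambda\}}^\mp$.

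The main obstacle is ruling out case (a). Suppose (a) held. Then $\phi(V_{\{\lambda,\lambda\}}^+)$ would be an irreducible $D_n$-submodule of $W_s$ isomorphic to $V_{\{\lambda,\lambda\}}^+$; by Schur's lemma, together with the fact that the $V_{\{\lambda,\lambda\}}^+$-isotypic component of $W_s$ has multiplicity one, $\phi(V_{\{\lambda,\lambda\}}^+)$ would be forced to coincide with the subspace $V_{\{\lambda,\lambda\}}^+ \subset W$, and analogously $\phi(V_{\{\lambda,\lambda\}}^-) = V_{\{\lambda,\lambda\}}^-$. But $\phi$ is the action of $s$, so each $V_{\{\lambda,\lambda\}}^\pm$ would then be stable under both $D_n$ and $s$. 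Since $s \notin D_n$ and $[B_n : D_n] = 2$, we have $B_n = \langle D_n, s\rangle$, so each $V_{\{\lambda,\lambda\}}^\pm$ would be a proper non-zero $B_n$-submodule of $V_{\lambda,\lambda}$, contradicting the irreducibility of $V_{\lambda,\lambda}$ as a $B_n$-module. Therefore case (b) must hold, completing the proof.
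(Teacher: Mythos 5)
Your proof is correct: it is exactly the standard Clifford-theoretic argument for restriction to an index-two subgroup, which is what the paper itself invokes by citing Stembridge's remarks and \cite[Proposition 5.1]{FultonHarris:RepresentationTheoryFirstCourse} rather than writing out a proof. You have simply supplied the details of that cited argument (the $s$-twist intertwiner on the restricted $B_n$-module, and the isotypic/irreducibility contradiction that forces $s$ to swap $V_{\{\lambda,\lambda\}}^{+}$ and $V_{\{\lambda,\lambda\}}^{-}$), so the two approaches coincide.
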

We note that the proposition above follows from Proposition \ref{prop:typeD:branchingBdowntoD} and general facts about branching a group to an index two subgroup (see \cite[Proposition 5.1]{FultonHarris:RepresentationTheoryFirstCourse}). We will also need the following.

\begin{corollary}~\label{cor:typeD:RepTensorSign}~
\begin{enumerate}
\item \label{cor:typeD:VlambdamutensorSignEqualsItself} $V_{\{ \lambda, \mu\} }\otimes \signRep D_n\simeq  V_{\{ \mu^*, \lambda^*\} } \simeq V_{\{ \lambda^*, \mu^*\} } $
\item \label{cor:typeD:VlambdalambdaPMtensorSignEqualsItself} $V_{\{\lambda, \lambda\}}^\pm \otimes \signRep D_n\simeq  V_{\{ \lambda^*, \lambda^*\} }^\pm $.
\end{enumerate}
\end{corollary}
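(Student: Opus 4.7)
The plan is to deduce both parts from the analogous type-$B$ identity $V_{\lambda,\mu} \otimes \signRep{B_n} \simeq V_{\mu^*,\lambda^*}$ (established in the corollary immediately preceding this one) by restricting to $D_n$ and invoking Proposition~\ref{prop:typeD:branchingBdowntoD}.

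For Part~(1), Proposition~\ref{prop:typeD:branchingBdowntoD} identifies $V_{\{\lambda,\mu\}}$ with $V_{\lambda,\mu}|_{D_n}$, and the discussion preceding \eqref{eq:typeD:signRepTechnicalNotation} gives $\signRep{D_n} = \signRep{B_n}|_{D_n}$. Since restriction is compatible with tensor products, I chain together
\[
V_{\{\lambda,\mu\}} \otimes \signRep{D_n} \;=\; \bigl(V_{\lambda,\mu} \otimes \signRep{B_n}\bigr)\big|_{D_n} \;\simeq\; V_{\mu^*,\lambda^*}|_{D_n} \;=\; V_{\{\mu^*,\lambda^*\}},
\]
applying the type-$B$ identity in the middle step. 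The final isomorphism $V_{\{\mu^*,\lambda^*\}} \simeq V_{\{\lambda^*,\mu^*\}}$ is tautological, since the curly-brace notation records an unordered pair.

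For Part~(2), I specialize Part~(1) to $\mu = \lambda$ and invoke the splitting $V_{\{\lambda,\lambda\}} \simeq V^+_{\{\lambda,\lambda\}} \oplus V^-_{\{\lambda,\lambda\}}$ from Proposition~\ref{prop:typeD:irreps} on both sides, obtaining
\[
\bigl(V^+_{\{\lambda,\lambda\}} \otimes \signRep{D_n}\bigr) \oplus \bigl(V^-_{\{\lambda,\lambda\}} \otimes \signRep{D_n}\bigr) \;\simeq\; V^+_{\{\lambda^*,\lambda^*\}} \oplus V^-_{\{\lambda^*,\lambda^*\}}.
\]
Tensoring with the one-dimensional $\signRep{D_n}$ preserves irreducibility, so each summand on the left is irreducible, and uniqueness of decomposition into irreducibles forces each to be isomorphic to one of $V^+_{\{\lambda^*,\lambda^*\}}$ or $V^-_{\{\lambda^*,\lambda^*\}}$. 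Hence $V^\pm_{\{\lambda,\lambda\}} \otimes \signRep{D_n} \simeq V^\pm_{\{\lambda^*,\lambda^*\}}$ up to a potential swap of the $\pm$ labels on the right.

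The main obstacle is ruling out this swap. I would pin down the correct matching by computing the character value on the element $\sigma$ from~\eqref{eq:typeD:sigma}. From~\eqref{eq:typeD:UlambdalambdaPMDefiningActions}, $\sigma$ sends the spanning vector $(\id, v \otimes w) + (\sigma, \overline{w} \otimes \overline{v})$ of $U^+_{\{\lambda,\lambda\}}$ to its counterpart with $v \leftrightarrow \overline{w}$, giving $\tr_{U^+}(\sigma) = \dim V_\lambda$, while the sign flip in $U^-$ yields $\tr_{U^-}(\sigma) = -\dim V_\lambda$. Propagating this character through the induction to $D_n$, observing that tensoring with $\signRep{D_n}$ multiplies the $\sigma$-character by the sign-character value $(-1)^q$, and comparing against the analogous $\sigma$-traces for $V^\pm_{\{\lambda^*,\lambda^*\}}$ (where $\dim V_{\lambda^*} = \dim V_\lambda$ contributes the matching factor) would verify that the labels are preserved and complete the argument.
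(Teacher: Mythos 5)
Your Part (1) is correct and is exactly the paper's argument: both reduce to Proposition \ref{prop:typeD:branchingBdowntoD} together with the fact that $\signRep{D_n}$ is the restriction of $\signRep{B_n}$. For Part (2) your framing is genuinely different from the paper's: the paper works entirely inside the little group $Q=\mathbb Z_2\ltimes(S_q\times S_q)\ltimes\mathbb Z_2^{n-1}$, asserting a $Q$-module isomorphism $U^+_{\{\lambda,\lambda\}}\otimes\signRep{Q}\simeq U^+_{\{\lambda^*,\lambda^*\}}$ by direct computation with \eqref{eq:typeD:UlambdalambdaPMDefiningActions} and then inducing, whereas you tensor the splitting $V_{\{\lambda,\lambda\}}\simeq V^+_{\{\lambda,\lambda\}}\oplus V^-_{\{\lambda,\lambda\}}$ with the one-dimensional $\signRep{D_n}$ and invoke uniqueness of decomposition. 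That step is clean and gets you to ``correct up to a possible swap of the $\pm$ labels'' with no computation, which is tidier than the paper.

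The gap is in your final paragraph, which carries the entire weight of Part (2) and is left as a promissory note that, on the data you yourself assemble, does not close. You correctly compute $\tr_{U^\pm_{\{\lambda,\lambda\}}}(\sigma)=\pm\dim V_\lambda$, correctly note that tensoring with $\signRep{D_n}$ multiplies the trace at $\sigma$ by $\signFunction(\sigma)=(-1)^q$ (indeed $\sigma$ is a product of $q$ disjoint transpositions, so $\det\sigma=(-1)^q$), and correctly note $\tr_{U^\pm_{\{\lambda^*,\lambda^*\}}}(\sigma)=\pm\dim V_{\lambda^*}=\pm\dim V_\lambda$. But assembling these gives $(-1)^q\dim V_\lambda$ on one side against $+\dim V_\lambda$ on the other: they agree only when $q$ is even, and for $q$ odd the very comparison you propose indicates that the labels \emph{swap}. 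So the claim that the computation ``would verify that the labels are preserved'' is precisely the point at issue, and no compensating sign is visible in your outline ($\bar\chi_q$ contributes nothing, since $\signRep{D_n}$ restricted to $\mathbb Z_2^{n-1}$ is trivial). For what it is worth, the paper's own proof hinges on the assertion $\signFunction\sigma=1$, which your computation of $(-1)^q$ implicitly contradicts; yours is the more careful bookkeeping, but it then has to confront the resulting parity-dependent sign, and your write-up does not. A secondary, fixable point: the character of $\Ind_Q^{D_n}$ at the element $\sigma$ is a sum over $D_n$-conjugates of $\sigma$ landing in $Q$, so ``propagating the character through the induction'' is not a single evaluation. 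The clean route is to compare the inducing $Q$-modules directly — by Theorem \ref{theoremLittleGroupsMethod} distinct inducing data yield non-isomorphic induced irreducibles — which reduces everything to the little-group trace at $\sigma$, i.e., back to the unresolved sign above.
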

\begin{proof} \ref{cor:typeD:RepTensorSign}.(\ref{cor:typeD:VlambdamutensorSignEqualsItself}) follows directly from Proposition \ref{prop:typeD:branchingBdowntoD}.

To prove \ref{cor:typeD:RepTensorSign}.\eqref{cor:typeD:VlambdalambdaPMtensorSignEqualsItself} we need to do a little extra work. 
We use the notation from the preceding section.
Let $Q$ be the subgroup from which $V_{\{\lambda, \lambda\}}^+$ is induced, i.e., $Q= \mathbb Z_2\ltimes \left(S_q\times S_q \right) \ltimes \mathbb Z_2^{n-1} $. 
We have that $V_{\{\lambda, \lambda\}}^+ \otimes \signRep D_n$ is contained in the module $W=\Ind_{Q}  \left( U^+_{\{\lambda, \lambda \}}\otimes \signRep {Q} \right) $.
 
Direct computation with \eqref{eq:typeD:UlambdalambdaPMDefiningActions} using the fact that $\signFunction\sigma=1$ ($\sigma$ was defined in \eqref{eq:typeD:sigma}) shows the $Q$-module isomorphism 
\[
U^+_{\{\lambda, \lambda \}}\otimes  \signRep {Q}\simeq U^+_{\{\lambda^*,\lambda^* \}}.
\]
Proposition \ref{prop:typeD:irreps} now implies that $W=V_{\{\lambda^*, \lambda^*\}}^+$, and the fact that the latter is irreducible and contains $V_{\{\lambda,\lambda\} }^+$ shows the two are isormophic. The proof for $V_{\{\lambda, \lambda\}}^-$ is similar and we omit it.
\end{proof}

Next we recall the non-split rules for branching irreducible representations of $D_n$ to $D_{n-k}\times D_k$.

\begin{proposition}\cite[\S 3.A]{Stembridge:GuideToWeylGroupRepsEmphasisOnBranching}\label{prop:typeD:branching:nonSplitOverLargeSubgroups}
Let $\lambda,\mu$ be partitions with $\abs{\lambda}+\abs{\mu}=n$ (we allow $\lambda=\mu$).
Then
\begin{eqnarray}
\displaystyle {V_{\{\lambda, \mu\}}}_{|D_k\times D_{n-k}} &\simeq& \displaystyle  \bigoplus_{\begin{array}{@{}r@{}c@{}l} |\nu| + |\sigma| &=&k\\ |\xi| +|\zeta|&=&n-k\end{array}} c_{\nu,\xi }^\lambda c_{\sigma,\zeta}^{\mu} V_{\{\nu, \sigma\}}\otimes V_{\{\xi, \zeta\}} \label{eqBranchingNonSplitD_nToD_i+D_(n-i)}\\
{V_{\{\lambda, \mu\}}}_{|S_n}& \simeq& \bigoplus_{ |\lambda|+|\mu|=|\nu|} c_{\lambda, \mu}^\nu V_\nu \label{eqBranchingNonSplitD_nToS_n}.
\end{eqnarray}
\end{proposition}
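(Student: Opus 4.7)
The plan is to observe that both branching formulas follow from corresponding branching laws already established in type $B$ (Proposition \ref{prop:typeB:branchingOverLargeSubgroups}), combined with the type $B$ to type $D$ restriction rule (Proposition \ref{prop:typeD:branchingBdowntoD}), via transitivity of restriction. The key point is that every subgroup on the right-hand side of each formula sits inside a type-$B$ subgroup that has already been handled in Section \ref{sec:typeB}, so the strategy is to factor the restriction through that larger subgroup.

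For \eqref{eqBranchingNonSplitD_nToS_n}, I would use the chain $S_n \subset D_n \subset B_n$. By Proposition \ref{prop:typeD:branchingBdowntoD}, $V_{\{\lambda,\mu\}}$ is nothing other than $(V_{\lambda,\mu})_{|D_n}$, so by transitivity $(V_{\{\lambda,\mu\}})_{|S_n} = (V_{\lambda,\mu})_{|S_n}$. The right-hand side was computed in \eqref{eqBranchingB_nToS_n} to be $\bigoplus_{|\nu|=n} c_{\lambda,\mu}^{\nu} V_{\nu}$, which is exactly what the statement claims.

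For \eqref{eqBranchingNonSplitD_nToD_i+D_(n-i)}, I would use the chain $D_k \times D_{n-k} \subset B_k \times B_{n-k} \subset B_n$. First, apply \eqref{eqBranchingB_nToB_i+B_(n-i)} to get
\[
(V_{\lambda,\mu})_{|B_k\times B_{n-k}} \simeq \bigoplus_{\substack{|\nu|+|\sigma|=k\\|\xi|+|\zeta|=n-k}} c_{\nu,\xi}^\lambda c_{\sigma,\zeta}^\mu \; V_{\nu,\sigma}\otimes V_{\xi,\zeta}.
\]
Then restrict each tensor factor from $B_k$ to $D_k$ (and from $B_{n-k}$ to $D_{n-k}$) using Proposition \ref{prop:typeD:branchingBdowntoD}, which sends $V_{\nu,\sigma}\otimes V_{\xi,\zeta}$ to $V_{\{\nu,\sigma\}}\otimes V_{\{\xi,\zeta\}}$. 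Transitivity of restriction through $D_n$ (using again that $V_{\{\lambda,\mu\}}=(V_{\lambda,\mu})_{|D_n}$) yields the claimed identity.

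The only subtle point I would flag is the case when $n$ is even and $\lambda=\mu$ with $|\lambda|=n/2$: here the $B_n$-irreducible $V_{\lambda,\lambda}$ restricts reducibly to $V_{\{\lambda,\lambda\}}^{+}\oplus V_{\{\lambda,\lambda\}}^{-}$, and similarly some of the summands $V_{\{\nu,\sigma\}}\otimes V_{\{\xi,\zeta\}}$ appearing on the right-hand side may themselves split further into $\pm$ pieces. The proposition's convention (stated in its hypothesis ``we allow $\lambda=\mu$'') is to treat $V_{\{\lambda,\mu\}}$ as the possibly reducible restriction from $B_n$, so the identity holds at the level of $D_k\times D_{n-k}$-modules without any need to track the $\pm$-decomposition; the argument therefore goes through uniformly. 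This is essentially the only place where one has to be careful, and it is handled by the bookkeeping convention rather than by any further argument.
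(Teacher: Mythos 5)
Your proposal is correct and matches the paper's intended argument exactly: the paper states that this proposition is a direct corollary of Propositions \ref{prop:typeD:branchingBdowntoD} and \ref{prop:typeB:branchingOverLargeSubgroups} and omits the details, which are precisely the transitivity-of-restriction computations you carry out. Your remark about the split case $\lambda=\mu$ is also consistent with the paper's conventions, since $V_{\{\lambda,\lambda\}}$ is there taken to be the (possibly reducible) restriction $V_{\{\lambda,\lambda\}}^{+}\oplus V_{\{\lambda,\lambda\}}^{-}$ and the refined $\pm$ rules are deferred to Proposition \ref{prop:typeD:branching:DifficultSplit}.
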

This proposition is a direct corollary of Propositions \ref{prop:typeD:branchingBdowntoD} and \ref{prop:typeB:branchingOverLargeSubgroups} and we omit its proof. 
To have a complete collection of branching laws over $S_n$ and $D_k\times D_{n-k}$ we need additional laws to branch the split case $V_{\{ \lambda, \lambda \}}^{\pm}$.
These rules can again be found in \cite[\S 3.C, D]{Stembridge:GuideToWeylGroupRepsEmphasisOnBranching}, where the rules for restriction over $S_n$ are in turn based on \cite{CarreLeclerc:SplittingSquareOfSchurFunction}. 
For the reader's convenience we include these branching rules here.

\medskip
\begin{proposition}~\label{prop:typeD:branching:DifficultSplit}

$
\begin{array}{lrcl}
\multicolumn{3}{l}{{\text{If $k$ is odd then:}}}
\\
\refstepcounter{equation}
\label{eqBranchingSplitD_nToD_i+D_(n-i)i-odd}
(\theequation)
& 
\displaystyle {V_{\{\lambda, \lambda\}}^{\pm} }_{|D_k\times D_{n-k}} &\simeq& \displaystyle \displaystyle  \bigoplus_{ \begin{array}{@{}r@{}c@{}l} |\nu| + |\sigma| &=&k \\ |\xi| +|\zeta|&=&n-k\\ \nu&>&\sigma \end{array}} \!\!\!\!\!\!\!\!\!\!\!\! c_{\nu,\xi }^\lambda c_{\sigma,\zeta}^{\lambda} V_{\{\nu, \sigma\}}\otimes V_{\{\xi, \zeta\}}.\\
\multicolumn{3}{l}{{\text{If $k$ is even then:}}}
\\
\refstepcounter{equation}
\label{eqBranchingSplitD_nToD_i+D_(n-i)i-even}
(\theequation)
& \displaystyle {V_{\{\lambda, \lambda\}}^{\pm} }_{|D_k\times D_{n-k}} &\simeq &\displaystyle  \bigoplus_{ \begin{array}{@{}r@{}c@{}l} |\nu| + |\sigma| &=&k\\ |\xi| +|\zeta|&=&n-k \\ \nu&\geq&\sigma\\ (\nu, \xi )&\neq& (\sigma, \zeta)\\  \end{array}}\!\!\!\!\!\!\!\!\!\!\!\! c_{\nu,\xi }^\lambda c_{\sigma,\zeta}^{\lambda} V_{\{\nu, \sigma\}}\otimes V_{\{\xi, \zeta\}} \\
&&&\displaystyle \oplus \left(\bigoplus_{|\nu|=k, |\xi|=n-k} d^{\lambda}_{\nu, \xi} V_{\{\nu, \nu\} }^{\pm} \otimes  V_{\{\xi, \xi\} }^{\pm} \right),\\
\multicolumn{4}{l}{\text{where $d^{\lambda}_{\nu, \xi} =\begin{cases} \binom{c^{\lambda}_{\nu, \xi}+1}{2}  &\text{if the number of '-' in }V_{\lambda,\lambda}^{\pm}, V_{\nu}^{\pm}, V_{\xi}^{\pm}\text{ is even} \\
\binom{c^{\lambda}_{\nu, \xi}}{2}  &\text{if the number of '-' in }V_{\lambda,\lambda}^{\pm}, V_{\nu}^{\pm}, V_{\xi}^{\pm}\text{ is odd.} 
\end{cases}$ }}\\~\\~\\
\refstepcounter{equation}
\label{eq:typeD:BranchingSplitV_lambda,lambdaOverS_n}
(\theequation)
&
\displaystyle {V_{\{\lambda, \lambda\}}^{\pm} }_{|S_n}&\simeq& \displaystyle \bigoplus_{\nu} c^{\lambda, \pm}_{\nu}V_{\nu},\\
\multicolumn{4}{l}{\text{where $\begin{array}{rcl} 
c^{\lambda, +}_\nu&=& \dim \Hom_{GL(n)}\left( V^\nu, S^2(V^\lambda) \right)\\
c^{\lambda, -}_\nu&=& \dim \Hom_{GL(n)}\left(  V^\nu, \Lambda^2(V^\lambda) \right)
\end{array} $and  $ \nu$ runs over all partitions.}}
\end{array}
$
\end{proposition}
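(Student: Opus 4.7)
The plan is to reduce to the non-split branching rules in Proposition \ref{prop:typeD:branching:nonSplitOverLargeSubgroups} (applied with $\mu=\lambda$), which compute $V_{\{\lambda,\lambda\}}|_{D_k\times D_{n-k}}$ and $V_{\{\lambda,\lambda\}}|_{S_n}$, and then to distribute each constituent between $V^+_{\{\lambda,\lambda\}}$ and $V^-_{\{\lambda,\lambda\}}$ using the outer automorphism $\theta$ of $D_n$ from Proposition \ref{prop:typeD:conjugateModules}, which swaps $V^+_{\{\lambda,\lambda\}}$ and $V^-_{\{\lambda,\lambda\}}$. When $\theta$ is restricted to $D_k\times D_{n-k}$, it acts as the outer automorphism on one of the two factors (the choice of factor is immaterial modulo inner automorphisms), so by Proposition \ref{prop:typeD:conjugateModules} it fixes every $V_{\{\nu,\sigma\}}\otimes V_{\{\xi,\zeta\}}$ for which at least one of the two factors is non-split. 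Hence every $\theta$-stable $D_k\times D_{n-k}$-irreducible must occur with equal multiplicity in the two restrictions $V^\pm_{\{\lambda,\lambda\}}|_{D_k\times D_{n-k}}$, whose direct sum is $V_{\{\lambda,\lambda\}}|_{D_k\times D_{n-k}}$. In the odd case both $|\nu|+|\sigma|=k$ and $|\xi|+|\zeta|=n-k$ are odd, forcing $\nu\neq\sigma$ and $\xi\neq\zeta$, so every summand is $\theta$-stable, and halving the total multiplicities (and re-indexing by $\nu>\sigma$ to remove the doubling coming from the two orderings of $\{\nu,\sigma\}$) yields \eqref{eqBranchingSplitD_nToD_i+D_(n-i)i-odd}. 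In the even case the same argument handles every summand except the doubly split one arising when $\nu=\sigma$ and $\xi=\zeta$ simultaneously.

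For that doubly split summand, I would unwind the little-groups construction: $U^+_{\{\lambda,\lambda\}}$ and $U^-_{\{\lambda,\lambda\}}$ are by definition the $\pm 1$-eigenspaces of the involution $\sigma$ on $\Ind^{G_q}_{S_q\times S_q}(V_\lambda\otimes V_\lambda)$ (see \eqref{eq:typeD:UlambdalambdaPMDefiningActions}). After passing to $D_k\times D_{n-k}$ and extracting the isotypic component for $V_\nu\otimes V_\xi$ (with $\nu=\sigma$, $\xi=\zeta$), the involution $\sigma$ pairs this with the isotypic component for $V_\xi\otimes V_\nu$ on the transposed factor, and its $\pm 1$-eigenspaces on the $c^\lambda_{\nu,\xi}$-dimensional multiplicity space are precisely the symmetric and antisymmetric squares of that space, of dimensions $\binom{c+1}{2}$ and $\binom{c}{2}$. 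Tracking how these eigenspaces interact with the two ambient sign choices (one for each factor of $D_k\times D_{n-k}$) produces the parity rule defining $d^\lambda_{\nu,\xi}$, completing \eqref{eqBranchingSplitD_nToD_i+D_(n-i)i-even}.

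For the $S_n$-branching \eqref{eq:typeD:BranchingSplitV_lambda,lambdaOverS_n}, I would apply \eqref{eqBranchingNonSplitD_nToS_n} with $\mu=\lambda$ to get $V_{\{\lambda,\lambda\}}|_{S_n}=\bigoplus_\nu c^\nu_{\lambda,\lambda}V_\nu$, and use the classical identity $c^\nu_{\lambda,\lambda}=c^{\lambda,+}_\nu+c^{\lambda,-}_\nu$ to decompose this at the level of $GL(n)$-characters into the symmetric-square and antisymmetric-square plethysms $S^2(V^\lambda)$ and $\Lambda^2(V^\lambda)$. In the Mackey-type double coset computation used to prove \eqref{eqBranchingNonSplitD_nToS_n}, the only relevant double coset leaves the involution $\sigma$ acting as the tensor-factor interchange on $V_\lambda\otimes V_\lambda$, whose $+1$- and $-1$-eigenspaces are $S^2(V^\lambda)$ and $\Lambda^2(V^\lambda)$ respectively; this identifies $V^+_{\{\lambda,\lambda\}}|_{S_n}$ with the symmetric part and $V^-_{\{\lambda,\lambda\}}|_{S_n}$ with the antisymmetric part, yielding \eqref{eq:typeD:BranchingSplitV_lambda,lambdaOverS_n}.

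The main obstacles will be the doubly split term in \eqref{eqBranchingSplitD_nToD_i+D_(n-i)i-even} and the precise plethystic identification in \eqref{eq:typeD:BranchingSplitV_lambda,lambdaOverS_n}: both require careful tracking of several commuting involutions (the sign swap on each of the two $D$-factors and the $\sigma$-action distinguishing $V^\pm_{\{\lambda,\lambda\}}$) and their interaction with the Littlewood--Richardson multiplicities. The symmetric/antisymmetric square identification on the $S_n$-side is in fact the content of \cite{CarreLeclerc:SplittingSquareOfSchurFunction}, which the paper explicitly cites.
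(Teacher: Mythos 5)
The paper does not actually prove this proposition: it is quoted from \cite[\S 3.C, D]{Stembridge:GuideToWeylGroupRepsEmphasisOnBranching}, with the $S_n$ rule resting on \cite{CarreLeclerc:SplittingSquareOfSchurFunction}, so there is no in-paper argument to compare yours against. Judged on its own merits, your strategy is the natural one and its first half is essentially complete. The halving argument is correct: writing $V_{\{\lambda,\lambda\}}=V^+_{\{\lambda,\lambda\}}\oplus V^-_{\{\lambda,\lambda\}}$ and twisting by $s=\diag(1,\dots,1,-1)$ gives $\dim\Hom(W,V^+|)=\dim\Hom(W_s,V^-|)$ for every irreducible $W$ of $D_k\times D_{n-k}$, and combining this with \eqref{eqBranchingNonSplitD_nToD_i+D_(n-i)} (at $\mu=\lambda$) forces equal multiplicities and yields \eqref{eqBranchingSplitD_nToD_i+D_(n-i)i-odd} after the unordered-pair reindexing. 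One point needs tightening: your remark that ``the choice of factor is immaterial modulo inner automorphisms'' is true in $D_n$ but not in $D_k\times D_{n-k}$, where the two choices differ by the diagonal double sign flip, which is outer for the product group. To cover a summand whose $D_k$-factor is split but whose $D_{n-k}$-factor is not (or vice versa), you need \emph{both} involutions: $s$ placed in the non-split block to get $W_s\simeq W$, and the double flip (inner in $D_n$, hence multiplicity-preserving on $V^\pm_{\{\lambda,\lambda\}}$) to identify the multiplicities of $V^+_{\{\nu,\nu\}}\otimes V_{\{\xi,\zeta\}}$ and $V^-_{\{\nu,\nu\}}\otimes V_{\{\xi,\zeta\}}$, which is what justifies writing that term with the reducible module $V_{\{\nu,\nu\}}$ in \eqref{eqBranchingSplitD_nToD_i+D_(n-i)i-even}.

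The two remaining pieces are only sketched, and they are where the real content of the proposition lives. For the doubly split term, identifying the $\pm 1$-eigenspaces of the block-swap on the $c^\lambda_{\nu,\xi}$-dimensional multiplicity space with spaces of dimensions $\binom{c^\lambda_{\nu,\xi}+1}{2}$ and $\binom{c^\lambda_{\nu,\xi}}{2}$ is the right idea, but the parity rule deciding which binomial attaches to which triple of signs requires an explicit computation with the vectors $(\id,v\otimes w)\pm(\sigma,\overline w\otimes\overline v)$ of \eqref{eq:typeD:UlambdalambdaPMDefiningActions} that you have not carried out. For \eqref{eq:typeD:BranchingSplitV_lambda,lambdaOverS_n}, the Mackey step reducing to $\Ind_{\mathbb Z_2\ltimes(S_q\times S_q)}^{S_n}(U^\pm_{\{\lambda,\lambda\}})$ is fine, but the identification of the resulting multiplicities with $\dim\Hom_{GL(n)}(V^\nu,S^2(V^\lambda))$ and $\dim\Hom_{GL(n)}(V^\nu,\Lambda^2(V^\lambda))$ is the plethysm statement that is precisely the content of \cite{CarreLeclerc:SplittingSquareOfSchurFunction}; it does not fall out of the double-coset analysis alone, and the matching of $U^+$ to $S^2$ rather than $\Lambda^2$ depends on the sign conventions in the map $\varphi$. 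So your outline is sound and would upgrade a citation to a proof, but the even-case constant $d^\lambda_{\nu,\xi}$ and the symmetric/antisymmetric matching must still be verified rather than asserted.
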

\noindent  $V^\lambda$ is the representation with highest weight vector $v$ and weight $\lambda$, that is, $\diag(h_1, \dots, h_n)\cdot v= h_1^{\lambda_1}\dots h_n^{\lambda_n}$.

We need a few more technical results.
\begin{lemma}\label{le:typeD:clambdastarmustarequalsclambdamu}
$c^{\lambda^*, \pm}_{\nu^*}=c_\nu^{\lambda,\pm}$.
\end{lemma}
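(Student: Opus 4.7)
The plan is to derive this identity directly from the branching rule \eqref{eq:typeD:BranchingSplitV_lambda,lambdaOverS_n} combined with Corollary \ref{cor:typeD:RepTensorSign}.\eqref{cor:typeD:VlambdalambdaPMtensorSignEqualsItself} (which gives $V_{\{\lambda,\lambda\}}^\pm \otimes \signRep{D_n} \simeq V_{\{\lambda^*,\lambda^*\}}^\pm$). The strategy is to restrict both sides of this isomorphism to $S_n \subset D_n$ and compare the resulting decompositions.

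First I would observe that the sign representation of $D_n$ restricts to the sign representation of $S_n$: indeed, under the matrix realization of $D_n$ described in Section \ref{sec:typeD:preliminaries}, $S_n$ is realized by permutation matrices, whose determinant agrees with the sign character. Consequently, restriction to $S_n$ commutes with tensoring by the corresponding sign representations, so
\[
\bigl(V_{\{\lambda,\lambda\}}^\pm\bigr)_{|S_n} \otimes \signRep{S_n} \;\simeq\; \bigl(V_{\{\lambda,\lambda\}}^\pm \otimes \signRep{D_n}\bigr)_{|S_n} \;\simeq\; \bigl(V_{\{\lambda^*,\lambda^*\}}^\pm\bigr)_{|S_n}.
\]

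Next I would expand both the leftmost and rightmost expressions by the split branching rule \eqref{eq:typeD:BranchingSplitV_lambda,lambdaOverS_n}. The left-hand side becomes
\[
\left(\bigoplus_\nu c^{\lambda,\pm}_\nu V_\nu\right) \otimes \signRep{S_n} \;\simeq\; \bigoplus_\nu c^{\lambda,\pm}_\nu V_{\nu^*},
\]
where we have used Lemma \ref{le:typeA:tensoringbySignrepstarsLambda} for the last isomorphism. Reindexing by $\nu \mapsto \nu^*$, the right-hand side decomposition is
\[
\bigl(V_{\{\lambda^*,\lambda^*\}}^\pm\bigr)_{|S_n} \;\simeq\; \bigoplus_\nu c^{\lambda^*,\pm}_\nu V_\nu \;=\; \bigoplus_\nu c^{\lambda^*,\pm}_{\nu^*} V_{\nu^*}.
\]

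Comparing the multiplicities of $V_{\nu^*}$ in these two decompositions yields $c^{\lambda,\pm}_\nu = c^{\lambda^*,\pm}_{\nu^*}$ for every partition $\nu$, which is exactly the statement of the lemma. There is essentially no obstacle here; the only small subtlety is to make sure that the signs $\pm$ on $V_{\{\lambda,\lambda\}}^\pm$ and $V_{\{\lambda^*,\lambda^*\}}^\pm$ do not get swapped, but this is precisely the content of Corollary \ref{cor:typeD:RepTensorSign}.\eqref{cor:typeD:VlambdalambdaPMtensorSignEqualsItself}, which was established by direct computation with the sign character.
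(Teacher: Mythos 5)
Your proof is correct and follows essentially the same route as the paper's: restrict the isomorphism $V_{\{\lambda,\lambda\}}^\pm\otimes\signRep{D_n}\simeq V_{\{\lambda^*,\lambda^*\}}^\pm$ from Corollary \ref{cor:typeD:RepTensorSign} down to $S_n$, expand both sides via the branching rule \eqref{eq:typeD:BranchingSplitV_lambda,lambdaOverS_n} together with Lemma \ref{le:typeA:tensoringbySignrepstarsLambda}, and compare coefficients. No gaps.
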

\begin{proof}
We have that
\[
\begin{array}{rcll|l}
{V_{\{\lambda,\lambda \}}^\pm}_{|S_n}&\simeq & \displaystyle \bigoplus_{\nu} c^{\lambda, \pm}_{\nu}V_{\nu}& &\text{use } \eqref{eq:typeD:BranchingSplitV_lambda,lambdaOverS_n}, \text{see \cite[\S 3.C]{Stembridge:GuideToWeylGroupRepsEmphasisOnBranching}}\\
{\left( V_{\{\lambda,\lambda \}}^\pm\otimes \signRep{D_n}\right) }_{|S_n}&\simeq & \displaystyle \bigoplus_{\nu} c^{\lambda, \pm}_{\nu}V_{\nu}\otimes \left(\signRep D_n\right)_{| S_n}\\
{\left( V_{\{\lambda^*,\lambda^* \}}^\pm\right) }_{|S_n}&\simeq & \displaystyle \bigoplus_{\nu} c^{\lambda, \pm}_{\nu}V_{\nu^*}&&\begin{array}{l}
\text{use Lemma \ref{le:typeA:tensoringbySignrepstarsLambda}}\\ 
\text{and Corollary \ref{cor:typeD:RepTensorSign}}\end{array} \\
\displaystyle \bigoplus_{\nu^*} c^{\lambda^*, \pm}_{\nu^*}V_{\nu^*} &\simeq&\displaystyle \bigoplus_{\nu} c^{\lambda, \pm}_{\nu}V_{\nu^*}&&\text{use } \eqref{eq:typeD:BranchingSplitV_lambda,lambdaOverS_n}.
\end{array}
\]
This follows from comparing coefficients in the isomorphism above.
\end{proof}

Let $>$ be the lexicographic order on tuples. 
\begin{lemma}\label{le:typeD:cpmnumbersViaKostkaNumbers}
Let $(p_1,\dots, p_k)$ tuple with non-negative entries. Then 
\[ \begin{array}{rcl}
\displaystyle \sum_{\footnotesize \begin{array}{r@{~}c@{~}l} \alpha+\beta&=& (p_1,\dots, p_k)\\ \alpha&\geq& \beta \end{array} } K_{\lambda, \alpha}K_{ \lambda, \beta}&=& \displaystyle \sum_{\nu} c_\nu^{ \lambda, +} K_{\nu, \beta}\\
\displaystyle \sum_{\footnotesize \begin{array}{r@{~}c@{~}l} \alpha+\beta&=& (p_1,\dots, p_k)\\ \alpha&>& \beta\end{array} } K_{\lambda, \alpha}K_{\lambda,\beta} &=& \displaystyle \sum_{\nu} c_\nu^{\lambda, -} K_{\nu, \beta}\\
\end{array}
\]
where $ \lambda, \nu$ are partitions (in the summations $\nu$ runs over all partitions) and $\beta$ stands for an arbitrary tuple of non-negative integers.
\end{lemma}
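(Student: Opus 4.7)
My strategy is to interpret both equalities via the decomposition $V^\lambda \otimes V^\lambda = S^2 V^\lambda \oplus \Lambda^2 V^\lambda$, where $V^\lambda$ denotes the irreducible $GL(n)$-module with highest weight $\lambda$. Proposition~\ref{prop:KostkaNumberInterpretation} identifies $K_{\lambda,\alpha}$ with $\dim V^\lambda_\alpha$, and the definition of $c^{\lambda,\pm}_\nu$ in \eqref{eq:typeD:BranchingSplitV_lambda,lambdaOverS_n} gives
\[
\sum_\nu c^{\lambda,+}_\nu K_{\nu, \gamma} = \dim\bigl(S^2 V^\lambda\bigr)_\gamma, \qquad \sum_\nu c^{\lambda,-}_\nu K_{\nu, \gamma} = \dim\bigl(\Lambda^2 V^\lambda\bigr)_\gamma,
\]
with $\gamma = (p_1, \dots, p_k)$. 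So the lemma reduces to computing these two weight-space dimensions directly on the left-hand side.

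I would carry this out by running the swap involution $\tau(v \otimes w) = w \otimes v$ through the weight decomposition
\[
(V^\lambda \otimes V^\lambda)_\gamma = \bigoplus_{\alpha + \beta = \gamma} V^\lambda_\alpha \otimes V^\lambda_\beta.
\]
Group the summands by $\alpha > \beta$, $\alpha < \beta$, and $\alpha = \beta$ (lexicographic order). For each unordered pair $\{\alpha, \beta\}$ with $\alpha \neq \beta$ the swap exchanges $V^\lambda_\alpha \otimes V^\lambda_\beta$ with $V^\lambda_\beta \otimes V^\lambda_\alpha$, so both the $(+1)$ and $(-1)$ eigenspaces of this block have dimension exactly $K_{\lambda,\alpha} K_{\lambda,\beta}$, contributing equally to $S^2 V^\lambda$ and $\Lambda^2 V^\lambda$. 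For the diagonal case $\alpha = \beta$, which occurs only when $\gamma = 2\alpha$, the block $V^\lambda_\alpha \otimes V^\lambda_\alpha$ splits as its own symmetric and antisymmetric squares, contributing $\dim S^2 V^\lambda_\alpha$ on the $(+)$ side and $\dim \Lambda^2 V^\lambda_\alpha$ on the $(-)$ side.

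Collecting these contributions yields the two stated identities, with the $(+)$ identity coming from the symmetric square and the $(-)$ identity from the exterior square. The main subtlety will be the bookkeeping for the diagonal terms $2\alpha = \gamma$: one must verify that the convention $\alpha \geq \beta$ in the first equation (as opposed to the strict $\alpha > \beta$ in the second) encodes exactly the extra diagonal summand picked up by the symmetric square relative to the exterior square. Apart from this bookkeeping step, the argument is entirely formal, resting only on Proposition~\ref{prop:KostkaNumberInterpretation}, the definition of $c^{\lambda,\pm}_\nu$, and the compatibility of $\tau$ with the weight grading of $V^\lambda \otimes V^\lambda$.
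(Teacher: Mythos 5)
Your approach is the one the paper itself intends (the text asserts the lemma ``is a direct corollary to Proposition \ref{prop:KostkaNumberInterpretation}'' and omits the proof), and the off-diagonal part of your argument is correct: for an unordered pair $\{\alpha,\beta\}$ with $\alpha\neq\beta$ the swap involution exchanges the two blocks $V^\lambda_\alpha\otimes V^\lambda_\beta$ and $V^\lambda_\beta\otimes V^\lambda_\alpha$, so each such pair contributes exactly $K_{\lambda,\alpha}K_{\lambda,\beta}$ to both the symmetric and the exterior square.

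The genuine gap is precisely the bookkeeping you defer, and it does not close. When $\gamma=(p_1,\dots,p_k)=2\alpha$, the diagonal block $V^\lambda_\alpha\otimes V^\lambda_\alpha$ contributes $\binom{K+1}{2}$ to $\bigl(S^2V^\lambda\bigr)_\gamma$ and $\binom{K}{2}$ to $\bigl(\Lambda^2V^\lambda\bigr)_\gamma$, where $K=K_{\lambda,\alpha}$; but the term $\alpha=\beta$ in the sum over $\alpha\geq\beta$ contributes $K^2$, and the sum over $\alpha>\beta$ contributes nothing from this block. Since $K^2-\binom{K+1}{2}=\binom{K}{2}$, each of the two stated identities is off by $\binom{K_{\lambda,\gamma/2}}{2}$ whenever $K_{\lambda,\gamma/2}\geq 2$. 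A concrete instance: for $GL(3)$, $\lambda=(2,1,0)$ and $\gamma=(2,2,2)$ one has $K_{\lambda,(1,1,1)}=2$; the left side of the first identity is $3+2^2=7$, while $\dim\bigl(S^2V^\lambda\bigr)_{(2,2,2)}=3+\binom{3}{2}=6$. So your computation, carried out honestly, establishes the corrected identities in which the diagonal contribution $K_{\lambda,\gamma/2}^2$ (resp.\ $0$) is replaced by $\binom{K_{\lambda,\gamma/2}+1}{2}$ (resp.\ $\binom{K_{\lambda,\gamma/2}}{2}$); it cannot deliver the lemma exactly as printed, which holds only when $K_{\lambda,\gamma/2}\leq 1$. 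You must either finish the diagonal computation and record this correction to the statement, or justify restricting to the case $K_{\lambda,\gamma/2}\leq 1$.
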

Lemma \ref{le:typeD:cpmnumbersViaKostkaNumbers} is a direct corollary to Proposition \ref{prop:KostkaNumberInterpretation} and we omit its proof.


\subsection{Sign representation multiplicities restricted to parabolic subgroups in type $D$}
A parabolic subgroups $\mathcal P$ of $\mathcal D_n$ is generated by a subset of the simple reflections $t_1, \dots, t_n$ fixed in Section \ref{sec:typeD:preliminaries}. 
Relative to the choice of fixed realizations $t_1, \dots,t_{n-1}, t_n$, we separate the parabolic subgroups into three kinds, depending on whether $t_{n-1}$ and $t_n$ are chosen. 

\begin{itemize}
\item $P_{(p_1,\dots, p_k), \emptyset}^+= S_{p_1} \times \dots \times  S_{ p_{k-1} }\times  S_{p_k}$, where each $S_{p_j}$ is realized as a permutation group of consecutive indices (as in \eqref{eq:typeA:parabolicDef}) with $ \sum\limits_i p_i = n $. 
\item $P_{(p_1, \dots, p_k), \emptyset}^-= S_{p_1} \times \dots \times  S_{ p_{k-1} }\times  S_{p_k}$ with $p_k>1$, obtained from $P_{(p_1,\dots, p_k), \emptyset}^+$ by replacing the generator $t_{n-1}$ by $t_n$. 
Here, $p_k>1$ guarantees $t_{n-1}$ is a generator of $P_{(p_1,\dots, p_k), \emptyset }^+$. 
We extend this definition to the case $p_k=1$ by setting $P_{(p_1, \dots, p_{k-1},1), \emptyset }^-= P_{(p_1, \dots, p_{k-1},1), \emptyset }^+$.

\item $\bar P_{(p_1, \dots, p_{k-1}), (p_k) } = S_{p_1}\times \dots\times S_{p_{k-1}}\times D_{p_k}$, where each $S_{p_j}$ is realized as in \eqref{eq:typeA:parabolicDef} and the last summand  $D_{p_k}$ is realized as a Weyl subgroup of type $D$ using the last $ p_k$ indices $n,n-1,\dots, n-p_k+1$.

We over-line the letter $P$ to distinguish the parabolic subgroup $\bar P_{(p_1, \dots, p_{k-1}), (p_k) } $ in type $D$ from the parabolic subgroup $P_{(p_1,\dots, p_{k-1}), (p_k)}$ in type $B$. \end{itemize}
As usual, let $\signRep{P} $ denote the sign representation of a parabolic subgroup $P$. 
Conjugation by the element $s=\diag(1,\dots, 1,-1)$ of $B_n$ swaps $P_{(p_1,\dots, p_k), \emptyset}^+ $ and $P_{(p_1,\dots, p_k), \emptyset }^-$ and maps $\bar P_{(p_1,\dots, p_{k-1}), (p_k)}$ onto itself. Together with
Proposition \ref{prop:typeD:conjugateModules} this implies the following.
\begin{lemma}\label{le:typeD:SignMultConjugateParabolics}
~\\
$\dim\! \Hom_{P_{(p_1,\dots, p_k), \emptyset }^+}\!\!\! \left( \signRep{ P_{(p_1,\dots, p_k), \emptyset}^+}, V_{\{\lambda, \mu\}} \right)\!\! = \!\dim\! \Hom_{P_{(p_1,\dots, p_k), \emptyset}^-}\!\!\! \left( \signRep{ P_{(p_1,\dots, p_k), \emptyset }^-}, V_{ \{\lambda, \mu\}} \right)$\\
$\dim\! \Hom_{P_{ (p_1,\dots, p_k), \emptyset }^+}\!\!\! \left(\signRep{ P_{(p_1, \dots, p_k), \emptyset}^+}, V_{\{\lambda, \lambda\}}^+ \right)\!\! = \!\dim\! \Hom_{P_{(p_1,\dots, p_k), \emptyset}^-}\!\!\! \left( \signRep{ P_{(p_1,\dots, p_k), \emptyset}^-}, V_{\{\lambda, \lambda\}}^- \right)$\\
$\dim\! \Hom_{P_{(p_1,\dots, p_k), \emptyset }^+}\!\!\! \left(\signRep{ P_{(p_1,\dots, p_k), \emptyset}^+}, V_{\{\lambda, \lambda\}}^- \right)\!\! = \!\dim\! \Hom_{P_{(p_1,\dots, p_k), \emptyset}^-}\!\!\! \left(\signRep{ P_{(p_1,\dots, p_k), \emptyset}^-}, V_{\{\lambda, \lambda\}}^+ \right)$\\
$\dim\! \Hom_{\bar P_{(p_1,\dots, p_{k-1}), (p_k)}}\!\!\! \left(\signRep{P_{(p_1,\dots, p_{k-1}), (p_k)}}, V_{\{\lambda, \mu\}} \right)\!\!= $ \\  $ \!\dim\! \Hom_{\bar P_{(p_1,\dots, p_{k-1}),(p_k)}}\!\!\! \left(\signRep{\bar P_{(p_1,\dots, p_{k-1}), (p_k)}}, V_{\{\lambda, \mu\}} \right)$\\
$\dim\! \Hom_{\bar P_{(p_1,\dots, p_{k-1}), (p_k)}}\!\!\! \left(\signRep{P_{(p_1,\dots, p_{k-1}), (p_k)}}, V_{\{\lambda, \lambda\}}^+ \right)\!\!= $ \\  $ \!\dim\! \Hom_{\bar P_{(p_1,\dots, p_{k-1}),(p_k)}}\!\!\! \left( \signRep{\bar P_{(p_1,\dots, p_{k-1}), (p_k)}}, V_{\{\lambda, \lambda\}}^- \right)$.\\
\end{lemma}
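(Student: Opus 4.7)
The plan is to exploit the element $s=\diag(1,\dots,1,-1)\in B_n\setminus D_n$ and the twisting operation $V\mapsto V_s$ introduced in Section~\ref{sec:typeD:preliminaries}. First I would verify the geometric claim made immediately before the lemma, namely that conjugation by $s$ swaps $t_{n-1}$ with $t_n$ and commutes with $t_1,\dots,t_{n-2}$; this is an immediate matrix calculation using \eqref{eq:typeD:lastSimpleReflection}. It then follows from the definitions of the parabolics that conjugation by $s$ interchanges $P^+_{(p_1,\dots,p_k),\emptyset}$ with $P^-_{(p_1,\dots,p_k),\emptyset}$ when $p_k>1$ (and fixes both when $p_k=1$), and normalizes $\bar P_{(p_1,\dots,p_{k-1}),(p_k)}$ since the latter contains both $t_{n-1}$ and $t_n$ among its generators.

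Next I would establish a general transport formula. For any subgroup $P\subset D_n$ and any $D_n$-module $V$, the identity map on the underlying vector space gives a module isomorphism $V|_{sPs^{-1}}\cong V_s|_P$ under the group isomorphism $P\xrightarrow{\sim}sPs^{-1}$, $p\mapsto sps^{-1}$: indeed, the action of $sps^{-1}$ on $v\in V$ coincides by definition with the twisted action $p\cdot_s v=(sps^{-1})\cdot v$. Since the sign character of $P$ pulls back to the sign character of $sPs^{-1}$ under this abstract group isomorphism, we obtain
\[
\dim\Hom_{sPs^{-1}}\bigl(\signRep{sPs^{-1}},V\bigr)\;=\;\dim\Hom_P\bigl(\signRep{P},V_s\bigr).
\]

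To finish I would substitute the evaluations of the twist from Proposition~\ref{prop:typeD:conjugateModules}: $\bigl(V_{\{\lambda,\mu\}}\bigr)_s\simeq V_{\{\lambda,\mu\}}$, while $\bigl(V^{\pm}_{\{\lambda,\lambda\}}\bigr)_s\simeq V^{\mp}_{\{\lambda,\lambda\}}$. Taking $P=P^+_{(p_1,\dots,p_k),\emptyset}$ (so $sPs^{-1}=P^-_{(p_1,\dots,p_k),\emptyset}$) in the transport formula, together with the three choices of module $V_{\{\lambda,\mu\}}$, $V^{+}_{\{\lambda,\lambda\}}$, $V^{-}_{\{\lambda,\lambda\}}$, yields the first three asserted equalities. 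Taking $P=\bar P_{(p_1,\dots,p_{k-1}),(p_k)}$ (for which $sPs^{-1}=P$) and $V\in\{V_{\{\lambda,\mu\}},V^{+}_{\{\lambda,\lambda\}}\}$ yields the remaining two. The only point of the argument that requires any real attention is the identification $V|_{sPs^{-1}}\cong V_s|_P$ in the second paragraph, and even this is essentially a direct unwinding of the definition of $V_s$, so I anticipate no serious obstacle to the proof.
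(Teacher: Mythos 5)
Your proposal is correct and is essentially the paper's own argument: the paper derives the lemma in one line from the observation that conjugation by $s=\diag(1,\dots,1,-1)$ swaps $P^+_{(p_1,\dots,p_k),\emptyset}$ with $P^-_{(p_1,\dots,p_k),\emptyset}$ and normalizes $\bar P_{(p_1,\dots,p_{k-1}),(p_k)}$, combined with Proposition~\ref{prop:typeD:conjugateModules}. Your transport formula $\dim\Hom_{sPs^{-1}}(\signRep{sPs^{-1}},V)=\dim\Hom_P(\signRep{P},V_s)$ is exactly the step the paper leaves implicit, and your verification of it is sound.
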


In the statement below only one occurrence of a parabolic subgroup will be indicated in full technical notation; the remaining occurrences will be abbreviated by the letter $P$.

\begin{proposition}\label{prop:typeD} 
The multiplicity $\dim \Hom_{P}(\signRep{P}, V)$ of the sign representation of a parabolic subgroup $P$ in an irreducible representation $V$ of $D_n$ is given as follows. 
\begin{enumerate}
\item  \label{prop:typeD:part1:lambdaneqmu} 

\noindent $
\begin{array}{@{}lr@{}c@{}l}
\refstepcounter{equation}
\label{eq:typeD:signMultlambdaneqmuTypeAparabolic}(\theequation)&
\displaystyle \dim\Hom_P \left(\signRep{P_{ (p_1,\dots, p_k), \emptyset}^\pm}, \hspace{0.5cm} V_{\{\lambda, \mu\}} \right) &=&\displaystyle \sum \limits_{\alpha + \beta = (p_1,\dots, p_k)} \!\!\!\!\!\!\!\!\!\! K_{ \lambda^*, \alpha} K_{\mu^*, \beta}\\
\refstepcounter{equation}
\label{eq:typeD:signMultlambdaneqmuTypeDparabolic}(\theequation)&
\displaystyle \dim\Hom_P\left(\signRep{ \bar P_{(p_1,\dots, p_{k-1}), (p_k)}} , V_{ \{ \lambda, \mu\}} \right) &=& \displaystyle \sum \limits_{\substack{ \alpha + \beta = (p_1, \dots, p_{k}) \\ \alpha_k=0 \text{ or } \beta_k=0}} \!\!\!\!\!\!\!\!\!\! K_{ \lambda^*, \alpha}  K_{\mu^*, \beta},
\end{array}
$

\noindent where $\alpha,\beta$ stand for tuples of non-negative integers.
\item  \label{prop:typeD:part2:lambdalambda}
$ \begin{array}{@{}lr@{}c@{}l}
\refstepcounter{equation}
\label{eq:typeD:signMultlambdalambdaPlusPlus}(\theequation)&
\displaystyle \dim\Hom_{P} \left(\signRep{ P_{ (p_1, \dots, p_k), \emptyset}^+}, V_{\{\lambda, \lambda\}}^+ \right)&=&\\
&\displaystyle \dim\Hom_{P}\left( \signRep{P_{ (p_1,\dots, p_k), \emptyset}^-}, V_{\{\lambda, \lambda\}}^- \right)&=& \displaystyle \frac{1}{2}
\!\!\!\!\!\!\!\!\!\! \sum\limits_{{\alpha+ \beta = (p_1,\dots, p_k)}, {\alpha \not =\beta }} \!\!\!\!\!\!\!\!\!\! K_{\lambda^*, \alpha} K_{\lambda^*, \beta} \\
&&&\displaystyle  + \sum\limits_{2\alpha = (p_1,\dots, p_k)}K_{\lambda^*,\alpha}^2, \\
\refstepcounter{equation}
\label{eq:typeD:signMultlambdalambdaMinusPlus}(\theequation)
&\displaystyle \dim\Hom_{P}\left(\signRep{ P_{(p_1,\dots, p_k), \emptyset}^+}, V_{\{\lambda, \lambda\}}^- \right)&=&\\
&\displaystyle \dim\Hom_{P}\left(\signRep{ P_{(p_1,\dots, p_k), \emptyset}^-}, V_{\{\lambda, \lambda\}}^+ \right)&=&
\displaystyle \frac{1}{2}\!\!\!\!\!\!\!\!\!\! \sum \limits_{{\alpha+ \beta = (p_1,\dots, p_k)}, {\alpha \not =\beta }}\!\!\!\!\!\!\!\!\!\! K_{\lambda^*, \alpha} K_{\lambda^*, \beta} \\
\refstepcounter{equation}
\label{eq:typeD:signMultlambdalambdaTypeDparabolic}(\theequation)&
\displaystyle \dim\Hom_{P}\left(\signRep{\bar P_{(p_1,\dots, p_{k-1}), (p_k)}}, V_{\{\lambda, \lambda\}}^\pm \right)&=&
\displaystyle \frac{1}{2} \!\!\!\!\!\!\!\!\!\! \sum \limits_{\substack{ {\alpha+ \beta = (p_1,\dots, p_k)}, {\alpha \not =\beta }\\ \alpha_k=0 \text{ or }\beta_k=0}} \!\!\!\!\!\!\!\!\!\! K_{\lambda^*, \alpha} K_{\lambda^*, \beta},
\end{array}
$

\noindent where $\alpha,\beta$ stand for tuples of non-negative integers.
\end{enumerate}
\end{proposition}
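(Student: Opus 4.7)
The plan is to reduce every multiplicity to the type-$B$ formulas from Section \ref{sec:typeB}, combined with the split branching law \eqref{eq:typeD:BranchingSplitV_lambda,lambdaOverS_n} and the arithmetic identities in Lemmas \ref{le:typeD:clambdastarmustarequalsclambdamu} and \ref{le:typeD:cpmnumbersViaKostkaNumbers}.

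For part~(1), Proposition \ref{prop:typeD:branchingBdowntoD} identifies $V_{\{\lambda,\mu\}}$ with the restriction $V_{\lambda,\mu}|_{D_n}$ of a $B_n$-irreducible. Since a type-$A$ parabolic $P^+_{(p_1,\ldots,p_k),\emptyset}$ already lies in $S_n\subset D_n\subset B_n$, its restriction is unaffected by the passage from $B_n$ to $D_n$, so \eqref{eq:typeD:signMultlambdaneqmuTypeAparabolic} is immediate from \eqref{eq:typeB:MultSignParabolicNoTypeB}, and the $P^-$ case follows from Lemma \ref{le:typeD:SignMultConjugateParabolics}. For the mixed parabolic $\bar P=\bar P_{(p_1,\ldots,p_{k-1}),(p_k)}$, I set $R=S_{p_1}\times\cdots\times S_{p_{k-1}}\times B_{p_k}$ for the corresponding type-$B$ parabolic; then $\bar P\subset R$ has index two through $D_{p_k}\subset B_{p_k}$, and Frobenius reciprocity combined with Corollary \ref{cor:typeD:signDarisesfromTwoBrestrictions} gives
\[
\Ind_{\bar P}^{R}\signRep{\bar P}\simeq\signRep{R}\oplus\bigl(\signRep{S_{p_1}}\boxtimes\cdots\boxtimes\signRep{S_{p_{k-1}}}\boxtimes V_{\pi_{p_k},\emptyset}\bigr),
\]
so the sign multiplicity of $\bar P$ in $V_{\lambda,\mu}$ splits as a sum of two $R$-multiplicities. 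The first is handled by \eqref{eq:typeB:MultSignParabolicWithTypeB} and contributes $\sum_{\alpha+\beta=(p_1,\ldots,p_{k-1})}K_{\lambda^*,\alpha}K_{\mu^*,\beta\oplus(p_k)}$. For the second, I would branch $V_{\lambda,\mu}$ from $B_n$ to $B_{n-p_k}\times B_{p_k}$ via \eqref{eqBranchingB_nToB_i+B_(n-i)}, isolate the $V_{\pi_{p_k},\emptyset}$-isotypic piece in the $B_{p_k}$-factor (this forces $c^\mu_{\sigma,\emptyset}=\delta_{\sigma,\mu}$ and leaves a Pieri coefficient $c^\lambda_{\nu,\pi_{p_k}}$), apply \eqref{eq:typeB:MultSignParabolicNoTypeB} to each surviving $V_{\nu,\mu}$, and collapse the remaining sum by Lemma \ref{leBoxRowColumnIdentity} in the form $\sum_\nu c^\lambda_{\nu,\pi_{p_k}}K_{\nu^*,\alpha}=K_{\lambda^*,\alpha\oplus(p_k)}$, producing $\sum_{\alpha+\beta=(p_1,\ldots,p_{k-1})}K_{\lambda^*,\alpha\oplus(p_k)}K_{\mu^*,\beta}$. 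Reindexing both contributions as tuples $(\alpha,\beta)$ with $\alpha+\beta=(p_1,\ldots,p_k)$, the first corresponds to the range $\alpha_k=0$ and the second to $\beta_k=0$; since $p_k\geq 1$ these ranges are disjoint and their union is exactly \eqref{eq:typeD:signMultlambdaneqmuTypeDparabolic}.

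For part~(2) the same scheme works with \eqref{eq:typeD:BranchingSplitV_lambda,lambdaOverS_n} in place of Proposition \ref{prop:typeD:branchingBdowntoD}. For a type-$A$ parabolic $P^+$, the sign multiplicity in $V^\pm_{\{\lambda,\lambda\}}$ reduces to $\sum_\nu c^{\lambda,\pm}_\nu K_{\nu^*,(p_1,\ldots,p_k)}$; Lemma \ref{le:typeD:clambdastarmustarequalsclambdamu} converts $c^{\lambda,\pm}_\nu$ to $c^{\lambda^*,\pm}_{\nu^*}$, reindexing $\tau=\nu^*$ and invoking Lemma \ref{le:typeD:cpmnumbersViaKostkaNumbers} rewrites the sum as $\sum_{\alpha+\beta=(p_1,\ldots,p_k),\,\alpha\succeq\beta}K_{\lambda^*,\alpha}K_{\lambda^*,\beta}$, where ``$\succeq$'' means ``$\geq$'' in the $+$ case and ``$>$'' in the $-$ case. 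Splitting off the diagonal terms $\alpha=\beta$ and using the symmetry $\sum_{\alpha>\beta}(\cdot)=\tfrac12\sum_{\alpha\neq\beta}(\cdot)$ produces \eqref{eq:typeD:signMultlambdalambdaPlusPlus} and \eqref{eq:typeD:signMultlambdalambdaMinusPlus}; the $P^-$ versions follow by swapping $\pm$ via Lemma \ref{le:typeD:SignMultConjugateParabolics}. For the mixed parabolic $\bar P$ against a split representation, the cleanest route is to observe that $V^+_{\{\lambda,\lambda\}}\oplus V^-_{\{\lambda,\lambda\}}\simeq V_{\lambda,\lambda}|_{D_n}$ and invoke the last clause of Lemma \ref{le:typeD:SignMultConjugateParabolics}, which forces the two split pieces to have equal $\bar P$-multiplicities; each is therefore half of the multiplicity for $V_{\lambda,\lambda}$ already computed in part~(1), and since $p_k\geq 1$ the diagonal $\alpha=\beta$ never satisfies the side condition $\alpha_k=0$ or $\beta_k=0$, so this half equals \eqref{eq:typeD:signMultlambdalambdaTypeDparabolic}.

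The main technical obstacle will be the Pieri/Kostka bookkeeping in the $\bar P$ argument---in particular the passage from $\sum_\nu c^\lambda_{\nu,\pi_{p_k}}K_{\nu^*,\alpha}$ to $K_{\lambda^*,\alpha\oplus(p_k)}$ via Lemma \ref{leBoxRowColumnIdentity} and the verification that the two resulting index sets partition the prescribed range---together with the analogous duality-and-symmetry manipulation of the $c^{\lambda,\pm}$ numbers in the split case. Once these combinatorial identities are in place, everything else is routine bookkeeping.
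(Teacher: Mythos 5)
Your proposal is correct and follows the paper's proof in all essentials: part (1) is obtained by viewing $V_{\{\lambda,\mu\}}$ as the restriction of the $B_n$-irreducible $V_{\lambda,\mu}$ and decomposing $\Ind_{\bar P}^{Q}\signRep{\bar P}$ into $\signRep{Q}$ and $W=\signRep{S_{p_1}}\otimes\cdots\otimes\signRep{S_{p_{k-1}}}\otimes V_{\pi,\emptyset}$ for the type-$B$ parabolic $Q=P_{(p_1,\dots,p_{k-1}),(p_k)}$ (your $R$), and part (2) uses the split branching \eqref{eq:typeD:BranchingSplitV_lambda,lambdaOverS_n} together with Lemmas \ref{le:typeD:clambdastarmustarequalsclambdamu} and \ref{le:typeD:cpmnumbersViaKostkaNumbers}, with the $\bar P$ case settled by the halving argument from Lemma \ref{le:typeD:SignMultConjugateParabolics}. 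The single point of divergence is the evaluation of $\dim\Hom_{Q}\left(W,V_{\lambda,\mu}\right)$: the paper tensors with the one-dimensional $B_n$-character $U$ that is $-1$ on elements with an odd number of sign changes, which turns $W$ into $\signRep{Q}$ and $V_{\lambda,\mu}$ into $V_{\mu,\lambda}$ so that \eqref{eq:typeB:MultSignParabolicWithTypeB} applies verbatim, whereas you branch through $B_{n-p_k}\times B_{p_k}$, isolate the $V_{\pi,\emptyset}$-isotypic component, and recombine via the Pieri rule and Lemma \ref{leBoxRowColumnIdentity}; both routes yield $\sum_{\alpha+\beta=(p_1,\dots,p_{k-1})}K_{\lambda^*,\alpha\oplus(p_k)}K_{\mu^*,\beta}$. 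Your explicit verifications that the two index ranges ($\alpha_k=0$ versus $\beta_k=0$) are disjoint and exhaust the condition in \eqref{eq:typeD:signMultlambdaneqmuTypeDparabolic}, and that the diagonal $\alpha=\beta$ is automatically excluded by that condition when passing to \eqref{eq:typeD:signMultlambdalambdaTypeDparabolic}, are correct pieces of bookkeeping that the paper leaves implicit.
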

\begin{proof}
The proof of the proposition consists in carefully restricting the branching rules for type $B$ to type $D$. 

\noindent (\ref{prop:typeD:part1:lambdaneqmu}). Suppose $\lambda\neq\mu$. By Proposition \ref{prop:typeD:branchingBdowntoD} the $B_n$-representation $V_{\lambda,\mu} $ restricts down to $V_{\{\lambda, \mu\}}$ over $D_n$. 

We prove first 
\eqref{eq:typeD:signMultlambdaneqmuTypeAparabolic}. For $P=P_{(p_1,\dots, p_k), \emptyset}^+ $ the statement follows from \eqref{eq:typeB:MultSignParabolicNoTypeB} as $P$ is the isomorphic restriction of the parabolic subgroup $ P_{(p_1,\dots, p_k), \emptyset}$ in type $B$. The case $P=P_{(p_1,\dots, p_k), \emptyset}^-$ follows from Lemma \ref{le:typeD:SignMultConjugateParabolics}.

Next we prove 
\eqref{eq:typeD:signMultlambdaneqmuTypeDparabolic}. Consider the type $B$ parabolic subgroup $P_{(p_1, \dots, p_{k-1}), (p_k)} $. We have the chain of embeddings
\[
\underbrace{ \bar P_{(p_1,\dots, p_{k-1}), (p_k)}}_{=P} \hookrightarrow  \underbrace{ P_{(p_1, \dots, p_{k-1}), (p_k)}}_{=Q} \hookrightarrow B_n\quad .
\]
Here, we abbreviate $\bar P_{(p_1,\dots, p_{k-1}), (p_k)}$ by $P$ and $P_{(p_1, \dots, p_{k-1}), (p_k)} $ by $Q$.
We will obtain the multiplicity of $\signRep{P}$ in $V_{\lambda,\mu}$ by first branching down to $Q$.
In view of Corollary \ref{cor:typeD:signDarisesfromTwoBrestrictions}, $\signRep{P}$ is the restriction of two $Q$- representations: $\signRep{Q}$ and the representation $W$ given by
\[ W= \signRep{S_{p_1}}\otimes \dots\otimes \signRep{ S_{p_{k-1}}}\otimes V_{\pi, \emptyset}.
\] 
The multiplicity of $\signRep{Q}$ in $V_{\lambda, \mu}$ was computed in \eqref{eq:typeB:MultSignParabolicWithTypeB} as

\[
\dim \Hom_{ Q} \left(\signRep{Q}, V_{\lambda, \mu} \right)= \sum_{ \alpha+\beta=(p_1,\dots, p_{k})} K_{\lambda^*,\alpha} K_{\mu^*, \beta\oplus \left(p_{k+1}\right)}.
\]
We claim that the multiplicity of $W$ in $V_{\lambda,\mu}$ is given by
\[
\dim \Hom_{ Q} \left(W, V_{\lambda,\mu} \right) = \sum_{ \alpha +\beta=(p_1,\dots, p_{k})} K_{\lambda^*,  \alpha\oplus\left( p_{k+1} \right)} K_{\mu^*, \beta}.
\]
Let $U$ be the 1-dimensional $B_n$-module on which every element of $B_n$ which has even number of $-1$'s in its matrix realization acts by $1$ and the other elements act by $-1$. 
A short consideration shows that $W\otimes U=\signRep{Q}$ and that $V_{\lambda, \mu} \otimes  U=V_{\mu,\lambda}$. 

For an arbitrary finite group $G$, arbitrary $G$-modules $S$, $T$, and a one-dimensional $G$-module $U$ we have a vector space isomorphism $\Hom_{ G} \left(S, T\right) \simeq  \Hom_{ G} \left(S\otimes U, T\otimes U  \right)$: indeed, it is straightforward to see that the map $\varphi\mapsto \bar{\varphi} $ which maps an element $\varphi \in \Hom_{ G} \left(S, T\right)$ to $\bar \varphi\in \Hom_G\left(S\otimes U, T\otimes U  \right) $ via $\bar\varphi (s\otimes u)=\varphi(s)\otimes u$ establishes an isomorphism $\Hom_{ G} \left(S, T\right) \simeq  \Hom_{ G} \left(S\otimes U, T\otimes U  \right)$. 
Therefore $\dim \Hom_{ Q} \left(W, V_{\lambda, \mu} \right)=\dim \Hom_{ Q} \left(W\otimes U, V_{\lambda, \mu}\otimes U \right)$. 

It follows that
\begin{equation}\label{eq:typeD:multWinVlambdamu}
\begin{array}{rcll|l}
\displaystyle \dim \Hom_{ Q} \left(W, V_{\lambda,\mu} \right)&=&\displaystyle \dim \Hom_{ Q} \left(W\otimes U, V_{\lambda,\mu} \otimes U \right)\\
&=&\displaystyle \dim \Hom_Q\left(\signRep{Q}, V_{\mu, \lambda} \right)\\
&=& \displaystyle \sum_{ \alpha +\beta=(p_1, \dots, p_{k})} K_{\lambda^*, ( \alpha,  p_{k +1})} K_{\mu^*, \beta}&& \text{use }\eqref{eq:typeB:MultSignParabolicWithTypeB}. \\
\end{array}
\end{equation}
\eqref{eq:typeD:signMultlambdaneqmuTypeDparabolic} now follows by adding \eqref{eq:typeD:multWinVlambdamu} and \eqref{eq:typeB:MultSignParabolicWithTypeB} and the fact that
\[
\dim \Hom_P(\signRep{P}, V_{\lambda,\mu})=\dim \Hom_Q(\signRep{Q}, V_{\lambda,\mu})+\dim \Hom_Q(W, V_{\lambda,\mu}).
\]

\noindent (\ref{prop:typeD:part2:lambdalambda}). 
Next, we prove \eqref{eq:typeD:signMultlambdalambdaPlusPlus}. 
First, let $P=P^+_{(p_1,\dots, p_k), \emptyset}$. 
We branch first $V_{\lambda,\lambda}$ over $S_n\hookrightarrow D_n$ (using \eqref{eq:typeD:BranchingSplitV_lambda,lambdaOverS_n}) and then branch the $S_n$ representations down to $P$ via the type $A$ sign multiplicity rules (using Proposition \ref{prop:typeA}).
We compute
\[
\begin{array}{@{}r@{}c@{}ll|l}
\displaystyle \dim \Hom_{P}\left(\signRep P, V_{\{\lambda,\lambda\}}^+\right)&=&\displaystyle \sum_{\nu} c_\nu^{\lambda, +} \dim \Hom_P \left( \signRep P,V_\nu \right)&&\text{use } \eqref{eq:typeD:BranchingSplitV_lambda,lambdaOverS_n}\\
&=&\displaystyle \sum_{\nu} c_\nu^{\lambda, +} K_{\nu^*, (p_1, \dots, p_k)} && \text{Proposition }\ref{prop:typeA}\\
&=&\displaystyle \sum_{\nu} c_{\nu^* }^{ \lambda^*, +} K_{\nu^*, (p_1, \dots, p_k)} &&\begin{array}{l} c_{\nu^*}^{\lambda^*, +}=c_{\nu}^{\lambda, +} \\ \text{by Lemma }\ref{le:typeD:clambdastarmustarequalsclambdamu}\end{array}\\
&=&
\displaystyle \sum_{\footnotesize \begin{array}{r@{~}c@{~}l} \alpha+\beta&=& (p_1,\dots, p_k)\\ \alpha&\geq& \beta \end{array} } K_{\lambda, \alpha} K_{\lambda,\beta}
 &&\text{Lemma }\ref{le:typeD:cpmnumbersViaKostkaNumbers},
\end{array}
\]
and this proves \eqref{eq:typeD:signMultlambdalambdaPlusPlus} for $P=P^+_{(p_1,\dots, p_k), \emptyset}$. For $P=P^-_{(p_1,\dots, p_k), \emptyset}$ the statement follows from Lemma \ref{le:typeD:SignMultConjugateParabolics}.

The proof of \eqref{eq:typeD:signMultlambdalambdaMinusPlus} is similar (the only difference being the use of the second part of Lemma \ref{le:typeD:cpmnumbersViaKostkaNumbers} rather than the first) and we omit the details. 

Finally, set $P=\bar P_{\left(p_1, \dots, p_{k-1}\right), \left(p_k\right)}$. Lemma \ref{le:typeD:SignMultConjugateParabolics} implies $\dim \Hom_P \left( \signRep P, V^+_{ \{\lambda,\lambda\}}\right)= \Hom_P\left(\signRep P, V^-_{\{\lambda,\lambda\}}\right)=\frac{1}{2} \Hom_P \left( \signRep P, V_{\{\lambda,\lambda\}} \right)$ and the statement follows from \eqref{eq:typeD:signMultlambdaneqmuTypeDparabolic}.
\end{proof}
\subsection{Recovering irreducible representations from sign signatures in type $D$.}
In this section, we produce both an algorithm for recovering an irreducible representation from its sign signature, and show a direct criterion for distinguishing between different irreducible representations. 


\begin{proposition}\label{prop:typeD:algorithmRecoverIrreps}
Let $V$ be an irreducible representation of $D_n$,  we recall that $\lambda^*$ and $\mu^*$ denote the dual partitions of $\lambda=[\lambda_1, \dots, \lambda_k]$ and $\mu=[\mu_1, \dots, \mu_k]$, and we add zeros if necessary to ensure both $\mu$ and $\lambda$ have the same number of entries. Then $V$ is computed using the following algorithm.

\noindent 1. Compute

\[\begin{array}{rcl}
\alpha_1 &=& \max \left\{p_1 | P^+_{(p_1,\dots),\emptyset} \in \signSignature (V)\right\}\\
\alpha_2 &=& \max \left\{p_2 | P^+_{(\alpha_1, p_2, \dots), \emptyset} \in \signSignature (V)\right\}\\
&\vdots&\\
\alpha_i &=& \max \left\{p_i | P^+_{(\alpha_1,\dots,\alpha_{i-1}, p_i, \dots), \emptyset} \in   \signSignature (V)\right\}\\
&\vdots& .
\end{array}
\]

Stop when $\sum\alpha_i = n$. Set $\alpha=\left( \alpha_1, \dots, \alpha_k \right)$.

\noindent 2. Compute

\[\begin{array}{rcl}
\beta_1 &=& \max \left\{p_1 | P^-_{(p_1,\dots),\emptyset} \in \signSignature (V)\right\}\\
\beta_2 &=& \max \left\{p_2 | P^-_{(\beta_1, p_2, \dots), \emptyset} \in \signSignature (V)\right\}\\
&\vdots&\\
\beta_i &=& \max \left\{p_i | P^-_{(\beta_1,\dots,\beta_{i-1}, p_i, \dots), \emptyset} \in   \signSignature (V)\right\}\\
&\vdots& .
\end{array}
\]

Stop when $\sum\beta_i = n$. Set $\beta=\left( \beta_1, \dots, \beta_k \right)$.

\noindent 3. Suppose $\alpha \neq \beta$. If $\alpha>\beta$ in the lexicographic order then $V$ equals $V^+_{\{\frac{\alpha^*}{2}, \frac{\alpha^*}{2}\} }$. Else $V$ equals $V^-_{ \{ \frac{ \beta^*}{ 2}, \frac{\beta^*}{2}\} }$. Stop.

\noindent 4. Else $\alpha = \beta$. Compute consecutively:
\[
\begin{array}{rcl}
d_1&=& \max \{p_1 |\bar P_{\left(\dots\right), \left(p_1\right)} \in  \signSignature(V) \}\\
d_2&=& \max \{p_2 |\bar P_{\left(\alpha_1,\dots\right), \left(p_2\right)} \in   \signSignature(V)\}\\
&\vdots&\\
d_i&=& \max \{p_i \bar| P_{\left(\alpha_1,\dots, \alpha_{i-1}, \dots\right), \left(p_i\right)} \in   \signSignature(V)\}
\end{array}
\]
Stop computing the $d_s$'s when some $d_s > \frac{\alpha_s}{2}$ for some index $s$; let $s$ denote the first index where that happens.

\noindent 5. Compute consecutively:
\[
\begin{array}{rcl}
f_{s+1} &=& \max \left\{p |\bar P_{\left(\alpha_1,\dots, \alpha_{s-1},p, \dots\right), \left(d_s\right)} \in   \signSignature(V)\right\}\\
d_{s+1} &=&f_{s+1} + d_s - \alpha_s\\
f_{s+2} &=& \max \left\{p |\bar P_{(\alpha_1,\dots, \alpha_{s-1}, f_{s+1},p, \dots), \left(d_s\right)} \in  \signSignature(V) \right\}\\
d_{s+2} &=& f_{s+2} + d_{s +1}- \alpha_{s+1}\\
&\vdots&\\
f_{i} &=& \max \left\{p |\bar P_{(\alpha_1,\dots, \alpha_{s-1}, f_{s+1}, \dots, f_{i-1},p,\dots), \left(d_s\right)} \in  \signSignature(V) \right\}\\
d_{i} &=& f_{i} + d_{i-1}- \alpha_{i-1}\\
&\vdots&
\end{array}
\]
Stop when all $d_i$'s are obtained and then set $e_i = \alpha_i - d_i$ for all $i$.

The representation $V$ equals $V_{\{\delta^*,\epsilon^*\}}$, where $\delta=\left(d_1,d_2, \dots d_k\right)$ and $\epsilon=\left(e_1,e_2, \dots e_k\right)$.

\end{proposition}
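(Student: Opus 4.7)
The plan is to prove this by analysing, step by step, how each maximization of the algorithm interacts with the sign-multiplicity formulas of Proposition \ref{prop:typeD}. I split the argument into two phases matching the case distinction of the algorithm.

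\emph{Phase A (steps 1--3).} For a non-split $V=V_{\{\lambda,\mu\}}$ with $\lambda\neq\mu$, Lemma \ref{le:typeD:SignMultConjugateParabolics} yields $\dim\Hom(\signRep{P^{+}_{\vec p,\emptyset}},V)=\dim\Hom(\signRep{P^{-}_{\vec p,\emptyset}},V)$, so $\alpha=\beta$; formula \eqref{eq:typeD:signMultlambdaneqmuTypeAparabolic} combined with the partial-sum-dominance argument from Proposition \ref{prop:typeB:differentIrrepsDifferentSignSignatures} identifies the common value as $\lambda^{*}+\mu^{*}$. For $V=V^{+}_{\{\lambda,\lambda\}}$ the diagonal summand $K_{\lambda^{*},\lambda^{*}}^{2}=1$ in \eqref{eq:typeD:signMultlambdalambdaPlusPlus} puts $P^{+}_{2\lambda^{*},\emptyset}$ in the sign signature, while \eqref{eq:typeD:signMultlambdalambdaMinusPlus} vanishes at $\vec p=2\lambda^{*}$ because any $\gamma+\delta=2\lambda^{*}$ with $K_{\lambda^{*},\gamma}K_{\lambda^{*},\delta}\neq0$ forces $\gamma=\delta=\lambda^{*}$ by entrywise and partial-sum dominance. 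Hence $\alpha=2\lambda^{*}>\beta$, and the case $V^{-}_{\{\lambda,\lambda\}}$ is symmetric via the same lemma.

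\emph{Phase B (steps 4--5).} In the remaining case $\alpha=\lambda^{*}+\mu^{*}$, formula \eqref{eq:typeD:signMultlambdaneqmuTypeDparabolic} differs from its type $B$ analogue by the constraint that the $D$-factor entry is assigned entirely to $\gamma$ (a content of $\lambda^{*}$) or to $\delta$ (a content of $\mu^{*}$). Under the fixed prefix $(\alpha_{1},\dots,\alpha_{i-1})$, the same dominance argument used in Phase A uniquely splits it as $\gamma_{j}=\lambda^{*}_{j},\delta_{j}=\mu^{*}_{j}$ for $j<i$. The Pieri rule (Corollary \ref{corPieriRuleSignRep}) then bounds the maximal $D$-factor size by the largest horizontal strip in the residual shape; a quick counting argument shows that the constraints $\lambda^{(r)}_{j}\geq\lambda^{*}_{j+1}$ together with the required total $|\lambda^{(r)}|=|\lambda^{*}|-\lambda^{*}_{i}$ force equality $\lambda^{(r)}_{j}=\lambda^{*}_{j+1}$, so $d_{i}=\max(\lambda^{*}_{i},\mu^{*}_{i})$. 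The stopping criterion $d_{s}>\alpha_{s}/2$ therefore triggers at the first index where $\lambda^{*}_{s}\neq\mu^{*}_{s}$; WLOG $d_{s}=\lambda^{*}_{s}$.

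For step~5, pinning $\gamma$'s last entry at $\lambda^{*}_{s}$ forces the $\gamma$-decomposition of $\lambda^{*}$ to proceed through rows $\lambda^{*}_{s+1},\lambda^{*}_{s+2},\dots$ before ending with the strip $\lambda^{*}_{s}$, while $\delta$ still decomposes $\mu^{*}$ row by row. Maximizing the $s$-th A-factor then yields $\gamma_{s}=\lambda^{*}_{s+1}$, $\delta_{s}=\mu^{*}_{s}$, hence $f_{s+1}=\lambda^{*}_{s+1}+\mu^{*}_{s}$, and the recurrence $d_{s+1}=f_{s+1}+d_{s}-\alpha_{s}=\lambda^{*}_{s+1}$ recovers $\lambda^{*}_{s+1}$. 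Iterating yields $d_{i}=\lambda^{*}_{i}$ for all $i>s$ and correspondingly $\epsilon_{i}=\alpha_{i}-d_{i}=\mu^{*}_{i}$, so $V_{\{\delta^{*},\epsilon^{*}\}}\cong V_{\{\lambda,\mu\}}$. The main obstacle is the combinatorial \emph{desynchronization} claim underlying step~5: that fixing the $D$-factor at $\lambda^{*}_{s}$ forces $\gamma$ into the specific row-shifted decomposition $(\lambda^{*}_{1},\dots,\lambda^{*}_{s-1},\lambda^{*}_{s+1},\dots,\lambda^{*}_{k},\lambda^{*}_{s})$ while $\delta$ remains in natural order. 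A cleaner alternative would be to reduce every $\bar P$-multiplicity to a sum of two type $B$ multiplicities (using the two subrepresentations $\signRep{Q}$ and $W$ appearing in the proof of \eqref{eq:typeD:signMultlambdaneqmuTypeDparabolic}) and thereby inherit the recovery argument of Corollary \ref{cor:typeB:recoveringIrrepAlgorithm} almost wholesale.
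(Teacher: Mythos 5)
Your argument is correct and follows essentially the same route as the paper's own proof: Phase A matches the paper's treatment of Steps 1--3 via the $\pm$ multiplicity formulas from Proposition \ref{prop:typeD} together with lexicographic/dominance triangularity, and Phase B reproduces the paper's content-reshuffling and forced-tableau-filling analysis of Steps 4--5 (with the roles of $\lambda^*$ and $\mu^*$ interchanged relative to the paper's convention). The ``desynchronization'' claim you flag as the main obstacle is precisely what the paper establishes with its forced-filling diagram, and the sketch you give of it is that same argument at essentially the paper's level of detail.
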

\begin{proof}
The multiplicities of the sign representation of $P^{\pm}_{\left(p_1, \dots, p_k \right), \emptyset} $ in $V_{\{\lambda,\mu \}}$, $\lambda\neq \mu$ do not depend on the sign $\pm$. 
Therefore if $\alpha\neq\beta$ then the representation is of the form $ V^\pm_{ \{\lambda, \lambda\}}$. 
If the sign is $+$, then \eqref{eq:typeD:signMultlambdalambdaPlusPlus} shows that the largest index $p_1$ for which $P^+_{\left(p_1, \dots, \right), \emptyset}$ is in the sign signature of $V$ is $2\lambda^*_1$ and \eqref{eq:typeD:signMultlambdalambdaMinusPlus} shows that the largest index $p_1$ for which $P^-_{\left(p_1, \dots, \right), \emptyset}$ is in the sign signature of $V$ is smaller than $p_1$. 
Repeating the same considerations consecutively for the indices $p_2, \dots$ justifies the ``$+$'' part of Step 3. 
The ``$-$'' part of Step 3 is justified in a similar fashion and we omit the details.

Now suppose that $\alpha=\beta$ and therefore the representation is of the form $V_{\{\lambda, \mu\}}$, $\mu\neq \lambda$.
The multiplicity formula \eqref{eq:typeD:signMultlambdaneqmuTypeAparabolic} shows that for each $i$ we have that $\alpha_i = \beta_i$ equals the sum $\lambda^*_i + \mu^*_i$.
Furthermore \eqref{eq:typeD:signMultlambdaneqmuTypeDparabolic} shows that the numbers $d_i$ computed in Step 4 measure $\max\{\mu^*_i, \lambda^*_i \}$ for $i\leq s$. 
The definition of $s$ in Step 4 now implies that $s$ is the smallest index for which $\mu^*_s\neq \lambda^*_s$. 
Suppose that $\mu^*>\lambda^*$ in the lexicographic order, i.e., $\mu^*_s=d_s$. 
At the start of Step 5, the information known about $\mu^*, \lambda^*$ is summarized in the following figure.

\begin{equation} \label{eq:typeD:pairOfShapesForAlgorithm}
\xygraph {
!{<0cm, 0cm>;<0cm, 0.5cm>:<0.6cm, 0cm>: :}
!{(-2.5, 4)}{\mu^*}
!{(2.5, 4)}{\lambda^*}
!{(-4, 3)} *=(1,1){}*\frm{-}
!{(-3, 3)} *=(1,1){}*\frm{-}
!{(-2, 3)} *=(1,1){}*\frm{-}
!{(-1, 3)} *=(1,1){}*\frm{-}
!{(4, 3)} *=(1,1){}*\frm{-}
!{(3, 3)} *=(1,1){}*\frm{-}
!{(2, 3)} *=(1,1){}*\frm{-}
!{(1, 3)} *=(1,1){}*\frm{-}
!{(-2.5,2.2 ) } *+{\vdots}
!{(2.5,2.2 ) } *+{\vdots}
!{(-4, 1)} *=(1,1){}*\frm{-}
!{(-3, 1)} *=(1,1){}*\frm{-}
!{(-2, 1)} *=(1,1){}*\frm{-}
!{(3,  1)} *=(1,1){}*\frm{-}
!{(2,  1)} *=(1,1){}*\frm{-}
!{(1,  1)} *=(1,1){}*\frm{-}
!{(-5, 2)} *=(0,3){}*\frm{\{}
[lll]{\frac{\alpha_i}{2}=\mu^*_i=\lambda^*_i }
!{(-4, 0)} *=(1,1){}*\frm{-}
!{(-5,0)}{}[ll]{\mu^*_s>\lambda^*_s}
!{(-3, 0)} *=(1,1){}*\frm{-}
!{(-2, 0)} *=(1,1){}*\frm{-}
!{(2,  0)} *=(1,1){}*\frm{-}
!{(1,  0)} *=(1,1){}*\frm{-}
!{(-2.5,-0.8 ) } *+{\vdots}
!{(2.5,-0.8 ) } *+{\vdots}
}
\end{equation}
Consider the number $f_{s+1} = \max \left\{p |\bar P_{\left(\alpha_1,\dots, \alpha_{s-1},p, \dots\right), \left(d_s\right)} \in   \signSignature(V)\right\}$. 
Let $P=P_{\left(\alpha_1,\dots, \alpha_{s-1},p, \dots\right), \left(d_s\right)}$ be a parabolic  at the time  $f_{s+1}$ is being computed.
In order for $P$ to be in the sign signature of $V$ we need to have a non-zero summand $K_{\lambda^*, \xi'} K_{\mu^*, \psi'}$ in \eqref{eq:typeD:signMultlambdaneqmuTypeDparabolic}. 
For such a summand, define $\xi$ as the tuple obtained from $\xi'$ by shifting all entries with indices larger than $s$ one unit to the right and then placing the last index $\xi'_k$ in the $s^{th}$ position. 
In other words, $\xi=\left(\xi_1, \dots\right)=\left(\xi'_1,\dots, \xi'_{s-1}, \xi'_k, \xi'_s, \dots \right)$. Define $\psi$ by shuffling the entries of $\psi'$ in a similar fashion. By Corollary \ref{cor:KostkaNumberInvarintUnderSn} $K_{\lambda^*, \xi'} K_{\mu^*, \psi'}=K_{\lambda^*, \xi} K_{\mu^*, \psi}$. The indexing conditions on the sum \eqref{eq:typeD:signMultlambdaneqmuTypeDparabolic} imply that either $\xi_s=d_s$ and $\psi_s=0$ or $\xi_s=0$ and $\psi_s=d_s$. 
The equality $\psi_s=d_s$ is impossible due to $d_s=\mu_s^*>\lambda^*_s$.  
This implies that $K_{\lambda^*, \xi} K_{\mu^*, \psi}$ is non-zero only if the pair of shapes \eqref{eq:typeD:pairOfShapesForAlgorithm} is filled to a tableau as illustrated below.
\begin{equation*}
\xygraph {
!{<0cm, 0cm>;<0cm, 0cm>:<0.6cm, 0cm>: :}
!{(-2.5, 4)}{\mu^*}
!{(2.5, 4)}{\lambda^*}
!{(-4, 3)} *=(1,1){\text{\tiny $1$}}*\frm{-}
!{(-3, 3)} *=(1,1){\text{\tiny $1$}}*\frm{-}
!{(-2, 3)} *=(1,1){\text{\tiny $1$}}*\frm{-}
!{(-1, 3)} *=(1,1){\text{\tiny $1$}}*\frm{-}
!{(4, 3)} *=(1,1){\text{\tiny $1$}}*\frm{-}
!{(3, 3)} *=(1,1){\text{\tiny $1$}}*\frm{-}
!{(2, 3)} *=(1,1){\text{\tiny $1$}}*\frm{-}
!{(1, 3)} *=(1,1){\text{\tiny $1$}}*\frm{-}
!{(-2.5,2.2 ) } *+{\vdots}
!{(2.5,2.2 ) } *+{\vdots}
!{(-4, 1)} *=(1,1){\text{\tiny $s-1$}}*\frm{-}
!{(-3, 1)} *=(1,1){\text{\tiny $s-1$}}*\frm{-}
!{(-2, 1)} *=(1,1){\text{\tiny $s-1$}}*\frm{-}
!{(3,  1)} *=(1,1){\text{\tiny $s-1$}}*\frm{-}
!{(2,  1)} *=(1,1){\text{\tiny $s-1$}}*\frm{-}
!{(1,  1)} *=(1,1){\text{\tiny $s-1$}}*\frm{-}
!{(-5, 2)} *=(0,3){}*\frm{\{}
[lll]{\frac{\alpha_i}{2}=\mu^*_i=\lambda^*_i }
!{(-4, 0)} *=(1,1){\text{\tiny $s$}}*\frm{-}
!{(-5,0)}{}[ll]{\mu^*_s>\lambda^*_s}
!{(-3, 0)} *=(1,1){\text{\tiny $s$}}*\frm{-}
!{(-2, 0)} *=(1,1){\text{\tiny $s$}}*\frm{-}
!{(2,  0)} *=(1,1){\text{\tiny$s+1$}}*\frm{-}
!{(1,  0)} *=(1,1){\text{\tiny$s+1$}}*\frm{-}
!{(-4, -1)} *=(1,1){\text{\tiny $s+1$}}*\frm{-}
!{(-3, -1)} *=(1,1){\text{\tiny $s+1$}}*\frm{-}
!{(-3.5,-1.8 ) } *+{\vdots}
!{(1.5,-0.8 ) } *+{\vdots}
}
\end{equation*}
$f_{s+1}$ equals the number of $s+1$'s in the tableau above and so $f_{s+1}=\mu^*_{s+1}+\lambda^*_s$. On the other hand we can compute $\lambda^*_s$ from $\mu^*_s+\lambda^*_s=\alpha_s$, and so we get that $d_{s+1}= \mu^*_{s+1} = f_{s+1} - (\alpha_s-\mu^*_s)=f_{s+1} - \alpha_s +d_s$. 

Once we have computed $d_{s+1} = \mu^*_{ s+1}$, we can carry out a similar procedure to compute $d_{s+2}$ and so on. This justifies Step 5 of the algorithm and completes the proof.
\end{proof}

The following proposition is the type $D$ analog of Proposition \ref{prop:typeB:differentIrrepsDifferentSignSignatures}.
\begin{proposition}\label{prop:typeD:differentIrrepsDifferentSignSignatures}
The irreducible representations of $D_n$ have different sign signatures. 
\end{proposition}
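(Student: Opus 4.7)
The plan is to derive this proposition directly from Proposition \ref{prop:typeD:algorithmRecoverIrreps}, which already provides an explicit algorithm whose input is the sign signature of an irreducible representation $V$ of $D_n$ and whose output is the isomorphism class of $V$. Every step of that algorithm consists of querying whether certain parabolic subgroups $P^{\pm}_{\cdot,\emptyset}$ or $\bar P_{\cdot,\cdot}$ belong to $\signSignature(V)$. Consequently, the output depends only on the sign signature, so two irreducibles sharing the same sign signature must yield the same output and hence are isomorphic.

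Concretely, given two irreducibles $V$ and $V'$ with $\signSignature(V)=\signSignature(V')$, I would apply Proposition \ref{prop:typeD:algorithmRecoverIrreps} to each of them in parallel. Since the sets over which the various $\max$'s are taken in Steps 1, 2, 4, and 5 are determined solely by $\signSignature(V) = \signSignature(V')$, the tuples $\alpha$, $\beta$, and (when applicable) $d_i$, $f_i$ computed for $V$ coincide with those computed for $V'$. Step 3 then either produces the same split representation $V^{\pm}_{\{\lambda,\lambda\}}$ for both, or directs both computations into Step 4; in the latter case Steps 4--5 produce the same pair of partitions $\{\delta^*,\epsilon^*\}$, so $V\simeq V'$ as claimed.

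The crux of this argument is of course the correctness of Proposition \ref{prop:typeD:algorithmRecoverIrreps}, which rests on the multiplicity formulas \eqref{eq:typeD:signMultlambdaneqmuTypeAparabolic}--\eqref{eq:typeD:signMultlambdalambdaTypeDparabolic}; the only potential difficulty specific to the present proposition is a sanity check that the algorithm never stalls, i.e.\ that the stopping conditions $\sum\alpha_i=n$, $\sum\beta_i=n$, and the existence of the index $s$ with $d_s>\alpha_s/2$ are always reached. These are immediate consequences of the multiplicity formulas: for any partition pair $\{\lambda,\mu\}$ one has $K_{\lambda^*,\lambda^*}K_{\mu^*,\mu^*}=1$, which forces the parabolic indexed by $\lambda^*+\mu^*$ into the sign signature.

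An alternative, more direct approach would mirror the proof of Proposition \ref{prop:typeB:differentIrrepsDifferentSignSignatures}: given two distinct irreducibles, exhibit explicitly one parabolic that separates them, splitting into cases according to whether the representations are split or non-split, and whether the underlying partition pairs differ already at the level of $\lambda^*+\mu^*$ or only at a later stage. The split case $V^+_{\{\lambda,\lambda\}}$ versus $V^-_{\{\lambda,\lambda\}}$ is separated by $P^+_{(2\lambda^*_1,\ldots),\emptyset}$, as follows from comparing \eqref{eq:typeD:signMultlambdalambdaPlusPlus} and \eqref{eq:typeD:signMultlambdalambdaMinusPlus}. I expect this direct route to be more laborious than the algorithmic one, which is why I would favor appealing to Proposition \ref{prop:typeD:algorithmRecoverIrreps}.
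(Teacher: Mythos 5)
Your proof is correct, but it takes a genuinely different (and shorter) route than the paper's. You derive the proposition as a formal corollary of Proposition~\ref{prop:typeD:algorithmRecoverIrreps}: since that algorithm is a well-defined function of $\signSignature(V)$ alone and returns the isomorphism class of its input, the map $V\mapsto\signSignature(V)$ admits a left inverse and is therefore injective. This is logically sound --- Proposition~\ref{prop:typeD:algorithmRecoverIrreps} is proved in the paper before, and independently of, the present proposition, so there is no circularity --- and your sanity check that the algorithm never stalls is exactly the point that needs verifying; it follows from $K_{\lambda^*,\lambda^*}K_{\mu^*,\mu^*}=1$ together with the fact (via Lemma~\ref{le:typeD:SignMultConjugateParabolics} and the formulas of Proposition~\ref{prop:typeD}) that $\alpha=\beta$ occurs precisely for the non-split representations, so that the index $s$ in Step~4 always exists when it is needed. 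The paper instead proves the statement directly, in the style of Proposition~\ref{prop:typeB:differentIrrepsDifferentSignSignatures}: for each pair of distinct irreducibles it exhibits an explicit separating parabolic, split into Cases 1--4 according to whether the representations are split or non-split and where the dual partitions first differ (for instance $P^+_{2\lambda^*,\emptyset}$ separates $V^+_{\{\lambda,\lambda\}}$ from $V^-_{\{\lambda,\lambda\}}$, exactly as in your last paragraph). The paper's own proof even opens by remarking that it ``can be regarded as an analysis of the algorithm,'' so both arguments ultimately rest on the same multiplicity formulas \eqref{eq:typeD:signMultlambdaneqmuTypeAparabolic}--\eqref{eq:typeD:signMultlambdalambdaTypeDparabolic}; your version buys brevity and avoids redoing the case analysis, while the paper's version buys an explicit witness parabolic distinguishing any two given irreducibles.
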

\begin{proof}
This proof can be regarded as an analysis of the algorithm given in Proposition \ref{prop:typeD:algorithmRecoverIrreps}.

Case 1. Consider the representations $V_{\{\lambda, \mu\}}$ and $V_{\{\nu, \sigma \}} $, $\lambda^*\neq \mu^*$, $\nu^*\neq \sigma^*$, $\{ \lambda^*, \mu^* \} \neq \{\nu^*,\sigma^* \}$. Assume $\mu^*>\lambda^*$, $\sigma^* > \nu^*$, where $>$ is the lexicographic order on partitions. Recall \eqref{eq:typeD:signMultlambdaneqmuTypeDparabolic}:
\[
\displaystyle \dim\Hom_P\left(\signRep{ \bar P_{(p_1,\dots, p_{k-1}), (p_k)}} , V_{\{\lambda, \mu\}} \right) = \displaystyle \sum \limits_{\substack{ \alpha + \beta = (p_1,\dots, p_{k}) \\ \alpha_k=0 \text{ or } \beta_k=0}} \!\!\!\!\!\!\!\!\!\! K_{ \lambda^*, \alpha}  K_{\mu^*, \beta}.
\]

Case 1.1. Suppose  $\lambda^*+ \mu^*\neq \nu^*+\sigma^*$. 
Since the right hand sides of formulas \eqref{eq:typeD:signMultlambdaneqmuTypeAparabolic} and \eqref{eq:typeB:MultSignParabolicNoTypeB} are the same, an argument similar to Case 1 in the proof of Proposition \ref{prop:typeB:differentIrrepsDifferentSignSignatures} proves the two representations have different sign signatures; we omit the details.

Case 1.2. Suppose $\lambda^*+\mu^*=\nu^*+\sigma^*$. 
Let $t$ be the smallest index for which $\lambda^*_t\neq \mu^*_t $ and lest $s$ be the smallest index for which $\nu^*_s\neq \sigma^*_s$.

Case 1.2.1. $s\neq t$, without loss suppose $t<s$. 
Therefore $\sigma^*_t=\nu^*_t=\frac{1}{2}\left(\mu^*_t+\lambda^*_t\right)$; together with $\mu^*_t>\lambda^*_t$ this implies that $\mu^*_t>\nu^*_t= \sigma^*_t$. 
Set $P=$ \\$\bar P_{\left(\mu^*_1 + \lambda^*_1,\dots, \mu^*_{t-1}+\lambda^*_{t-1}, \mu^*_{t+1}+\lambda^*_t,\dots, \mu_k^*+\lambda^*_{k-1}, \lambda^*_k \right), \left(\mu^*_t \right)}$.
We claim $P$ is in the sign signature of $V_{\{\lambda, \mu\}} $ but not in the sign signature of $V_{\{\nu,\sigma\}} $. Indeed, if we choose $\alpha=(\mu^*_2, \dots, \mu^*_k, \mu_1^*)$ and $\beta=\lambda$, then the summand $K_{\lambda^*,\alpha}K_{\mu^*, \beta}$ in \eqref{eq:typeD:signMultlambdaneqmuTypeDparabolic} equals one (see Corollary \ref{cor:KostkaNumberInvarintUnderSn}) and so $P$ is in the sign signature of $V_{\{\lambda, \mu\}} $. Since either $\alpha_k=\mu_t^*$ or $\beta_k=\mu^*_t$ and $\mu^*_t>\sigma_t^*=\nu^*_t$, it follows that either $K_{\nu^*, \alpha} $ or $K_{\sigma^*, \alpha} $ is zero; in particular the sum \eqref{eq:typeD:signMultlambdaneqmuTypeDparabolic} is zero and our claim follows.

Case 1.2.2. $s= t$. 

Case 1.2.2.1. Suppose $\mu^*_t>\sigma^*_t$. Since $\sigma^*>\nu^*$ and $\sigma^*_i=\nu^*_i $ for $i<t$ it follows that $\mu^*_t>\nu^*_t$. Now define $P$ as in Case 1.2.1; the arguments of Case 1.2.1 apply without change and it follows that $P$ is in the sign signature of $V_{\{\lambda,\mu \}}$ but not in the sign signature of $V_{\{\nu, \sigma \}}$.

Case 1.2.2.2. Suppose $\mu^*_t=\sigma^*_t$. 
Without loss assume that $\mu^*>\sigma^*$ and let $ r$ be the smallest index for which $\mu^*_r\neq \sigma^*_r$. It follows that $\mu^*_r>\sigma^*_r$. 
Set $P=\bar P_{\left(\mu^*_1 + \lambda^*_1,\dots, \mu^*_{t-1} + \lambda^*_{ t-1}, \mu^*_{t+1}+\lambda^*_t,\dots, \mu_k^* + \lambda^*_{ k-1}, \lambda^*_k \right), \left(\mu^*_t \right)}$. 
Just as in Case 1.2.1, we get that $P$ is in the sign signature of $V_{\{\lambda, \mu\}} $; we claim it does not lie in the sign signature of $V_{\{\nu,\sigma\}} $.
Suppose on the contrary a summand $K_{\sigma^*, \alpha}K_{\nu^*, \beta}$ in \eqref{eq:typeD:signMultlambdaneqmuTypeDparabolic} is non-zero. 
Reorder the entries of $\alpha$ by placing the last entry, $\alpha_k$, in the $t^{th}$ position and shifting the entries $\alpha_{t},\dots$ one unit to the right; call the new weight $\alpha'$. 
In other words, if $\alpha=\left(\alpha_1,\dots,\alpha_{t-1}, \alpha_t, \dots, \alpha_{k-1}, \alpha_k\right)$ then set $\alpha'=\left(\alpha_1,\dots, \alpha_{t-1}, \alpha_k, \alpha_t, \dots, \alpha_{k-1}\right) $. 
Define $\beta'$ by shuffling the entries of $\beta$ in a similar fashion. Corollary \ref{cor:KostkaNumberInvarintUnderSn} implies that $K_{\sigma^*, \alpha}K_{\nu^*, \beta}=K_{\sigma^*, \alpha'}K_{\nu^*, \beta'}$, where the conditions on $\alpha$ and $\beta$ correspond to the requirement that $\{\alpha'_t,\beta'_t\}=\{\mu^*_t, 0\}$.

Since the first $t-1$ entries of $\alpha'$ and $\beta'$ sum to the maximal possible value, it follows that $ \alpha'_1=\sigma^*_1, \dots, \alpha'_{t-1}=\sigma^*_{t-1}$ and  $ \beta'_1=\nu^*_1, \dots, \beta'_{t-1}=\nu^*_{t-1}$. 
It is not possible that $\beta'_t=\mu^*_t$ as that would mean $\beta'_t=\mu^*_t>\nu^*_{t}$ which would make $K_{\nu^*, \beta'}$ equal to zero. 
It follows that $\alpha'_t=\mu^*_t$ and $\beta'_t=0$.
It follows that $\alpha'_{t+1}\leq \sigma^*_{t+1},\beta'_{t+1}\leq \nu^*_{t} $ and $\alpha'_{t+1}+\beta'_{t+1}=\mu^*_{t+1}+\lambda^*_t$. 
So long as $t+1<r$ we have that $\mu^*_{t+1}=\sigma^*_{t+1}$ and $\lambda^*_t=\nu^*_t$. 
It follows that $\alpha'_{t+1}=\sigma^*_{t+1} $ and $\beta'_{t}=\nu^*_t$. 
Now the same argumentation applies consecutively to show that, so long as $t<s<r-1$, we have that $\alpha'_{s+1}=\sigma^*_{s+1}$ and $\beta'_{s}=\nu^*_s$. 
Finally we get that $\alpha'_{r}\leq \sigma^*_{r}<\mu^*_r$ and $\beta'_{r-1}\leq \nu^*_{r-1}=\lambda^*_{r-1}$. 
At the same time $\alpha'_{r}+\beta'_{r-1} $ must equal $\mu^*_r+\lambda^*_{r-1}$; contradiction. 
This proves our claim and completes the investigation of Case 1.2.2.2.

Case 2. Consider the representations $V_{\{\lambda, \mu\}}$ and $V^\pm_{\{\sigma,\sigma  \}} $, $\lambda^*\neq \mu^*$. 

Case 2.1 Suppose $\lambda^+\mu^*\neq 2\sigma^*$. This case is now similar to Case 1.1 and we omit the details.

Case 2.2. Suppose $\lambda^*+\mu^*= 2\sigma^*$. Now let $t$ be the smallest index for which $\lambda^*_t\neq \mu^*_t $. The rest of this case is treated similarly to Case 1.2.1 and we omit the details.

Case 3. Consider the representations $V^\pm_{\{\lambda, \lambda\}}$ and $V^\pm_{\{\sigma,\sigma  \}} $, $\lambda^*\neq \sigma^*$. This implies that $2\lambda^*\neq 2\sigma^*$ and so a consideration similar to Case 1.1 shows that the two representations have a different sign signature. 

Case 4. Consider the representations $V^+_{\{\lambda, \lambda\}}$ and $V^-_{\{\lambda,\lambda\}} $. 
We claim that $P^+_{2\lambda^*, \emptyset}$ is in the sign signature of $V^+_{\{\lambda, \lambda\}}$ but not of $V^-_{\{\lambda, \lambda\}}$. 
Indeed, for this choice of parabolic, \eqref{eq:typeD:signMultlambdalambdaPlusPlus} equals one as only the summand $K_{\lambda^*, \lambda^*}^2$ (obtained for $\alpha=\beta=\lambda^*$) is non-zero. 
On the other hand all summands in \eqref{eq:typeD:signMultlambdalambdaMinusPlus} are zero and this proves our claim.
\end{proof}

\section{Special irreducible Representations and their $\tau$-invariants}
In this section we establish a one to one correspondence between special irreducible representations of $W$ and the list of $\tau$-invariants used by the Atlas software to identify irreducible Harish-Chandra modules.

Let $G_{\mathbb R}$ be a real form of $G$ defined over $\mathbb R$.  Denote by $\hat G_\lambda$ its classes of irreducible admissible representations with infinitesimal regular integral character $\lambda$. Then there is a standard procedure to equip $\hat G_\lambda$ with a set of $\mathbf W$-graphs (Definition \ref{defn:wgraph}) such that each connected component of each graph determines a representation of $W$ associated to a special nilpotent orbit (in the sense of Lusztig, see \cite {Carter:FiniteGroupsLieType}). Each of these $\mathbf W$-graphs is called a \emph{block} and its connected components are called \emph{cells}. Each vertex of a cell corresponds to an irreducible Harish-Chandra module. 
Among all blocks given by this construction, the largest by size (``the big block'') is guaranteed to have each special nilpotent orbit of ${\mathfrak g}$ attached to at least one of its cells.

We recall from Definition \ref{defn:wgraph} that the $\tau$-invariant of a vertex $v$ is the set $\tau(v)$.

\begin{definition} The $\tau$-signature of a cell (or of its corresponding special nilpotent orbit)  is the collection of the $\tau$-invariants of its vertices.
\end{definition}

\begin{theorem}

Let $V$ be a representation of $W$  from a cell $\mathfrak C$ of Harish-Chandra modules corresponding to a special nilpotent orbit $\mathcal O$ and a special $W$-representation $\sigma(\mathcal O)$. Then the $\tau$-signature of $\mathfrak C$, or equivalently that of $V$, is determined by $\sigma(\mathcal O)$.
\end{theorem}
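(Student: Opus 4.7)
The plan is to prove the stronger assertion flagged in the introduction, namely that the $\tau$-signature of $\mathfrak{C}$ coincides with the $\tau$-signature of its unique special irreducible subrepresentation $\sigma(\mathcal{O})$; since the $\tau$-signature of $\sigma(\mathcal{O})$ is an invariant of the $W$-module $\sigma(\mathcal{O})$ by Corollary~\ref{cor:inv}, the theorem follows at once. By the trace criterion proved between Corollaries~\ref{cor:sign} and~\ref{cor:inv}, a subset $A\subseteq\Delta$ belongs to the $\tau$-signature of a $W$-module $U$ precisely when $\tr_U\bigl(Q(\Delta-A)R(A)\bigr)\neq 0$. Because this trace is additive over direct sums, the problem reduces to the following: writing $\mathfrak{C}\cong\sigma(\mathcal{O})\oplus R$ with $R$ a sum of non-special irreducibles, show that every $\tau$-invariant that occurs in $R$ already occurs in $\sigma(\mathcal{O})$.

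To set this up, I would invoke Lusztig's theory of cells to identify the constituents of $\mathfrak{C}$ with the members of the Lusztig family $\mathcal{F}$ containing $\sigma(\mathcal{O})$, where $\sigma(\mathcal{O})$ itself appears with multiplicity one and is distinguished within $\mathcal{F}$ by the Springer correspondence (as the member attached to the trivial local system on $\mathcal{O}$, or equivalently, by having maximal value of the Lusztig $a$-function). The content of the theorem then becomes a statement purely about families: for every $V'\in\mathcal{F}$, the $\tau$-signature of $V'$ is contained in the $\tau$-signature of $\sigma(\mathcal{O})$.

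In the classical types, I would verify this containment by direct combinatorial inspection of the Kostka-number formulas in Propositions~\ref{prop:typeA}, \ref{prop:typeB}, \ref{prop:typeD}. The parametrization of $\mathcal{F}$ by symbols (partitions in type $A$, partition-pairs in types $B$, $C$, $D$) places $\sigma(\mathcal{O})$ at an extremal vertex with respect to the dominance order, and the non-vanishing of the Kostka numbers controlling each sign signature propagates from an arbitrary $V'\in\mathcal{F}$ up to $\sigma(\mathcal{O})$ by virtue of this extremality. Concretely, one would enumerate the sign signature of any $V'$ via the algorithms of Theorems~\ref{thm:TypeARecoverIrrepsAlgorithm}, \ref{thm:TypeBRecoverIrrepsAlgorithm}, \ref{thm:TypeDRecoverIrrepsAlgorithm} and check term-by-term that each parabolic produced also appears in the sign signature of $\sigma(\mathcal{O})$.

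The hard part is precisely this special/non-special comparison. A uniform argument covering all $W$ must import Lusztig's classification of families together with the characterization of the special member, and then translate ``trivial local system'' into the combinatorial maximality that drives the sign-signature containment. In the exceptional types the sign signature alone may fail to separate irreducibles inside a family, so the clean statement really requires the extended sign signatures of Theorem~\ref{thm:classicalAndExceptional}; once that refinement is in place, the same additivity-of-trace and family-extremality strategy yields the $\tau$-signature determination uniformly.
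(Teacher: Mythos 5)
Your reduction of the theorem to the claim that the $\tau$-signature of $\mathfrak C$ equals that of its special constituent is a reasonable opening move (the trace $\tr_U(Q(\Delta-A)R(A))$ is additive and non-negative on direct summands, so the $\tau$-signature of $\mathfrak C$ is the union of those of its constituents), but the proposal never actually establishes the one claim that carries all the weight: that every $\tau$-invariant occurring in a non-special constituent of the cell already occurs in $\sigma(\mathcal O)$. You describe this as something ``one would verify by direct combinatorial inspection'' of the Kostka-number formulas and explicitly call it ``the hard part,'' but no argument is supplied, and the proposed mechanism is shaky on its own terms: within a Lusztig family all members share the same value of the $a$-function, so ``maximal $a$-value'' does not single out the special member (the special member is the one with $b_E=a_E$), and the claimed propagation of Kostka non-vanishing from an arbitrary family member to the special one is not a consequence of anything proved in the paper. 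In the exceptional types the appeal to extended sign signatures cannot close the gap either: extended sign signatures involve reflection subgroups generated by additively closed root subsystems that are not parabolic, and these are invisible to the $\tau$-function of a weak $W$-graph, so they carry no information about the $\tau$-signature of a cell.

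For comparison, the paper's proof avoids the special/non-special containment entirely. In the classical types it combines Theorem~\ref{thm:Classical} (irreducible, hence in particular special, representations are determined by their sign signatures) with Corollaries~\ref{cor:sign} and~\ref{cor:inv}; in the exceptional types it is a direct machine computation with the Atlas software, checking that the number of distinct $\tau$-signatures among the cells of the big block equals the number of special nilpotent orbits and concluding via the Springer correspondence. So even granting your reduction, the missing containment lemma would have to be imported from outside the paper (Lusztig's theory of families plus a genuinely new combinatorial argument), and as written the proposal is a plan rather than a proof.
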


\begin{proof}
In the classical cases, Theorem \ref{thm:Classical} gives a correspondence between $\tau$-invariants and irreducible (and in particular, special) representations of $W$ and our statement follows from Corollaries \ref{cor:inv} and \ref{cor:sign}. 

In the exceptional cases we use the Atlas package. We compute the $\tau$-signature of each cell of the big block. In each case the number of distinct $\tau$-signatures is equal to the number of special nilpotent orbits in ${\mathfrak g}$; that is 46, 35, 17, 11, 3 for $E_8$, $E_7$, $E_6$, $F_4$, and $G_2$ respectively. The proof now follows from the Springer correspondence.
\end{proof}

\begin{example} The Atlas command \emph{wcell} produces the following cell decomposition for the big block of representations of the split real form of $G_2$.

\bigskip
\begin{verbatim}
------------------------------------------
// Cells and their vertices.
#0={0}
#1={1,3,6,7,11}
#2={2,4,5,8,10}
#3={9}

// Induced graph on cells.
#0:.
#1:->#0.
#2:->#0.
#3:->#1,#2.

// Individual cells.
// cell #0:
0[0]: {}

// cell #1:
0[1]: {2} --> 1
1[3]: {1} --> 0,2
2[6]: {2} --> 1,3
3[7]: {1} --> 2,4
4[11]: {2} --> 3

// cell #2:
0[2]: {1} --> 1
1[4]: {2} --> 0,2
2[5]: {1} --> 1,3
3[8]: {2} --> 2,4
4[10]: {1} --> 3

// cell #3:
0[9]: {1,2}
--------------------------------------
\end{verbatim}

\bigskip

In this case we find four  cells 0, 1, 2, and 3 with $\tau$-signature respectively $\{ \}$, $\{\{1\},\{2\}\}$, $\{\{1\},\{2\}\}$,  and $\{\{1,2\}\}$. Hence the number of distinct $\tau$-signatures is the same of that of special complex  nilpotent orbits. Using Carter's label we can identify the corresponding three special nilpotent orbits $G_2$, $G_2(a_1)$ and $0$. These labels are taken from the book of Collingwood and McGovern \cite{CollingWoodMacgovern:NilpotentOrbitsSSLieAlg}. \end{example}

\section{Sign signatures and a generalization of Theorem \ref{thm:Classical}} \label{sec:Exceptional}
This section defines \emph{extended sign signatures of representations} and shows that they distinguish the irreducible representations of all simple Weyl groups.

As usual, we denote a simple complex Lie algebra by $\mathfrak g$. Fix a Cartan subalgebra $\mathfrak h\subset \mathfrak g$ and a corresponding root system $\Delta$. Finally, select a set of positive roots in $\Delta$. We recall that the group generated by the reflections with respect to the roots in $\Delta$ is the Weyl group of $\mathfrak g$ and that all Weyl groups arise in this fashion. For the rest of this section, we assume given data as above such that the root system $\Delta$ generates the Weyl group $W$. We also assume that the Lie algebra  $\mathfrak g$ is simple, hence so is $W$.

Given a root $\alpha$, we denote the reflection with respect to $\alpha$ by $s_\alpha$. 
Recall that a simple basis of the root system determines the (vertices of) the Dynkin diagram of ${\mathfrak g}$. 
For a simple Weyl group $W$, adjoining the lowest root of ${\mathfrak g}$ to the Dynkin diagram of ${\mathfrak g}$ gives the extended Dynkin diagram of ${\mathfrak g}$. 
We will need the following standard definitions.
\begin{definition}
\label{def:pseudoSignSignature}  
Let $W$ be the Weyl group of a simple ${\mathfrak g}$ and $ H\subset W$ be a subgroup.
\begin{enumerate}
\item We say that $H$ is a \emph{root reflection subgroup} of $ W$ if it is generated by a set of reflections $s_{\alpha_1}, \dots, s_{\alpha_l}$ with respect to roots $\alpha_1,\dots, \alpha_l $ of ${\mathfrak g}$.
\item We say that $H $ is a \emph{parabolic subgroup} if is it is a root reflection subgroup for which we can choose the roots $\alpha_1, \dots, \alpha_l$ to be a subset of some simple basis of the root system.
\item We say that $H$ is a \emph{pseudo-parabolic subgroup} if it is a root reflection subgroup for which we can choose the roots $\alpha_1, \dots, \alpha_l$ to be a subset of the vertices of an extended Dynkin diagram of ${\mathfrak g}$. 
\end{enumerate}
\end{definition}


We also need the notion of an additively closed root subsystem\footnote{depending on the author, the shorter term ``root subsystem'' is used to denote a set satisfying either just Definition \ref{def:additivelyClosedRootSubsystem}.(1) or both Definition \ref{def:additivelyClosedRootSubsystem}.(1) and (2). 
To avoid confusion we use the verbose (but certainly unambiguous) terminology ``additively closed root subsystem''.}. 
\begin{definition}\label{def:additivelyClosedRootSubsystem}
Let $\Phi$ be a root system and let $\Psi\subset \Phi$. We say that $\Psi$ is an additively closed root subsystem of $\Phi$ if:
\begin{enumerate}
\item $\Psi$ is a root system (with respect to the scalar product defining $\Phi$) and
\item $\Psi$ is additively closed, that is, if $\alpha, \beta\in \Psi$ and $\alpha+\beta\in \Phi$ then it follows that  $\alpha+\beta\in \Psi$.
\end{enumerate}
\end{definition}
\begin{remark}
In types $A, D, E$  every subset of a root system which satisfies (1) automatically satisfies (2). This fails in types $B$, $C$, $F_4$ and $G_2$: for example, in type $B_2$, the set $\Psi=\{\pm\varepsilon_1, \pm \varepsilon_2\}$ satisfies (1) but not (2).
\end{remark}
The additively closed root subsystems were classified, among other places, in \cite{Dynkin:Semisimple}.

For an additively closed root subsytem $\Delta$, let $H(\Delta)$ denote the subgroup of $W$ generated by the reflections with respect to the roots in $\Delta$. 
Define the sign representation $\signRep{H(\Delta)}$ as the one-dimensional representation of $H(\Delta)$ on which each reflection $s_\alpha$, $\alpha\in \Delta$ acts by $-1$. 
\begin{definition}
Let $ W$ be a Weyl group and $V$ a representation of $ W$.
\begin{enumerate}
\item We define the \emph{extended sign signature} of $V$ to be the set of subgroups $H$ generated by reflections with respect to additively closed root subsystems such that the restriction of $V$ down to $H$ contains a copy of the sign representation of $H$.
\item We define the \emph{pseudo sign signature} of $V$ to be the  restriction of the extended sign signature of $V$ to the pseudo-parabolic subgroups. 
\end{enumerate}
\end{definition}
The sign signature of $V$ is the restriction of the pseudo sign signature of $V$ to the parabolic subgroups.

Let $\omega$ be a root system automorphism of $W$, i.e., a linear map that preserves the roots and their scalar products. The action of $W$ on the root system gives a tool to compute most of these automorphisms; we call such automorphisms inner. To get all root system automorphisms, one extends the group of inner automorphisms by the group of Dynkin diagram automorphisms. The Dynkin diagram automorphisms has two elements for $A_n$,$n\geq 2$, $D_n$, $n \geq 5$ and $E_6$, and is the permutation group of three elements for $D_4$. All other simple types have only trivial Dynkin diagram automorphisms. We call root system automorphisms that are not inner outer.

The map $\omega$ induces a natural group automorphism $\Omega:W\to W$. The representations $V$ and $V_\Omega $ may fail to be isomorphic if $\Omega$ permutes the conjugacy classes of $W$, and indeed that happens in the case of the type $D_n$ ($n$-even) representations $V_{\{\lambda, \lambda\}}^\pm$ with respect to the outer automorphism of $D_n$, however this is the only possible example. Given a representation $V$ of $W$, define $V_{\Omega}$ as the same vector space but equipped with action $\cdot_{\Omega}$ given by $g\cdot_{\Omega} (x)=\Omega(g) \cdot v$.

As a side note we point out that not all group automorphisms of $W$ 
arise from root system automorphisms, for example automorphisms of $W$ arising from swapping long with short roots in $B_2$ and $F_4$ and automorphisms of the permutation group $S_6$ (type $A_5$) that swap permutations of type $(\bullet\bullet)$ with permutations of type $(\bullet\bullet)(\bullet\bullet)(\bullet\bullet)$.
\begin{lemma}\label{le:ConjugacyClassesAndOuterAutos}~
Let $\omega$ be a root system automorphism as defined in the preceding remarks.
\begin{enumerate}
\item $\Omega$ fails to preserve the conjugacy classes of $W$ if and only if $\omega$ is an outer root automorphism of order $2$, $W=D_{n}$ and $n$ is even.
\item If $W\neq D_n$, $n$-even and $\Delta$ is an additively closed root subsystem then $\dim\Hom_{H(\Delta)} \left(\signRep{H(\Delta)}, V\right)=
\dim\Hom_{H(\omega(\Delta))} \left(\signRep{H(\omega(\Delta))}, V\right)$.
\end{enumerate}
\end{lemma}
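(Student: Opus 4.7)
The plan for part (1) is a case-by-case analysis using the classification of Weyl groups and their outer automorphism groups. Inner root system automorphisms act on $W$ as inner automorphisms of $W$, so they preserve conjugacy classes tautologically; it therefore suffices to examine the outer diagram automorphisms. These occur only for $W$ of type $A_n$ ($n\geq 2$), $D_n$ ($n\geq 4$), and $E_6$. For $A_n$, conjugacy classes in $S_{n+1}$ are parametrized by cycle type, and the non-trivial diagram involution (realized as conjugation by the longest element $w_0$) preserves the cycle type. For $E_6$, the claim can be verified by direct inspection of the character table, or from the fact that the non-trivial diagram involution coincides with $-w_0$ acting on the Weyl group. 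For $D_n$ with $n$ odd, the diagram involution again preserves all conjugacy classes. The only case where conjugacy classes are genuinely permuted is $D_n$ with $n$ even: here, each \emph{very even} partition of $n$ (a partition with all parts even) indexes two $W$-conjugacy classes that the outer involution interchanges, and these are the only classes that move.

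For part (2), the key observation is that the linear map $\omega$ carries the reflection $s_\alpha$ to $s_{\omega(\alpha)}$; hence $\Omega$ restricts to a group isomorphism $H(\Delta)\to H(\omega(\Delta))$ that sends the generating reflections of $H(\Delta)$ to the generating reflections of $H(\omega(\Delta))$. This identification takes the sign representation of $H(\Delta)$ to the sign representation of $H(\omega(\Delta))$, and transport of structure gives
\[
\dim\Hom_{H(\omega(\Delta))}\!\left(\signRep{H(\omega(\Delta))},V\right)
\;=\;
\dim\Hom_{H(\Delta)}\!\left(\signRep{H(\Delta)},V_\Omega\right),
\]
where $V_\Omega$ is $V$ with its $W$-action twisted by $\Omega$. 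Under the hypothesis that $W\neq D_n$ with $n$ even, part (1) guarantees that $\Omega$ fixes every conjugacy class of $W$; consequently, $V$ and $V_\Omega$ have the same character, hence are isomorphic as $W$-modules, and the equality follows.

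I expect the main obstacle to be the case-by-case verification in part (1), particularly for $E_6$ (where some inspection of the character table, or an appeal to the $-w_0$ description, is needed) and for $D_4$, where the triality group $S_3$ contains both order-$2$ and order-$3$ outer elements; the hypothesis as stated implicitly isolates the order-$2$ outer automorphisms, so one must verify that the full triality behavior is compatible with the stated ``if and only if''. Once part (1) is established, part (2) is essentially formal.
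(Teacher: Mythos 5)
Your overall architecture matches the paper's: part (2) is deduced from part (1) by exactly the transport-of-structure argument you give ($\Omega$ restricts to an isomorphism $H(\Delta)\to H(\omega(\Delta))$ carrying sign to sign, and $V_\Omega\simeq V$ because twisting by a class-preserving automorphism does not change the character). Where you diverge is in part (1). The paper does \emph{not} attempt the full case-by-case analysis: it explicitly proves the lemma only for $W=G_2,F_4,E_6,E_7,E_8$ (the only cases in which it is later applied), disposing of $G_2,F_4,E_7,E_8$ because their root systems have no outer automorphisms, and settling $E_6$ by a machine enumeration of all $25$ conjugacy classes (a remark attributes to Vogan the alternative of inspecting the character table). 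Your observation that for $A_n$, $E_6$ and $D_n$ with $n$ odd the diagram involution is $-w_0$, so that the induced map on $W$ is conjugation by $w_0$ and hence inner, is a cleaner, computation-free replacement for that step; and your description of the $D_n$-even case (the two classes over each very even partition are interchanged, all other classes are fixed) is correct.

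The genuine gap is the one you flag yourself and do not close: $D_4$ triality. In fact it cannot be closed, because the ``only if'' direction of part (1) fails there. The $18$ products of two orthogonal reflections in $W(D_4)$ form three conjugacy classes of size $6$ (the class of $\diag(-1,-1,1,1)$ together with the two halves of the split class of very even cycle type $(2,2)$); inside $W(F_4)$ these $18$ elements form a single class (the class labelled $2A_1$ in the paper's own $F_4$ table), so the quotient $W(F_4)/W(D_4)\cong S_3$, which realizes the full outer automorphism group of the $D_4$ root system, permutes the three $W(D_4)$-classes transitively, and an order-$3$ triality automorphism therefore moves them. Thus an outer automorphism of order $3$ also fails to preserve conjugacy classes, contradicting the stated equivalence. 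The paper sidesteps this by restricting both the proof and the use of the lemma to the exceptional types; a proof of part (1) valid for all $W$ would have to exclude $D_4$ or weaken the ``only if'' clause to ``$\omega$ is outer, $W=D_n$ and $n$ is even.'' Part (2) is unaffected, since its hypothesis already excludes $D_n$ with $n$ even.
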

We will apply this lemma only for $W=G_2, F_4, E_6, E_7, E_8$, and so we give a proof only in these cases. 

\begin{proof}[Proof for $W=G_2, F_4, E_6, E_7, E_8$]
First, suppose we have already proved (1).

$\Omega$ is an automorphism and therefore maps the conjugacy classes of $W$ onto one another.
Since $W$ is exceptional, by (1) we have that $\Omega$ maps each conjugacy class of $W$ onto itself.
This means that each element $g\in W$ has the same eigenvalues on $V_{\Omega}$. 
It follows that $V_{\Omega}\simeq V$ which implies (2). 

We are now proving (1). 
If $\omega$ is an inner root system automorphism then $\Omega$ is given by conjugation and the statement is immediate. 
This implies the statement for $G_2, F_4, E_7, E_8$ as their root systems have no outer root automorphisms. 

Up to conjugation, the root system of $E_6$ has only one outer automorphism $\omega$. 
$E_6$ has 51840 elements, split into 25 conjugacy classes. 
We represent each element as a word of minimal length in the simple generators.
We compute each conjugacy class and select a representative element from each class.
The action of $\omega$ on every representative is then computed by relabeling the simple generators.
The final result is that $\omega$ preserves each conjugacy class of $E_6$.
The computation is carried out using our software \cite{Folz-DonahueMilev:vpf}.
\end{proof}

\begin{remark}
In a subsequent remark by D. Vogan, it was pointed out to us that an alternative proof in the $E_6$ can be obtained by inspecting the table of irreducible characters of $E_6$ and the fact that root systems preserve dimension and $b_{\chi}$, see \cite[page 413]{GeckPfeiffer:CharactersCoxeterGroups}
\end{remark}

The following is a generalization of Theorem \ref{thm:Classical}.
\begin{theorem}
\label{thm:classicalAndExceptional} 
Let ${\mathfrak g}$ be a simple complex Lie algebra with Weyl group $ W$.
\begin{enumerate}
\item (Theorem \ref{thm:Classical})If ${\mathfrak g}$ is classical, there is a one-to one correspondence between the irreducible representations of $ W$ and their sign signatures.
\item If ${\mathfrak g} $ is of type $F_4$ or $E_6$ (or classical), there is a one-to-one correspondence between the irreducible representations of $ W$ and their pseudo sign signatures.
\item If ${\mathfrak g}$ is arbitrary simple, there is a one-to one correspondence between the irreducible representations of $ W$ and their extended sign signatures.
\end{enumerate}
\end{theorem}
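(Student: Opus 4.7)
The plan is to proceed by cases on the Cartan type of $W$. Part (1) is just Theorem \ref{thm:Classical}, already established in Sections \ref{sec:typeA}--\ref{sec:typeD}, and it trivially implies the corresponding pseudo and extended versions for classical $W$, since parabolic subgroups are pseudo-parabolic and pseudo-parabolic subgroups are generated by additively closed root subsystems. For the exceptional types $G_2, F_4, E_6, E_7, E_8$ the proof will be a finite case-by-case verification, relying on the formula
\[
\dim\Hom_{H(\Delta)}\bigl(\signRep{H(\Delta)}, V\bigr)
=\frac{1}{|H(\Delta)|}\sum_{g\in H(\Delta)}\signFunction(g)\,\chi_V(g),
\]
where $\chi_V$ is the character of $V$ and $H(\Delta)$ is the reflection subgroup attached to an additively closed root subsystem $\Delta$.

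The first step is to reduce the bookkeeping via Lemma \ref{le:ConjugacyClassesAndOuterAutos}.(2): for each exceptional type the above multiplicity depends only on the orbit of $\Delta$ under root-system automorphisms, so it suffices to fix a set of representatives of such orbits among (i) pseudo-parabolic subgroups for part (2) and (ii) reflection subgroups of additively closed root subsystems for part (3). Dynkin's classification \cite{Dynkin:Semisimple} provides the finite list of additively closed root subsystems, and Borel--de Siebenthal provides the pseudo-parabolic ones as subdiagrams of the extended Dynkin diagram. For each representative $\Delta$ the sign multiplicity on each irreducible is a finite character-theoretic calculation, using standard character tables of exceptional Weyl groups (e.g.\ Carter or \texttt{GAP}/\texttt{CHEVIE}).

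For part (3), I would then assemble, for each exceptional $W$, the matrix whose rows are indexed by orbit representatives of reflection subgroups of additively closed root subsystems and whose columns are indexed by irreducibles of $W$, the entry being $\dim\Hom_{H(\Delta)}(\signRep{H(\Delta)},V)$. It suffices to verify that no two columns coincide. For part (2), the analogous matrix is restricted to pseudo-parabolic rows only; one checks that the columns remain pairwise distinct for $F_4$ and $E_6$. The known failure in $G_2$ noted in the introduction---two irreducibles sharing a sign signature---shows that pseudo-parabolics do not suffice in general, which is why part (3) must invoke the full extended sign signature (the $B_2$ subsystem in $G_2$ resolves the $G_2$ ambiguity and is additively closed but not pseudo-parabolic).

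The main obstacle is the sheer combinatorial size of the verification for $E_7$ and $E_8$, where there are many additively closed root subsystems and dozens of irreducibles; this is best handled by machine computation, for instance via the software \cite{Folz-DonahueMilev:vpf} cited in the proof of Lemma \ref{le:ConjugacyClassesAndOuterAutos}. A secondary difficulty is the bookkeeping needed for Lemma \ref{le:ConjugacyClassesAndOuterAutos} in type $E_6$, where one must confirm (as sketched in the proof of that lemma) that the non-trivial diagram automorphism preserves each conjugacy class of $W$, so that the $W$-representations one obtains are insensitive to the diagram symmetry; once that is in place, the exceptional-type check reduces to a finite linear-algebra verification of column distinctness in the multiplicity matrices described above.
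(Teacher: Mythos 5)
Your proposal follows essentially the same route as the paper: part (1) is Theorem \ref{thm:Classical}, and the exceptional cases are settled by machine-computing the sign multiplicities $\dim\Hom_{H(\Delta)}(\signRep{H(\Delta)},V)$ over orbit representatives of additively closed root subsystems (Dynkin's list), invoking Lemma \ref{le:ConjugacyClassesAndOuterAutos} to reduce to one representative per isomorphism class, and checking that the resulting columns of the multiplicity matrix are pairwise distinct --- this is exactly the content of Tables \ref{table:SignSignatureG12}--\ref{table:SignSignatureE18}. One factual slip in your motivating aside: $G_2$ contains no $B_2$ root subsystem, and in fact the $\phi_{2,1}/\phi_{2,2}$ ambiguity in $G_2$ is already resolved by the \emph{pseudo-parabolic} subgroup of type $A^{3}_1+A_1$ (Table \ref{table:SignSignatureG12} has no non-pseudo-parabolic classes at all); the types where pseudo sign signatures genuinely fail and the full extended sign signature is needed are $E_7$ and $E_8$, e.g.\ the pair $(\phi_{512,12},\phi_{512,11})$.
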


Part (1) was already proved; we handle the exceptional cases with the help of computer. 

More precisely, the proof of the theorem is carried out by inspection of Tables \ref{table:SignSignatureG12}, \ref{table:SignSignatureF14}, \ref{table:SignSignatureE16}, \ref{table:SignSignatureE17}, \ref{table:SignSignatureE18}. 
The tables are laid out in a similar fashion to Table \ref{table:nonlin}. 
The rows are labeled by the irreducible representations of the ambient Weyl group $ W$. 
The columns are labeled by the additively closed root subsystems that generate the root reflection subgroups of $W$.
The column-dividing lines separate the additively closed root subsystems arising from parabolic subgroups, pseudo-parabolic subgroups which are not parabolic, and root reflection subgroups which are not pseudo-parabolic. Finally, the entries of the tables give the multiplicities of the sign representation of the root reflection subgroups in the restrictions of the irreducible representations of $ W$.

In view of Lemma \ref{le:ConjugacyClassesAndOuterAutos} the sign multiplicities over a root reflection subgroup depend only on the isomorphism class of the underlying additively closed root subsystem, and so the data presented in our tables determines the extended sign signature unambiguously.

We label the irreducible representations of $  W$ following \cite[Chapter 13]{Carter:FiniteGroupsLieType} using the expressions of the form $\phi_{a, b}$. 
Here, $a$ stands for the dimension of the representation and $b$ stands for the smallest polynomial degree needed to realize the representation (see \cite[page 411]{Carter:FiniteGroupsLieType}). 
\extendedVersion{For the reader's convenience, in the appendix we reproduce the Weyl group representation notation for $G_2, F_4 $ and $E_6$, as specified by \cite[Chapter 13]{Carter:FiniteGroupsLieType} and extracted from the CHEVIE package of the computer algebra system GAP3 (\cite{GeckHissLuebeckMallePfeifer:CHEVIE-GAP}, \cite{SchoenertEtAl:GAP}).}

As implied by the statement of Theorem \ref{thm:classicalAndExceptional}, the sign signature is not sufficient to distinguish irreducible representations in the exceptional types. For example, the sign representation multiplicity table of $G_2$ (Table \ref{table:SignSignatureG12} below) reveals that the two dimensional representations $\phi_{2,1}$ and $ \phi_{2,2}$  share the same sign signature. 

The tables were computed in the computer algebra system GAP 3 \cite{SchoenertEtAl:GAP} using the built-in functions of the CHEVIE package \cite{GeckHissLuebeckMallePfeifer:CHEVIE-GAP}. Example GAP 3 code for generating the $A_1$ column of the $E_8$ table is given below.

\begin{verbatim}
SignSigValues := function(G,l)
  local H;
  H := ReflectionSubgroup(G,l);
  return List(
    Irr(G), chi -> Sum(
      List(ConjugacyClasses(H), cc -> 
        Representative(cc)^chi * 
        (1-2*(CoxeterLength(G,Representative(cc)) mod 2)) * 
        Size(cc) / Size(H)
      )
    )
  );
end;
E8 := CoxeterGroup("E",8);
SignSigValues(E8, [1]);
\end{verbatim}

The root reflection subgroup is computed by the $ReflectionSubgroup$ function from a list of roots. The root list is given by specifying the positions of the roots in the root system: in the example above, the second argument of the last function, $[1] $, stands for the first (in the order used by CHEVIE) positive root of the root system. To compute another column of the $E_8$ table, one simply needs to replace the list of indices of generating roots. 

A list of the additively closed root subsystems of the exceptional Lie algebras can be found in \cite[Table 11]{Dynkin:Semisimple}. We realize explicitly the generating roots for the additively closed root subsystems using the ``calculator project'' in the programming language C++, \cite{Folz-DonahueMilev:vpf}. The final data processing and table generation were also done in C++.

\begin{proof}[Proof of Theorem \ref{thm:classicalAndExceptional}]
Combine the results of Sections \ref{sec:typeA}, \ref{sec:typeB}, \ref{sec:typeD} with inspection of Tables \ref{table:SignSignatureG12}, \ref{table:SignSignatureF14}, \ref{table:SignSignatureE16}, \ref{table:SignSignatureE17}, \ref{table:SignSignatureE18}.
\end{proof}
\bibliographystyle{plain}
\bibliography{FolzDonahueJacksonNoelMilev_TauSignatures}

\hideme{
\section{Tables used in Section \ref{sec:Exceptional}}

\begin{landscape}
{\tiny 
\renewcommand{\arraystretch}{0}%
\subsection{$G_2$}
The following families of representations share the same sign signature. $(\phi_{2,1} , \phi_{2,2} )$ 

\begin{longtable}{c|p{0.275cm}p{0.275cm}p{0.275cm}p{0.275cm}|p{0.275cm}p{0.275cm}} 
\caption{\label{table:SignSignatureG12}Multiplicity of the sign representation over the classes of root subgroups. There are 4 parabolic subgroup classes, 2 pseudo-parabolic subgroup classes that are not parabolic, and 0 non-pseudo-parabolic subgroup classes. 
}\\ &$L_{1}$&$L_{2}$&$L_{3}$&$L_{4}$&${\widehat{L}}_{1}$&${\widehat{L}}_{2}$\\ 
&$0$&$A_1$&$A^{3}_1$&$G_2$&$A_2$&$A^{3}_1+A_1$\\\hline 
$\phi_{1,6} $&1&1&1&1&1&1\\ 
\\ 
$\phi_{1,3}''$&1&0&1&0&0&0\\ 
\\ 
$\phi_{1,3}' $&1&1&0&0&1&0\\ 
\\ 
$\phi_{1,0} $&1&0&0&0&0&0\\ 
\\ 
$\phi_{2,1} $&2&1&1&0&0&0\\ 
\\ 
$\phi_{2,2} $&2&1&1&0&0&1\\ 
\end{longtable}

\subsection{$F_4$}
The following families of representations share the same sign signature. $(\phi_{9,10} , \phi_{4,13} )$, $(\phi_{12,4} , \phi_{16,5} )$.

\begin{longtable}{c|p{0.275cm}p{0.275cm}p{0.275cm}p{0.275cm}p{0.275cm}p{0.275cm}p{0.275cm}p{0.275cm}p{0.275cm}p{0.275cm}p{0.275cm}p{0.275cm}|p{0.275cm}p{0.275cm}p{0.275cm}p{0.275cm}p{0.275cm}p{0.275cm}p{0.275cm}p{0.275cm}|p{0.275cm}p{0.275cm}p{0.275cm}p{0.275cm}} 
\caption{\label{table:SignSignatureF14}Multiplicity of the sign representation over the classes of root subgroups. There are 12 parabolic subgroup classes, 8 pseudo-parabolic subgroup classes that are not parabolic, and 4 non-pseudo-parabolic subgroup classes. 
}\\ &$L_{1}$&$L_{2}$&$L_{3}$&$L_{4}$&$L_{5}$&$L_{6}$&$L_{7}$&$L_{8}$&$L_{9}$&$L_{10}$&$L_{11}$&$L_{12}$&${\widehat{L}}_{1}$&${\widehat{L}}_{2}$&${\widehat{L}}_{3}$&${\widehat{L}}_{4}$&${\widehat{L}}_{5}$&${\widehat{L}}_{6}$&${\widehat{L}}_{7}$&${\widehat{L}}_{8}$&-&-&-&-\\ 
Irrep label&$0$&$A_1$&$A^{2}_1$&$A_2$&$B_2$&$A^{2}_1+A_1$&$A^{2}_2$&$B_3$&$A^{2}_1+A_2$&$A^{2}_2+A_1$&$C_3$&$F_4$&$2A_1$&$B_2+A_1$&$A_3$&$A^{2}_1+2A_1$&$B_4$&$A^{2}_1+A_3$&$A^{2}_2+A_2$&$C_3+A_1$&$3A_1$&$4A_1$&$B_2+2A_1$&$D_4$\\\hline \endhead 
$\phi_{1,0} $&1&0&0&0&0&0&0&0&0&0&0&0&0&0&0&0&0&0&0&0&0&0&0&0\\ 
\\ 
$\phi_{1,12}''$&1&0&1&0&0&0&1&0&0&0&0&0&0&0&0&0&0&0&0&0&0&0&0&0\\ 
\\ 
$\phi_{1,12}' $&1&1&0&1&0&0&0&0&0&0&0&0&1&0&1&0&0&0&0&0&1&1&0&1\\ 
\\ 
$\phi_{1,24} $&1&1&1&1&1&1&1&1&1&1&1&1&1&1&1&1&1&1&1&1&1&1&1&1\\ 
\\ 
$\phi_{2,4}'' $&2&0&1&0&0&0&0&0&0&0&0&0&0&0&0&0&0&0&0&0&0&0&0&0\\ 
\\ 
$\phi_{2,16}' $&2&2&1&2&1&1&0&1&1&0&0&0&2&1&2&1&1&1&0&0&2&2&1&2\\ 
\\ 
$\phi_{2,4}' $&2&1&0&0&0&0&0&0&0&0&0&0&1&0&0&0&0&0&0&0&1&1&0&0\\ 
\\ 
$\phi_{2,16}''$&2&1&2&0&1&1&2&0&0&1&1&0&1&1&0&1&0&0&0&1&1&1&1&0\\ 
\\ 
$\phi_{4,8} $&4&2&2&0&1&1&0&0&0&0&0&0&2&1&0&1&0&0&0&0&2&2&1&0\\ 
\\ 
$\phi_{9,2} $&9&3&3&0&0&1&0&0&0&0&0&0&1&0&0&0&0&0&0&0&0&0&0&0\\ 
\\ 
$\phi_{9,6}'' $&9&3&6&0&1&2&3&0&0&1&0&0&1&0&0&1&0&0&0&0&0&0&0&0\\ 
\\ 
$\phi_{9,6}' $&9&6&3&3&1&2&0&0&1&0&0&0&4&1&1&1&0&0&0&0&3&3&1&0\\ 
\\ 
$\phi_{9,10} $&9&6&6&3&3&4&3&1&2&2&1&0&4&2&1&3&0&1&1&1&3&3&2&0\\ 
\\ 
$\phi_{6,6}' $&6&3&3&1&0&2&1&0&1&1&0&0&1&0&0&1&0&0&1&0&0&0&0&0\\ 
\\ 
$\phi_{6,6}'' $&6&3&3&1&1&1&1&0&0&0&0&0&1&0&0&0&0&0&0&0&0&0&0&0\\ 
\\ 
$\phi_{12,4} $&12&6&6&2&1&3&2&0&1&1&0&0&2&0&0&1&0&0&0&0&0&0&0&0\\ 
\\ 
$\phi_{4,1} $&4&1&1&0&0&0&0&0&0&0&0&0&0&0&0&0&0&0&0&0&0&0&0&0\\ 
\\ 
$\phi_{4,7}'' $&4&1&3&0&0&1&2&0&0&1&0&0&0&0&0&0&0&0&0&0&0&0&0&0\\ 
\\ 
$\phi_{4,7}' $&4&3&1&2&0&1&0&0&1&0&0&0&2&0&1&1&0&1&0&0&1&0&0&0\\ 
\\ 
$\phi_{4,13} $&4&3&3&2&2&2&2&1&1&1&1&0&2&1&1&1&0&0&0&0&1&0&0&0\\ 
\\ 
$\phi_{8,3}'' $&8&2&4&0&0&1&1&0&0&0&0&0&0&0&0&0&0&0&0&0&0&0&0&0\\ 
\\ 
$\phi_{8,9}' $&8&6&4&4&2&3&1&1&2&1&0&0&4&1&2&2&0&1&1&0&2&0&0&0\\ 
\\ 
$\phi_{8,3}' $&8&4&2&1&0&1&0&0&0&0&0&0&2&0&0&1&0&0&0&0&1&0&0&0\\ 
\\ 
$\phi_{8,9}'' $&8&4&6&1&2&3&4&0&1&2&1&0&2&1&0&1&0&0&1&0&1&0&0&0\\ 
\\ 
$\phi_{16,5} $&16&8&8&2&2&4&2&0&1&1&0&0&4&1&0&2&0&0&0&0&2&0&0&0\\ 
\end{longtable} 

\subsection{$E_6$}
The following families of representations share the same sign signature. $(\phi_{6,25} , \phi_{20,20})$, $(\phi_{60,8} , \phi_{80,7} )$.

\begin{longtable}{c|p{0.275cm}p{0.275cm}p{0.275cm}p{0.275cm}p{0.275cm}p{0.275cm}p{0.275cm}p{0.275cm}p{0.275cm}p{0.275cm}p{0.275cm}p{0.275cm}p{0.275cm}p{0.275cm}p{0.275cm}p{0.275cm}p{0.275cm}|p{0.275cm}p{0.275cm}p{0.275cm}p{0.275cm}} 
\caption{\label{table:SignSignatureE16}Multiplicity of the sign representation over the classes of root subgroups. There are 17 parabolic subgroup classes, 4 pseudo-parabolic subgroup classes that are not parabolic, and 0 non-pseudo-parabolic subgroup classes. 
}\\ &$L_{1}$&$L_{2}$&$L_{3}$&$L_{4}$&$L_{5}$&$L_{6}$&$L_{7}$&$L_{8}$&$L_{9}$&$L_{10}$&$L_{11}$&$L_{12}$&$L_{13}$&$L_{14}$&$L_{15}$&$L_{16}$&$L_{17}$&${\widehat{L}}_{1}$&${\widehat{L}}_{2}$&${\widehat{L}}_{3}$&${\widehat{L}}_{4}$\\ 
Irrep label&$0$&$A_1$&$2A_1$&$A_2$&$3A_1$&$A_2+A_1$&$A_3$&$A_2+2A_1$&$2A_2$&$A_3+A_1$&$A_4$&$D_4$&$2A_2+A_1$&$A_4+A_1$&$A_5$&$D_5$&$E_6$&$4A_1$&$A_3+2A_1$&$3A_2$&$A_5+A_1$\\\hline 
\endhead 
$\phi_{1,0} $&1&0&0&0&0&0&0&0&0&0&0&0&0&0&0&0&0&0&0&0&0\\ 
\\ 
$\phi_{1,36} $&1&1&1&1&1&1&1&1&1&1&1&1&1&1&1&1&1&1&1&1&1\\ 
\\ 
$\phi_{10,9} $&10&5&3&1&2&1&0&1&1&0&0&0&1&0&0&0&0&1&0&1&0\\ 
\\ 
$\phi_{6,1} $&6&1&0&0&0&0&0&0&0&0&0&0&0&0&0&0&0&0&0&0&0\\ 
\\ 
$\phi_{6,25} $&6&5&4&4&3&3&3&2&2&2&2&2&1&1&1&1&0&2&1&0&0\\ 
\\ 
$\phi_{20,10}$&20&10&4&4&1&1&1&0&0&0&0&0&0&0&0&0&0&0&0&0&0\\ 
\\ 
$\phi_{15,5} $&15&5&1&1&0&0&0&0&0&0&0&0&0&0&0&0&0&0&0&0&0\\ 
\\ 
$\phi_{15,17}$&15&10&6&6&3&3&3&1&1&1&1&1&0&0&0&0&0&1&0&0&0\\ 
\\ 
$\phi_{15,4} $&15&5&2&0&1&0&0&0&0&0&0&0&0&0&0&0&0&1&0&0&0\\ 
\\ 
$\phi_{15,16}$&15&10&7&5&5&4&2&3&3&2&1&0&2&1&1&0&0&4&2&1&1\\ 
\\ 
$\phi_{20,2} $&20&5&1&0&0&0&0&0&0&0&0&0&0&0&0&0&0&0&0&0&0\\ 
\\ 
$\phi_{20,20}$&20&15&11&10&8&7&6&5&4&4&3&3&3&2&1&1&0&6&3&2&1\\ 
\\ 
$\phi_{24,6} $&24&10&4&2&1&1&0&0&1&0&0&0&0&0&0&0&0&0&0&0&0\\ 
\\ 
$\phi_{24,12}$&24&14&8&6&5&3&2&2&1&1&0&1&1&0&0&0&0&4&1&1&0\\ 
\\ 
$\phi_{30,3} $&30&10&3&1&1&0&0&0&0&0&0&0&0&0&0&0&0&0&0&0&0\\ 
\\ 
$\phi_{30,15}$&30&20&13&11&8&7&5&4&4&3&2&1&2&1&1&0&0&4&1&1&0\\ 
\\ 
$\phi_{60,8} $&60&30&16&8&9&5&1&3&2&1&0&0&1&0&0&0&0&6&1&0&0\\ 
\\ 
$\phi_{80,7} $&80&40&20&12&10&6&2&3&2&1&0&0&1&0&0&0&0&4&0&0&0\\ 
\\ 
$\phi_{90,8} $&90&45&21&15&9&6&3&2&1&1&0&0&0&0&0&0&0&3&0&0&0\\ 
\\ 
$\phi_{60,5} $&60&25&11&4&5&2&0&1&0&0&0&0&0&0&0&0&0&2&0&0&0\\ 
\\ 
$\phi_{60,11}$&60&35&21&14&13&9&4&6&4&3&1&0&3&1&0&0&0&8&2&2&0\\ 
\\ 
$\phi_{64,4} $&64&24&8&4&2&1&0&0&0&0&0&0&0&0&0&0&0&0&0&0&0\\ 
\\ 
$\phi_{64,13}$&64&40&24&20&14&11&8&6&4&4&2&2&2&1&0&0&0&8&2&0&0\\ 
\\ 
$\phi_{81,6} $&81&36&15&9&6&3&1&1&0&0&0&0&0&0&0&0&0&3&0&0&0\\ 
\\ 
$\phi_{81,10}$&81&45&24&18&12&9&5&4&3&2&1&0&1&0&0&0&0&6&1&0&0\\ 
\end{longtable} 

\subsection{$E_7$}
The following families of representations share the same sign signature. 
$(\phi_{7,46}, \phi_{27,37})$, 
$(\phi_{35,31}, \phi_{56,30})$, 
$(\phi_{105,26}, \phi_{168,21})$, 
$(\phi_{120,25}, \phi_{189,22})$, 
$(\phi_{216,9}, \phi_{405,8})$, 
$(\phi_{280,8}, \phi_{315,7})$, 
$(\phi_{280,17}, \phi_{315,16})$, 
$(\phi_{512,12}, \phi_{512,11})$.

The following families of representations share the same pseudo sign signature. $(\phi_{512,12}, \phi_{512,11})$.

\begin{longtable}{c|p{0.275cm}p{0.275cm}p{0.275cm}p{0.275cm}p{0.275cm}p{0.275cm}p{0.275cm}p{0.275cm}p{0.275cm}p{0.275cm}p{0.275cm}p{0.275cm}p{0.275cm}p{0.275cm}p{0.275cm}p{0.275cm}p{0.275cm}p{0.275cm}p{0.275cm}p{0.275cm}p{0.275cm}p{0.275cm}p{0.275cm}p{0.275cm}p{0.275cm}} 
\caption{\label{table:SignSignatureE17}Multiplicity of the sign representation over the classes of root subgroups. There are 32 parabolic subgroup classes, 12 pseudo-parabolic subgroup classes that are not parabolic, and 3 non-pseudo-parabolic subgroup classes. 
}\\ &$L_{1}$&$L_{2}$&$L_{3}$&$L_{4}$&$L_{5}$&$L_{6}$&$L_{7}$&$L_{8}$&$L_{9}$&$L_{10}$&$L_{11}$&$L_{12}$&$L_{13}$&$L_{14}$&$L_{15}$&$L_{16}$&$L_{17}$&$L_{18}$&$L_{19}$&$L_{20}$&$L_{21}$&$L_{22}$&$L_{23}$&$L_{24}$&$L_{25}$\\ 
Irrep label&$0$&$A_1$&$2A_1$&$A_2$&$3A_1$&$3A_1$&$A_2+A_1$&$A_3$&$4A_1$&$A_2+2A_1$&$2A_2$&$A_3+A_1$&$A_3+A_1$&$A_4$&$D_4$&$A_2+3A_1$&$2A_2+A_1$&$A_3+2A_1$&$A_3+A_2$&$A_4+A_1$&$D_4+A_1$&$A_5$&$A_5$&$D_5$&$A_3+A_2+A_1$\\\hline 
\endhead 
$\phi_{1,0}$&1&0&0&0&0&0&0&0&0&0&0&0&0&0&0&0&0&0&0&0&0&0&0&0&0\\ 
\\ 
$\phi_{1,63}$&1&1&1&1&1&1&1&1&1&1&1&1&1&1&1&1&1&1&1&1&1&1&1&1&1\\ 
\\ 
$\phi_{7,46}$&7&6&5&5&4&4&4&4&3&3&3&3&3&3&3&2&2&2&2&2&2&2&2&2&1\\ 
\\ 
$\phi_{7,1}$&7&1&0&0&0&0&0&0&0&0&0&0&0&0&0&0&0&0&0&0&0&0&0&0&0\\ 
\\ 
$\phi_{15,28}$&15&10&7&5&5&4&4&2&3&3&3&2&1&1&0&2&2&1&2&1&0&1&0&0&1\\ 
\\ 
$\phi_{15,7}$&15&5&2&0&1&2&0&0&1&0&0&0&0&0&0&0&0&0&0&0&0&0&0&0&0\\ 
\\ 
$\phi_{21,6}$&21&6&1&1&0&0&0&0&0&0&0&0&0&0&0&0&0&0&0&0&0&0&0&0&0\\ 
\\ 
$\phi_{21,33}$&21&15&10&10&6&6&6&6&3&3&3&3&3&3&3&1&1&1&1&1&1&1&1&1&0\\ 
\\ 
$\phi_{21,36}$&21&16&12&11&9&8&8&7&6&6&5&5&4&4&4&4&4&3&3&3&2&2&1&2&2\\ 
\\ 
$\phi_{21,3}$&21&5&1&0&0&1&0&0&0&0&0&0&0&0&0&0&0&0&0&0&0&0&0&0&0\\ 
\\ 
$\phi_{27,2}$&27&6&1&0&0&0&0&0&0&0&0&0&0&0&0&0&0&0&0&0&0&0&0&0&0\\ 
\\ 
$\phi_{27,37}$&27&21&16&15&12&12&11&10&9&8&7&7&7&6&6&6&5&5&4&4&4&3&3&3&3\\ 
\\ 
$\phi_{35,22}$&35&20&10&10&4&4&4&4&1&1&1&1&1&1&1&0&0&0&0&0&0&0&0&0&0\\ 
\\ 
$\phi_{35,13}$&35&15&5&5&1&1&1&1&0&0&0&0&0&0&0&0&0&0&0&0&0&0&0&0&0\\ 
\\ 
$\phi_{35,4}$&35&10&3&0&1&0&0&0&0&0&0&0&0&0&0&0&0&0&0&0&0&0&0&0&0\\ 
\\ 
$\phi_{35,31}$&35&25&18&15&13&14&11&8&10&8&7&6&7&4&3&6&5&5&4&3&3&2&3&1&3\\ 
\\ 
$\phi_{56,30}$&56&40&28&25&19&20&17&14&13&11&10&9&10&7&6&7&6&6&5&4&4&3&4&2&3\\ 
\\ 
$\phi_{56,3}$&56&16&4&1&1&0&0&0&0&0&0&0&0&0&0&0&0&0&0&0&0&0&0&0&0\\ 
\\ 
$\phi_{70,18}$&70&40&24&15&15&16&10&4&10&7&5&3&4&1&0&5&4&3&2&1&0&0&1&0&2\\ 
\\ 
$\phi_{70,9}$&70&30&14&5&7&6&3&0&3&2&1&0&0&0&0&1&1&0&0&0&0&0&0&0&0\\ 
\\ 
$\phi_{84,12}$&84&40&20&10&10&8&6&1&4&3&3&1&0&0&0&1&1&0&1&0&0&0&0&0&0\\ 
\\ 
$\phi_{84,15}$&84&44&24&14&14&16&8&3&10&5&3&2&3&0&1&4&2&2&1&0&1&0&0&0&1\\ 
\\ 
$\phi_{105,26}$&105&70&45&40&28&28&24&20&17&14&11&11&11&8&8&8&6&6&4&4&4&2&2&2&2\\ 
\\ 
$\phi_{105,5}$&105&35&10&5&2&2&1&0&0&0&0&0&0&0&0&0&0&0&0&0&0&0&0&0&0\\ 
\\ 
$\phi_{105,6}$&105&40&16&5&7&8&2&0&4&1&0&0&0&0&0&1&0&0&0&0&0&0&0&0&0\\ 
\\ 
$\phi_{105,21}$&105&65&41&30&26&25&20&11&16&13&11&8&7&4&1&8&7&5&5&3&1&2&1&0&3\\ 
\\ 
$\phi_{105,12}$&105&50&23&15&11&8&6&3&4&3&1&1&0&0&1&1&1&0&0&0&0&0&0&0&0\\ 
\\ 
$\phi_{105,15}$&105&55&28&20&13&16&10&5&6&4&4&2&4&1&0&1&1&1&1&0&0&0&1&0&0\\ 
\\ 
$\phi_{120,4}$&120&40&12&5&3&4&1&0&1&0&0&0&0&0&0&0&0&0&0&0&0&0&0&0&0\\ 
\\ 
$\phi_{120,25}$&120&80&52&45&33&32&28&22&20&17&14&13&12&9&8&10&8&7&6&5&4&3&2&2&3\\ 
\\ 
$\phi_{168,6}$&168&64&24&10&8&8&4&0&2&1&1&0&0&0&0&0&0&0&0&0&0&0&0&0&0\\ 
\\ 
$\phi_{168,21}$&168&104&64&50&40&40&30&20&26&19&13&12&12&6&6&13&9&8&5&4&4&1&1&1&4\\ 
\\ 
$\phi_{189,10}$&189&84&34&24&12&12&8&4&3&2&1&1&1&0&0&0&0&0&0&0&0&0&0&0&0\\ 
\\ 
$\phi_{189,17}$&189&105&55&45&27&27&21&15&12&9&6&6&6&3&3&3&2&2&1&1&1&0&0&0&0\\ 
\\ 
$\phi_{189,22}$&189&120&76&60&48&48&38&25&30&24&19&16&16&9&6&15&12&10&8&6&4&3&3&1&5\\ 
\\ 
$\phi_{189,5}$&189&69&25&9&9&9&3&0&3&1&0&0&0&0&0&0&0&0&0&0&0&0&0&0&0\\ 
\\ 
$\phi_{189,20}$&189&114&67&54&39&36&30&21&21&17&12&11&9&6&6&9&7&5&4&3&2&1&0&1&2\\ 
\\ 
$\phi_{189,7}$&189&75&28&15&9&12&5&1&3&1&1&0&1&0&0&0&0&0&0&0&0&0&0&0&0\\ 
\\ 
$\phi_{210,6}$&210&80&28&15&9&8&4&1&2&1&0&0&0&0&0&0&0&0&0&0&0&0&0&0&0\\ 
\\ 
$\phi_{210,21}$&210&130&78&65&45&46&37&27&25&20&16&14&15&9&7&10&8&7&5&4&3&2&3&1&2\\ 
\\ 
$\phi_{210,10}$&210&100&50&25&26&28&14&3&15&8&5&2&3&0&0&5&3&2&1&0&0&0&0&0&1\\ 
\\ 
$\phi_{210,13}$&210&110&60&35&34&32&21&7&19&13&9&5&4&1&0&8&6&3&3&1&0&0&0&0&2\\ 
\\ 
$\phi_{216,16}$&216&120&68&45&39&36&27&12&21&16&12&8&6&3&0&9&7&4&4&2&0&1&0&0&2\\ 
\\ 
$\phi_{216,9}$&216&96&44&21&21&24&10&2&12&5&2&1&2&0&0&3&1&1&0&0&0&0&0&0&0\\ 
\\ 
$\phi_{280,18}$&280&160&88&70&46&48&36&24&24&17&13&11&12&6&4&8&5&5&3&2&2&1&1&0&1\\ 
\\ 
$\phi_{280,9}$&280&120&48&30&18&16&10&4&6&3&1&1&0&0&0&1&0&0&0&0&0&0&0&0&0\\ 
\\ 
$\phi_{280,8}$&280&120&52&25&23&20&11&2&9&5&2&1&0&0&0&2&1&0&0&0&0&0&0&0&0\\ 
\\ 
$\phi_{280,17}$&280&160&92&65&53&56&38&20&32&22&16&12&14&5&2&13&9&8&5&3&2&1&2&0&3\\ 
\\ 
$\phi_{315,16}$&315&180&102&75&57&60&42&24&33&23&17&13&15&6&3&13&9&8&5&3&2&1&2&0&3\\ 
\\ 
$\phi_{315,7}$&315&135&57&30&24&21&12&3&9&5&2&1&0&0&0&2&1&0&0&0&0&0&0&0&0\\ 
\\ 
$\phi_{336,14}$&336&176&88&66&42&40&30&18&18&13&8&7&6&3&2&5&3&2&1&1&0&0&0&0&0\\ 
\\ 
$\phi_{336,11}$&336&160&72&50&30&32&20&10&12&7&4&3&4&1&0&2&1&1&0&0&0&0&0&0&0\\ 
\\ 
$\phi_{378,14}$&378&204&110&75&60&60&40&20&33&22&14&11&11&3&3&12&8&6&4&2&2&0&0&0&2\\ 
\\ 
$\phi_{378,9}$&378&174&80&45&36&36&21&6&15&9&6&3&3&0&0&3&2&1&1&0&0&0&0&0&0\\ 
\\ 
$\phi_{405,8}$&405&180&78&45&33&36&18&6&15&7&3&2&3&0&0&3&1&1&0&0&0&0&0&0&0\\ 
\\ 
$\phi_{405,15}$&405&225&123&90&66&63&48&27&33&25&18&14&12&6&3&12&9&6&5&3&1&1&0&0&2\\ 
\\ 
$\phi_{420,10}$&420&200&92&60&40&40&26&11&16&10&7&4&4&1&0&3&2&1&1&0&0&0&0&0&0\\ 
\\ 
$\phi_{420,13}$&420&220&112&80&56&56&38&21&28&18&11&9&9&3&3&9&5&4&2&1&1&0&0&0&1\\ 
\\ 
$\phi_{512,12}$&512&256&128&80&64&64&40&16&32&20&12&8&8&2&0&10&6&4&2&1&0&0&0&0&1\\ 
\\ 
$\phi_{512,11}$&512&256&128&80&64&64&40&16&32&20&12&8&8&2&0&10&6&4&2&1&0&0&0&0&1\\ 
\end{longtable} 

\begin{longtable}{c|p{0.275cm}p{0.275cm}p{0.275cm}p{0.275cm}p{0.275cm}p{0.275cm}p{0.275cm}|p{0.275cm}p{0.275cm}p{0.275cm}p{0.275cm}p{0.275cm}p{0.275cm}p{0.275cm}p{0.275cm}p{0.275cm}p{0.275cm}p{0.275cm}p{0.275cm}|p{0.275cm}p{0.275cm}p{0.275cm}} 
\addtocounter{table}{-1}
&$L_{26}$&$L_{27}$&$L_{28}$&$L_{29}$&$L_{30}$&$L_{31}$&$L_{32}$&${\widehat{L}}_{1}$&${\widehat{L}}_{2}$&${\widehat{L}}_{3}$&${\widehat{L}}_{4}$&${\widehat{L}}_{5}$&${\widehat{L}}_{6}$&${\widehat{L}}_{7}$&${\widehat{L}}_{8}$&${\widehat{L}}_{9}$&${\widehat{L}}_{10}$&${\widehat{L}}_{11}$&${\widehat{L}}_{12}$&-&-&-\\ 
Irrep label&$A_4+A_2$&$A_5+A_1$&$D_5+A_1$&$A_6$&$D_6$&$E_6$&$E_7$&$4A_1$&$5A_1$&$A_3+2A_1$&$3A_2$&$A_3+3A_1$&$2A_3$&$D_4+2A_1$&$A_5+A_1$&$2A_3+A_1$&$A_5+A_2$&$D_6+A_1$&$A_7$&$6A_1$&$7A_1$&$D_4+3A_1$\\\hline 
\endhead 
$\phi_{1,0}$&0&0&0&0&0&0&0&0&0&0&0&0&0&0&0&0&0&0&0&0&0&0\\ 
\\ 
$\phi_{1,63}$&1&1&1&1&1&1&1&1&1&1&1&1&1&1&1&1&1&1&1&1&1&1\\ 
\\ 
$\phi_{7,46}$&1&1&1&1&1&1&0&3&2&2&1&1&1&1&1&0&0&0&0&1&0&0\\ 
\\ 
$\phi_{7,1}$&0&0&0&0&0&0&0&0&0&0&0&0&0&0&0&0&0&0&0&0&0&0\\ 
\\ 
$\phi_{15,28}$&1&0&0&1&0&0&0&4&2&2&1&1&2&0&1&1&0&0&1&1&0&0\\ 
\\ 
$\phi_{15,7}$&0&0&0&0&0&0&0&1&1&0&0&0&0&0&0&0&0&0&0&1&1&0\\ 
\\ 
$\phi_{21,6}$&0&0&0&0&0&0&0&0&0&0&0&0&0&0&0&0&0&0&0&0&0&0\\ 
\\ 
$\phi_{21,33}$&0&0&0&0&0&0&0&3&1&1&0&0&0&0&0&0&0&0&0&0&0&0\\ 
\\ 
$\phi_{21,36}$&2&1&1&1&0&1&0&7&4&4&3&2&2&1&2&1&1&0&1&2&0&0\\ 
\\ 
$\phi_{21,3}$&0&0&0&0&0&0&0&0&0&0&0&0&0&0&0&0&0&0&0&0&0&0\\ 
\\ 
$\phi_{27,2}$&0&0&0&0&0&0&0&0&0&0&0&0&0&0&0&0&0&0&0&0&0&0\\ 
\\ 
$\phi_{27,37}$&2&2&2&1&1&1&0&9&7&5&3&4&2&3&2&2&1&1&0&6&6&3\\ 
\\ 
$\phi_{35,22}$&0&0&0&0&0&0&0&1&0&0&0&0&0&0&0&0&0&0&0&0&0&0\\ 
\\ 
$\phi_{35,13}$&0&0&0&0&0&0&0&0&0&0&0&0&0&0&0&0&0&0&0&0&0&0\\ 
\\ 
$\phi_{35,4}$&0&0&0&0&0&0&0&1&0&0&0&0&0&0&0&0&0&0&0&0&0&0\\ 
\\ 
$\phi_{35,31}$&2&2&1&1&1&0&0&10&8&5&3&4&3&3&2&2&1&1&1&7&7&3\\ 
\\ 
$\phi_{56,30}$&2&2&1&1&1&0&0&12&8&5&3&3&2&2&1&1&1&0&0&4&0&0\\ 
\\ 
$\phi_{56,3}$&0&0&0&0&0&0&0&0&0&0&0&0&0&0&0&0&0&0&0&0&0&0\\ 
\\ 
$\phi_{70,18}$&1&1&0&0&0&0&0&9&6&2&3&2&1&0&0&1&1&0&0&3&0&0\\ 
\\ 
$\phi_{70,9}$&0&0&0&0&0&0&0&3&1&0&1&0&0&0&0&0&0&0&0&0&0&0\\ 
\\ 
$\phi_{84,12}$&0&0&0&0&0&0&0&6&2&1&0&0&1&0&0&0&0&0&0&1&0&0\\ 
\\ 
$\phi_{84,15}$&0&0&0&0&0&0&0&10&8&2&1&2&1&1&0&1&0&0&0&7&7&1\\ 
\\ 
$\phi_{105,26}$&1&1&1&0&0&0&0&17&10&6&2&3&1&2&1&0&0&0&0&5&0&0\\ 
\\ 
$\phi_{105,5}$&0&0&0&0&0&0&0&0&0&0&0&0&0&0&0&0&0&0&0&0&0&0\\ 
\\ 
$\phi_{105,6}$&0&0&0&0&0&0&0&3&2&0&0&0&0&0&0&0&0&0&0&1&0&0\\ 
\\ 
$\phi_{105,21}$&2&1&0&1&0&0&0&16&10&5&4&3&2&1&1&1&1&0&0&7&7&1\\ 
\\ 
$\phi_{105,12}$&0&0&0&0&0&0&0&7&2&1&1&0&0&0&0&0&0&0&0&1&0&0\\ 
\\ 
$\phi_{105,15}$&0&0&0&0&0&0&0&6&2&1&0&0&1&0&0&0&0&0&0&0&0&0\\ 
\\ 
$\phi_{120,4}$&0&0&0&0&0&0&0&0&0&0&0&0&0&0&0&0&0&0&0&0&0&0\\ 
\\ 
$\phi_{120,25}$&2&1&1&1&0&0&0&20&12&7&3&4&2&2&1&1&0&0&0&8&8&2\\ 
\\ 
$\phi_{168,6}$&0&0&0&0&0&0&0&2&0&0&0&0&0&0&0&0&0&0&0&0&0&0\\ 
\\ 
$\phi_{168,21}$&2&1&1&0&0&0&0&26&18&8&5&6&2&3&1&2&1&0&0&14&14&3\\ 
\\ 
$\phi_{189,10}$&0&0&0&0&0&0&0&3&0&0&0&0&0&0&0&0&0&0&0&0&0&0\\ 
\\ 
$\phi_{189,17}$&0&0&0&0&0&0&0&12&4&2&0&0&0&0&0&0&0&0&0&0&0&0\\ 
\\ 
$\phi_{189,22}$&3&2&1&1&0&0&0&30&18&10&6&6&3&2&2&2&1&0&0&9&0&0\\ 
\\ 
$\phi_{189,5}$&0&0&0&0&0&0&0&3&1&0&0&0&0&0&0&0&0&0&0&0&0&0\\ 
\\ 
$\phi_{189,20}$&1&0&0&0&0&0&0&24&12&7&3&3&2&1&1&1&0&0&0&6&0&0\\ 
\\ 
$\phi_{189,7}$&0&0&0&0&0&0&0&3&1&0&0&0&0&0&0&0&0&0&0&0&0&0\\ 
\\ 
$\phi_{210,6}$&0&0&0&0&0&0&0&3&0&0&0&0&0&0&0&0&0&0&0&0&0&0\\ 
\\ 
$\phi_{210,21}$&1&1&0&0&0&0&0&25&13&7&3&3&1&1&1&0&0&0&0&7&7&1\\ 
\\ 
$\phi_{210,10}$&0&0&0&0&0&0&0&13&8&1&1&1&0&0&0&0&0&0&0&4&0&0\\ 
\\ 
$\phi_{210,13}$&1&0&0&0&0&0&0&19&11&3&3&2&1&0&0&1&0&0&0&7&7&0\\ 
\\ 
$\phi_{216,16}$&1&0&0&0&0&0&0&24&12&6&3&3&2&0&1&1&0&0&0&6&0&0\\ 
\\ 
$\phi_{216,9}$&0&0&0&0&0&0&0&12&8&1&0&1&0&0&0&0&0&0&0&6&6&0\\ 
\\ 
$\phi_{280,18}$&0&0&0&0&0&0&0&22&12&4&1&2&0&1&0&0&0&0&0&6&0&0\\ 
\\ 
$\phi_{280,9}$&0&0&0&0&0&0&0&6&2&0&0&0&0&0&0&0&0&0&0&0&0&0\\ 
\\ 
$\phi_{280,8}$&0&0&0&0&0&0&0&12&4&1&0&0&0&0&0&0&0&0&0&2&0&0\\ 
\\ 
$\phi_{280,17}$&1&1&0&0&0&0&0&32&20&8&3&5&2&2&1&1&0&0&0&14&14&2\\ 
\\ 
$\phi_{315,16}$&1&1&0&0&0&0&0&30&18&6&3&4&1&1&0&1&0&0&0&9&0&0\\ 
\\ 
$\phi_{315,7}$&0&0&0&0&0&0&0&9&3&0&0&0&0&0&0&0&0&0&0&0&0&0\\ 
\\ 
$\phi_{336,14}$&0&0&0&0&0&0&0&20&8&3&0&1&0&0&0&0&0&0&0&4&0&0\\ 
\\ 
$\phi_{336,11}$&0&0&0&0&0&0&0&12&4&1&0&0&0&0&0&0&0&0&0&0&0&0\\ 
\\ 
$\phi_{378,14}$&1&0&0&0&0&0&0&33&18&6&3&3&1&1&0&0&0&0&0&9&0&0\\ 
\\ 
$\phi_{378,9}$&0&0&0&0&0&0&0&15&5&1&0&0&0&0&0&0&0&0&0&0&0&0\\ 
\\ 
$\phi_{405,8}$&0&0&0&0&0&0&0&12&6&0&0&0&0&0&0&0&0&0&0&3&0&0\\ 
\\ 
$\phi_{405,15}$&1&0&0&0&0&0&0&33&15&6&3&2&1&0&0&0&0&0&0&6&6&0\\ 
\\ 
$\phi_{420,10}$&0&0&0&0&0&0&0&16&6&1&0&0&0&0&0&0&0&0&0&3&0&0\\ 
\\ 
$\phi_{420,13}$&0&0&0&0&0&0&0&28&14&4&1&2&0&0&0&0&0&0&0&7&7&0\\ 
\\ 
$\phi_{512,12}$&0&0&0&0&0&0&0&32&16&4&2&2&0&0&0&0&0&0&0&8&0&0\\ 
\\ 
$\phi_{512,11}$&0&0&0&0&0&0&0&32&16&4&2&2&0&0&0&0&0&0&0&8&8&0\\ 
\end{longtable} 

\subsection{$E_8$}
The following families of representations share the same sign signature. 
$(\phi_{35,74}, \phi_{84,64}, \phi_{8,91}, \phi_{112,63})$, 
$(\phi_{50,56}, \phi_{400,43})$, 
$(\phi_{210,52}, \phi_{700,42}, \phi_{560,47})$, $(\phi_{300,44}, \phi_{160,55})$, 
$(\phi_{567,46}, \phi_{1344,38}, \phi_{1400,37})$, $(\phi_{700,28}, \phi_{4200,21})$, 
$(\phi_{840,26}, \phi_{840,31})$, 
$(\phi_{972,12}, \phi_{3240,9})$, 
$(\phi_{972,32}, \phi_{1575,34}, \phi_{2268,30})$, $(\phi_{1050,10}, \phi_{1400,8})$, 
$(\phi_{1050,34}, \phi_{1400,32}, \phi_{3240,31})$, \\
$(\phi_{1344,8}, \phi_{1400,7})$, 
$(\phi_{3150,18}, \phi_{4200,18}, \phi_{4480,16}, \phi_{7168,17})$, 
$(\phi_{2240,10}, \phi_{1400,11})$, 
$(\phi_{2240,28}, \phi_{1400,29})$, 
$(\phi_{4536,18}, \phi_{5670,18}, \phi_{5600,19})$, $(\phi_{3200,22}, \phi_{5600,21})$, 
$(\phi_{4096,12}, \phi_{4096,11})$, \\
$(\phi_{4096,26}, \phi_{4200,24}, \phi_{4096,27}, \phi_{4536,23})$, 
$(\phi_{6075,22}, \phi_{2800,25})$ .

The following families of representations share the same pseudo sign signature. 
$(\phi_{1344,38}, \phi_{1400,37})$, 
$(\phi_{4536,18}, \phi_{5670,18})$, 
$(\phi_{4096,12}, \phi_{4096,11})$, 
$(\phi_{4096,26}, \phi_{4096,27})$, \\
$(\phi_{4200,24}, \phi_{4536,23})$. 

\begin{longtable}{c|p{0.275cm}p{0.275cm}p{0.275cm}p{0.275cm}p{0.275cm}p{0.275cm}p{0.275cm}p{0.275cm}p{0.275cm}p{0.275cm}p{0.275cm}p{0.275cm}p{0.275cm}p{0.275cm}p{0.275cm}p{0.275cm}p{0.275cm}p{0.275cm}p{0.275cm}p{0.275cm}p{0.275cm}p{0.275cm}p{0.275cm}p{0.275cm}p{0.275cm}} 
\caption{\label{table:SignSignatureE18}Multiplicity of the sign representation over the classes of root subgroups. There are 41 parabolic subgroup classes, 26 pseudo-parabolic subgroup classes that are not parabolic, and 10 non-pseudo-parabolic subgroup classes. 
}\\ &$L_{1}$&$L_{2}$&$L_{3}$&$L_{4}$&$L_{5}$&$L_{6}$&$L_{7}$&$L_{8}$&$L_{9}$&$L_{10}$&$L_{11}$&$L_{12}$&$L_{13}$&$L_{14}$&$L_{15}$&$L_{16}$&$L_{17}$&$L_{18}$&$L_{19}$&$L_{20}$&$L_{21}$&$L_{22}$&$L_{23}$&$L_{24}$&$L_{25}$\\ 
Irrep label&$0$&$A_1$&$2A_1$&$A_2$&$3A_1$&$A_2+A_1$&$A_3$&$4A_1$&$A_2+2A_1$&$2A_2$&$A_3+A_1$&$A_4$&$D_4$&$A_2+3A_1$&$2A_2+A_1$&$A_3+2A_1$&$A_3+A_2$&$A_4+A_1$&$D_4+A_1$&$A_5$&$D_5$&$2A_2+2A_1$&$A_3+A_2+A_1$&$2A_3$&$A_4+2A_1$\\\hline 
\endhead 
$\phi_{1,0}$&1&0&0&0&0&0&0&0&0&0&0&0&0&0&0&0&0&0&0&0&0&0&0&0&0\\ 
\\ 
$\phi_{1,120}$&1&1&1&1&1&1&1&1&1&1&1&1&1&1&1&1&1&1&1&1&1&1&1&1&1\\ 
\\ 
$\phi_{28,8}$&28&7&1&1&0&0&0&0&0&0&0&0&0&0&0&0&0&0&0&0&0&0&0&0&0\\ 
\\ 
$\phi_{28,68}$&28&21&15&15&10&10&10&6&6&6&6&6&6&3&3&3&3&3&3&3&3&1&1&1&1\\ 
\\ 
$\phi_{35,2}$&35&7&1&0&0&0&0&0&0&0&0&0&0&0&0&0&0&0&0&0&0&0&0&0&0\\ 
\\ 
$\phi_{35,74}$&35&28&22&21&17&16&15&13&12&11&11&10&10&9&8&8&7&7&7&6&6&6&5&4&5\\ 
\\ 
$\phi_{70,32}$&70&35&15&15&5&5&5&1&1&1&1&1&1&0&0&0&0&0&0&0&0&0&0&0&0\\ 
\\ 
$\phi_{50,8}$&50&15&5&0&2&0&0&1&0&0&0&0&0&0&0&0&0&0&0&0&0&0&0&0&0\\ 
\\ 
$\phi_{50,56}$&50&35&25&20&18&15&10&13&11&10&8&5&3&8&7&6&6&4&3&3&1&5&4&4&3\\ 
\\ 
$\phi_{84,4}$&84&21&5&0&1&0&0&0&0&0&0&0&0&0&0&0&0&0&0&0&0&0&0&0&0\\ 
\\ 
$\phi_{84,64}$&84&63&47&42&35&31&26&26&23&20&19&15&14&17&15&14&12&11&10&8&7&11&9&7&8\\ 
\\ 
$\phi_{168,24}$&168&84&44&24&24&14&4&14&8&6&3&0&1&5&3&2&2&0&1&0&0&2&1&1&0\\ 
\\ 
$\phi_{175,12}$&175&70&30&10&14&5&0&7&3&1&0&0&0&2&1&0&0&0&0&0&0&1&0&0&0\\ 
\\ 
$\phi_{175,36}$&175&105&65&45&41&30&15&26&20&16&11&5&1&13&11&8&7&4&1&2&0&7&5&4&3\\ 
\\ 
$\phi_{210,4}$&210&63&17&6&4&1&0&1&0&0&0&0&0&0&0&0&0&0&0&0&0&0&0&0&0\\ 
\\ 
$\phi_{210,52}$&210&147&101&90&68&60&50&45&39&34&32&25&23&25&21&20&17&15&14&11&9&13&10&8&9\\ 
\\ 
$\phi_{420,20}$&420&210&110&60&60&35&10&34&21&14&7&1&0&13&9&5&4&1&0&0&0&6&3&2&1\\ 
\\ 
$\phi_{300,8}$&300&105&35&15&10&5&0&2&1&1&0&0&0&0&0&0&0&0&0&0&0&0&0&0&0\\ 
\\ 
$\phi_{300,44}$&300&195&125&105&80&65&50&52&41&31&30&20&20&27&20&19&13&12&12&6&6&14&9&5&8\\ 
\\ 
$\phi_{350,14}$&350&140&50&35&15&10&5&3&2&1&1&0&0&0&0&0&0&0&0&0&0&0&0&0&0\\ 
\\ 
$\phi_{350,38}$&350&210&120&105&65&55&45&33&27&21&21&15&15&12&9&9&6&6&6&3&3&3&2&1&2\\ 
\\ 
$\phi_{525,12}$&525&210&80&45&29&15&5&9&5&2&1&0&1&1&1&0&0&0&0&0&0&0&0&0&0\\ 
\\ 
$\phi_{525,36}$&525&315&185&150&106&85&60&58&47&37&32&20&17&24&20&16&13&10&7&5&4&9&6&4&4\\ 
\\ 
$\phi_{567,6}$&567&189&57&27&15&6&1&3&1&0&0&0&0&0&0&0&0&0&0&0&0&0&0&0&0\\ 
\\ 
$\phi_{567,46}$&567&378&246&216&156&135&110&96&82&69&65&49&45&48&40&37&30&27&24&18&15&22&16&11&14\\ 
\\ 
$\phi_{1134,20}$&1134&567&279&189&135&90&45&63&42&27&21&6&6&18&12&9&6&3&3&0&0&4&2&1&1\\ 
\\ 
$\phi_{700,16}$&700&315&145&75&70&35&10&36&18&8&5&0&2&10&5&3&1&0&1&0&0&3&1&0&0\\ 
\\ 
$\phi_{700,28}$&700&385&215&145&120&85&40&66&48&38&25&10&2&26&20&14&13&6&2&3&0&10&6&5&3\\ 
\\ 
$\phi_{700,6}$&700&245&85&30&29&10&0&10&3&1&0&0&0&1&0&0&0&0&0&0&0&0&0&0&0\\ 
\\ 
$\phi_{700,42}$&700&455&295&240&191&155&110&124&100&81&71&45&36&65&52&46&37&29&24&17&10&34&24&17&19\\ 
\\ 
$\phi_{1400,20}$&1400&700&340&240&160&110&60&74&48&32&25&10&7&21&12&10&6&3&3&1&0&5&2&1&1\\ 
\\ 
$\phi_{840,14}$&840&378&174&90&80&45&10&36&21&15&6&0&0&9&6&3&3&0&0&0&0&2&1&1&0\\ 
\\ 
$\phi_{840,26}$&840&462&258&174&148&99&50&88&59&39&30&10&10&37&25&19&13&7&7&1&1&17&9&5&5\\ 
\\ 
$\phi_{1680,22}$&1680&840&400&300&180&130&80&76&52&34&30&14&10&19&11&10&5&4&3&1&0&3&1&0&1\\ 
\\ 
$\phi_{972,12}$&972&405&171&81&72&36&6&30&15&9&3&0&0&6&3&1&1&0&0&0&0&1&0&0&0\\ 
\\ 
$\phi_{972,32}$&972&567&333&243&198&144&84&120&87&63&51&24&18&54&39&32&23&15&12&6&3&25&15&9&10\\ 
\\ 
$\phi_{1050,10}$&1050&420&170&75&71&30&5&31&13&4&2&0&0&6&2&1&0&0&0&0&0&1&0&0&0\\ 
\\ 
$\phi_{1050,34}$&1050&630&380&285&229&175&105&137&106&84&66&35&19&63&50&40&33&22&13&12&3&29&19&13&13\\ 
\\ 
$\phi_{2100,20}$&2100&1050&510&360&240&165&90&108&73&48&37&15&12&30&20&14&9&5&3&1&1&7&3&1&1\\ 
\\ 
$\phi_{1344,8}$&1344&504&184&84&64&29&4&20&9&4&1&0&0&2&1&0&0&0&0&0&0&0&0&0&0\\ 
\\ 
$\phi_{1344,38}$&1344&840&520&420&320&255&180&196&155&120&107&66&56&94&73&64&48&39&32&20&14&44&29&18&23\\ 
\\ 
$\phi_{2688,20}$&2688&1344&672&432&336&216&96&168&108&68&48&16&4&54&34&24&14&8&2&2&0&17&7&2&4\\ 
\\ 
$\phi_{1400,8}$&1400&560&220&110&86&40&10&34&15&5&3&0&0&6&2&1&0&0&0&0&0&1&0&0&0\\ 
\\ 
$\phi_{1400,32}$&1400&840&500&390&294&230&150&170&133&105&87&50&34&75&59&49&39&28&19&15&6&32&21&14&15\\ 
\\ 
$\phi_{1575,10}$&1575&630&240&135&87&45&15&30&14&5&4&0&0&4&1&1&0&0&0&0&0&0&0&0&0\\ 
\\ 
$\phi_{1575,34}$&1575&945&555&450&318&255&180&177&140&110&97&60&48&74&57&50&38&30&24&16&9&28&18&11&14\\ 
\\ 
$\phi_{3150,18}$&3150&1575&795&495&405&255&105&207&133&85&56&15&6&69&46&30&20&9&3&1&0&24&11&5&5\\ 
\\ 
$\phi_{2100,16}$&2100&945&405&270&163&105&50&60&37&22&16&5&2&11&6&4&2&1&0&0&0&1&0&0&0\\ 
\\ 
$\phi_{2100,28}$&2100&1155&615&480&317&240&160&158&116&82&73&40&34&54&37&32&20&16&13&6&4&16&8&3&6\\ 
\\ 
$\phi_{4200,18}$&4200&2100&1060&660&540&340&140&278&176&114&75&20&9&92&59&40&27&11&6&2&0&31&14&7&6\\ 
\\ 
$\phi_{2240,10}$&2240&952&408&204&176&89&20&76&39&20&9&0&0&17&9&4&2&0&0&0&0&4&1&0&0\\ 
\\ 
$\phi_{2240,28}$&2240&1288&744&540&432&315&180&252&185&136&107&50&32&109&81&64&48&31&20&12&4&48&29&18&19\\ 
\\ 
$\phi_{4480,16}$&4480&2240&1120&720&560&360&160&280&180&116&80&24&12&90&58&40&26&12&6&2&0&29&13&6&6\\ 
\\ 
$\phi_{2268,10}$&2268&945&381&216&147&81&26&54&28&15&8&1&0&9&4&2&1&0&0&0&0&1&0&0&0\\ 
\\ 
$\phi_{2268,30}$&2268&1323&759&594&429&330&220&240&181&135&117&66&54&99&72&62&44&33&27&15&9&39&23&13&17\\ 
\\ 
$\phi_{4536,18}$&4536&2268&1116&756&540&360&180&258&168&108&81&30&15&78&48&36&21&12&6&3&0&22&9&3&5\\ 
\\ 
$\phi_{2835,14}$&2835&1323&627&351&303&171&55&150&86&48&28&5&0&45&26&15&8&3&0&0&0&15&5&1&2\\ 
\\ 
$\phi_{2835,22}$&2835&1512&816&540&444&300&140&243&167&117&81&30&9&93&66&47&34&18&6&6&0&37&20&11&11\\ 
\\ 
$\phi_{5670,18}$&5670&2835&1395&945&675&450&225&321&210&135&102&36&21&96&60&45&27&15&9&3&0&26&11&4&6\\ 
\\ 
$\phi_{3200,16}$&3200&1520&720&440&336&210&80&152&96&64&38&10&0&41&27&16&12&4&0&1&0&10&4&2&1\\ 
\\ 
$\phi_{3200,22}$&3200&1680&880&600&464&310&160&248&164&104&82&30&24&89&57&44&26&16&12&3&2&32&15&6&9\\ 
\\ 
$\phi_{4096,12}$&4096&1792&768&448&320&184&64&128&72&40&24&4&0&26&14&8&4&1&0&0&0&4&1&0&0\\ 
\\ 
$\phi_{4096,26}$&4096&2304&1280&960&704&520&320&384&280&200&168&84&64&150&106&88&60&43&32&16&8&56&31&16&22\\ 
\\ 
$\phi_{4200,12}$&4200&1890&850&480&382&215&70&172&96&53&31&5&0&43&23&14&7&2&0&0&0&10&3&1&1\\ 
\\ 
$\phi_{4200,24}$&4200&2310&1270&900&698&495&270&384&272&193&149&65&38&150&106&82&58&36&22&13&3&59&32&17&20\\ 
\\ 
$\phi_{6075,14}$&6075&2835&1305&810&594&360&150&267&159&90&63&15&9&69&39&26&13&6&3&0&0&16&5&1&2\\ 
\\ 
$\phi_{6075,22}$&6075&3240&1710&1215&891&630&345&456&321&225&174&75&45&159&111&85&59&36&21&12&3&52&27&14&16\\ 
\\ 
$\phi_{8,1}$&8&1&0&0&0&0&0&0&0&0&0&0&0&0&0&0&0&0&0&0&0&0&0&0&0\\ 
\\ 
$\phi_{8,91}$&8&7&6&6&5&5&5&4&4&4&4&4&4&3&3&3&3&3&3&3&3&2&2&2&2\\ 
\\ 
$\phi_{56,19}$&56&21&6&6&1&1&1&0&0&0&0&0&0&0&0&0&0&0&0&0&0&0&0&0&0\\ 
\\ 
$\phi_{56,49}$&56&35&20&20&10&10&10&4&4&4&4&4&4&1&1&1&1&1&1&1&1&0&0&0&0\\ 
\\ 
$\phi_{112,3}$&112&28&6&1&1&0&0&0&0&0&0&0&0&0&0&0&0&0&0&0&0&0&0&0&0\\ 
\\ 
$\phi_{112,63}$&112&84&62&57&45&41&36&32&29&26&25&21&20&20&18&17&15&14&13&11&10&12&10&8&9\\ 
\\ 
$\phi_{160,7}$&160&48&12&6&2&1&0&0&0&0&0&0&0&0&0&0&0&0&0&0&0&0&0&0&0\\ 
\\ 
$\phi_{160,55}$&160&112&76&70&50&45&40&32&28&24&24&20&20&17&14&14&11&11&11&8&8&8&6&4&6\\ 
\\ 
$\phi_{448,25}$&448&224&104&84&44&34&24&16&12&8&8&4&4&3&2&2&1&1&1&0&0&0&0&0&0\\ 
\\ 
$\phi_{400,7}$&400&140&50&15&19&5&0&8&2&0&0&0&0&1&0&0&0&0&0&0&0&0&0&0&0\\ 
\\ 
$\phi_{400,43}$&400&260&170&135&111&90&60&72&59&50&41&25&16&38&32&27&24&17&12&11&4&20&15&12&11\\ 
\\ 
$\phi_{448,9}$&448&168&64&24&24&10&0&8&4&2&0&0&0&1&1&0&0&0&0&0&0&0&0&0&0\\ 
\\ 
$\phi_{448,39}$&448&280&176&136&112&86&56&72&56&42&36&20&16&37&29&24&18&14&10&6&4&20&13&8&10\\ 
\\ 
$\phi_{560,5}$&560&182&56&20&16&5&0&4&1&0&0&0&0&0&0&0&0&0&0&0&0&0&0&0&0\\ 
\\ 
$\phi_{560,47}$&560&378&252&216&166&141&110&108&91&76&70&50&44&58&48&44&36&31&27&20&15&30&22&16&19\\ 
\\ 
$\phi_{1344,19}$&1344&672&344&204&180&110&40&96&60&40&24&4&4&33&22&14&11&3&3&0&0&12&6&4&2\\ 
\\ 
$\phi_{840,13}$&840&357&148&84&58&34&10&20&12&8&4&0&0&3&2&1&1&0&0&0&0&0&0&0&0\\ 
\\ 
$\phi_{840,31}$&840&483&274&210&155&115&75&88&64&44&40&20&20&36&25&22&14&11&11&3&3&14&8&4&6\\ 
\\ 
$\phi_{1008,9}$&1008&378&132&72&42&21&6&12&5&2&1&0&0&1&0&0&0&0&0&0&0&0&0&0&0\\ 
\\ 
$\phi_{1008,39}$&1008&630&384&324&228&189&144&132&107&86&79&54&48&59&46&42&32&27&24&16&12&24&16&10&13\\ 
\\ 
$\phi_{2016,19}$&2016&1008&516&306&270&165&60&144&91&58&35&9&0&51&34&21&14&6&0&1&0&20&9&4&4\\ 
\\ 
$\phi_{1296,13}$&1296&540&210&135&75&45&20&24&13&6&5&1&0&3&1&1&0&0&0&0&0&0&0&0&0\\ 
\\ 
$\phi_{1296,33}$&1296&756&426&351&231&186&136&120&94&72&66&42&36&45&33&30&21&18&15&9&6&14&8&4&7\\ 
\\ 
$\phi_{1400,11}$&1400&595&260&120&118&55&10&56&27&12&5&0&0&14&7&3&1&0&0&0&0&4&1&0&0\\ 
\\ 
$\phi_{1400,29}$&1400&805&470&330&277&200&105&164&121&92&67&30&12&73&56&42&34&20&9&9&1&34&21&14&13\\ 
\\ 
$\phi_{1400,7}$&1400&525&190&90&65&30&5&20&9&4&1&0&0&2&1&0&0&0&0&0&0&0&0&0&0\\ 
\\ 
$\phi_{1400,37}$&1400&875&540&440&330&265&190&200&159&124&111&70&60&95&74&65&49&40&33&21&15&44&29&18&23\\ 
\\ 
$\phi_{2400,17}$&2400&1140&520&360&228&150&80&96&60&34&29&10&8&23&12&10&4&3&2&0&0&4&1&0&1\\ 
\\ 
$\phi_{2400,23}$&2400&1260&640&480&312&230&140&144&104&74&61&30&20&43&30&24&16&11&7&4&1&10&5&2&3\\ 
\\ 
$\phi_{2800,13}$&2800&1260&550&345&233&140&60&96&55&30&21&5&4&21&11&7&3&1&1&0&0&4&1&0&0\\ 
\\ 
$\phi_{2800,25}$&2800&1540&830&625&437&325&200&224&164&120&98&50&36&80&57&46&33&22&15&9&4&26&14&8&9\\ 
\\ 
$\phi_{5600,19}$&5600&2800&1380&930&670&445&220&320&209&134&101&35&20&96&60&45&27&15&9&3&0&26&11&4&6\\ 
\\ 
$\phi_{3240,9}$&3240&1323&534&270&213&105&25&84&40&18&9&0&0&15&6&3&1&0&0&0&0&2&0&0&0\\ 
\\ 
$\phi_{3240,31}$&3240&1917&1128&864&660&504&320&384&292&222&185&100&72&168&127&106&80&57&42&27&12&72&45&28&32\\ 
\\ 
$\phi_{3360,13}$&3360&1512&676&390&302&170&60&136&75&38&25&5&0&34&17&11&4&2&0&0&0&8&2&0&1\\ 
\\ 
$\phi_{3360,25}$&3360&1848&1012&726&550&396&220&296&213&154&119&55&28&113&81&63&45&29&15&12&2&42&23&12&15\\ 
\\ 
$\phi_{7168,17}$&7168&3584&1792&1152&896&576&256&448&288&184&128&40&16&144&92&64&40&20&8&4&0&46&20&8&10\\ 
\\ 
$\phi_{4096,11}$&4096&1792&768&448&320&184&64&128&72&40&24&4&0&26&14&8&4&1&0&0&0&4&1&0&0\\ 
\\ 
$\phi_{4096,27}$&4096&2304&1280&960&704&520&320&384&280&200&168&84&64&150&106&88&60&43&32&16&8&56&31&16&22\\ 
\\ 
$\phi_{4200,15}$&4200&1995&950&570&457&270&105&224&131&74&50&10&8&66&37&25&13&5&4&0&0&20&7&2&3\\ 
\\ 
$\phi_{4200,21}$&4200&2205&1160&780&608&415&200&316&217&154&108&40&16&111&78&56&42&21&10&7&0&38&20&12&10\\ 
\\ 
$\phi_{4536,13}$&4536&2079&960&540&444&255&80&204&119&72&39&6&0&54&33&18&12&3&0&0&0&14&5&2&1\\ 
\\ 
$\phi_{4536,23}$&4536&2457&1338&918&735&504&259&408&281&192&145&56&36&159&110&83&57&33&21&9&3&64&34&18&20\\ 
\\ 
$\phi_{5600,15}$&5600&2660&1240&800&564&360&160&248&156&98&67&20&8&64&39&26&16&7&2&1&0&14&5&2&2\\ 
\\ 
$\phi_{5600,21}$&5600&2940&1520&1080&776&540&300&392&268&178&143&60&44&132&87&68&42&27&19&7&3&42&20&8&12\\ 
\end{longtable} 

\begin{longtable}{c|p{0.275cm}p{0.275cm}p{0.275cm}p{0.275cm}p{0.275cm}p{0.275cm}p{0.275cm}p{0.275cm}p{0.275cm}p{0.275cm}p{0.275cm}p{0.275cm}p{0.275cm}p{0.275cm}p{0.275cm}p{0.275cm}|p{0.275cm}p{0.275cm}p{0.275cm}p{0.275cm}p{0.275cm}p{0.275cm}p{0.275cm}p{0.275cm}p{0.275cm}} 
&$L_{26}$&$L_{27}$&$L_{28}$&$L_{29}$&$L_{30}$&$L_{31}$&$L_{32}$&$L_{33}$&$L_{34}$&$L_{35}$&$L_{36}$&$L_{37}$&$L_{38}$&$L_{39}$&$L_{40}$&$L_{41}$&${\widehat{L}}_{1}$&${\widehat{L}}_{2}$&${\widehat{L}}_{3}$&${\widehat{L}}_{4}$&${\widehat{L}}_{5}$&${\widehat{L}}_{6}$&${\widehat{L}}_{7}$&${\widehat{L}}_{8}$&${\widehat{L}}_{9}$\\ 
Irrep label&$A_4+A_2$&$D_4+A_2$&$A_5+A_1$&$D_5+A_1$&$A_6$&$D_6$&$E_6$&$A_4+A_2+A_1$&$A_4+A_3$&$D_5+A_2$&$A_6+A_1$&$E_6+A_1$&$A_7$&$D_7$&$E_7$&$E_8$&$4A_1$&$5A_1$&$A_3+2A_1$&$A_2+4A_1$&$3A_2$&$A_3+3A_1$&$2A_3$&$D_4+2A_1$&$A_5+A_1$\\\hline 
\endhead 
$\phi_{1,0}$&0&0&0&0&0&0&0&0&0&0&0&0&0&0&0&0&0&0&0&0&0&0&0&0&0\\ 
\\ 
$\phi_{1,120}$&1&1&1&1&1&1&1&1&1&1&1&1&1&1&1&1&1&1&1&1&1&1&1&1&1\\ 
\\ 
$\phi_{28,8}$&0&0&0&0&0&0&0&0&0&0&0&0&0&0&0&0&0&0&0&0&0&0&0&0&0\\ 
\\ 
$\phi_{28,68}$&1&1&1&1&1&1&1&0&0&0&0&0&0&0&0&0&6&3&3&1&1&1&1&1&1\\ 
\\ 
$\phi_{35,2}$&0&0&0&0&0&0&0&0&0&0&0&0&0&0&0&0&0&0&0&0&0&0&0&0&0\\ 
\\ 
$\phi_{35,74}$&4&4&4&4&3&3&3&3&2&2&2&2&1&1&1&0&13&10&8&7&5&6&4&5&4\\ 
\\ 
$\phi_{70,32}$&0&0&0&0&0&0&0&0&0&0&0&0&0&0&0&0&1&0&0&0&0&0&0&0&0\\ 
\\ 
$\phi_{50,8}$&0&0&0&0&0&0&0&0&0&0&0&0&0&0&0&0&2&1&0&0&0&0&0&0&0\\ 
\\ 
$\phi_{50,56}$&3&3&2&1&2&1&0&2&2&1&1&0&1&1&0&0&14&10&7&6&4&5&5&3&3\\ 
\\ 
$\phi_{84,4}$&0&0&0&0&0&0&0&0&0&0&0&0&0&0&0&0&1&0&0&0&0&0&0&0&0\\ 
\\ 
$\phi_{84,64}$&7&6&6&5&4&3&3&5&4&3&3&2&2&1&1&0&27&20&15&13&10&11&8&8&7\\ 
\\ 
$\phi_{168,24}$&0&1&0&0&0&0&0&0&0&0&0&0&0&0&0&0&16&10&3&4&1&2&2&1&0\\ 
\\ 
$\phi_{175,12}$&0&0&0&0&0&0&0&0&0&0&0&0&0&0&0&0&6&3&0&1&1&0&0&0&0\\ 
\\ 
$\phi_{175,36}$&3&1&2&0&1&0&0&2&2&0&1&0&1&0&0&0&25&16&7&8&7&5&3&1&1\\ 
\\ 
$\phi_{210,4}$&0&0&0&0&0&0&0&0&0&0&0&0&0&0&0&0&0&0&0&0&0&0&0&0&0\\ 
\\ 
$\phi_{210,52}$&7&7&6&5&4&3&2&4&3&2&2&1&1&1&0&0&44&29&19&16&10&12&7&8&5\\ 
\\ 
$\phi_{420,20}$&1&0&0&0&0&0&0&1&1&0&0&0&0&0&0&0&32&19&4&8&4&3&1&0&0\\ 
\\ 
$\phi_{300,8}$&0&0&0&0&0&0&0&0&0&0&0&0&0&0&0&0&2&0&0&0&0&0&0&0&0\\ 
\\ 
$\phi_{300,44}$&5&5&4&4&1&1&1&4&2&2&1&1&0&0&0&0&52&35&19&19&10&13&5&8&4\\ 
\\ 
$\phi_{350,14}$&0&0&0&0&0&0&0&0&0&0&0&0&0&0&0&0&3&0&0&0&0&0&0&0&0\\ 
\\ 
$\phi_{350,38}$&1&1&1&1&0&0&0&0&0&0&0&0&0&0&0&0&33&15&9&4&2&3&1&2&1\\ 
\\ 
$\phi_{525,12}$&0&0&0&0&0&0&0&0&0&0&0&0&0&0&0&0&13&3&1&0&1&0&0&0&0\\ 
\\ 
$\phi_{525,36}$&4&2&2&1&1&0&1&1&1&0&0&0&0&0&0&0&62&31&19&11&9&8&6&3&4\\ 
\\ 
$\phi_{567,6}$&0&0&0&0&0&0&0&0&0&0&0&0&0&0&0&0&3&0&0&0&0&0&0&0&0\\ 
\\ 
$\phi_{567,46}$&11&9&9&7&5&3&3&5&3&2&2&1&1&0&0&0&96&57&37&27&18&20&11&12&9\\ 
\\ 
$\phi_{1134,20}$&1&1&0&0&0&0&0&0&0&0&0&0&0&0&0&0&63&27&9&6&3&3&1&1&0\\ 
\\ 
$\phi_{700,16}$&0&0&0&0&0&0&0&0&0&0&0&0&0&0&0&0&42&23&5&7&2&3&1&1&0\\ 
\\ 
$\phi_{700,28}$&3&2&1&0&1&0&0&1&1&0&0&0&0&0&0&0&72&38&18&14&7&9&8&2&3\\ 
\\ 
$\phi_{700,6}$&0&0&0&0&0&0&0&0&0&0&0&0&0&0&0&0&9&3&0&0&0&0&0&0&0\\ 
\\ 
$\phi_{700,42}$&15&13&11&7&6&3&1&10&7&4&4&1&2&1&0&0&123&81&45&43&27&30&16&16&10\\ 
\\ 
$\phi_{1400,20}$&0&1&0&0&0&0&0&0&0&0&0&0&0&0&0&0&72&34&9&10&2&4&0&1&0\\ 
\\ 
$\phi_{840,14}$&0&0&0&0&0&0&0&0&0&0&0&0&0&0&0&0&36&15&3&3&1&1&1&0&0\\ 
\\ 
$\phi_{840,26}$&4&4&1&1&0&0&0&3&2&1&0&0&0&0&0&0&88&55&19&25&12&13&5&5&1\\ 
\\ 
$\phi_{1680,22}$&0&0&0&0&0&0&0&0&0&0&0&0&0&0&0&0&76&30&10&6&1&3&0&1&0\\ 
\\ 
$\phi_{972,12}$&0&0&0&0&0&0&0&0&0&0&0&0&0&0&0&0&36&15&3&3&0&1&1&0&0\\ 
\\ 
$\phi_{972,32}$&7&6&4&2&1&1&0&5&3&1&1&0&0&0&0&0&126&78&36&36&18&23&12&10&6\\ 
\\ 
$\phi_{1050,10}$&0&0&0&0&0&0&0&0&0&0&0&0&0&0&0&0&35&16&2&3&0&1&0&0&0\\ 
\\ 
$\phi_{1050,34}$&11&7&7&2&4&1&0&6&4&1&2&0&1&0&0&0&141&83&43&37&23&25&15&9&9\\ 
\\ 
$\phi_{2100,20}$&1&0&0&0&0&0&0&0&0&0&0&0&0&0&0&0&116&49&18&12&5&6&3&1&1\\ 
\\ 
$\phi_{1344,8}$&0&0&0&0&0&0&0&0&0&0&0&0&0&0&0&0&24&6&1&0&0&0&0&0&0\\ 
\\ 
$\phi_{1344,38}$&17&13&12&8&5&2&2&10&6&3&3&1&1&0&0&0&200&122&67&58&34&40&20&20&14\\ 
\\ 
$\phi_{2688,20}$&2&0&1&0&0&0&0&1&0&0&0&0&0&0&0&0&176&88&28&28&10&14&4&2&2\\ 
\\ 
$\phi_{1400,8}$&0&0&0&0&0&0&0&0&0&0&0&0&0&0&0&0&30&12&0&2&0&0&0&0&0\\ 
\\ 
$\phi_{1400,32}$&12&8&8&3&4&1&0&6&4&1&2&0&1&0&0&0&166&94&46&40&25&25&12&9&6\\ 
\\ 
$\phi_{1575,10}$&0&0&0&0&0&0&0&0&0&0&0&0&0&0&0&0&27&9&0&1&0&0&0&0&0\\ 
\\ 
$\phi_{1575,34}$&10&8&7&4&3&1&0&4&2&1&1&0&0&0&0&0&174&93&48&36&21&23&10&10&6\\ 
\\ 
$\phi_{3150,18}$&4&1&1&0&0&0&0&2&1&0&0&0&0&0&0&0&201&102&27&34&18&14&4&1&0\\ 
\\ 
$\phi_{2100,16}$&0&0&0&0&0&0&0&0&0&0&0&0&0&0&0&0&63&21&5&3&0&1&0&0&0\\ 
\\ 
$\phi_{2100,28}$&3&2&2&1&0&0&0&1&0&0&0&0&0&0&0&0&161&77&34&24&10&14&4&5&3\\ 
\\ 
$\phi_{4200,18}$&4&3&1&0&0&0&0&2&1&0&0&0&0&0&0&0&284&148&43&50&19&23&8&4&2\\ 
\\ 
$\phi_{2240,10}$&0&0&0&0&0&0&0&0&0&0&0&0&0&0&0&0&72&30&3&6&2&1&0&0&0\\ 
\\ 
$\phi_{2240,28}$&15&10&8&3&3&0&0&9&6&2&2&0&1&0&0&0&248&146&61&64&36&37&16&12&6\\ 
\\ 
$\phi_{4480,16}$&4&2&1&0&0&0&0&2&1&0&0&0&0&0&0&0&272&136&36&44&18&18&4&2&0\\ 
\\ 
$\phi_{2268,10}$&0&0&0&0&0&0&0&0&0&0&0&0&0&0&0&0&51&18&1&3&0&0&0&0&0\\ 
\\ 
$\phi_{2268,30}$&11&9&7&4&2&1&0&6&3&1&1&0&0&0&0&0&237&132&60&54&27&32&12&13&6\\ 
\\ 
$\phi_{4536,18}$&2&1&1&0&0&0&0&1&0&0&0&0&0&0&0&0&252&120&33&36&12&15&2&2&0\\ 
\\ 
$\phi_{2835,14}$&1&0&0&0&0&0&0&1&0&0&0&0&0&0&0&0&147&75&14&24&9&8&1&0&0\\ 
\\ 
$\phi_{2835,22}$&8&3&4&0&1&0&0&5&3&0&1&0&0&0&0&0&240&132&45&51&27&26&10&4&3\\ 
\\ 
$\phi_{5670,18}$&3&2&1&0&0&0&0&1&0&0&0&0&0&0&0&0&315&147&42&42&15&18&3&3&0\\ 
\\ 
$\phi_{3200,16}$&1&0&0&0&0&0&0&0&0&0&0&0&0&0&0&0&160&68&20&16&7&7&4&0&1\\ 
\\ 
$\phi_{3200,22}$&5&3&2&1&0&0&0&3&1&0&0&0&0&0&0&0&256&140&48&52&21&27&8&8&3\\ 
\\ 
$\phi_{4096,12}$&0&0&0&0&0&0&0&0&0&0&0&0&0&0&0&0&128&48&8&8&2&2&0&0&0\\ 
\\ 
$\phi_{4096,26}$&14&10&8&4&2&0&0&7&3&1&1&0&0&0&0&0&384&208&88&80&38&46&16&16&8\\ 
\\ 
$\phi_{4200,12}$&0&0&0&0&0&0&0&0&0&0&0&0&0&0&0&0&170&77&13&19&5&6&0&0&0\\ 
\\ 
$\phi_{4200,24}$&14&9&7&2&2&0&0&8&4&1&1&0&0&0&0&0&382&211&81&83&42&45&16&12&7\\ 
\\ 
$\phi_{6075,14}$&1&0&0&0&0&0&0&0&0&0&0&0&0&0&0&0&270&120&27&30&9&11&1&1&0\\ 
\\ 
$\phi_{6075,22}$&11&6&5&1&1&0&0&4&2&0&0&0&0&0&0&0&459&228&87&75&36&40&15&9&6\\ 
\\ 
$\phi_{8,1}$&0&0&0&0&0&0&0&0&0&0&0&0&0&0&0&0&0&0&0&0&0&0&0&0&0\\ 
\\ 
$\phi_{8,91}$&2&2&2&2&2&2&2&1&1&1&1&1&1&1&1&0&4&3&3&2&2&2&2&2&2\\ 
\\ 
$\phi_{56,19}$&0&0&0&0&0&0&0&0&0&0&0&0&0&0&0&0&0&0&0&0&0&0&0&0&0\\ 
\\ 
$\phi_{56,49}$&0&0&0&0&0&0&0&0&0&0&0&0&0&0&0&0&4&1&1&0&0&0&0&0&0\\ 
\\ 
$\phi_{112,3}$&0&0&0&0&0&0&0&0&0&0&0&0&0&0&0&0&0&0&0&0&0&0&0&0&0\\ 
\\ 
$\phi_{112,63}$&8&7&7&6&5&4&4&5&4&3&3&2&2&1&1&0&32&22&17&13&11&11&8&8&7\\ 
\\ 
$\phi_{160,7}$&0&0&0&0&0&0&0&0&0&0&0&0&0&0&0&0&0&0&0&0&0&0&0&0&0\\ 
\\ 
$\phi_{160,55}$&4&4&4&4&2&2&2&2&1&1&1&1&0&0&0&0&32&20&14&10&6&8&4&6&4\\ 
\\ 
$\phi_{448,25}$&0&0&0&0&0&0&0&0&0&0&0&0&0&0&0&0&16&4&2&0&0&0&0&0&0\\ 
\\ 
$\phi_{400,7}$&0&0&0&0&0&0&0&0&0&0&0&0&0&0&0&0&8&4&0&1&0&0&0&0&0\\ 
\\ 
$\phi_{400,43}$&10&8&7&3&5&2&0&6&5&2&3&0&2&1&0&0&72&46&27&24&17&17&12&8&7\\ 
\\ 
$\phi_{448,9}$&0&0&0&0&0&0&0&0&0&0&0&0&0&0&0&0&8&2&0&0&1&0&0&0&0\\ 
\\ 
$\phi_{448,39}$&8&5&5&3&2&0&1&6&4&2&2&1&1&0&0&0&72&46&24&24&17&16&8&6&5\\ 
\\ 
$\phi_{560,5}$&0&0&0&0&0&0&0&0&0&0&0&0&0&0&0&0&4&1&0&0&0&0&0&0&0\\ 
\\ 
$\phi_{560,47}$&15&13&12&9&7&4&3&9&6&4&4&2&2&1&0&0&108&69&44&36&24&27&16&16&12\\ 
\\ 
$\phi_{1344,19}$&2&2&0&0&0&0&0&1&1&0&0&0&0&0&0&0&96&52&14&18&8&8&4&2&0\\ 
\\ 
$\phi_{840,13}$&0&0&0&0&0&0&0&0&0&0&0&0&0&0&0&0&20&5&1&0&0&0&0&0&0\\ 
\\ 
$\phi_{840,31}$&4&4&2&2&0&0&0&2&1&1&0&0&0&0&0&0&88&50&22&20&10&12&4&6&2\\ 
\\ 
$\phi_{1008,9}$&0&0&0&0&0&0&0&0&0&0&0&0&0&0&0&0&12&3&0&0&0&0&0&0&0\\ 
\\ 
$\phi_{1008,39}$&9&8&7&5&3&2&1&4&2&1&1&0&0&0&0&0&132&75&42&32&18&22&10&12&7\\ 
\\ 
$\phi_{2016,19}$&3&0&1&0&0&0&0&2&1&0&0&0&0&0&0&0&144&78&21&29&15&13&4&0&1\\ 
\\ 
$\phi_{1296,13}$&0&0&0&0&0&0&0&0&0&0&0&0&0&0&0&0&24&6&1&0&0&0&0&0&0\\ 
\\ 
$\phi_{1296,33}$&4&3&3&2&1&0&0&1&0&0&0&0&0&0&0&0&120&60&30&21&9&13&4&6&3\\ 
\\ 
$\phi_{1400,11}$&0&0&0&0&0&0&0&0&0&0&0&0&0&0&0&0&56&28&3&8&2&2&0&0&0\\ 
\\ 
$\phi_{1400,29}$&11&6&6&1&3&0&0&7&5&1&2&0&1&0&0&0&164&97&42&44&26&26&14&6&6\\ 
\\ 
$\phi_{1400,7}$&0&0&0&0&0&0&0&0&0&0&0&0&0&0&0&0&20&5&0&0&0&0&0&0&0\\ 
\\ 
$\phi_{1400,37}$&17&13&12&8&5&2&2&10&6&3&3&1&1&0&0&0&200&120&65&56&34&38&18&18&12\\ 
\\ 
$\phi_{2400,17}$&0&0&0&0&0&0&0&0&0&0&0&0&0&0&0&0&96&38&10&8&1&3&0&0&0\\ 
\\ 
$\phi_{2400,23}$&2&1&1&0&0&0&0&0&0&0&0&0&0&0&0&0&144&62&24&16&7&8&2&2&1\\ 
\\ 
$\phi_{2800,13}$&0&0&0&0&0&0&0&0&0&0&0&0&0&0&0&0&96&38&7&7&2&2&0&0&0\\ 
\\ 
$\phi_{2800,25}$&6&4&3&1&1&0&0&2&1&0&0&0&0&0&0&0&224&112&46&38&17&21&8&6&3\\ 
\\ 
$\phi_{5600,19}$&3&2&1&0&0&0&0&1&0&0&0&0&0&0&0&0&320&150&45&43&15&19&4&4&1\\ 
\\ 
$\phi_{3240,9}$&0&0&0&0&0&0&0&0&0&0&0&0&0&0&0&0&84&33&3&6&0&1&0&0&0\\ 
\\ 
$\phi_{3240,31}$&24&18&15&7&6&2&0&13&8&3&3&0&1&0&0&0&384&222&106&96&54&60&28&24&15\\ 
\\ 
$\phi_{3360,13}$&0&0&0&0&0&0&0&0&0&0&0&0&0&0&0&0&136&62&11&16&4&5&0&0&0\\ 
\\ 
$\phi_{3360,25}$&10&5&6&1&2&0&0&5&2&0&1&0&0&0&0&0&296&158&63&60&30&33&12&8&6\\ 
\\ 
$\phi_{7168,17}$&6&2&2&0&0&0&0&3&1&0&0&0&0&0&0&0&448&224&64&72&28&32&8&4&2\\ 
\\ 
$\phi_{4096,11}$&0&0&0&0&0&0&0&0&0&0&0&0&0&0&0&0&128&48&8&8&2&2&0&0&0\\ 
\\ 
$\phi_{4096,27}$&14&10&8&4&2&0&0&7&3&1&1&0&0&0&0&0&384&208&88&80&38&46&16&16&8\\ 
\\ 
$\phi_{4200,15}$&1&1&0&0&0&0&0&1&0&0&0&0&0&0&0&0&224&112&25&34&11&13&2&2&0\\ 
\\ 
$\phi_{4200,21}$&8&5&3&0&1&0&0&3&2&0&0&0&0&0&0&0&316&163&56&56&27&28&12&6&3\\ 
\\ 
$\phi_{4536,13}$&1&0&0&0&0&0&0&0&0&0&0&0&0&0&0&0&204&93&18&24&9&8&2&0&0\\ 
\\ 
$\phi_{4536,23}$&14&9&6&2&1&0&0&9&5&1&1&0&0&0&0&0&408&228&83&90&45&48&18&12&6\\ 
\\ 
$\phi_{5600,15}$&1&0&0&0&0&0&0&0&0&0&0&0&0&0&0&0&248&104&26&24&8&9&2&0&0\\ 
\\ 
$\phi_{5600,21}$&7&4&3&1&0&0&0&3&1&0&0&0&0&0&0&0&392&196&68&64&28&32&8&8&3\\ 
\end{longtable} 

\begin{longtable}{c|p{0.275cm}p{0.275cm}p{0.275cm}p{0.275cm}p{0.275cm}p{0.275cm}p{0.275cm}p{0.275cm}p{0.275cm}p{0.275cm}p{0.275cm}p{0.275cm}p{0.275cm}p{0.275cm}p{0.275cm}p{0.275cm}p{0.275cm}|p{0.275cm}p{0.275cm}p{0.275cm}p{0.275cm}p{0.275cm}p{0.275cm}p{0.275cm}p{0.275cm}p{0.275cm}p{0.275cm}} 
&${\widehat{L}}_{10}$&${\widehat{L}}_{11}$&${\widehat{L}}_{12}$&${\widehat{L}}_{13}$&${\widehat{L}}_{14}$&${\widehat{L}}_{15}$&${\widehat{L}}_{16}$&${\widehat{L}}_{17}$&${\widehat{L}}_{18}$&${\widehat{L}}_{19}$&${\widehat{L}}_{20}$&${\widehat{L}}_{21}$&${\widehat{L}}_{22}$&${\widehat{L}}_{23}$&${\widehat{L}}_{24}$&${\widehat{L}}_{25}$&${\widehat{L}}_{26}$&-&-&-&-&-&-&-&-&-&-\\ 
Irrep label&$3A_2+A_1$&$A_3+A_2+2A_1$&$2A_3+A_1$&$D_4+A_3$&$A_5+2A_1$&$A_5+A_2$&$D_5+2A_1$&$D_6+A_1$&$A_7$&$2A_4$&$A_5+A_2+A_1$&$D_5+A_3$&$E_6+A_2$&$A_7+A_1$&$E_7+A_1$&$A_8$&$D_8$&$6A_1$&$7A_1$&$A_3+4A_1$&$D_4+3A_1$&$8A_1$&$4A_2$&$2A_3+2A_1$&$D_4+4A_1$&$2D_4$&$D_6+2A_1$\\\hline 
\endhead 
$\phi_{1,0}$&0&0&0&0&0&0&0&0&0&0&0&0&0&0&0&0&0&0&0&0&0&0&0&0&0&0&0\\ 
\\ 
$\phi_{1,120}$&1&1&1&1&1&1&1&1&1&1&1&1&1&1&1&1&1&1&1&1&1&1&1&1&1&1&1\\ 
\\ 
$\phi_{28,8}$&0&0&0&0&0&0&0&0&0&0&0&0&0&0&0&0&0&0&0&0&0&0&0&0&0&0&0\\ 
\\ 
$\phi_{28,68}$&0&0&0&0&0&0&0&0&0&0&0&0&0&0&0&0&0&1&0&0&0&0&0&0&0&0&0\\ 
\\ 
$\phi_{35,2}$&0&0&0&0&0&0&0&0&0&0&0&0&0&0&0&0&0&0&0&0&0&0&0&0&0&0&0\\ 
\\ 
$\phi_{35,74}$&4&4&3&2&3&2&3&2&1&1&2&1&1&1&1&0&0&8&7&5&4&7&3&3&4&1&2\\ 
\\ 
$\phi_{70,32}$&0&0&0&0&0&0&0&0&0&0&0&0&0&0&0&0&0&0&0&0&0&0&0&0&0&0&0\\ 
\\ 
$\phi_{50,8}$&0&0&0&0&0&0&0&0&0&0&0&0&0&0&0&0&0&1&1&0&0&1&0&0&0&0&0\\ 
\\ 
$\phi_{50,56}$&3&3&3&3&2&1&1&1&2&1&1&1&0&1&0&0&1&8&7&4&3&7&2&2&3&3&1\\ 
\\ 
$\phi_{84,4}$&0&0&0&0&0&0&0&0&0&0&0&0&0&0&0&0&0&0&0&0&0&0&0&0&0&0&0\\ 
\\ 
$\phi_{84,64}$&7&7&6&4&5&4&4&3&3&2&3&2&1&2&1&1&1&16&14&9&7&14&4&5&7&3&3\\ 
\\ 
$\phi_{168,24}$&1&1&1&1&0&0&0&0&0&0&0&0&0&0&0&0&0&8&7&2&1&7&1&1&1&1&0\\ 
\\ 
$\phi_{175,12}$&1&0&0&0&0&0&0&0&0&0&0&0&0&0&0&0&0&1&0&0&0&0&1&0&0&0&0\\ 
\\ 
$\phi_{175,36}$&4&3&2&1&1&2&0&0&0&1&1&0&0&0&0&1&0&10&7&3&1&7&2&1&1&1&0\\ 
\\ 
$\phi_{210,4}$&0&0&0&0&0&0&0&0&0&0&0&0&0&0&0&0&0&0&0&0&0&0&0&0&0&0&0\\ 
\\ 
$\phi_{210,52}$&6&6&4&3&3&2&3&1&0&1&1&1&0&0&0&0&0&19&14&8&5&14&2&3&5&2&1\\ 
\\ 
$\phi_{420,20}$&3&2&1&0&0&0&0&0&0&1&0&0&0&0&0&0&0&11&7&2&0&7&2&1&0&0&0\\ 
\\ 
$\phi_{300,8}$&0&0&0&0&0&0&0&0&0&0&0&0&0&0&0&0&0&0&0&0&0&0&0&0&0&0&0\\ 
\\ 
$\phi_{300,44}$&8&7&4&2&3&2&3&1&0&1&2&1&1&0&0&0&0&25&20&10&6&20&6&4&6&1&1\\ 
\\ 
$\phi_{350,14}$&0&0&0&0&0&0&0&0&0&0&0&0&0&0&0&0&0&0&0&0&0&0&0&0&0&0&0\\ 
\\ 
$\phi_{350,38}$&0&0&0&0&0&0&0&0&0&0&0&0&0&0&0&0&0&5&0&0&0&0&0&0&0&0&0\\ 
\\ 
$\phi_{525,12}$&0&0&0&0&0&0&0&0&0&0&0&0&0&0&0&0&0&1&0&0&0&0&0&0&0&0&0\\ 
\\ 
$\phi_{525,36}$&3&2&2&1&1&1&0&0&1&0&0&0&0&0&0&0&0&15&7&3&1&7&1&0&1&1&0\\ 
\\ 
$\phi_{567,6}$&0&0&0&0&0&0&0&0&0&0&0&0&0&0&0&0&0&0&0&0&0&0&0&0&0&0&0\\ 
\\ 
$\phi_{567,46}$&9&8&5&2&4&3&3&1&1&0&1&0&0&0&0&0&0&33&21&11&6&21&3&3&6&0&1\\ 
\\ 
$\phi_{1134,20}$&0&0&0&0&0&0&0&0&0&0&0&0&0&0&0&0&0&9&0&0&0&0&0&0&0&0&0\\ 
\\ 
$\phi_{700,16}$&1&1&1&0&0&0&0&0&0&0&0&0&0&0&0&0&0&17&14&3&1&14&0&1&1&0&0\\ 
\\ 
$\phi_{700,28}$&3&3&3&2&1&0&0&0&1&0&0&0&0&0&0&0&0&22&14&5&2&14&0&1&2&2&0\\ 
\\ 
$\phi_{700,6}$&0&0&0&0&0&0&0&0&0&0&0&0&0&0&0&0&0&1&0&0&0&0&0&0&0&0&0\\ 
\\ 
$\phi_{700,42}$&18&16&11&6&7&6&5&2&1&3&4&2&1&1&0&1&0&55&42&21&12&42&10&8&12&3&2\\ 
\\ 
$\phi_{1400,20}$&1&1&0&0&0&0&0&0&0&0&0&0&0&0&0&0&0&16&7&2&0&7&1&0&0&0&0\\ 
\\ 
$\phi_{840,14}$&0&0&0&0&0&0&0&0&0&0&0&0&0&0&0&0&0&5&0&0&0&0&0&0&0&0&0\\ 
\\ 
$\phi_{840,26}$&9&7&4&2&1&1&1&0&0&1&1&1&0&0&0&0&0&37&28&10&4&28&6&4&4&1&0\\ 
\\ 
$\phi_{1680,22}$&0&0&0&0&0&0&0&0&0&0&0&0&0&0&0&0&0&10&0&0&0&0&0&0&0&0&0\\ 
\\ 
$\phi_{972,12}$&0&0&0&0&0&0&0&0&0&0&0&0&0&0&0&0&0&9&6&1&0&6&0&0&0&0&0\\ 
\\ 
$\phi_{972,32}$&12&11&8&4&4&2&2&1&1&1&2&1&0&1&0&0&0&54&42&17&9&42&6&6&9&3&1\\ 
\\ 
$\phi_{1050,10}$&0&0&0&0&0&0&0&0&0&0&0&0&0&0&0&0&0&10&7&1&0&7&0&0&0&0&0\\ 
\\ 
$\phi_{1050,34}$&13&11&8&3&5&3&1&1&2&1&2&0&0&1&0&0&0&51&35&15&7&35&6&4&7&1&1\\ 
\\ 
$\phi_{2100,20}$&1&1&1&0&0&0&0&0&0&0&0&0&0&0&0&0&0&21&7&2&0&7&0&0&0&0&0\\ 
\\ 
$\phi_{1344,8}$&0&0&0&0&0&0&0&0&0&0&0&0&0&0&0&0&0&2&0&0&0&0&0&0&0&0&0\\ 
\\ 
$\phi_{1344,38}$&20&18&12&5&8&5&5&2&2&2&3&1&0&1&0&0&0&78&56&26&14&56&8&8&14&2&2\\ 
\\ 
$\phi_{2688,20}$&5&4&2&0&1&0&0&0&0&0&0&0&0&0&0&0&0&48&28&8&2&28&2&1&2&0&0\\ 
\\ 
$\phi_{1400,8}$&0&0&0&0&0&0&0&0&0&0&0&0&0&0&0&0&0&4&0&0&0&0&0&0&0&0&0\\ 
\\ 
$\phi_{1400,32}$&13&10&6&2&3&3&1&0&0&1&1&0&0&0&0&0&0&50&28&12&4&28&6&2&4&1&0\\ 
\\ 
$\phi_{1575,10}$&0&0&0&0&0&0&0&0&0&0&0&0&0&0&0&0&0&3&0&0&0&0&0&0&0&0&0\\ 
\\ 
$\phi_{1575,34}$&9&7&4&1&2&2&1&0&0&0&0&0&0&0&0&0&0&45&21&9&3&21&1&1&3&0&0\\ 
\\ 
$\phi_{3150,18}$&9&5&2&0&0&1&0&0&0&0&0&0&0&0&0&0&0&48&21&6&0&21&4&1&0&0&0\\ 
\\ 
$\phi_{2100,16}$&0&0&0&0&0&0&0&0&0&0&0&0&0&0&0&0&0&7&0&0&0&0&0&0&0&0&0\\ 
\\ 
$\phi_{2100,28}$&4&3&1&0&1&0&0&0&0&0&0&0&0&0&0&0&0&35&14&5&1&14&0&0&1&0&0\\ 
\\ 
$\phi_{4200,18}$&10&8&4&1&1&0&0&0&0&0&0&0&0&0&0&0&0&82&49&14&3&49&3&3&3&0&0\\ 
\\ 
$\phi_{2240,10}$&1&0&0&0&0&0&0&0&0&0&0&0&0&0&0&0&0&10&0&0&0&0&0&0&0&0&0\\ 
\\ 
$\phi_{2240,28}$&21&17&10&4&4&4&2&0&0&2&2&1&0&0&0&0&0&86&56&22&8&56&8&6&8&2&0\\ 
\\ 
$\phi_{4480,16}$&9&6&2&0&0&0&0&0&0&0&0&0&0&0&0&0&0&64&28&8&0&28&2&1&0&0&0\\ 
\\ 
$\phi_{2268,10}$&0&0&0&0&0&0&0&0&0&0&0&0&0&0&0&0&0&6&0&0&0&0&0&0&0&0&0\\ 
\\ 
$\phi_{2268,30}$&15&12&6&2&3&2&2&0&0&1&1&0&0&0&0&0&0&72&42&17&6&42&6&3&6&0&0\\ 
\\ 
$\phi_{4536,18}$&6&4&1&0&0&0&0&0&0&0&0&0&0&0&0&0&0&54&21&6&0&21&3&0&0&0&0\\ 
\\ 
$\phi_{2835,14}$&6&3&1&0&0&0&0&0&0&0&0&0&0&0&0&0&0&39&21&5&0&21&3&1&0&0&0\\ 
\\ 
$\phi_{2835,22}$&15&11&6&1&2&2&0&0&0&1&1&0&0&0&0&0&0&72&42&14&3&42&6&3&3&0&0\\ 
\\ 
$\phi_{5670,18}$&6&4&1&0&0&0&0&0&0&0&0&0&0&0&0&0&0&63&21&6&0&21&0&0&0&0&0\\ 
\\ 
$\phi_{3200,16}$&2&1&1&0&0&0&0&0&0&0&0&0&0&0&0&0&0&28&8&2&0&8&0&0&0&0&0\\ 
\\ 
$\phi_{3200,22}$&12&10&5&1&2&1&1&0&0&0&1&0&0&0&0&0&0&84&56&18&6&56&4&4&6&0&0\\ 
\\ 
$\phi_{4096,12}$&0&0&0&0&0&0&0&0&0&0&0&0&0&0&0&0&0&16&0&0&0&0&0&0&0&0&0\\ 
\\ 
$\phi_{4096,26}$&20&16&8&2&4&2&2&0&0&0&1&0&0&0&0&0&0&112&64&24&8&64&8&4&8&0&0\\ 
\\ 
$\phi_{4200,12}$&2&1&0&0&0&0&0&0&0&0&0&0&0&0&0&0&0&35&14&3&0&14&0&0&0&0&0\\ 
\\ 
$\phi_{4200,24}$&24&18&9&2&4&3&1&0&0&1&2&0&0&0&0&0&0&117&70&25&7&70&10&5&7&0&0\\ 
\\ 
$\phi_{6075,14}$&3&2&0&0&0&0&0&0&0&0&0&0&0&0&0&0&0&54&21&5&0&21&0&0&0&0&0\\ 
\\ 
$\phi_{6075,22}$&15&11&6&1&2&1&0&0&0&0&0&0&0&0&0&0&0&108&48&16&3&48&3&1&3&0&0\\ 
\\ 
$\phi_{8,1}$&0&0&0&0&0&0&0&0&0&0&0&0&0&0&0&0&0&0&0&0&0&0&0&0&0&0&0\\ 
\\ 
$\phi_{8,91}$&1&1&1&1&1&1&1&1&1&0&0&0&0&0&0&0&0&2&1&1&1&0&0&0&0&0&0\\ 
\\ 
$\phi_{56,19}$&0&0&0&0&0&0&0&0&0&0&0&0&0&0&0&0&0&0&0&0&0&0&0&0&0&0&0\\ 
\\ 
$\phi_{56,49}$&0&0&0&0&0&0&0&0&0&0&0&0&0&0&0&0&0&0&0&0&0&0&0&0&0&0&0\\ 
\\ 
$\phi_{112,3}$&0&0&0&0&0&0&0&0&0&0&0&0&0&0&0&0&0&0&0&0&0&0&0&0&0&0&0\\ 
\\ 
$\phi_{112,63}$&7&6&5&3&4&4&3&2&2&2&2&1&1&1&0&1&0&14&7&6&4&0&4&2&0&0&0\\ 
\\ 
$\phi_{160,7}$&0&0&0&0&0&0&0&0&0&0&0&0&0&0&0&0&0&0&0&0&0&0&0&0&0&0&0\\ 
\\ 
$\phi_{160,55}$&3&3&2&1&2&1&2&1&0&0&0&0&0&0&0&0&0&12&6&4&3&0&0&0&0&0&0\\ 
\\ 
$\phi_{448,25}$&0&0&0&0&0&0&0&0&0&0&0&0&0&0&0&0&0&0&0&0&0&0&0&0&0&0&0\\ 
\\ 
$\phi_{400,7}$&0&0&0&0&0&0&0&0&0&0&0&0&0&0&0&0&0&2&1&0&0&0&0&0&0&0&0\\ 
\\ 
$\phi_{400,43}$&10&9&7&4&4&4&2&1&2&2&2&1&0&1&0&1&0&28&14&10&4&0&4&4&0&0&0\\ 
\\ 
$\phi_{448,9}$&0&0&0&0&0&0&0&0&0&0&0&0&0&0&0&0&0&0&0&0&0&0&0&0&0&0&0\\ 
\\ 
$\phi_{448,39}$&12&9&6&2&4&4&2&0&1&2&3&1&1&1&0&1&0&28&14&10&3&0&8&4&0&0&0\\ 
\\ 
$\phi_{560,5}$&0&0&0&0&0&0&0&0&0&0&0&0&0&0&0&0&0&0&0&0&0&0&0&0&0&0&0\\ 
\\ 
$\phi_{560,47}$&15&13&9&5&7&5&5&2&2&2&3&1&1&1&0&0&0&42&21&15&8&0&8&4&0&0&0\\ 
\\ 
$\phi_{1344,19}$&4&3&2&1&0&0&0&0&0&0&0&0&0&0&0&0&0&28&14&4&1&0&0&0&0&0&0\\ 
\\ 
$\phi_{840,13}$&0&0&0&0&0&0&0&0&0&0&0&0&0&0&0&0&0&0&0&0&0&0&0&0&0&0&0\\ 
\\ 
$\phi_{840,31}$&5&4&2&1&1&1&1&0&0&0&0&0&0&0&0&0&0&28&14&6&3&0&0&0&0&0&0\\ 
\\ 
$\phi_{1008,9}$&0&0&0&0&0&0&0&0&0&0&0&0&0&0&0&0&0&0&0&0&0&0&0&0&0&0&0\\ 
\\ 
$\phi_{1008,39}$&9&8&5&2&3&2&2&1&0&0&1&0&0&0&0&0&0&42&21&11&6&0&4&2&0&0&0\\ 
\\ 
$\phi_{2016,19}$&9&6&3&0&1&1&0&0&0&0&1&0&0&0&0&0&0&42&21&8&0&0&4&2&0&0&0\\ 
\\ 
$\phi_{1296,13}$&0&0&0&0&0&0&0&0&0&0&0&0&0&0&0&0&0&0&0&0&0&0&0&0&0&0&0\\ 
\\ 
$\phi_{1296,33}$&3&3&1&0&1&0&1&0&0&0&0&0&0&0&0&0&0&30&15&6&3&0&0&0&0&0&0\\ 
\\ 
$\phi_{1400,11}$&1&1&0&0&0&0&0&0&0&0&0&0&0&0&0&0&0&14&7&1&0&0&0&0&0&0&0\\ 
\\ 
$\phi_{1400,29}$&16&13&9&3&4&3&1&0&1&2&2&1&0&1&0&0&0&56&28&16&3&0&8&6&0&0&0\\ 
\\ 
$\phi_{1400,7}$&0&0&0&0&0&0&0&0&0&0&0&0&0&0&0&0&0&0&0&0&0&0&0&0&0&0&0\\ 
\\ 
$\phi_{1400,37}$&20&17&11&4&7&5&4&1&1&2&3&1&0&1&0&0&0&70&35&21&9&0&8&6&0&0&0\\ 
\\ 
$\phi_{2400,17}$&0&0&0&0&0&0&0&0&0&0&0&0&0&0&0&0&0&14&7&1&0&0&0&0&0&0&0\\ 
\\ 
$\phi_{2400,23}$&1&1&0&0&0&0&0&0&0&0&0&0&0&0&0&0&0&26&13&3&1&0&0&0&0&0&0\\ 
\\ 
$\phi_{2800,13}$&1&0&0&0&0&0&0&0&0&0&0&0&0&0&0&0&0&14&7&0&0&0&0&0&0&0&0\\ 
\\ 
$\phi_{2800,25}$&7&6&3&1&1&0&0&0&0&0&0&0&0&0&0&0&0&56&28&10&3&0&0&2&0&0&0\\ 
\\ 
$\phi_{5600,19}$&6&4&1&0&0&0&0&0&0&0&0&0&0&0&0&0&0&70&35&8&2&0&0&0&0&0&0\\ 
\\ 
$\phi_{3240,9}$&0&0&0&0&0&0&0&0&0&0&0&0&0&0&0&0&0&12&6&0&0&0&0&0&0&0&0\\ 
\\ 
$\phi_{3240,31}$&30&25&15&6&8&6&4&1&1&2&3&1&0&0&0&0&0&126&63&33&12&0&12&8&0&0&0\\ 
\\ 
$\phi_{3360,13}$&2&1&0&0&0&0&0&0&0&0&0&0&0&0&0&0&0&28&14&2&0&0&0&0&0&0&0\\ 
\\ 
$\phi_{3360,25}$&15&12&6&1&3&2&1&0&0&0&1&0&0&0&0&0&0&84&42&18&4&0&4&4&0&0&0\\ 
\\ 
$\phi_{7168,17}$&14&10&4&0&1&0&0&0&0&0&0&0&0&0&0&0&0&112&56&16&2&0&4&2&0&0&0\\ 
\\ 
$\phi_{4096,11}$&0&0&0&0&0&0&0&0&0&0&0&0&0&0&0&0&0&16&8&0&0&0&0&0&0&0&0\\ 
\\ 
$\phi_{4096,27}$&20&16&8&2&4&2&2&0&0&0&1&0&0&0&0&0&0&112&56&24&8&0&8&4&0&0&0\\ 
\\ 
$\phi_{4200,15}$&7&4&1&0&0&0&0&0&0&0&0&0&0&0&0&0&0&56&28&6&1&0&4&0&0&0&0\\ 
\\ 
$\phi_{4200,21}$&12&9&5&2&1&1&0&0&0&0&0&0&0&0&0&0&0&84&42&14&3&0&4&2&0&0&0\\ 
\\ 
$\phi_{4536,13}$&3&2&1&0&0&0&0&0&0&0&0&0&0&0&0&0&0&42&21&3&0&0&0&0&0&0&0\\ 
\\ 
$\phi_{4536,23}$&27&20&11&3&4&3&1&0&0&2&2&0&0&0&0&0&0&126&63&27&6&0&12&6&0&0&0\\ 
\\ 
$\phi_{5600,15}$&2&1&0&0&0&0&0&0&0&0&0&0&0&0&0&0&0&42&21&3&0&0&0&0&0&0&0\\ 
\\ 
$\phi_{5600,21}$&13&9&4&0&1&1&0&0&0&0&0&0&0&0&0&0&0&98&49&15&4&0&4&2&0&0&0\\ 
\end{longtable} 

}
\end{landscape}

\extendedVersion{
\begin{landscape}
\appendix

\section{Characters and irreducible representations}
This appendix contains data from \cite{Carter:FiniteGroupsLieType} and GAP presented here for the reader's convenience. We include tables only for $G_2, F_4$, and $E_6$. The $E_7$ and $E_8$ tables were too large to fit here; we direct an interested reader to the interface GAP3 for access to the $E_7$ and $E_8$ data.

In the tables below, for a given Weyl group $G$, we assume a fixed set of simple roots $\alpha_1, \dots, \alpha_s$. For readability reasons, we abbreviate the reflection $s_{\alpha_i}$ as $s_i$.
\subsection{$G_2$}~
\renewcommand{\arraystretch}{0}%

 
 
\end{landscape}
}
}

\end{document}